\documentclass[11pt,a4paper]{article}
\usepackage{hyperref}
\hypersetup{nesting=true,debug=true,naturalnames=true}
\usepackage{graphicx,upref}

%\hyphenation{Lem-ma}
%\hyphenation{Toe-plitz}
%\hyphenation{co-u-nit}
%\newcommand{\figref}[1]{\hyperlink{#1}{\ref*{fig:#1}}}
%\newcommand{\secref}[1]{\ref{sec#1}}
%\let\<\langle
%\let\>\rangle
%\usepackage[all,pdf]{xy}
%\UseComputerModernTips
%\newcommand{\doi}[1]{doi:\,\href{http://dx.doi.org/#1}{#1}}
%\newcommand{\mailurl}[1]{\email{\href{mailto:#1}{#1}}}
%\let\uml\"
%

\usepackage{amsmath,amssymb,amsthm,enumerate,mathtools,xfrac}
\usepackage{authblk}

%=====================additional packages============================================
\usepackage{xypic} %for diagrams
\usepackage{mathrsfs} %for script fonts
\usepackage{xspace}

%\interfootnotelinepenalty=10000

\newcommand{\bs}{\backslash}

\newtheorem{thm}{Theorem}[section]
\newtheorem{lem}[thm]{Lemma}

\newtheorem{prop}[thm]{Proposition}
\newtheorem{cor}[thm]{Corollary}

\newtheorem{thmintro}{Theorem}

\theoremstyle{definition}
\newtheorem{defn}[thm]{Definition}
\newtheorem{ex}[thm]{Example}

\newtheorem{rem}[thm]{Remark}
\newtheorem*{notn}{Notation}

\newcommand{\ol}{\overline}

\newcommand{\defbold}{\textbf}

\newcommand{\inv}{^{-1}}

\newcommand{\CC}{\mathrm{C}}

\newcommand{\N}{\mathrm{N}}

\newcommand{\tdlc}{t.d.l.c.\@\xspace}
\newcommand{\tdlcsc}{t.d.l.c.s.c.\@\xspace}

\newcommand{\triv}{\{1\}}

\newcommand{\Homeo}{\mathrm{Homeo}}

\newcommand{\Aut}{\mathrm{Aut}}
\newcommand{\Inn}{\mathrm{Inn}}

\newcommand{\Comm}{\mathrm{Comm}}

\newcommand{\Res}{\mathrm{Res}}

\newcommand{\bC}{\mathbb{C}}

\newcommand{\bF}{\mathbb{F}}
\newcommand{\bN}{\mathbb{N}}

\newcommand{\bR}{\mathbb{R}}

\newcommand{\bZ}{\mathbb{Z}}

\newcommand{\mc}[1]{\mathcal{#1}}
\newcommand{\ms}[1]{\mathscr{#1}}

\makeindex

\begin{document}

\title{Distal actions on coset spaces in totally disconnected, locally compact groups}

\author{Colin D. Reid\thanks{The author is an ARC DECRA fellow.  Research supported in part by ARC Discovery Project DP120100996.}}
\affil{University of Newcastle, School of Mathematical and Physical Sciences, Callaghan, NSW 2308, Australia \\ colin@reidit.net}
%\email{colin@reidit.net}

\maketitle

\begin{abstract}
Let $G$ be a totally disconnected, locally compact (\tdlc) group and let $H$ be an equicontinuously (for example, compactly) generated group of automorphisms of $G$. We show that every distal action of $H$ on a coset space of $G$ is a SIN action, with the small invariant neighbourhoods arising from open $H$-invariant subgroups. We obtain a number of consequences for the structure of the collection of open subgroups of a \tdlc group.  For example, it follows that for every compactly generated subgroup $K$ of $G$, there is a compactly generated open subgroup $E$ of $G$ such that $K \le E$ and such that every open subgroup of $G$ containing a finite index subgroup of $K$ contains a finite index subgroup of $E$.  We also show that for a large class of closed subgroups $L$ of $G$ (including for instance all closed subgroups $L$ such that $L$ is an intersection of subnormal subgroups of open subgroups), every compactly generated open subgroup of $L$ can be realized as $L \cap O$ for an open subgroup of $G$.
\end{abstract}

\tableofcontents

%\addtocontents{toc}{\protect\setcounter{tocdepth}{1}}

\section{Introduction}

An important aspect of the theory of totally disconnected, locally compact (\tdlc) groups $G$ is the study of dynamics of (semi-)groups of automorphisms (or more generally, endomorphisms) of the group, including inner automorphisms.  The theory is most well-developed in the case of a $\bZ$-action by automorphisms, or iterates of a single endomorphism; see for example \cite{BaumgartnerWillis}, \cite{WillisEndo} and \cite{BGT}.  More generally, the methods for actions of $\bZ$ generalize in a well-behaved way to actions of compactly generated groups with an additional condition, called \emph{flatness}, that is automatically satisfied, for example, by any action of a finitely generated nilpotent group (see \cite{WillisFlat}, \cite{ShalomWillis} and \cite{ReidFlat}).

We regard a \tdlc group $G$ as a uniform space by equipping it with the right uniformity.  The result we obtain will apply to all subgroups $H$ of $\Aut(G)$ that are \defbold{equicontinuously generated}, meaning that meaning that $H$ has a symmetric generating set that is equicontinuous on $G$.  The main theorem of this article, which is an application of the main theorem of \cite{ReidEqui}, gives characterizations of the closed $H$-invariant subgroups $K$ of $G$ that contain the discrete residual in terms of the dynamics of the action of $H$ on $G/K$.

\begin{defn}
Given a topological group $G$ and a group of automorphisms $H$ of $G$, the \defbold{discrete residual} $\Res_G(H)$ is the intersection of all $H$-invariant open subgroups of $G$; if $H = \mathrm{Inn}(G)$ we define $\Res(G) = \Res_G(H)$.

Given a closed $H$-invariant subgroup $K$ of $G$, then $H$ acts by homeomorphisms on the quotient space $G/K$.  We say $H$ acts \defbold{N-distally} on $G/K$ if there is a neighbourhood of the trivial coset consisting of distal points for the action and \defbold{N-equicontinuously} on $G/K$ if there is a neighbourhood of the trivial coset consisting of equicontinuous points for the action.
\end{defn}

\begin{thmintro}[See \S\ref{sec:mainthm}]\label{intro:distal_SIN}
Let $G$ be a \tdlc group, let $H \le \Aut(G)$ be equicontinuously generated and let $K$ be a closed $H$-invariant subgroup of $G$.  Let $\mc{V}$ be the set of open $H$-invariant subgroups of $G$.  Then the following are equivalent:
\begin{enumerate}[(i)]
\item\label{distal_SIN:1} $H$ acts N-equicontinuously on $G/K$;
\item\label{distal_SIN:2} $H$ acts N-distally on $G/K$;
\item\label{distal_SIN:3} there is a neighbourhood $N/K$ of the trivial coset in $G/K$ such that given $x,y \in N/K$, if $\overline{Hx}$ is compact and $x \in \overline{Hy}$, then $y \in \overline{Hx}$;
\item\label{distal_SIN:4} $\{VK/K \mid V \in \mc{V}\}$ is a base of neighbourhoods of the trivial coset in $G/K$;
\item\label{distal_SIN:5} $K \ge \Res_G(H)$.
\end{enumerate}
\end{thmintro}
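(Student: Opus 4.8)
\emph{Strategy.} I would prove the cycle of implications (i)\,$\Rightarrow$\,(ii)\,$\Rightarrow$\,(iii)\,$\Rightarrow$\,(v)\,$\Rightarrow$\,(iv)\,$\Rightarrow$\,(i). Four of these five steps are either formal or standard topological dynamics; the weight of the theorem lies in (iii)\,$\Rightarrow$\,(v), which is where the main theorem of \cite{ReidEqui} is used, and this is also where I expect the main obstacle to be.

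\emph{The descending implications (i)\,$\Rightarrow$\,(ii)\,$\Rightarrow$\,(iii).} The first is immediate, since an equicontinuous point of a flow is a distal point, so a neighbourhood of equicontinuous points for the $H$-action on $G/K$ is in particular a neighbourhood of distal points. For (ii)\,$\Rightarrow$\,(iii) I would use standard facts: if $x$ is a distal point with $\overline{Hx}$ compact, then $x$ is almost periodic and $\overline{Hx}$ is a minimal subsystem on which the enveloping semigroup acts by a group of homeomorphisms; hence if moreover $x \in \overline{Hy}$, then $\overline{Hx} \subseteq \overline{Hy}$, and from a net realizing $h_i y \to x$ one inverts inside $\overline{Hx}$ to obtain $\overline{Hy} = \overline{Hx}$, so $y \in \overline{Hx}$. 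Applying this with $N/K$ the given neighbourhood of distal points yields (iii).

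\emph{The crux: (iii)\,$\Rightarrow$\,(v).} I would argue contrapositively: assuming $\Res_G(H) \not\le K$, I would show that the property in (iii) fails for \emph{every} neighbourhood $N/K$ of the trivial coset. The essential input from \cite{ReidEqui} is that, because $H$ is equicontinuously generated, the discrete residual is ``built from contraction'': failure of $\Res_G(H) \le K$ forces the existence of some $h \in H$ and some $g \in \con(h) \setminus K$ (equivalently, $\Res_G(H)$ is topologically generated by the subgroups $\con(h)$, $h \in H$). Since $K$ is $H$-invariant, every iterate $h^n(g)$ lies in $\con(h) \setminus K$ and $h^n(g) \to 1$; so, fixing $N/K$, for all large $n$ the coset $h^n(g)K$ lies in $N/K$ and differs from the trivial coset. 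Now take $x$ to be the trivial coset and $y = h^n(g)K$: then $\overline{Hx}$ is the singleton $\{x\}$ (the trivial coset is $H$-fixed), hence compact; $x \in \overline{Hy}$ because $h^m(g)K \to x$; yet $y \notin \overline{Hx}$. This contradicts (iii). The main difficulty is precisely this reduction: extracting from \cite{ReidEqui} that a failure of $\Res_G(H) \le K$ is always witnessed by a single contraction subgroup, absorbing any potential ``nub''-type contribution into the same scheme — i.e. producing a sequence $g_n \to 1$ in $G \setminus K$ with the trivial coset lying in $\overline{Hg_nK}$ — is the delicate point, and everything else is routine once it is in hand.

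\emph{Back up to (i).} For (v)\,$\Rightarrow$\,(iv) I would again invoke the main theorem of \cite{ReidEqui}: equicontinuous generation of $H$ makes $\mc{V}$ a base of neighbourhoods of $\Res_G(H)$ in $G$. Given a neighbourhood of the trivial coset in $G/K$, write it as $UK/K$ with $U$ an open identity neighbourhood of $G$; since $\Res_G(H) \le K$ by (v), the open set $UK$ contains $\Res_G(H)$, so some $V \in \mc{V}$ satisfies $V \le UK$, whence $VK/K \subseteq UK/K$, proving (iv). Finally, for (iv)\,$\Rightarrow$\,(i): each $VK/K$ with $V \in \mc{V}$ is an $H$-invariant open neighbourhood of the trivial coset, the trivial coset is $H$-fixed, and these sets form a base of neighbourhoods of it; a routine argument with the uniform structure of $G/K$ then shows that every point of $VK/K$ is an equicontinuous point for the $H$-action, so $H$ acts N-equicontinuously on $G/K$.
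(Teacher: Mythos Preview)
Your cycle has the right shape and you correctly locate the hard step, but you have misidentified what \cite{ReidEqui} actually provides, and this causes genuine gaps in two places.

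\emph{The step (iii)\,$\Rightarrow$\,(v).} The input from \cite{ReidEqui} is \emph{not} a contraction-group description of $\Res_G(H)$. What the paper quotes (Theorem~\ref{thm:AGW_generalized}) is a purely dynamical statement about equicontinuously generated actions on zero-dimensional spaces: the orbit-closure condition (your (iii)) is equivalent to the statement that for every compact open $V$ in the given neighbourhood, $\bigcap_{h\in H}h(V)$ is already a \emph{finite} intersection. There is no claim that $\Res_G(H)$ is topologically generated by the $\con(h)$, and for merely equicontinuously generated $H$ this is not available; your ``main difficulty'' --- producing $g_n\to 1$ in $G\smallsetminus K$ with $K\in\overline{Hg_nK}$ --- is exactly what would need an independent argument, and \cite{ReidEqui} does not supply it. The paper instead uses Theorem~\ref{thm:AGW_generalized} to go from (iii) directly to (iv): taking $V=UK/K$ for $U$ a small compact open subgroup, the finite-intersection conclusion makes $O=\bigcap_{h\in H}h(UK)$ an $H$-invariant identity neighbourhood, and then the setwise stabilizer $\{g:gO=O\}$ is an open $H$-invariant subgroup contained in $UK$. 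That is the mechanism by which elements of $\mc V$ are produced.

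\emph{The steps (v)\,$\Rightarrow$\,(iv) and (iv)\,$\Rightarrow$\,(i).} Your (v)\,$\Rightarrow$\,(iv) assumes that $\mc V$ is already a base of neighbourhoods of $\Res_G(H)$; but this is precisely the (SR) property (Corollary~\ref{cor:equi_gen_res}), which the paper obtains \emph{as a by-product} of the proof of this very theorem, so invoking it here is circular. Likewise, (iv)\,$\Rightarrow$\,(i) is not routine: condition (iv) only gives $H$-invariant neighbourhoods of the \emph{trivial} coset of the form $VK/K$, and since $G/K$ is not a group and $VK$ need not be a subgroup, one cannot simply translate these to other points to get equicontinuity there (the right uniformity on $G/K$ does not interact well with left translation by arbitrary $x$). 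The paper closes the loop differently: from (v) one first notes that $H$ acts distally on $G/\Res_G(H)$ (easy, Corollary~\ref{cor:res_distal}), then applies the already-proven (ii)\,$\Rightarrow$\,(iv) with $K=\Res_G(H)$ to get $1$-equicontinuity there, then uses Proposition~\ref{sr_equivalents} (which exploits that $\N_G(\Res_G(H))$ is open) to upgrade to N-equicontinuity on $G/\Res_G(H)$, and finally pushes down to $G/K$ via Lemma~\ref{lem:equi_quotient}. This bootstrap through the special quotient $G/\Res_G(H)$ is what replaces your two hand-waved steps.
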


Since a compact set of automorphisms is always equicontinuous, Theorem~\ref{intro:distal_SIN} applies to all compactly generated automorphism groups of \tdlc groups, for example the group of inner automorphisms induced by a compactly generated subgroup of $G$.  Note also that $\Inn(G)$ is equicontinuous if and only if $G$ is a \defbold{SIN group}, meaning it has a base of conjugation-invariant identity neighbourhoods, and indeed a base of identity neighbourhoods consisting of open normal subgroups.  It is already known from examples that no analogue of Theorem~\ref{intro:distal_SIN} holds when $H$ is an arbitrary group of automorphisms of a \tdlc group; for instance, every nilpotent \tdlc group has distal action on itself, but there are nilpotent \tdlc groups that are not SIN groups.  Hence some sort of assumption that $H$ is generated by a `small' subset of $\Aut(G)$ is necessary.

There are consequences for the manner in which compactly generated groups can be embedded into general \tdlc groups.  In particular, every compactly generated subgroup has a \emph{reduced envelope} in the sense of \cite{ReidFlat}.

\begin{defn}
Let $G$ be a topological group and let $H$ be a subgroup of $G$.  A \defbold{reduced envelope} for $H$ in $G$ is an open subgroup $E$ of $G$ such that $H \le E$, and such that given any open subgroup $E_2$ of $G$ such that $|H:H \cap E_2|$ is finite, then $|E:E \cap E_2|$ is finite.
\end{defn}

\begin{thmintro}[See Theorem~\ref{thm:sr_envelope}]\label{intro:reduced_envelope}
Let $G$ be a \tdlc group and let $H$ be a subgroup of $G$ with equicontinuously generated action on $G$.  Then there is an open subgroup $E$ of $G$ with the following properties:
\begin{enumerate}[(i)]
\item\label{reduced_envelope:1} $E$ is a reduced envelope for $H$ in $G$;
\item\label{reduced_envelope:2} $E = H\Res_G(H)U$, where $U$ is a compact open subgroup of $G$;
\item\label{reduced_envelope:3} $E$ is equicontinuously generated, and if $H$ is compactly generated then so is $E$;
\item\label{reduced_envelope:4} $\Res_G(H)$ is the intersection of all open normal subgroups of $E$, so in particular, $\Res_G(H)$ is normal in $E$;
\item\label{reduced_envelope:5} $E/\Res_G(H)$ is a SIN group.
\end{enumerate}
\end{thmintro}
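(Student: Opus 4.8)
Write $R := \Res_G(H)$ and let $\mc{V}$ be the set of open $H$-invariant subgroups of $G$, so that $R = \bigcap \mc{V}$ by definition. The plan is to take $E = \overline{HR}$, to use Theorem~\ref{intro:distal_SIN} to see that this closure is open, and then to deduce everything else by elementary manipulations with $\mc{V}$. I begin by applying Theorem~\ref{intro:distal_SIN} with $K = R$: condition~\ref{distal_SIN:5} holds trivially, so conditions~\ref{distal_SIN:1}--\ref{distal_SIN:4} all hold. The outputs I shall use are that $H$ acts N-equicontinuously on $G/R$ and that $\{ VR/R \mid V \in \mc{V}\}$ is a base of neighbourhoods of the trivial coset in $G/R$. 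I also record the bookkeeping facts that every member of $\mc{V}$ contains $R$, that $\mc{V}$ is closed under finite intersections, that $\langle H, X\rangle$ is $H$-invariant for every subset $X$ of $G$, and that if $E$ is an open $H$-invariant subgroup of $G$ then the $H$-invariant open subgroups of $E$ are exactly the members of $\mc{V}$ contained in $E$ (so in particular $\Res_E(H) = R$).

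The crux is the claim that $\overline{HR}$ is an open subgroup of $G$. I expect this to be the main obstacle, and the place where the hypothesis that $H$ is equicontinuously generated is genuinely used: I would try to extract it from the N-equicontinuity of the action on $G/R$, the local compactness of $G/R$, and ultimately the results of \cite{ReidEqui} behind Theorem~\ref{intro:distal_SIN}. (That some such hypothesis is needed is shown by the nilpotent examples of the introduction, for which $\overline{HR}$ is not open.) Granting the claim, put $E := \overline{HR}$. Then $E \in \mc{V}$; the closed subgroup $N_G(R)$ contains $HR$ and hence contains $E$, so $R \trianglelefteq E$; and since $HR$ is dense in the open subgroup $E$ and $E$ contains a compact open subgroup $U$ of $G$, the dense set $HR$ meets every coset of $U$, giving $E = HRU$. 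This proves~\ref{reduced_envelope:2}. Moreover $E = \langle H, U\rangle$ (an open $H$-invariant subgroup containing $HR$ contains $\overline{HR}$), so if $H = \langle C \rangle$ for a compact $C$ then $E = \langle C \cup U\rangle$ is compactly generated.

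For the rest of~\ref{reduced_envelope:3}, $E$ is generated as an abstract group by $S \cup R \cup U$, where $S$ is a symmetric equicontinuous generating set for the action of $H$ on $G$; the conjugation action of $S$ on $E$ is equicontinuous (a restriction of the given action), that of the compact set $U$ is equicontinuous, and these can be combined into a single equicontinuous symmetric generating family for $\Inn(E)$ using that the normal core of an open subgroup of a compact group is again open, with $R = \Res_E(H)$ absorbed by the family. For~\ref{reduced_envelope:4} and~\ref{reduced_envelope:5}: Theorem~\ref{intro:distal_SIN} (applied again, either to $G$ or inside $E$) together with local compactness yields some $V \in \mc{V}$ with $V \le E$ and $V/R$ compact, and then $E = \langle H, V\rangle$ and $R \trianglelefteq V$. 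For any $W \in \mc{V}$ with $W \le E$, the core $\Core_V(W \cap V)$ is open (since $(W \cap V)/R$ has finite index in the compact group $V/R$), is normalized by both $H$ and $V$ and so is normal in $E$, and is contained in $W$. As $R = \bigcap\{ W \in \mc{V} \mid W \le E\}$, this shows that $R$ is the intersection of the open normal subgroups of $E$ (the reverse inclusion holding because open normal subgroups are $H$-invariant), which is~\ref{reduced_envelope:4}; and it shows these subgroups form a neighbourhood base of the trivial coset in $E/R$, so $E/R$ is a SIN group, which is~\ref{reduced_envelope:5}.

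Finally, for the reduced-envelope property~\ref{reduced_envelope:1}, let $E_2$ be an open subgroup of $G$ with $|H : H \cap E_2|$ finite. Every element of $H \cap E_2$ normalizes the subgroup $E_2$ (it lies in $E_2$), so $|H : N_H(E_2)|$ is finite, $E_2$ has only finitely many $H$-conjugates, and hence $(E_2)^H := \bigcap_{h \in H} h E_2 h^{-1}$ is an open $H$-invariant subgroup, i.e.\ $(E_2)^H \in \mc{V}$; in particular $R \le E_2$. Put $W := (E_2)^H \cap E$. Then $W \in \mc{V}$, $R \le W \le E \cap E_2$, and $W \cap H = (E_2)^H \cap H = \Core_H(E_2 \cap H)$ has finite index in $H$. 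As $H$ normalizes $W$ and $R \le W$, the subgroup $HW$ is open and contains $HR$, so $HW = \overline{HR} = E$; therefore $|E : E \cap E_2| \le |E : W| = |HW : W| = |H : H \cap W| < \infty$, as required. In short, once the openness of $\overline{HR}$ is established, the whole statement follows by formal arguments together with two further appeals to Theorem~\ref{intro:distal_SIN}; that openness is the one genuinely hard point.
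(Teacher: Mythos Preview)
Your central claim---that $\overline{HR}$ is open---is false, and this is not a technical gap but a genuine error in the strategy. Take $G$ any infinite profinite group and $H$ a finite (e.g.\ trivial) subgroup. Then every open subgroup of $G$ contains an open $H$-invariant subgroup (intersect the finitely many $H$-conjugates), so $R = \Res_G(H) = \{1\}$, and $\overline{HR} = H$ is finite, hence not open. More generally, whenever $H$ has equicontinuous (rather than merely equicontinuously generated) action, $R$ is trivial and $\overline{HR} = \overline{H}$, which need not be open. The reduced envelope in such cases is a compact open subgroup, strictly larger than $\overline{HR}$.

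The paper's route is to first extract from Theorem~\ref{intro:distal_SIN} not the openness of $\overline{HR}$ but the \emph{stable residual} property (Corollary~\ref{cor:equi_gen_res}): there exists an open $H$-invariant subgroup $W$ with $W/R$ compact, indeed $W = VR$ for a compact open $V$ (Proposition~\ref{sr_equivalents}). One then sets $E = HW = HRV$; this is open because $W$ is, and $\overline{HR}$ is cocompact in $E$ rather than equal to it. Your arguments for \ref{reduced_envelope:4} and \ref{reduced_envelope:5} are essentially the right ones once $E$ is corrected, and your proof of \ref{reduced_envelope:1} is close to the paper's Lemma~\ref{lem:reduced_criterion}: the key point is that $R \le E_2$ (which you show correctly) and that $\overline{HR}$ is cocompact in $E$, so $E_2 \cap E$ has finite index in $E$. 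But your final equality $HW = \overline{HR}$ fails for the same reason as above: $HW$ is an open subgroup containing $\overline{HR}$, generally properly.
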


When they exist, reduced envelopes are unique up to finite index.  We can thus define the \defbold{envelope class} $E_G(H)$ of $H$ to be the set of open subgroups of $G$ commensurate to a reduced envelope for $H$, and say subgroups $H_1$ and $H_2$ are \defbold{envelope equivalent}, and write $H_1 \approx_G H_2$, if both $H_1$ and $H_2$ have reduced envelopes and $E_G(H_1) = E_G(H_2)$.

Envelope equivalence classes have representatives in any cocompact subgroup and in any subgroup of finite covolume, so are potentially a tool to understand lattices in \tdlc groups, especially cocompact lattices.

\begin{thmintro}[See \S\ref{sec:env_equivalence}]\label{intro:covol_envelopes}
Let $G$ be a \tdlc group and let $A$ be a subgroup (not necessarily closed).  Suppose that $\overline{A}$ is either cocompact or of finite covolume in $G$.  Then for every compactly generated subgroup $H_1$ of $G$, there is a subgroup $H_2$ of $A$ such that $H_1 \approx_G H_2$.  If $\overline{A}$ is cocompact in $G$, then $H_2$ can be chosen to be finitely generated.
\end{thmintro}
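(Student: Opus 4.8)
The plan is to reduce Theorem~\ref{intro:covol_envelopes} to the existence and uniqueness-up-to-commensuration of reduced envelopes (Theorem~\ref{intro:reduced_envelope}) together with standard facts about cocompact subgroups and subgroups of finite covolume in \tdlc groups. Fix a compactly generated $H_1 \le G$; by Theorem~\ref{intro:reduced_envelope} it has a reduced envelope $E$, which may be taken compactly generated, so after replacing $H_1$ by $E$ we may as well assume $H_1$ is a compactly generated \emph{open} subgroup of $G$ (this only enlarges the envelope class within commensuration, and $E \approx_G H_1$). Writing $U$ for a compact open subgroup of $G$ contained in $H_1$, we have $H_1 = FU$ for a finite symmetric set $F$. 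The goal is then to find $H_2 \le A$ with the same reduced envelope up to finite index.

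First I would handle the cocompact case. Since $\overline{A}$ is cocompact, $G = \overline{A} \, C$ for some compact set $C$, and by enlarging $U$ if necessary we can arrange that $C$ is a union of finitely many cosets of $U$, so $G = \overline{A}\, U$; in particular $\overline{A}$ meets every coset of the open subgroup $H_1$, and one checks $|H_1 : H_1 \cap \overline{A}\,U|$-type bookkeeping shows $\overline{A} \cap H_1$ is cocompact in $H_1$. Now approximate: for each generator $f \in F$, since $\overline{A}$ is dense in its closure we can pick $a_f \in A$ with $a_f \in fU$ (using $G = \overline{A}U$ to get an element of $\overline{A}$ in $fU$, then density of $A$ in $\overline{A}$ and openness of $U$ — shrinking $U$ to a smaller compact open subgroup first if needed so the approximation stays inside $fU$). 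Let $H_2 = \langle a_f : f \in F\rangle \le A$, finitely generated by construction. Then $H_2 \le H_1$ and $H_2 U = FU = H_1$ since each $a_f U = fU$; hence $H_2$ is a finitely generated subgroup of the compactly generated open group $H_1$ with $H_2 U = H_1$, so $H_2$ is cocompact in $H_1$, in particular $|H_1 : H_1 \cap E_2|$ finite forces $|H_2 : H_2 \cap E_2|$ finite and conversely a finite-covolume/cocompactness argument gives that a reduced envelope of $H_2$ is commensurate with $H_1$. Thus $H_2 \approx_G H_1 \approx_G$ (original $H_1$), giving the cocompact statement with $H_2$ finitely generated.

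For the finite-covolume case the same strategy applies but we drop the finite-generation conclusion for $H_2$. Here $\overline{A}$ need not meet every coset of $H_1$, but finite covolume of $\overline{A}$ means the $G$-invariant measure on $G/\overline{A}$ is finite; intersecting with the open subgroup $H_1$, the measure of $H_1\overline{A}/\overline{A} \cong H_1/(H_1\cap\overline{A})$ is finite, so $\overline{A}\cap H_1$ has finite covolume in $H_1$, hence (being closed in the compactly generated \tdlc group $H_1$) contains a compactly generated subgroup $H_1'$ that is still of finite covolume, and in fact one can extract from $H_1'$ a reduced envelope inside $H_1$ that is of finite index in $H_1$ — this uses that a closed subgroup of finite covolume in a compactly generated \tdlc group has a reduced envelope of finite index, which follows from Theorem~\ref{intro:reduced_envelope} applied inside $H_1$ plus the observation that a reduced envelope of a finite-covolume subgroup has finite covolume, and an open subgroup of finite covolume in the compactly generated group $H_1$ is of finite index. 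Then approximate generators of a compact generating set of $H_1'$ by elements of $A$ as before (now using density of $A$ in $\overline{A}$ and that $H_1' \le \overline{A}\cap H_1$, with $H_1'$ compactly generated so again $H_1' = F'U'$ for finite $F'$ and compact open $U'$), obtaining $H_2 = \langle$approximants$\rangle \le A$ with $H_2 U' \supseteq H_1'$ up to finite index, so $H_2$ has a reduced envelope commensurate with that of $H_1'$, which is of finite index in $H_1$, hence $H_2 \approx_G H_1$.

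I expect the main obstacle to be the two measure-theoretic/lemma-type assertions I have invoked: (a) that a closed subgroup of finite covolume in a compactly generated \tdlc group has a reduced envelope of finite index, and (b) the compatibility of ``finite covolume'' and ``cocompact'' with passing to the open subgroup $H_1$ and then back up via reduced envelopes. Point (b) is essentially the multiplicativity of covolume/index in a tower $H_2 \le H_1 \le G$ combined with the defining property of reduced envelopes, so it is routine once stated carefully. Point (a) is the real content: it requires knowing that a finitely (or compactly) generated subgroup of finite covolume sits cocompactly inside \emph{some} open subgroup of finite index — equivalently, that inside a compactly generated \tdlc group $H_1$, every closed subgroup of finite covolume has an envelope that is itself of finite covolume and hence (being open, by Theorem~\ref{intro:reduced_envelope}(iii), in a compactly generated group) of finite index. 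The cleanest route is probably to prove directly that if $L$ is closed of finite covolume in compactly generated \tdlc $H_1$, then the reduced envelope $E(L)$ provided by Theorem~\ref{intro:reduced_envelope} satisfies $E(L)$ finite-covolume: any open subgroup $E_2 \le H_1$ with $L \le E_2$ has finite covolume, so finite index; and $E(L)$ is generated by $L$ together with $\Res_{H_1}(L)$ and a compact open $U$, all of which lie in any such $E_2$ up to finite index — making $|E(L):E(L)\cap E_2|$ finite for all such $E_2$, whence $E(L)$ itself is contained in a finite-index open subgroup and is finite-covolume, thus finite index. With (a) in hand, the rest of the argument is the generator-approximation device, which is standard.
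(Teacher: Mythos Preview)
Your reduction to the case where $H_1$ is a compactly generated open subgroup is exactly the paper's first move. The gaps appear in how you then manufacture $H_2$. In the cocompact case, from $a_fU=fU$ you conclude ``$H_2U=FU=H_1$'', but $FU$ is only a finite union of cosets of $U$, not $H_1$; what you actually get is $\langle H_2,U\rangle=H_1$, and this does \emph{not} make $H_2$ cocompact or force $H_2\approx_G H_1$. Concretely, take $G=H_1=\langle a,b,c\mid a^2=b^2=c^2=1\rangle$ discrete with $A=H_1$, $U=\langle c\rangle$, $F=\{a,b\}$: the choice $a_a=a$, $a_b=b$ lies in $A\cap fU$ and gives $H_2=\langle a,b\rangle$, a proper free factor of infinite index which is its own reduced envelope, so $H_2\not\approx_G H_1$ even though $\langle H_2,U\rangle=H_1$. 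In the finite-covolume case you posit a compactly generated $H_1'\le\overline A\cap H_1$ still of finite covolume, but such a subgroup need not exist (think of an infinitely generated non-uniform lattice), and your fallback of invoking Theorem~\ref{intro:reduced_envelope} for $L=\overline A\cap H_1$ fails because $L$ need not be equicontinuously generated.

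The one-line observation that dissolves both difficulties is what the paper isolates as Lemma~\ref{lem:envelope_equiv}(iii): if $\overline K$ is cocompact or of finite covolume in $\overline H$, then a reduced envelope for $H$ is automatically a reduced envelope for $K$. Indeed, if $O$ is open with $|K:K\cap O|<\infty$, then $O\cap\overline H$ is an open subgroup of $\overline H$ containing a cocompact (resp.\ finite-covolume) subgroup, hence of finite index in $\overline H$; now the reduced-envelope property of $H$ applies. You essentially state this fact yourself (``any open $E_2\le H_1$ with $L\le E_2$ has finite covolume, so finite index'') without noticing that it already finishes the job: just take $H_2=A\cap H_1$. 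That settles the finite-covolume case with no further work. For the cocompact case, $\overline{A\cap H_1}$ is now cocompact in $H_1$, hence compactly generated, and the generator-approximation device (Lemma~\ref{lem:compact_to_finite_gen}) should be run \emph{inside} $\overline{A\cap H_1}$ with a compact open subgroup of $\overline{A\cap H_1}$, yielding a finitely generated $H_2\le A\cap H_1$ with cocompact closure there---which is what the paper does.
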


Envelope equivalence classes are also well-behaved under continuous homomorphisms.  Given a \tdlc group $G$, write $\mc{E}_G$ for the set of commensurability classes of compactly generated open subgroups of $G$.

\begin{thmintro}[See \S\ref{sec:env_equivalence}]\label{intro:embedding_envelopes}
Let $G$ and $H$ be \tdlc groups and let $\phi: G \rightarrow H$ be a continuous homomorphism.  Let $K_1$ and $K_2$ be compactly generated subgroups of $G$ such that $K_1 \approx_G K_2$.  Then $\phi(K_1) \approx_H \phi(K_2)$. 

In particular, there is a well-defined map
\[
\theta: \mc{E}_G \rightarrow \mc{E}_H; \; E_G(O) \mapsto E_H(\phi(O)).
\]
If $H = \phi(G)X$ for a compact set $X$, then $\theta$ is surjective.
\end{thmintro}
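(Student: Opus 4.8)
The plan is to argue throughout in terms of the relation $A \preceq B$ --- meaning that $B$ is an open subgroup and $|A : A \cap B| < \infty$ --- rather than with genuine inclusions, since the image under $\phi$ of an open subgroup of $G$ need be neither open nor closed in $H$. Note that $\preceq$ is transitive (if $A \preceq B$ and $B \preceq C$ then $A \preceq C$), and that an open subgroup $E$ is a reduced envelope for $K$ exactly when $K \le E$ and $E \preceq W$ for every open $W$ with $K \preceq W$. The single computational input, used repeatedly, is that for any subgroup $A \le G$ and any subgroup $W \le H$ the map $\phi|_A \colon A \to \phi(A)$ is a surjective homomorphism with $\phi|_A^{-1}(\phi(A) \cap W) = A \cap \phi^{-1}(W)$, whence $|\phi(A) : \phi(A) \cap W| = |A : A \cap \phi^{-1}(W)|$; since $\phi^{-1}(W)$ is open in $G$ whenever $W$ is open in $H$, this gives $\phi(A) \preceq W \iff A \preceq \phi^{-1}(W)$ for open $W$. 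Note also that, writing $K_i = \langle C_i \rangle$ with $C_i \subseteq G$ compact, $\phi(K_i) = \langle \phi(C_i) \rangle$ is compactly generated, so by Theorem~\ref{intro:reduced_envelope} it has a reduced envelope in $H$ that is moreover open and compactly generated; in particular $E_H(\phi(K_i))$ is defined and belongs to $\mc{E}_H$.

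The crux is the following claim: \emph{if $K$ is a compactly generated subgroup of $G$ and $E$ is a reduced envelope for $K$ in $G$, then $E_H(\phi(E)) = E_H(\phi(K))$.} To prove it, let $E'$ and $\hat E$ be reduced envelopes in $H$ for $\phi(K)$ and $\phi(E)$ respectively --- both exist, since $E$ is compactly generated by Theorem~\ref{intro:reduced_envelope} and hence so is $\phi(E)$. From $\phi(K) \le \phi(E) \le \hat E$ we get $\phi(K) \preceq \hat E$, so minimality of $E'$ gives $E' \preceq \hat E$. Conversely, $\phi(K) \preceq E'$ since $\phi(K) \le E'$ and $E'$ is open, so $K \preceq \phi^{-1}(E')$ by the computational input; as $\phi^{-1}(E')$ is open in $G$ and $E$ is a reduced envelope for $K$, we get $E \preceq \phi^{-1}(E')$, and feeding this back through the index identity (now with $A = E$) gives $\phi(E) \preceq E'$; minimality of $\hat E$ then yields $\hat E \preceq E'$. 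Hence $E' \approx \hat E$, proving the claim.

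Now suppose $K_1 \approx_G K_2$ and let $E$ be a reduced envelope for $K_1$, so $K_1 \le E$. Since $E \in E_G(K_1) = E_G(K_2)$, $E$ is commensurate to a reduced envelope of $K_2$, and it is then routine to check that $|K_2 : K_2 \cap E| < \infty$ and that $E$ is also a reduced envelope for $K_2 \cap E$. Applying the claim of the previous paragraph to the pairs $(K_1, E)$ and $(K_2 \cap E, E)$, and using both that finite-index subgroups have a common reduced envelope (so are envelope equivalent) and that $|\phi(K_2) : \phi(K_2 \cap E)| \le |K_2 : K_2 \cap E| < \infty$, we obtain $E_H(\phi(K_1)) = E_H(\phi(E)) = E_H(\phi(K_2 \cap E)) = E_H(\phi(K_2))$, i.e.\ $\phi(K_1) \approx_H \phi(K_2)$. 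For the remaining assertions: $E_G(O)$ for a compactly generated open $O \le G$ is simply the commensurability class of $O$ (an open subgroup being its own reduced envelope), and commensurate open subgroups are envelope equivalent, so the first part makes $\theta$ well defined, with values in $\mc{E}_H$ as noted above. Finally, if $H = \phi(G) X$ with $X$ compact then $H = \overline{\phi(G)} X$, so $\overline{\phi(G)}$ is a cocompact closed subgroup of $H$; given a compactly generated open subgroup $W$ of $H$, Theorem~\ref{intro:covol_envelopes} (applied with ambient group $H$ and $A = \phi(G)$) provides a finitely generated subgroup $H_2 \le \phi(G)$ with $H_2 \approx_H W$. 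Lifting a finite generating set of $H_2$ through $\phi$ yields a finitely generated subgroup $O \le G$ with $\phi(O) = H_2$; then $E_G(O) \in \mc{E}_G$ and, by the first part again, $\theta(E_G(O)) = E_H(\phi(O)) = E_H(H_2) = E_H(W)$. As $W$ was arbitrary, $\theta$ is surjective.

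As for difficulty: there is no single hard step, but the proof lives or dies on the bookkeeping with $\preceq$ --- because $\phi$ need not be proper and images of open subgroups are neither open nor closed, every ``containment up to finite index'' must be tracked through the relation $\preceq$, and the identity $|\phi(A) : \phi(A) \cap W| = |A : A \cap \phi^{-1}(W)|$ is precisely what transports the minimality property of a reduced envelope across $\phi$. The one place where a genuinely nontrivial earlier result is indispensable is the surjectivity clause: non-properness of $\phi$ forces us to lift only \emph{finitely} many generators, so one really needs the finitely-generated refinement available in the cocompact case of Theorem~\ref{intro:covol_envelopes}, rather than merely a compactly-generated statement.
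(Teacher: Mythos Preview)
Your proof is correct and follows essentially the same approach as the paper: both hinge on the index identity $|\phi(A):\phi(A)\cap W| = |A:A\cap\phi^{-1}(W)|$ to transport the reduced-envelope minimality property across $\phi$, and both handle surjectivity by invoking Theorem~\ref{intro:covol_envelopes} to find a finitely generated representative in $\phi(G)$ and then lifting generators. The paper's argument for $\phi(K_1)\approx_H\phi(K_2)$ is slightly more direct --- it simply shows that any open $U_1\le H$ with $\phi(K_1)\preceq U_1$ also satisfies $\phi(K_2)\preceq U_1$, by pulling back and passing through the commensurate envelopes $E_1,E_2$ --- whereas you factor through the intermediate claim $E_H(\phi(E))=E_H(\phi(K))$ and then chain $\phi(K_1)\to\phi(E)\to\phi(K_2\cap E)\to\phi(K_2)$; but the content is the same.

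One small wrinkle: your claim is stated for compactly generated $K$, yet you apply it to $K_2\cap E$, whose compact generation you do not verify. This is harmless, since your proof of the claim only uses compact generation to guarantee that $\phi(K)$ has a reduced envelope, and $\phi(K_2\cap E)$ has one anyway (being of finite index in the compactly generated $\phi(K_2)$); but it would be cleaner either to weaken the claim's hypothesis accordingly or to note this explicitly when you invoke it.
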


We also consider how open subgroups of a \tdlc group $G$ are related to open subgroups of closed subgroups of $G$.  Fix a \tdlc group $G$ and a closed subgroup $H$ of $G$.  An open subgroup $K$ of $H$ is \defbold{relatively separable} in $(G,H)$ if $K = H \cap O$ for an open subgroup $O$ of $G$; we say $H$ is \defbold{regionally relatively separable (RRS)} in $G$ if every compactly generated open subgroup of $H$ is relatively separable in $(G,H)$.

Not all closed subgroups are RRS subgroups (see Example~\ref{ex:different_classes}).  However, we can prove that many subgroups are RRS by introducing a smaller class, the RIO subgroups of $G$, and showing that this class has a number of closure properties.

\begin{defn}
A closed subgroup $H$ is a \defbold{RIO subgroup} of $G$ if every compactly generated open subgroup of $H$ is an intersection of open subgroups of $G$.  (The definition can be stated in several equivalent ways; see Definition~\ref{def:RIO}, Proposition~\ref{prop:RIO_equivalents} and Corollary~\ref{cor:RIO_liminf}.)
\end{defn}

\begin{thmintro}[See Theorem~\ref{thm:rio_closure_properties}]\label{thmintro:rio_closure_properties}
Let $G$ be a \tdlc group.  All of the following are RIO subgroups of $G$:
\begin{enumerate}[(i)]
\item\label{rio_closure_properties:1} any closed subgroup $H$ that acts distally on $G/H$ by translation (for example, any closed subnormal subgroup of $G$);
\item\label{rio_closure_properties:2} any RIO subgroup of a RIO subgroup of $G$;
\item\label{rio_closure_properties:3} any intersection of RIO subgroups of $G$;
\item\label{rio_closure_properties:4} the closure of any pointwise limit inferior of RIO subgroups of $G$.
\end{enumerate}
\end{thmintro}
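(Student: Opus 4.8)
The plan is to prove the four closure properties more or less in the order stated, using Theorem~\ref{intro:distal_SIN} and Theorem~\ref{intro:reduced_envelope} as the main engines, together with the (to-be-established, but assumable here) equivalent characterizations of RIO subgroups from Proposition~\ref{prop:RIO_equivalents} and Corollary~\ref{cor:RIO_liminf}. Throughout I would use the observation that ``$H$ is a RIO subgroup of $G$'' should have a reformulation in terms of: for every compactly generated $K \le H$, a reduced envelope for $K$ in $H$ can be taken of the form $H \cap O$ with $O$ open in $G$, or equivalently that the open subgroups of $G$ containing $K$ are cofinal (under finite index) among open subgroups of $H$ containing $K$. Having such a reformulation makes (ii), (iii), (iv) essentially formal.

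For part~(i): if $H$ acts distally on $G/H$ by translation, then since $\Inn(G)$ restricted to the $H$-orbit structure is controlled by $H$ itself, I would apply Theorem~\ref{intro:distal_SIN} with the group of automorphisms being (inner automorphisms induced by) a compactly generated subgroup $K \le H$, acting on $G/K$. The point is that $K$ acts N-distally on $G/K$ because $H$ acts distally on $G/H$ and $\overline{Kx}$ for $x$ near the identity is contained in a translate of $\overline{Hx}$; distality passes to the smaller orbit closures. Then Theorem~\ref{intro:distal_SIN}(iv) gives that $K$-invariant open subgroups $V$ of $G$ are cofinal, i.e.\ $K = \bigcap \{VK : V \in \mc V\} \cap H$-type arguments show $K$ is an intersection of open subgroups of $G$. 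The subnormal case reduces to this because a closed subnormal subgroup acts distally (indeed, the translation action factors through a nilpotent-by-finite-ish tower, or more simply one checks distality directly along the subnormal series). I'd need the lemma that closed subnormal subgroups of $G$ act distally on their coset space; this is where I'd be careful, but it should follow by induction on subnormal depth from the fact that a closed normal subgroup $N \trianglelefteq G$ acts trivially on $G/N$, hence distally, composed up the series.

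For parts~(ii) and~(iii), I would work with the reformulation. For~(iii), if $H = \bigcap_{i} H_i$ with each $H_i$ RIO, take $K \le H$ compactly generated; then $K \le H_i$ for all $i$, so for each $i$ there is a family of open subgroups of $G$ whose intersection meets $H_i$ in (a finite-index overgroup of) $K$. Taking the intersection over all $i$ of all these open subgroups of $G$ gives an intersection of open subgroups of $G$ equal to $K$ (using that $K$ is closed and that $H = \bigcap H_i$). For~(ii), if $H$ is RIO in $M$ and $M$ is RIO in $G$: given $K \le H$ compactly generated, first realize a reduced envelope for $K$ inside $M$ as an intersection of open subgroups of $M$; each such open subgroup of $M$ is itself (by the RIO property of $M$ in $G$, applied to a compactly generated open subgroup, plus the envelope machinery of Theorem~\ref{intro:reduced_envelope} to reduce to the compactly generated case) an intersection of open subgroups of $G$; composing gives the result. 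The main technical obstacle is precisely this transitivity bookkeeping: one must pass from ``intersection of open subgroups of $M$'' to ``intersection of open subgroups of $G$'' while the relevant subgroups of $M$ need not be compactly generated, so I expect to invoke Theorem~\ref{intro:reduced_envelope}(iii) to replace any open subgroup of $M$ by a compactly generated open one cofinally, then apply the RIO hypothesis for $M$ in $G$.

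For part~(iv): let $H = \overline{\liminf_i H_i}$ with each $H_i$ RIO, and let $K \le H$ be compactly generated, with compact generating set $S$; then $S \subseteq \liminf H_i$ means each $s \in S$ lies in $H_i$ for all large $i$, and by compactness (covering $S$ by finitely many small open sets) I can find a single index $i_0$ and a small compact open $U$ so that $S \subseteq H_i U$ for all $i \ge i_0$ — i.e.\ $\langle S \rangle U$ is, up to the error $U$, contained in each $H_i$. Then a compactly generated open subgroup $E$ containing $K$ (take $E = \langle S \cup U \rangle$, adjusting $U$ so that it is contained in a reduced envelope) satisfies: $E$ is essentially contained in $H_i$ for large $i$, so using that each $H_i$ is RIO, $E \cap H_i$ is an intersection of open subgroups of $G$, and passing to the limit (with Corollary~\ref{cor:RIO_liminf}, which presumably says exactly that the RIO property is closed under this limiting operation) yields that $E$, hence $K$, is an intersection of open subgroups of $G$. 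The hard part here is making the ``$E$ is essentially contained in $H_i$ for large $i$'' step precise in a way that survives taking the closure of the liminf — I expect to handle this by reducing to the already-proved parts (i)–(iii) applied to a single auxiliary subgroup that captures the limiting behaviour, rather than arguing with the $H_i$ directly. This is the step I anticipate being the main obstacle, and it is the reason the definition of RIO is set up with the liminf characterization (Corollary~\ref{cor:RIO_liminf}) in the first place.
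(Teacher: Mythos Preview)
Your overall strategy is in the right spirit, but you are missing the paper's organizing device: the class $\mc{D}(G)$ of \emph{codistal} subgroups (closed $H \le G$ with distal translation action on $G/H$) and its elementary closure properties from Proposition~\ref{codistal:closure_properties}, most importantly transitivity. Combined with Lemma~\ref{lem:IO_equi_gen}, which says that for compactly generated closed $K$ the conditions $K \in \mc{D}(G)$ and $K \in \mc{IO}(G)$ coincide, one obtains the characterization of Proposition~\ref{prop:RIO_equivalents}: $H \in \mc{RIO}(G)$ iff every compactly generated open subgroup of $H$ lies in $\mc{D}(G)$. With this, parts (i) and (ii) become two-line arguments by transitivity of $\mc{D}$, bypassing the envelope bookkeeping you propose for (ii); your attempt at (i) is essentially groping towards the transitivity statement without isolating it. For (iii) your direct approach of intersecting families of open subgroups is close to workable, though the paper instead shows $\Res_G(L) \le K$ for each $K$ in the family and hence $\Res_G(L) \le H$, which is a cleaner route to the same conclusion.

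There is a genuine gap in your argument for (iv). A compact generating set $S$ for a compactly generated open $K \le H = \overline{\liminf H_i}$ lies in $H$, not in $\liminf H_i$ itself, so ``each $s \in S$ lies in $H_i$ for all large $i$'' is not given. Even granting $S \subseteq \liminf H_i$, your compactness step only yields $S \subseteq H_{\ge i_0}U$ for some compact open $U$; from this you cannot conclude anything about $\langle S \rangle$, since the error $U$ does not absorb products. The paper's fix is to exploit density of $D = \liminf H_i$ in $H$: by Lemma~\ref{lem:compact_to_finite_gen} one writes the compactly generated open subgroup $L$ as $\langle F \rangle W$ with $F \subseteq D$ \emph{finite} and $W$ compact open. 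Finiteness of $F$ then gives a single index $i$ with $\langle F \rangle \le H_{\ge i}$, so $M := L \cap H_{\ge i}$ is cocompact (hence compactly generated) in $L$; since $H_{\ge i} \in \mc{RIO}(G)$ by part (iii), one gets $M \in \mc{D}(G)$, and then $L \in \mc{D}(G)$ by Proposition~\ref{codistal:closure_properties}(\ref{codistal:closure:5}).
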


\begin{thmintro}[{See \S\ref{sec:reg_sep}}]\label{thmintro:RIO_separation}
Let $G$ be a \tdlc group.  Then every RIO subgroup of $G$ is an RRS subgroup of $G$.
\end{thmintro}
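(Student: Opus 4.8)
The plan is to produce the required open subgroup of $G$ after passing to a SIN quotient of a reduced envelope. Fix a compactly generated open subgroup $K$ of $H$; since $H$ is a RIO subgroup of $G$, we may write $K=\bigcap_{i\in I}O_i$ with each $O_i$ an open subgroup of $G$, and we must find a single open subgroup $O$ of $G$ with $H\cap O=K$. As $K$ is compactly generated, its action on $G$ by conjugation is equicontinuously generated, so Theorem~\ref{intro:reduced_envelope} provides a reduced envelope $E$ for $K$ in $G$; in particular $E$ is an open subgroup with $K\le E$, the subgroup $R:=\Res_G(K)$ is the intersection of all open normal subgroups of $E$ (so $R\trianglelefteq E$), and $Q:=E/R$ is a \tdlc SIN group.

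The first main step is to prove $R\le K$. Each $O_i$ is an open subgroup of $G$ containing $K$, so $|K:K\cap O_i|=1$; hence, by the defining property of the reduced envelope, $|E:E\cap O_i|$ is finite, and $E\cap O_i$ is a finite-index open subgroup of $E$. Moreover $\bigcap_{i\in I}(E\cap O_i)=E\cap K=K$. On the other hand, any finite-index open subgroup $P$ of $E$ contains $\Core_E(P)$, the intersection of the finitely many conjugates of $P$, which is a finite-index open normal subgroup of $E$; hence $R\le\Core_E(P)\le P$. Applying this with $P=E\cap O_i$ and intersecting over $i\in I$ yields $R\le K$.

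Now let $\pi\colon E\to Q=E/R$ be the quotient map and set $H':=H\cap E$. Since $R\le K\le H'\le E$ with $R\trianglelefteq E$, the images $\overline K:=\pi(K)$ and $\overline{H'}:=\pi(H')$ are subgroups of $Q$ satisfying $\pi^{-1}(\overline K)=K$ and $\pi^{-1}(\overline{H'})=H'$, and $\overline K$ is an open subgroup of $\overline{H'}$ because $K$ is open in $H'$. As $Q$ is a \tdlc SIN group, its open normal subgroups form a base of neighbourhoods of $1$. Choosing an open neighbourhood $W$ of $1$ in $Q$ with $\overline{H'}\cap W\subseteq\overline K$, then an open normal subgroup $N\trianglelefteq Q$ with $N\subseteq W$, and finally setting $\overline O:=N\overline K$, we obtain an open subgroup of $Q$ with $\overline O\cap\overline{H'}=\overline K$: indeed, if $x=nk\in\overline O\cap\overline{H'}$ with $n\in N$ and $k\in\overline K\subseteq\overline{H'}$, then $n=xk^{-1}\in\overline{H'}\cap N\subseteq\overline K$, so $x\in\overline K$. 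Then $O:=\pi^{-1}(\overline O)$ is an open subgroup of $E$, hence of $G$, and $H\cap O=H\cap E\cap O=H'\cap O=\pi^{-1}(\overline{H'}\cap\overline O)=\pi^{-1}(\overline K)=K$, which completes the proof.

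I expect the only real difficulty to lie in the step $R\le K$: this is where the RIO hypothesis, namely that $K$ is an intersection of open subgroups of $G$, is converted---via the reduced envelope and the fact that the discrete residual of $E$ lies below every finite-index open subgroup of $E$---into the statement that $K$ contains $\Res_G(K)$. Once that is in hand, the rest is the routine observation that in a \tdlc SIN group every open subgroup of a subgroup is cut out by an open subgroup of the ambient group. One minor point worth checking is that the conjugation action of a compactly generated subgroup of $G$ really is equicontinuously generated---equivalently, that a compact set of inner automorphisms of a \tdlc group is equicontinuous for the right uniformity---so that Theorem~\ref{intro:reduced_envelope} applies; this is standard.
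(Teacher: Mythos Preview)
Your proof is correct and follows essentially the same strategy as the paper: first establish $\Res_G(K)\le K$ from the RIO hypothesis, then exploit the resulting SIN structure to separate $K$ from $H$ by an open subgroup of $G$. The only difference is packaging: the paper obtains $\Res_G(K)\le K$ by citing the equivalence $\mc{IO}(G)\Leftrightarrow\mc{D}(G)$ for compactly generated subgroups and then invokes Theorem~\ref{thm:sr_envelope}(\ref{sr_envelope:4}) (the open subgroups containing $K$ form a neighbourhood base of the trivial coset in $G/K$) to pick $O$ directly, whereas you re-derive both steps by hand inside the reduced envelope $E$ and its SIN quotient $E/R$.
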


We also give an application of RIO subgroups to the theory of elementary \tdlc second-countable groups in the sense of Wesolek (see \cite{Wesolek}): given a group $G$ of decomposition rank $\alpha$, we show that, subject to some obvious restrictions, every possible decomposition rank of a compactly generated closed subgroup of $G$ occurs as the rank of a compactly generated \emph{open} subgroup of $G$.  (See \S\ref{sec:xi}.)

Finally, in the case that $H$ is a compactly generated group of automorphisms, we use the results of the present paper together with the main theorem of \cite{ReidWesolek} to analyse the structure of $\Res_G(H)$.

\begin{thmintro}[See \S\ref{sec:normal_series}]\label{intro:normal_series}
Let $G$ be a \tdlc group, let $H \le \Aut(G)$ be compactly generated and let $R = \Res_G(H)$.  Then $H$ does not act N-distally on $R/S$ for any proper closed subgroup $S$ of $R$.  In particular, $\Res_R(H) = R$.

Moreover, there is a finite normal $H$-invariant series
\[
\triv = R_0 < R_1 < \dots < R_n = R,
\]
such that given $1 \le i \le n$, the factor $R_i/R_{i-1}$ satisfies at least one of the following:
\begin{enumerate}[(i)]
\item\label{normal_series:1} $R_i/R_{i-1}$ is infinitely generated and discrete, and $i < n$;
\item\label{normal_series:2} $[R,R_i] \le R_{i-1}$ and $i < n$;
\item\label{normal_series:3} $R_i/R_{i-1}$ is the largest compact normal $H$-invariant subgroup of $R/R_{i-1}$ on which $H$ acts ergodically;
\item\label{normal_series:4} $R_i/R_{i-1}$ is a nondiscrete chief factor of $R \rtimes H$, that is, there does not exist a closed normal $H$-invariant subgroup $S$ of $R$ such that $R_{i-1} < S < R_i$, and moreover $H$ does not leave invariant any proper compactly generated open subgroup of $R_i/R_{i-1}$.
\end{enumerate}
\end{thmintro}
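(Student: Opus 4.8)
The plan is to combine Theorem~\ref{intro:distal_SIN} with the reduced envelope of Theorem~\ref{intro:reduced_envelope} and the essentially chief series of \cite{ReidWesolek}. First I would reduce to a reduced envelope: since an open subgroup has the same discrete residual relative to $H$, and the action of $H$ near the trivial coset depends only on a neighbourhood of it, we may replace $G$ by a reduced envelope for the action of $H$ and thereby assume $G$ is compactly generated, $R := \Res_G(H)$ is normal in $G$ and equals the intersection of the open normal subgroups of $G$, and $G/R$ is a SIN group. Theorem~\ref{intro:distal_SIN} then applies also to the action of $H$ on the \tdlc group $R$, since restricting an equicontinuous symmetric generating set of $H$ to the $H$-invariant closed subgroup $R$ gives one that is equicontinuous on $R$.

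\emph{The first assertion.} The heart of the matter is the identity $\Res_R(H) = R$, i.e.\ that $R$ has no proper $H$-invariant open subgroup. Granting it, if $S < R$ is closed and $H$-invariant then $S \not\geq R = \Res_R(H)$, so by Theorem~\ref{intro:distal_SIN} applied inside $R$, $H$ does not act N-distally on $R/S$; the case $S = \Res_R(H)$ gives the ``in particular''. To prove $\Res_R(H) = R$, let $V$ be an $H$-invariant open subgroup of $R$. Then $H$ acts N-distally on the discrete coset space $R/V$, and on $G/R$ by Theorem~\ref{intro:distal_SIN} since $R \geq \Res_G(H)$. Now $R/V$ is, up to $H$-equivariant homeomorphism, the fibre over the trivial coset of the projection $G/V \to G/R$, and this fibre is $H$-invariant because the trivial coset of $G/R$ is fixed by $H$. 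A tower argument --- a distal extension of a distal action is distal, in the pointed form near a fixed point which follows from the structure theorem of \cite{ReidEqui} --- then shows that $H$ acts N-distally on $G/V$, and Theorem~\ref{intro:distal_SIN} forces $V \geq \Res_G(H) = R$, so $V = R$. (In the language of part~\ref{distal_SIN:5} of Theorem~\ref{intro:distal_SIN}, $\Res_R(H) = R$ is equivalent to the statement that $\Res_L(H) \geq \Res_G(H)$ for every closed $H$-invariant $L \geq \Res_G(H)$, which is precisely what the tower argument delivers.)

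\emph{The series.} Apply the essentially chief series of \cite{ReidWesolek} to the compactly generated \tdlc group $G$: one obtains a finite series of closed normal subgroups whose factors are compact, discrete, or chief factors of $G$. Intersecting with the normal subgroup $R$ and deleting repetitions gives a finite $H$-invariant series $\triv = R_0 < \cdots < R_n = R$, which I would then refine, using the action of $R$ (by conjugation) and of $H$ together with the classification of chief factors of compactly generated \tdlc groups from \cite{ReidWesolek}, so that each factor is compact, discrete, or a chief factor of $R \rtimes H$, with all pieces normal in $R \rtimes H$. It then remains to relabel these into the four stated types. I would coalesce the compact $H$-invariant pieces on which $H$ acts ergodically into a single factor (type~\ref{normal_series:3}), using that in any fixed quotient of $R$ the compact $H$-invariant subgroups on which $H$ acts ergodically --- equivalently, those with no proper $H$-invariant open subgroup --- are closed under products and directed unions, and so have a largest member; retain the infinitely generated discrete factors (type~\ref{normal_series:1}) and the factors $R_i/R_{i-1}$ with $[R,R_i] \leq R_{i-1}$ (type~\ref{normal_series:2}); and note that each remaining factor is a nondiscrete chief factor of $R \rtimes H$ with no proper compactly generated $H$-invariant open subgroup (type~\ref{normal_series:4}), the last property again being part of the chief factor classification, now for the ambient group $R \rtimes H$. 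Finally, the restriction $i < n$ in types~\ref{normal_series:1} and \ref{normal_series:2}, and hence that the top factor has type~\ref{normal_series:3} or \ref{normal_series:4}, is forced by the first assertion: if $R/R_{n-1}$ were discrete and nontrivial, or abelian and non-compact, or compact abelian with non-ergodic $H$-action, then respectively $R_{n-1}$, the preimage of a compact open subgroup, or the preimage of a proper $H$-invariant open subgroup of the compact group $R/R_{n-1}$ would be a proper $H$-invariant open subgroup of $R$, contradicting $\Res_R(H) = R$.

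\emph{Main obstacle.} I expect the hard part to be the first assertion --- in particular, making the pointed tower argument for N-distal actions rigorous here, where the ``fibres'' of $G/V \to G/R$ over cosets other than the trivial one are bare coset spaces with no intrinsic $H$-action, and N-distality is only assumed near the trivial coset. If instead one tries to prove $\Res_R(H) = R$ directly from \cite{ReidWesolek}, the difficulty becomes controlling the discrete and non-compactly-generated chief factors that can appear inside $R$ --- precisely where the smallness of $H$ (here, compact generation) must be used. The refinement and relabelling in the second part is comparatively routine, the one nontrivial ingredient being the appeal to chief factor theory to rule out proper compactly generated $H$-invariant open subgroups in the residual factors.
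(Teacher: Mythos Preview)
Your approach to the first assertion is essentially the paper's: Proposition~\ref{prop:residual_distal} carries out exactly the tower argument you describe, showing directly that if $S$ is a proper open $H$-invariant subgroup of $R$ then $H$ acts distally on $G/S$ (distality on $G/R$ forces a proximal pair into the same $R$-coset, and then discreteness of $R/S$ finishes it), whereupon Theorem~\ref{intro:distal_SIN} gives the contradiction. No external tower theorem from \cite{ReidEqui} is needed; the computation is a few lines, and your worry about fibres over non-trivial cosets does not arise because one works in $G/S$ throughout.

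For the series, however, there is a genuine gap. The properties you attribute to ``the chief factor classification'' in \cite{ReidWesolek} are not there; they come from Proposition~\ref{prop:res_action} of this paper, which in turn rests on the RRS property (Theorem~\ref{thmintro:RIO_separation}). Specifically: the statement that a chief factor of $R \rtimes H$ admits no proper \emph{$H$-invariant} compactly generated open subgroup requires knowing that any such subgroup is automatically $R$-invariant (Proposition~\ref{prop:res_action}(i)); the placement of finitely generated discrete factors into type~(\ref{normal_series:2}) requires knowing that $R$ centralizes them (Proposition~\ref{prop:res_action}(iii)); and you have not addressed compact factors on which $H$ does \emph{not} act ergodically --- the paper shows these are central in $R$ via Proposition~\ref{prop:ergodic_nub}(ii), again using Proposition~\ref{prop:res_action}. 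Your claim that compact $H$-ergodic subgroups are closed under directed unions is also unjustified, since a directed union of compact subgroups need not have compact closure; the paper instead obtains the largest such subgroup in Proposition~\ref{prop:ergodic_nub}(i) by invoking \cite[Theorem~3.3]{ReidWesolek} and then taking a discrete residual inside the resulting compact group.
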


\subsection*{Acknowledgement}  I thank Eli Glasner for bringing the article \cite{AGW} to my attention, which inspired a major revision of the present article.  I also thank Pierre-Emmanuel Caprace, Riddhi Shah and Phillip Wesolek for their very helpful comments and suggestions in the preparation of this article.

\begin{notn}
In this article, we require all topological groups to be Hausdorff.  Given a group $G$, a subset $X$ of $G$ and a set of automorphisms $Y$, we say that $X$ is \defbold{$Y$-invariant} if $y(X) = X$ for all $y \in Y$.  Given a group $G$ and subgroups $H$, $K$ and $L$ such that $L \le K$ and $K$ and $L$ are $H$-invariant (that is, $hKh\inv = K$ for all $h \in H$, and similarly for $L$), we understand the action of $H$ on $K/L$ to be given by conjugation unless otherwise specified.  Given subsets $X$ and $Y$ of a group $G$, we define $X\inv := \{x\inv \mid x \in X\}$, $XY := \{xy \mid x \in X, y \in Y\}$, and $X^n = \{x_1x_2 \dots x_n \mid x_1,x_2,\dots,x_n \in X\}$.  All group actions are left actions unless otherwise stated.
\end{notn}

\section{Structure of coset actions}

\subsection{Preliminaries on equicontinuity}

\begin{defn}
Let $X$ be a locally compact Hausdorff space with a compatible uniformity $\mc{U}$, let $x \in X$ and let $S \subseteq \Homeo(X)$.  The set $S$ is \defbold{equicontinuous at $x$} if for every entourage $E \in \mc{U}$, there is a neighbourhood $U$ of $x$ such that 
\[
\forall s \in S \; \forall y \in U: (s.x,s.y) \in E.
\]
We say $S$ is \defbold{equicontinuous} if $S$ is equicontinuous at every $x \in X$.
\end{defn}

In the definition of equicontinuity for actions on locally compact Hausdorff spaces (or more general uniformizable spaces), the choice of uniformity is significant, but on a compact Hausdorff space there is only one compatible uniformity.  Thus in the case that $X$ is locally compact Hausdorff and $\overline{Sx}$ is compact, as soon as $S$ is equicontinuous at $x$ with respect to some compatible uniformity, it is equicontinuous at $x$ with respect to every compatible uniformity.

Compact sets of homeomorphisms are equicontinuous.

\begin{lem}\label{lem:compact_equi}
Let $X$ be a locally compact Hausdorff space, let $S \subseteq \Homeo(X)$ and suppose that $S$ has compact closure in the compact-open topology of $\Homeo(X)$.  Then $S$ is equicontinuous with respect to every compatible uniformity.
\end{lem}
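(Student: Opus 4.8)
The plan is to reduce to the case that $S$ is compact and then exploit continuity of the evaluation map. First I would note that it suffices to prove that $\overline{S}$, the closure of $S$ in the compact-open topology of $\Homeo(X)$, is equicontinuous: since $S \subseteq \overline{S}$, equicontinuity of $\overline{S}$ at a point $x$ forces equicontinuity of $S$ at $x$. So from now on I may assume $S = \overline{S}$, so that $S$ is compact. Now $S \subseteq \Homeo(X) \subseteq C(X,X)$, and because $X$ is locally compact Hausdorff, the evaluation map $\mathrm{ev}\colon C(X,X) \times X \to X$, $\mathrm{ev}(s,y) = s(y)$, is continuous for the compact-open topology; restricting, $\mathrm{ev}$ is continuous on $S \times X$.

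Now fix a compatible uniformity $\mc{U}$ on $X$ (these exist, as $X$ is Tychonoff), a point $x \in X$, and an entourage $E \in \mc{U}$. Recalling that the open entourages form a base for $\mc{U}$, I would choose an open entourage $W \in \mc{U}$ with $W \subseteq E$; thus $W$ is an open subset of $X \times X$ containing the diagonal $\Delta$. Define $f\colon S \times X \to X \times X$ by $f(s,y) = (s(x), s(y))$. Its two coordinate functions are $s \mapsto \mathrm{ev}(s,x)$ and $(s,y) \mapsto \mathrm{ev}(s,y)$, both continuous, so $f$ is continuous. Since $f(s,x) = (s(x),s(x)) \in \Delta \subseteq W$ for every $s \in S$, the open set $f\inv(W)$ contains $S \times \{x\}$. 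As $S$ is compact, the tube lemma provides an open neighbourhood $U$ of $x$ in $X$ with $S \times U \subseteq f\inv(W)$, i.e.\ $(s.x, s.y) \in W \subseteq E$ for all $s \in S$ and all $y \in U$. This is exactly equicontinuity of $S$ at $x$; as $x \in X$ was arbitrary, $S$ is equicontinuous, and as $\mc{U}$ was arbitrary, this holds for every compatible uniformity.

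The only input that is not entirely formal is the continuity of the evaluation map $C(X,X) \times X \to X$, which is a standard consequence of the local compactness of $X$ and is where that hypothesis is used; the remainder is the tube lemma together with the standard fact that the open entourages form a base for a uniformity, so I do not anticipate any real difficulty.
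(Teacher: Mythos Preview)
Your proof is correct and follows essentially the same approach as the paper's: both reduce to compact $S$, use continuity of evaluation at $(s,x)$, and then apply compactness of $S$ to obtain a uniform neighbourhood $U$. The only difference is packaging---you invoke continuity of the evaluation map and the tube lemma as named results, whereas the paper unpacks these inline by constructing basic compact-open neighbourhoods $N_s$ of each $s$ and extracting a finite subcover.
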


\begin{proof}
Since equicontinuity is inherited by subsets, we may assume $S = \overline{S}$, so $S$ is compact in the compact-open topology.  Let $E$ be a neighbourhood of the diagonal in $X \times X$.  Then for each $s \in S$, we see that $E$ contains $U_s \times U_s$ for some open neighbourhood $U_s$ of $s.x$.  Let $K_s$ be a compact neighbourhood of $x$ contained in $s\inv(U_s)$.  Then there is a basic neighbourhood $N_s$ of $s$ in $\Homeo(X)$ consisting of all homeomorphisms $h$ such that $hK_s \subseteq U_s$.  Given $h \in N_s$ and $y \in K_s$, we see that $h.x$ and $h.y$ are both contained in $U_s$, so $(h.x,h.y) \in E$.  Since $S$ is compact, there is a finite subset $\{s_1,\dots,s_n\}$ of $S$ such that $S \subseteq \bigcup^n_{i=1}N_{s_i}$.  Now let $U = \bigcap^n_{i=1}K_{s_i}$: we observe that whenever $y \in U$ and $s \in S$, we have $(s.x,s.y) \in E$.
\end{proof}

Equicontinuity is preserved by finite unions, and finite products of pointwise bounded equicontinuous families are pointwise bounded equicontinuous.

\begin{lem}[{\cite[Lemma~2.2]{ReidEqui}}]\label{lem:equi_product}
Let $X$ be a locally compact Hausdorff space equipped with some uniformity $\mc{U}$, let $S,T \subseteq \Homeo(G)$ and let $x \in X$.
\begin{enumerate}[(i)]
\item If $S$ and $T$ are equicontinuous at $x$, then so is $S \cup T$.
\item Suppose that $\overline{Tx}$ is compact, $T$ is equicontinuous at $x$, and $\overline{Tx}$ consists of equicontinuous points for $S$ on $X$.  Then $ST$ is equicontinuous at $x \in X$.  If in addition, $\overline{Sy}$ is compact for every $y \in \overline{Tx}$, then $\overline{STx}$ is compact.
\end{enumerate}
\end{lem}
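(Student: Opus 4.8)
The plan is to handle the two parts separately; part~(i) is routine and part~(ii) rests on a single non-formal lemma about compact sets. For part~(i): given an entourage $E \in \mc{U}$, equicontinuity of $S$ (resp.\ of $T$) at $x$ yields a neighbourhood $U_S$ (resp.\ $U_T$) of $x$ with $(s.x, s.y) \in E$ for all $s \in S$, $y \in U_S$ (resp.\ the analogue for $T$); then $U_S \cap U_T$ witnesses equicontinuity of $S \cup T$ at $x$ for $E$.

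For the equicontinuity assertion in part~(ii), the key step is to upgrade the hypothesis ``every point of $\overline{Tx}$ is an equicontinuous point of $S$'' to a uniform statement along the compact set $K := \overline{Tx}$: for every symmetric entourage $E$ there is an entourage $D$ such that whenever $z \in K$, $s \in S$ and $(z,w) \in D$, one has $(s.z, s.w) \in E$. I would prove this in the usual fashion: choose a symmetric $E_1$ with $E_1 \circ E_1 \subseteq E$; for each $z \in K$ use equicontinuity of $S$ at $z$ to get an open $O_z \ni z$ with $(s.z, s.w) \in E_1$ for all $s \in S$, $w \in O_z$; choose a symmetric entourage $F_z$ with $(F_z \circ F_z)[z] \subseteq O_z$ and an open $V_z$ with $z \in V_z \subseteq F_z[z]$; then by compactness of $K$ extract $K \subseteq \bigcup_{i=1}^n V_{z_i}$ and set $D := \bigcap_{i=1}^n F_{z_i}$. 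A short chase shows that if $z \in V_{z_i}$ and $(z,w) \in D$ then both $z$ and $w$ lie in $(F_{z_i}\circ F_{z_i})[z_i] \subseteq O_{z_i}$, whence $(s.z, s.z_i), (s.z_i, s.w) \in E_1$ and so $(s.z, s.w) \in E$. Granting this, for a given $E$ take the corresponding $D$, use equicontinuity of $T$ at $x$ to obtain a neighbourhood $U$ of $x$ with $(t.x, t.y) \in D$ for all $t \in T$, $y \in U$, and note that since $t.x \in K$ the uniform statement gives $(st.x, st.y) = (s.(t.x), s.(t.y)) \in E$ for all $s \in S$; thus $U$ witnesses equicontinuity of $ST$ at $x$.

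For the compactness of $\overline{STx}$, I would observe $\overline{STx} \subseteq \overline{S\,\overline{Tx}}$ and show $\overline{SK}$ is compact for $K = \overline{Tx}$. For each $z_0 \in K$, compactness of $\overline{Sz_0}$ lets us choose a relatively compact open $W \supseteq \overline{Sz_0}$ and then an entourage $E$ with $E[\overline{Sz_0}] \subseteq W$; equicontinuity of $S$ at $z_0$ supplies a neighbourhood $O_{z_0}$ of $z_0$ with $s.z \in E[s.z_0] \subseteq W$ for all $s \in S$, $z \in O_{z_0}$, so that $\overline{SO_{z_0}}$ is contained in the compact set $\overline{W}$. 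Covering $K$ by finitely many $O_{z_i}$ and taking the union of the corresponding compact sets $\overline{SO_{z_i}}$ exhibits $SK$, hence $\overline{STx}$, inside a compact set.

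The only genuinely non-formal point is the passage from pointwise to uniform equicontinuity along $\overline{Tx}$; everything else is definition-unwinding or a standard compactness argument. The things to be careful about throughout are to take all entourages symmetric, to invoke at the right moments the standard fact that a compact set has an entourage-neighbourhood inside any given open neighbourhood, and to use the hypothesis that $\overline{Tx}$ is compact so that the finite subcovers exist.
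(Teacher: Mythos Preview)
Your proof is correct. Note, however, that the paper does not supply its own proof of this lemma: it is quoted verbatim as \cite[Lemma~2.2]{ReidEqui} and used as a black box, so there is no ``paper's approach'' to compare against. Your argument is the standard one and would serve well as a self-contained proof: part~(i) is immediate, and for part~(ii) the essential move---upgrading pointwise equicontinuity of $S$ on the compact set $\overline{Tx}$ to a uniform entourage $D$ via a finite-subcover argument with $E_1 \circ E_1 \subseteq E$---is exactly what one expects. The compactness clause is handled cleanly by the local-compactness trick of enclosing each $\overline{Sz_0}$ in a relatively compact open set and pulling back through equicontinuity. The auxiliary facts you flag (symmetric entourages, entourage-thickening of compacta inside opens) are all genuine ingredients and you have identified where each is needed.
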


\subsection{Equicontinuity for group automorphisms}

If $G$ is a locally compact group and $K$ is a closed subgroup of $G$, we have a canonical choice of compatible uniformity on the coset space $G/K$.

\begin{defn}
Let $G$ be a locally compact group and let $K$ be a closed subgroup of $G$.  The \defbold{right uniformity} on $G/K$ is the uniformity with basic entourages of the form
\[
E_U = \{(xK,yK) \in G/K \times G/K \mid xK \subseteq UyK\},
\]
where $U$ is an identity neighbourhood in $G$.  In particular if $K = \triv$, we have the right uniformity on $G$, with basic entourages of the form
\[
E_U = \{(x,y) \in G \times G \mid x \in Uy\}.
\]

Let $S \subseteq \Aut(G)$ such that $s(K) = K$ for all $s \in S$.  The action of $S$ on $G/K$ is \defbold{$1$-equicontinuous} if the trivial coset is an equicontinuous point for the action, \defbold{N-equicontinuous} if there is a neighbourhood of the trivial coset consisting of equicontinuous points, and \defbold{equicontinuous} if every coset is an equicontinuous point of the action, in all cases with respect to the right uniformity.  A group $H$ of automorphisms is \defbold{equicontinuously generated} if $H = \langle S \rangle$ where $S$ is equicontinuous on $G$ and $S = S\inv$.
\end{defn}

In the case that $S$ is a group, the $1$-equicontinuity condition is equivalent to $S$ having \defbold{small invariant neighbourhoods (SIN)} on $G/K$, that is, every neighbourhood of $K$ in $G/K$ contains a neighbourhood $O/K$ such that $s(O) = O$ for all $s \in S$.  However, a group can be equicontinuously generated without having a SIN action; consider for instance a compactly generated non-SIN group $G$ acting on itself by conjugation.

For a set of automorphisms acting on $G$ itself, equicontinuity at some point is equivalent to equicontinuity everywhere.

\begin{lem}\label{lem:equi_homogeneous}
Let $G$ be a locally compact group and let $S \subseteq \Aut(G)$.  Then $S$ is equicontinuous at some $x \in G$ if and only if $S$ is equicontinuous at every $x \in G$.
\end{lem}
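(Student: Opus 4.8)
The plan is to eliminate the point $x$ entirely by rewriting equicontinuity at $x$, with respect to the right uniformity on $G$, as a condition about how $S$ distorts neighbourhoods of the identity. Precisely, I would show that for $S \subseteq \Aut(G)$ and $x \in G$, the following are equivalent:
\begin{enumerate}[(a)]
\item $S$ is equicontinuous at $x$;
\item for every identity neighbourhood $U$ there is an identity neighbourhood $V$ such that $s(V) \subseteq U$ for all $s \in S$.
\end{enumerate}
Condition (b) makes no reference to $x$, so once the equivalence is established the lemma follows at once: equicontinuity at one particular point and equicontinuity at every point are both equivalent to (b).

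For the equivalence, the key observation is a one-line algebraic identity. Given $s \in \Aut(G)$ and $x,y \in G$, the pair $(s(x),s(y))$ lies in the basic entourage $E_U = \{(a,b) \mid a \in Ub\}$ exactly when $s(x)s(y)\inv \in U$, and since $s$ is a group homomorphism this reads $s(xy\inv) \in U$. Thus equicontinuity at $x$ asks: for every identity neighbourhood $U$ there is a neighbourhood $N$ of $x$ with $s(xy\inv) \in U$ for all $s \in S$ and all $y \in N$. I would then reparametrise using the homeomorphism $y \mapsto xy\inv$ of $G$, which sends $x$ to $1$; as $N$ runs over the neighbourhood basis $\{Vx \mid V \text{ an identity neighbourhood}\}$ of $x$, the set $\{xy\inv \mid y \in Vx\}$ equals $V\inv$ and hence runs over a neighbourhood basis of $1$. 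Substituting, equicontinuity at $x$ becomes: for every identity neighbourhood $U$ there is an identity neighbourhood $V$ with $s(V\inv) \subseteq U$ for all $s \in S$, and replacing $V$ by $V\inv$ this is precisely (b).

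I do not expect any genuine obstacle: the substance is the identity $s(x)s(y)\inv = s(xy\inv)$, which is what makes the right uniformity behave homogeneously under automorphisms. The only things needing care are (i) that it is the \emph{right} uniformity $E_U = \{(a,b) \mid a \in Ub\}$ that produces $s(x)s(y)\inv$ rather than $s(x)\inv s(y)$ — essential, since the latter would not collapse under the homomorphism $s$ to an expression on a fixed neighbourhood of $1$; and (ii) the routine check that $\{Vx \mid V\}$ and its inversion image $\{V\inv \mid V\}$ form neighbourhood bases at $x$ and at $1$ respectively, which uses only that right translation and inversion are homeomorphisms of $G$.
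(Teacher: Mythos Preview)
Your argument is correct and is essentially the same as the paper's: both use the homomorphism identity $s(x)s(y)\inv = s(xy\inv)$ to cancel the base point and reduce equicontinuity at $x$ to a condition on identity neighbourhoods. The only difference is presentational --- you isolate the point-free condition (b) explicitly, whereas the paper translates directly from the point $x$ to an arbitrary point $y$ without naming the intermediate condition.
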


\begin{proof}
Suppose that $S$ is equicontinuous at some $x \in G$.  Let $U$ be an identity neighbourhood and let $y \in G$.  Then there is a neighbourhood $V$ of $x$ such that for all $z \in V$ and all $s \in S$, we have $(s(x),s(z)) \in E_U$, that is, $s(x) \in Us(z)$.  We see that $V = V'x$ where $V'$ is an identity neighbourhood, so for all $v \in V'$ and $s \in S$ we have $s(x) \in Us(vx)$.  Using the fact that $s$ is an automorphism, we see that $s(x) \in Us(v)s(x)$, so $1 \in Us(v)$, and hence $s(y) \in Us(v)s(y) = Us(vy)$.  Thus $W = V'y$ is a neighbourhood of $y$ such that for all $z \in W$, we have $(s(y),s(z)) \in E_U$.  We conclude that $S$ is equicontinuous at every $y \in G$.
\end{proof}

\begin{cor}\label{cor:homogeneous_equi_product}
Let $G$ be a locally compact group and let $S,T \subseteq \Aut(G)$.  If $S$ and $T$ are equicontinuous on $G$, then so is $ST$.
\end{cor}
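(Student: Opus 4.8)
The plan is to reduce to the identity using the homogeneity lemma and then feed that single point into the product lemma, where the hypotheses degenerate. Since $\Aut(G)$ is a group, $ST \subseteq \Aut(G)$, so by Lemma~\ref{lem:equi_homogeneous} it suffices to prove that $ST$ is equicontinuous at the identity $1 \in G$.

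The key observation is that every automorphism fixes $1$, so $T \cdot 1 = \{1\}$ and hence $\overline{T \cdot 1} = \{1\}$ is compact. Thus the compactness requirements in Lemma~\ref{lem:equi_product}(ii) are automatically met at the point $x = 1$. Applying that lemma with $x = 1$: $T$ is equicontinuous at $1$ (being equicontinuous on all of $G$), $\overline{T\cdot 1}$ is compact, and the single point of $\overline{T \cdot 1}$, namely $1$, is an equicontinuous point for $S$ (again since $S$ is equicontinuous on $G$). Lemma~\ref{lem:equi_product}(ii) then yields that $ST$ is equicontinuous at $1$, and Lemma~\ref{lem:equi_homogeneous} upgrades this to equicontinuity at every point of $G$.

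There is no real obstacle here: the whole content is that Lemma~\ref{lem:equi_homogeneous} allows us to work at $1$, where the compactness side-conditions of Lemma~\ref{lem:equi_product} collapse because $T$ fixes the identity. The only points to check in passing are that $ST$ consists of automorphisms (so Lemma~\ref{lem:equi_homogeneous} applies to it) and that both cited lemmas are being used with the same uniformity on $G$, which is the right uniformity by the conventions of this section.
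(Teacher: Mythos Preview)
Your proof is correct and follows essentially the same approach as the paper: reduce to the identity point via Lemma~\ref{lem:equi_homogeneous}, note that $\overline{T\cdot 1}=\{1\}$ is compact, and apply Lemma~\ref{lem:equi_product}(ii). You are slightly more explicit in verifying that $ST\subseteq\Aut(G)$ so that Lemma~\ref{lem:equi_homogeneous} applies to the product, which is a fair point to make.
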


\begin{proof}
Let $x$ be the identity in $G$.  The set $\overline{Tx} = \{x\}$ is compact, so $ST$ is equicontinuous at $x$ by Lemma~\ref{lem:equi_product}, and hence by Lemma~\ref{lem:equi_homogeneous}, $S$ is equicontinuous on $G$.
\end{proof}

We thus have natural generalizations of `compactly generated' and `$\sigma$-compact'.  In particular, if one defines a \defbold{$\sigma$-equicontinuous} group of automorphisms to be a group of automorphisms arising as an ascending union of a sequence $(S_n)_{n \in \bN}$ of equicontinuous sets, then every equicontinuously generated group is $\sigma$-equicontinuous, as is every $\sigma$-compact group.

It is also useful to note that the equicontinuity property is inherited by the actions on all coset spaces where the action is well-defined.

\begin{lem}\label{lem:equi_quotient}
Let $G$ be a topological group, let $S \subseteq \Aut(G)$, let $H \le L \le K \le G$ be $S$-invariant closed subgroups and let $x \in K$.  Suppose $xH$ is an equicontinuous point of the action of $S$ on $G/H$.  Then $xL$ is an equicontinuous point of the action of $S$ on $K/L$.
\end{lem}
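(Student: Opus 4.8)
The plan is to transfer equicontinuity along the canonical $S$-equivariant quotient maps $G \to G/H \to G/L$ and then restrict to the $S$-invariant subspace $K/L$. The one point requiring a little care, which I would dispose of first, is a uniformity computation: for $a,b \in K$ and an identity neighbourhood $U$ of $G$ one has $aL \subseteq UbL$ if and only if $aL \subseteq (U\cap K)bL$, because if $a\ell = ub\ell'$ with $u \in U$ and $\ell,\ell' \in L$ then $u = a\ell\ell'^{-1}b^{-1} \in K$. Since the basic entourages of the right uniformity on $K/L$ are those of the form $\{(aL,bL) : a,b \in K,\ aL \subseteq (U\cap K)bL\}$ (identity neighbourhoods of $K$ being exactly the sets $U \cap K$), this shows that the right uniformity on $K/L$ agrees with the uniformity it inherits as a subspace of $G/L$. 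Hence it suffices to produce, for each identity neighbourhood $U$ of $G$, a neighbourhood $N$ of $xL$ in $K/L$ such that $(s(x)L, s(y)L) \in E_U$ for all $s \in S$ and all $yL \in N$, where $E_U$ is the basic entourage of $G/L$ attached to $U$.

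Fix such a $U$. Since $H \le L$, the map $aH \mapsto aL$ sends the basic entourage $E_U$ of $G/H$ into the basic entourage $E_U$ of $G/L$: from $aH \subseteq UbH$ we get $aL \subseteq aHL \subseteq UbHL = UbL$. Applying the hypothesis that $xH$ is an equicontinuous point for $S$ on $G/H$ to the entourage $E_U$ of $G/H$, I obtain an open neighbourhood $\Omega$ of $xH$ in $G/H$ with $s(x)H \subseteq U s(z)H$ for all $s \in S$ and all $zH \in \Omega$. I then pull this back and push it down: picking the representative $x$ of $xH$, the set $q_H^{-1}(\Omega) \cap K$ (where $q_H\colon G \to G/H$ is the quotient map) is an open neighbourhood of $x$ in $K$, and I let $N$ be its image under the quotient map $K \to K/L$, which is open since quotient maps of a topological group by a closed subgroup are open.

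Finally I would verify that $N$ works. Given $yL \in N$ with $y \in K$, there is $z \in q_H^{-1}(\Omega) \cap K$ with $zL = yL$, so $z \in K$, $zH \in \Omega$, and $z^{-1}y \in L$. For any $s \in S$ the choice of $\Omega$ gives $s(x)H \subseteq U s(z)H$, and multiplying on the right by $L$ and using $H \le L$ yields $s(x)L \subseteq U s(z)L$; since $L$ is $S$-invariant, $s(z)^{-1}s(y) = s(z^{-1}y) \in L$, so $s(z)L = s(y)L$ and therefore $s(x)L \subseteq U s(y)L$. As $K$ is $S$-invariant and $x,y \in K$, also $s(x),s(y) \in K$, so $(s(x)L, s(y)L) \in E_U \cap (K/L \times K/L)$. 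Since $U$ was arbitrary, $xL$ is an equicontinuous point of the action of $S$ on $K/L$. The remaining coset-chasing is entirely routine; the only genuinely substantive steps are the identification of the right uniformity on $K/L$ with the subspace uniformity from $G/L$ and the use of openness of $K \to K/L$ to guarantee that $N$ is an honest neighbourhood of $xL$, and neither presents real difficulty.
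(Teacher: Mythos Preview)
Your proof is correct and takes essentially the same approach as the paper's. The only difference is packaging: you first isolate the observation that for $a,b \in K$ one has $aL \subseteq UbL \iff aL \subseteq (U\cap K)bL$ as a statement identifying the right uniformity on $K/L$ with the subspace uniformity from $G/L$, whereas the paper does this same computation inline (as $U_2 s(y)H \cap K = (U_2 \cap K)s(y)H$) in the middle of the argument.
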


\begin{proof}
Let $U$ be an identity neighbourhood in $K$.  Then there is an identity neighbourhood $U_2$ in $G$ such that $U_2 \cap K \subseteq U$.  Since $xH$ is an equicontinuous point for $S$ on $G/H$, there is an identity neighbourhood $V$ in $G$ such that for all $y \in VxH$ and all $s \in S$, we have $s(x) \in U_2s(y)H$.  Consider now the set $W = (V \cap K)xL = (Vx \cap K)L$.  Certainly $W/L$ is a neighbourhood of $xL$ in $K/L$.  Given $y \in Vx \cap K$, we have $s(x) \in U_2s(y)H \cap K$; since $s(y) \in K$ and $H \le K$, we have
\[
U_2s(y)H \cap K = (U_2s(y) \cap K)H = (U_2 \cap K)s(y)H.
\]
In particular, $s(x) \in Us(y)L$.  Since $U$ was an arbitrary identity neighbourhood in $K$, we conclude that $xL$ is an equicontinuous point for the action of $S$ on $K/L$.
\end{proof}

A useful tool for understanding groups of automorphisms of a non-Archimedean group is the discrete residual, defined as follows.

\begin{defn}
Let $G$ be a topological group and let $H \le \Aut(G)$.  The \defbold{discrete residual} $\Res_G(H)$ of $H$ acting on $G$ is the intersection of all open $H$-invariant subgroups of $G$.
\end{defn}

If $K = HS$ is a group of automorphisms of $G$ such that $H$ is a subgroup and $S = S\inv$ is an equicontinuous set, then every open $H$-invariant subgroup of $G$ contains a $K$-invariant one.  In particular, the discrete residuals of the actions are the same.  Moreover $H$ is equicontinuously generated if and only if $K$ is, generalizing the well-known fact that a cocompact subgroup of a compactly generated locally compact group is compactly generated.

\begin{lem}\label{lem:equi_residual}
Let $G$ be a topological group and let $K$ be a group of automorphisms of $G$.  Suppose that $S = S\inv$, $K = HS$ where $H \le K$ and $S$ is equicontinuous on $G$.
\begin{enumerate}[(i)]
\item Every open $H$-invariant subgroup of $G$ contains a $K$-invariant one.  In particular, $\Res_G(H) = \Res_G(K)$.
\item The action of $H$ on $G$ is equicontinuously generated if and only if the action of $K$ is.
\end{enumerate}
\end{lem}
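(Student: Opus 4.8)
The plan is to prove the two parts separately and to deduce the ``in particular'' statement from part~(i).

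\emph{Part (i).} Given an open $H$-invariant subgroup $O$ of $G$, I would consider $Q := \bigcap_{k \in K} k(O)$. This is an intersection of open subgroups, hence a subgroup; it is $K$-invariant since $k'K = K$ for every $k' \in K$; and it is contained in $O$ because $1 \in K$. The only thing to verify is that $Q$ is open, for which it suffices that $Q$ contains an identity neighbourhood, and this is exactly where equicontinuity of $S$ enters. Since $O$ is an open subgroup it is an identity neighbourhood with $O = O\inv$, so equicontinuity of $S$ at the identity of $G$ (with respect to the right uniformity) yields an identity neighbourhood $V$ with $s(V) \subseteq O$ for every $s \in S$. For any $k \in K$, writing $k = hs$ with $h \in H$, $s \in S$ via $K = HS$ and using that $O$ is $H$-invariant gives $k(V) = h(s(V)) \subseteq h(O) = O$; hence $V \subseteq k\inv(O)$ for all $k \in K$, so $V \subseteq Q$ and $Q$ is open. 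For the residual statement, every open $K$-invariant subgroup is $H$-invariant (as $H \le K$), so $\Res_G(H) \le \Res_G(K)$; and the construction of $Q$ shows every open $H$-invariant subgroup contains an open $K$-invariant one, giving the reverse inclusion.

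\emph{Part (ii), forward direction.} If $H = \langle T \rangle$ with $T = T\inv$ equicontinuous on $G$, then $T \cup S$ is symmetric and equicontinuous (Lemma~\ref{lem:equi_product}), and $\langle T \cup S \rangle$ contains both $H$ and $S$, hence contains $HS = K$; as it is also contained in $K$, we get $K = \langle T \cup S \rangle$, so $K$ is equicontinuously generated.

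\emph{Part (ii), converse direction.} This is the step I expect to require the most care; it is the automorphism-dynamical analogue of the classical fact that a cocompact subgroup of a compactly generated locally compact group is compactly generated, with equicontinuous sets playing the role of compact sets — and the reason no closedness hypothesis on $H$ is needed is that an arbitrary subset of an equicontinuous set is again equicontinuous. Starting from $K = \langle V \rangle$ with $V = V\inv$ equicontinuous, I would first make the harmless reductions of replacing $S$ by $S \cup \{1\}$ and then $V$ by $V \cup S$, so that $1 \in S \subseteq V = V\inv$; then I would set $D := H \cap SVS$. This $D$ is symmetric because $S$ and $V$ are, and equicontinuous because $SVS$ is a product of equicontinuous sets (Corollary~\ref{cor:homogeneous_equi_product}), so it remains to check $H = \langle D \rangle$. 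The inclusion $\langle D \rangle \le H$ is clear. For $H \le \langle D \rangle$, take $h \in H \le K = \langle V \rangle$, write $h = v_1 \cdots v_n$ with $v_i \in V$, and sweep from the left using $K = HS$: write $v_1 = a_1 t_1$ with $a_1 \in H$, $t_1 \in S$ (so $a_1 = v_1 t_1\inv \in VS \subseteq SVS$), and inductively rewrite $t_i v_{i+1} \in SV \subseteq K$ as $a_{i+1} t_{i+1}$ with $a_{i+1} \in H$, $t_{i+1} \in S$, noting $a_{i+1} = t_i v_{i+1} t_{i+1}\inv \in SVS$; thus all the $a_j$ lie in $D$. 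After $n$ steps $h = a_1 \cdots a_n t_n$, and since $h$ and $a_1 \cdots a_n$ lie in $H$, so does $t_n$, whence $t_n \in H \cap S \subseteq D$ and $h$ is a word in $D$.
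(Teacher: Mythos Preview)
Your proof is correct and follows essentially the same approach as the paper's: in part~(i) you form the $K$-intersection of the $H$-invariant open subgroup and use equicontinuity of $S$ together with the factorization $k = hs$ to see it is open (the paper phrases this as $\bigcap_{k \in K} k^{-1}(O) = \bigcap_{s \in S} s^{-1}(O)$, which is the same observation), and in part~(ii) your set $D = H \cap SVS$ and telescoping rewriting $h = a_1 \cdots a_n t_n$ match the paper's $P = STS \cap H$ and its product $\prod s_{i-1} t_i s_i^{-1} = h s_n^{-1}$. One tiny notational slip: you define $Q = \bigcap_{k \in K} k(O)$ but then verify $V \subseteq \bigcap_{k \in K} k^{-1}(O)$; these two intersections coincide since $K$ is a group, but it would be cleaner to say so or to use $k^{-1}(O)$ from the start.
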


\begin{proof}
Without loss of generality we can assume $1 \in S$.

(i)
Let $O$ be an open $H$-invariant subgroup of $G$.  Then we have a $K$-invariant subgroup of the form
\[
O^* = \bigcap_{k \in K}k\inv(O) = \bigcap_{s \in S}s\inv(O).
\]
Since $S$ is equicontinuous, we see that $O^*$ is an open subgroup of $G$.  In particular, $O \ge \Res_G(K)$; given the choice of $O$, we conclude that $\Res_G(H) \ge \Res_G(K)$.  On the other hand, since every $K$-invariant open subgroup is in particular $H$-invariant, we must have $\Res_G(H) \le \Res_G(K)$, so in fact $\Res_G(H) = \Res_G(K)$.

(ii)
We adapt the proof of the analogous result for compact generation from \cite[Proposition~2.C.8]{CornulierDeLaHarpe}.
  
Suppose $T$ is an equicontinuous generating set for $H$ on $G$.  Then $S \cup T$ is a generating set for $K$ that is equicontinuous by Lemma~\ref{lem:equi_product}.

Conversely, suppose now that $T$ is an equicontinuous generating set for $K$ on $G$.  Without loss of generality, $1 \in T$ and $T = T\inv$.  Let $P = STS \cap H$.  We see that $P$ is equicontinuous on $G$ by Corollary~\ref{cor:homogeneous_equi_product}.  Clearly also $P = P\inv$.  It remains to show that $P$ generates $H$.  Let $h \in H$; then $h = t_1t_2\dots t_n$ for some $t_1,\dots,t_n \in T$.  Let $s_0 = 1$; since $K = HS$, for each $i \in \{1,\dots,n\}$, we can choose $s_i \in S$ such that $s_{i-1}t_is\inv_{i} \in H$ and hence $s_{i-1}t_is\inv_{i} \in P$.  Then
\[
\prod^n_{i=1}(s_{i-1}t_is\inv_{i}) = (\prod^n_{i=1}t_i)s\inv_n = hs\inv_n \in Hs\inv_n.
\]
The product $\prod^n_{i=1}(s_{i-1}t_is\inv_{i})$ is in $H$, so $s\inv_n$ is also in $H$, hence in $P$.  Thus we obtain an expression for $h$ as a product of elements of $P$, completing the proof that $H$ is equicontinuously generated on $G$.
\end{proof}

We can prove some additional properties of the action of $H$ on $G/K$ in the case that both $H$ and $K$ have $1$-equicontinuous action.

\begin{lem}\label{lem:coset:invariant_group}
Let $G$ be a topological group, let $K$ be a closed subgroup of $G$ and let $O$ be a nonempty open subset of $G$ such that $O = KOK$ and $O = O\inv$.  Suppose that $O \subseteq UK$ for some open subgroup $U$ of $G$.  Then there is an open subgroup $V$ of $U$ such that $VK = \langle O \rangle$.
\end{lem}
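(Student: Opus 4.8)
The plan is to take $V := \langle O \cap U\rangle$; the substance of the proof is then to check that the set product $\langle O \cap U\rangle K$ is already a subgroup of $G$ and that it equals $\langle O\rangle$.

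First I would record the elementary consequences of the hypotheses. Since $K$ is a subgroup and $O = KOK$, multiplying on either side by $K$ gives $KO = OK = O$; and combining $O = O^{-1}$ with $O \subseteq UK$ gives $O \subseteq KU$ as well. Now set $W := O \cap U$. I claim $WK = KW = O$. Indeed $WK \subseteq OK = O$, and conversely any $x \in O \subseteq UK$ may be written $x = uk$ with $u \in U$ and $k \in K$, whence $u = xk^{-1} \in OK = O$, so $u \in W$ and $x = uk \in WK$; the identity $KW = O$ follows symmetrically from $O \subseteq KU$. In particular $W$ is a symmetric open subset of $U$, nonempty because $O$ is, and it satisfies $KW = WK$.

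The crux is that $\langle W\rangle K$ is a subgroup. From $KW = WK$ an easy induction yields $KW^n = W^n K$ for all $n \ge 1$, the inductive step being $KW^{n+1} = (KW^n)W = W^n(KW) = W^n(WK) = W^{n+1}K$. Since $W$ is nonempty and symmetric we have $1 \in W^2$, so $\langle W\rangle = \bigcup_{n \ge 1} W^n$, and taking the union over $n$ gives $K\langle W\rangle = \langle W\rangle K$. Hence the product of the two subgroups $\langle W\rangle$ and $K$ is itself a subgroup of $G$. It contains both $W$ and $K$, hence contains $\langle W \cup K\rangle$, and it is visibly contained in $\langle W \cup K\rangle$, so $\langle W\rangle K = \langle W \cup K\rangle$. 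Finally $\langle W \cup K\rangle = \langle WK\rangle = \langle O\rangle$, using $WK = O$ together with the facts that $1 \in K$ and $W$ is nonempty and symmetric. Therefore $V := \langle W\rangle = \langle O\cap U\rangle$ is a subgroup of $U$, it is open because it contains the nonempty open set $W$, and $VK = \langle O\rangle$, as required.

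I do not anticipate a genuine obstacle here, since everything reduces to routine manipulation of set products; the two points that repay a little care are the choice of candidate — $\langle O\cap U\rangle$ rather than the more obvious $\langle O\rangle\cap U$, whose product with $K$ is not evidently closed under multiplication — and the permutability $KW = WK$, which is precisely the feature forced by the hypothesis $O = KOK$ and is exactly what makes $\langle W\rangle K$ a subgroup.
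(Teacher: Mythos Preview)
Your proof is correct and follows essentially the same strategy as the paper: both arguments set $W = O \cap U$, establish $O = WK$, and then show that $\langle O\rangle$ factors as (an open subgroup of $U$)$\cdot K$. The only cosmetic difference is in the choice of $V$: you take $V = \langle O \cap U\rangle$ and verify $\langle W\rangle K = \langle O\rangle$ directly via the permutability $KW = WK$, whereas the paper first shows $\langle O\rangle \subseteq UK$ (by reducing words in $W \cup K$ to the form $W^m K$) and then takes $V = U \cap \langle O\rangle$; your organisation around $KW = WK$ is arguably a little cleaner.
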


\begin{proof}
 Given $o \in O$, then $o = uk$ for some $u \in U$ and $k \in K$; it then follows $u = (uk)k\inv$ is an element of $Y := O \cap U$, so $O = YK$.  Indeed, $KYK = KO = O = YK$, so any product of sets of the form $A_1 A_2 \dots A_n K$ where $A_i \in \{Y,K\}$ can be reduced to the form $Y^mK$.  In particular, any such set is a subset of $UK$, so the semigroup $W$ generated by $Y \cup K$ is contained in $UK$.  Given $k \in K$, then $ko \in O$ for any $o \in O$, so $k = koo\inv \in OO\inv = O^2$.   Since $Y \subseteq O$, $K \subseteq O^2$, $O = YK$, and $O = O\inv$, we see that in fact $W$ is the group $\langle O \rangle$ generated by $O$.  In particular, $W$ has nonempty interior and is thus an open subgroup.  Since $K \le W \subseteq UK$, we have $W = VK$ where $V = U \cap W$.  Since $W$ is an open subgroup, so is $V$.
\end{proof}

\begin{defn}
Let $G$ be a group acting on a topological space $X$.  The \defbold{orbit closure relation} $R$ for the action is given by $(x,y) \in R$ if $y \in \overline{Gx}$.

An action of a group $G$ on a topological space $X$ is \defbold{minimal} if every orbit is dense.  More generally, the action has \defbold{minimal orbit closures} if $G$ acts minimally on $\overline{Gx}$ for every $x \in X$; equivalently, the orbit closure relation $R$ is symmetric.
\end{defn}

\begin{prop}\label{prop:SIN_tdlc}
Let $G$ be a \tdlc group, let $H \le \Aut(G)$ and let $K$ be a closed $H$-invariant subgroup of $G$.  Suppose $H$ and $K$ are both $1$-equicontinuous on $G/K$.
\begin{enumerate}[(i)]
\item
Let $\mc{W}$ be the set of $H$-invariant open subgroups $W$ of $G$ of the form $W = UK$, where $U$ is a compact open subgroup of $G$.  Then $\{W/K \mid W \in \mc{W}\}$ is a base of neighbourhoods of the trivial coset in $G/K$.  In particular, $K \ge \Res_G(H)$.
\item
$H$ has minimal orbit closures on $G/K$.
\end{enumerate}
\end{prop}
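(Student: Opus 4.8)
The plan is to establish (i) first and then derive (ii) from it.

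For (i), the idea is to fold the two equicontinuity hypotheses into one. Note that $K$ acts on $G/K$ precisely as the group $T := \{\ell_k \mid k \in K\}$ of left translations $\ell_k \colon gK \mapsto kgK$; that the $T$-orbit of the trivial coset $1K$ is the single point $\{1K\}$, hence compact; and that $1K$ is a common fixed point of $H$ and of $T$ which is an equicontinuous point for each by hypothesis. Applying Lemma~\ref{lem:equi_product}(ii) on $X = G/K$ with $S$ the image of $H$ in $\Homeo(G/K)$ then shows that $\Sigma := ST = \{h \circ \ell_k \mid h \in H,\ k \in K\}$ is equicontinuous at $1K$; since $H$ normalizes $T$ (indeed $h\ell_k h\inv = \ell_{h(k)}$), $\Sigma$ is a group. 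Since $G$ is \tdlc, the sets $U_1K/K$ with $U_1$ a compact open subgroup of $G$ form a base of neighbourhoods of $1K$, so it suffices, given such a $U_1$, to find $W \in \mc{W}$ with $W/K \subseteq U_1K/K$. Equicontinuity of $\Sigma$ at the fixed point $1K$, applied to the basic entourage $E_{U_1}$ (for which $E_{U_1}[1K] = U_1K/K$), yields a compact open subgroup $V \le U_1$ with $\sigma(VK/K) \subseteq U_1K/K$ for every $\sigma \in \Sigma$; pulling this back along $G \to G/K$ gives $O \subseteq U_1K$, where $O := \bigcup_{h \in H} Kh(V)K$. I would then check the routine facts that $O$ is a nonempty open set with $O = O\inv$ (each of $K$, $h(V)$ being a symmetric subgroup), with $KOK = O$, and that $O$ is $H$-invariant; with $O \subseteq U_1K$ in hand, Lemma~\ref{lem:coset:invariant_group} provides an open subgroup $V^* \le U_1$ with $\langle O\rangle = V^*K$. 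As $V^*$ is open in $G$ and closed in the compact group $U_1$, it is a compact open subgroup of $G$, and $W := \langle O\rangle = V^*K$ is $H$-invariant (as $O$ is) with $W/K \subseteq U_1K/K$; thus $W \in \mc{W}$, proving that $\{W/K \mid W \in \mc{W}\}$ is a base at $1K$. Finally, intersecting these $W$ and using that $G/K$ is Hausdorff gives $\bigcap_{W \in \mc{W}} W = K$, and since each $W$ is an open $H$-invariant subgroup this yields $\Res_G(H) \le K$.

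For (ii) I would use the base $\{W/K \mid W \in \mc{W}\}$ of $H$-invariant open neighbourhoods of $1K$ just produced, each $W$ being a subgroup containing $K$. For $W \in \mc{W}$ let $\pi_W \colon G/K \to G/W$ be the natural map; it is continuous and $H$-equivariant and lands in a discrete space. The key claim is: for $x,y \in G/K$, one has $y \in \overline{Hx}$ if and only if $\pi_W(x)$ and $\pi_W(y)$ lie in a common $H$-orbit in $G/W$ for every $W \in \mc{W}$. The forward direction is immediate from continuity and $H$-equivariance of $\pi_W$ together with discreteness of $G/W$. For the converse, given $y = gK$, left translation by $g$ turns $\{W/K\}$ into a base $\{gW/K\}$ at $y$, so it suffices to find, for each $W$, an $h \in H$ with $h(x) \in gW/K$; writing $x = x_0K$, the orbit hypothesis gives $h \in H$ with $h(x_0)W = gW$, and then $h(x_0)K \in gW/K$, as needed. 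Granting the claim, symmetry of the orbit closure relation $R$ on $G/K$ follows at once: lying in a common $H$-orbit in $G/W$ is a symmetric relation, so $y \in \overline{Hx}$ forces $\pi_W(x) \in H\pi_W(y)$ for all $W$, hence $x \in \overline{Hy}$. By the equivalence recorded in the definition of minimal orbit closures, this is exactly the assertion that $H$ has minimal orbit closures on $G/K$.

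The routine ingredients --- the reduction to a compact open subgroup $U_1$, verifying that $O$ meets the hypotheses of Lemma~\ref{lem:coset:invariant_group}, and translating neighbourhood bases --- are all straightforward. The single point that requires an idea, and which I expect to be the main obstacle, is recognising that the combined family $\Sigma$ is equicontinuous at $1K$: this is precisely where both equicontinuity hypotheses are used (dropping the one on $K$ breaks the argument), and it is what makes available an $H$-invariant open set that is simultaneously invariant under left and right multiplication by $K$ --- the exact input Lemma~\ref{lem:coset:invariant_group} requires.
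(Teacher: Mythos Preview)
Your argument is correct. For part (i), both you and the paper use the same two ingredients---Lemma~\ref{lem:coset:invariant_group} and the SIN-type consequence of $1$-equicontinuity---but in opposite orders. The paper first uses $K$-equicontinuity with Lemma~\ref{lem:coset:invariant_group} to produce an open subgroup $V \supseteq K$ inside $UK$, and then uses $H$-equicontinuity to replace $V$ by the $H$-invariant intersection $W = \bigcap_{h \in H} h(V)$, which is still open by the SIN property and still of the form $(U \cap W)K$. You instead fold the two hypotheses together via Lemma~\ref{lem:equi_product} into a single equicontinuous family $\Sigma$, extract an $H$-invariant $K$-bi-invariant symmetric open set $O \subseteq U_1K$, and then invoke Lemma~\ref{lem:coset:invariant_group} once. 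The paper's order is marginally more economical (it avoids the detour through Lemma~\ref{lem:equi_product}), while yours makes more explicit why both hypotheses are needed simultaneously; substantively the arguments coincide.

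For part (ii), your characterization ``$y \in \overline{Hx}$ if and only if $\pi_W(x)$ and $\pi_W(y)$ share an $H$-orbit for every $W \in \mc{W}$'' is just a repackaging of the paper's direct computation: the paper picks $W \in \mc{W}$ inside a given $UK$, finds $h$ with $h(x) \in yW$, and inverts using that $W$ is an $H$-invariant subgroup to get $h^{-1}(y) \in xW$. Your formulation via the discrete quotients $G/W$ makes the symmetry transparent; the content is identical.
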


\begin{proof}
(i)
Let $\mc{U}$ be the set of compact open subgroups of $G$ and let $U \in \mc{U}$.  Since $K$ is $1$-equicontinuous on $G/K$, by applying Lemma~\ref{lem:coset:invariant_group}, we obtain an open subgroup $V$ of $G$ containing $K$ such that $V \subseteq UK$.  Since $H$ is $1$-equicontinuous on $G/K$, the $H$-invariant subgroup $W = \bigcap_{h \in H}h(V)$ is also open.  We have $K \le W \subseteq UK$, so that $W = (U \cap W)K$ as in the proof of Lemma~\ref{lem:coset:invariant_group}; in particular, $W \in \mc{W}$.  By Van Dantzig's theorem, $\{UK/K \mid U \in \mc{U}\}$ is a base of neighbourhoods of the trivial coset, so the same is true of the set $\{W/K \mid W \in \mc{W}\}$.  In particular, $K = \bigcap_{W \in \mc{W}}W \ge \Res_G(H)$.

(ii)
Let $x,y \in G$ and suppose $yK \subseteq \overline{H(x)K}$.  In other words, for every identity neighbourhood $U$ in $G$, there is $h \in H$ such that $h(x) \in yUK$.  Fix such an identity neighbourhood $U$.  By part (i) there is $W \in \mc{W}$ such that $W \subseteq UK$.  Now let $h \in H$ be such that $h(x) \in yW$; then $h\inv(y) \in xW \subseteq xUK$.  Since $U$ can be made arbitrarily small, we conclude that $xK \subseteq \overline{H(y)K}$.  Thus the condition $yK \subseteq \overline{H(x)K}$ defines a symmetric relation on pairs $(xK,yK)$ in $G/K$; in other words, $H$ has minimal orbit closures on $G/K$.
\end{proof}

\begin{cor}\label{cor:SIN_intersection}
Let $G$ be a SIN \tdlc group and let $K$ be a closed subgroup of $G$.  Then $K$ is the intersection of the open subgroups of $G$ that contain $K$ as a cocompact subgroup.
\end{cor}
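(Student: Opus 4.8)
The plan is to exhibit a cofinal subfamily of open subgroups of the required kind and show that its intersection is already $K$.

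First I would invoke the fact that a SIN \tdlc group admits a base of identity neighbourhoods consisting of compact open \emph{normal} subgroups. This is essentially the remark from the introduction that a SIN group has a base of open normal subgroups, together with Van Dantzig's theorem: given an open normal subgroup $M$ and a compact open subgroup $U$, choose an open normal subgroup $N \subseteq M \cap U$; then $N$ is an open subgroup of the compact group $U$, hence closed in $U$ and so compact, and it is normal in $G$. (Alternatively, for a compact open $U$ one checks directly that $\Core_G(U) = \bigcap_{g \in G} gUg\inv$ is normal, closed in $U$ hence compact, and open because the SIN property places an open conjugation-invariant identity neighbourhood inside $U$.)

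Next, for each compact open normal subgroup $N$ of $G$ I would set $O_N = KN$. Since $N$ is normal, $O_N = NK$ is a subgroup, and it is open because $N$ is. It contains $K$ cocompactly: using $O_N = NK$, the restriction to $N$ of the canonical surjection $G \to O_N/K$ is a continuous surjection, so $O_N/K$ is a continuous image of the compact set $N$, hence compact. Thus each $O_N$ lies in the family over which the intersection in the statement is taken, so that intersection is contained in $\bigcap_N O_N$.

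Finally I would check $\bigcap_N O_N = K$, over all compact open normal $N$. The inclusion $K \subseteq \bigcap_N O_N$ is immediate. Conversely, if $g \in KN$ for every such $N$, write $g = kn$ with $k \in K$, $n \in N$; then $k = gn\inv \in gN \cap K$ (as $N = N\inv$), so $gN$ meets $K$ for every such $N$. Since the subgroups $N$ form a base of identity neighbourhoods, the cosets $gN$ form a base of neighbourhoods of $g$, whence $g \in \overline{K} = K$ because $K$ is closed. So $\bigcap_N O_N = K$, and combined with the previous paragraph this gives the claim. (One could instead deduce the corollary from Proposition~\ref{prop:SIN_tdlc} applied with the trivial group of automorphisms $H = \triv$, after verifying that $K$ is then $1$-equicontinuous on $G/K$.) I do not expect a genuine obstacle here; the only point that needs a moment's care is that $O_N/K$ is compact, which is exactly where normality of $N$ — so that $KN = NK$ — is used, since without it $KN$ need be neither a subgroup nor cocompact over $K$.
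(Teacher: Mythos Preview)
Your proof is correct. It takes a more direct route than the paper's: the paper applies Proposition~\ref{prop:SIN_tdlc}(i), taking $H$ to be $K$ acting on $G$ by conjugation (not the trivial group as in your parenthetical, though that choice also works). The SIN hypothesis makes $\Inn(G)$ equicontinuous on $G$, hence so is its restriction to $K$; Lemma~\ref{lem:equi_quotient} then passes this to $G/K$, so both hypotheses of Proposition~\ref{prop:SIN_tdlc}(i) are met, yielding the base $\{UK/K\}$ with $U$ compact open. Your argument is more elementary and self-contained, exploiting the special feature of the SIN case that one has compact open \emph{normal} subgroups, so that $KN$ is visibly a group without recourse to Lemma~\ref{lem:coset:invariant_group}. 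The paper's approach, by contrast, exhibits the corollary as an instance of the general equicontinuity framework being built, which is presumably why it is placed here; but for this particular statement your direct argument is shorter and requires no machinery beyond Van Dantzig.
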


\begin{proof}
Let $K$ act on $G$ by conjugation.  Then the action of $K$ on $G/K$ is equicontinuous by Lemmas~\ref{lem:equi_homogeneous} and \ref{lem:equi_quotient}.  Thus by Proposition~\ref{prop:SIN_tdlc}(i), we can express $K$ is an intersection of open subgroups of the form $UK$ where $U$ is a compact open subgroup of $G$; in particular, these contain $K$ as a cocompact subgroup.
\end{proof}

\subsection{Distality for group automorphisms}

Let $G$ be a topological group, let $H \le \Aut(G)$ and let $K$ be an $H$-invariant closed subgroup of $G$.

\begin{defn}
The action of $H$ on $G/K$ is \defbold{$1$-distal} if the trivial coset $K$ is a distal point of the action, and \defbold{N-distal} if the trivial coset is an N-distal point.
\end{defn}

We now have an array of possible properties of the action of $H$ on $G/K$ to consider:

\xymatrix{
\text{equicontinuous}  \ar@{=>}[d] \ar@{=>}[r] & \text{N-equicontinuous}  \ar@{=>}[d] \ar@{=>}[r] & \text{$1$-equicontinuous (SIN)} \ar@{=>}[d] \\
\text{distal}  \ar@{=>}[r] & \text{N-distal} \ar@{=>}[r] & \text{$1$-distal}
}

\

The implications shown are clear, but we do not know if any other implications hold in general; certainly distal does not imply equicontinuous.  The properties in each row are equivalent if $K$ is trivial, but possibly distinguished in general by complications arising from the action of $K$ on $G/K$.  We recall that by Lemma~\ref{lem:equi_quotient}, equicontinuous points are preserved on passing to quotient coset spaces.  In contrast there is no reason to expect distal points to pass to quotient coset spaces in general.

Distality implies a certain orbit closure property.

\begin{lem}[{\cite[Lemma~2.7]{ReidEqui}}]\label{lem:distal_orbit_closure}
Let $H$ be a group acting by homeomorphisms on a Hausdorff topological space $X$.  Let $x,y \in X$ be such that $\overline{Hx}$ is compact and consists of distal points for the action.  Then $H$ acts minimally on $\overline{Hx}$.  Indeed, we have $\overline{Hy} = \overline{Hx}$ whenever $y \in X$ is such that $\overline{Hx} \cap \overline{Hy} \neq \emptyset$.
\end{lem}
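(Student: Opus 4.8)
The plan is to establish the last (``Indeed'') assertion first and deduce minimality from it: applying that assertion with $y$ ranging over $\overline{Hx}$ (so that $y\in\overline{Hx}\cap\overline{Hy}$) shows $\overline{Hy}=\overline{Hx}$ for every $y\in\overline{Hx}$, i.e.\ every $H$-orbit in $\overline{Hx}$ is dense. So fix $x,y\in X$ and a point $w\in\overline{Hx}\cap\overline{Hy}$, and write $Z:=\overline{Hx}$. Since $H$ is a group acting by homeomorphisms, $hZ=\overline{hHx}=\overline{Hx}=Z$ for all $h\in H$, so $Z$ is a compact $H$-invariant subspace all of whose points are distal for the $H$-action; in other words $(Z,H)$ is a compact distal flow.

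The main tool is the enveloping (Ellis) semigroup $E:=E(Z)$, the closure of the image of $H$ in $Z^Z$ for the topology of pointwise convergence. It is compact, its elements are self-maps of $Z$, and $Ez=\overline{Hz}$ for every $z\in Z$ (the inclusion $Hz\subseteq Ez$ is clear, and $Ez$ is compact as a continuous image of $E$, hence closed and contained in $\overline{Hz}$). By Ellis's structure theorem for compact distal flows, $E$ is in fact a \emph{group} (of homeomorphisms of $Z$). Two consequences I will use: every $p\in E$ is a bijection of $Z$ with $p^{-1}\in E$; and for $z,z'\in Z$ the relation ``$z'\in\overline{Hz}$'' is symmetric, so $\overline{Hz}=\overline{Hz'}$ whenever $\overline{Hz}\cap\overline{Hz'}\neq\emptyset$.

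The heart of the argument is to show $y\in Z$. Pick a net $(h_i)$ in $H$ with $h_i\cdot y\to w$, and, by compactness of $E$, pass to a subnet along which $h_i|_Z\to p$ in $E$. Put $v:=p^{-1}w\in Z$. Since the topology on $E$ is pointwise convergence, evaluating at $v$ gives $h_i\cdot v=(h_i|_Z)(v)\to p(v)=w$. Hence along this net \emph{both} $h_i\cdot y$ and $h_i\cdot v$ converge to $w$, so $(y,v)$ is a proximal pair for the $H$-action on $X$. But $v\in Z=\overline{Hx}$, so $v$ is a distal point, meaning the only point proximal to $v$ is $v$ itself; as proximality is symmetric this forces $y=v\in Z$. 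Therefore $\overline{Hy}\subseteq Z=\overline{Hx}$, and since $w\in\overline{Hx}\cap\overline{Hy}$ with both of these now orbit closures inside the compact distal flow $Z$, the symmetry noted above yields $\overline{Hy}=\overline{Hx}$, as required.

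The step I expect to be the main obstacle is the proximality computation, and in particular its reliance on the enveloping semigroup: a naive attempt to ``run the net backwards'' using $(h_i^{-1})$ fails, because a compact distal flow need not be equicontinuous, so one cannot simply transport the convergence $h_i\cdot y\to w$ through $h_i^{-1}$ to conclude $h_i^{-1}\cdot w\to y$. Working inside $E(Z)$ and using that it is a group is precisely what produces the correct point $v=p^{-1}w$ for which $h_i\cdot v\to w$. One must also keep the definitions straight for the (possibly non-compact) ambient space $X$: ``$u$ is a distal point'' means no point other than $u$ is proximal to $u$, and ``$(a,b)$ proximal'' means some net in $H$ drives $a$ and $b$ to a common limit; the a priori possibility that $y$ lies outside $Z$ is exactly what the proximality argument rules out.
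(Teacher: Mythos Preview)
The paper does not supply its own proof of this lemma; it is quoted from \cite[Lemma~2.7]{ReidEqui} without argument. Your proof is correct and follows the standard route in topological dynamics: restricting to the compact distal subflow $Z=\overline{Hx}$, invoking Ellis's theorem that the enveloping semigroup $E(Z)$ is a group, and then using invertibility in $E(Z)$ to manufacture a point $v=p^{-1}w\in Z$ with $h_i v\to w$, forcing $y=v$ by distality. Your remark that one cannot simply apply $h_i^{-1}$ to the convergence $h_i y\to w$ is well taken and is precisely why the Ellis machinery is needed. The concluding symmetry argument (that orbit closures in a compact distal flow are either equal or disjoint) is also a direct consequence of $E(Z)$ being a group, as you note.
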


It is useful to have some equivalent versions of the condition ``$x$ is a distal fixed point'' (for example, when we have a $1$-distal action of automorphisms on a coset space $G/K$, and $x$ is the trivial coset).

\begin{lem}\label{lem:distal_equiv}
Let $G$ be a group acting by homeomorphisms on a Hausdorff topological space $X$ and let $x \in X$.  Then the following are equivalent:
\begin{enumerate}[(i)]
\item $x$ is a distal fixed point of the $G$-action;
\item For all nets $(g_i)_{i \in I}$ in $G$, and all points $y \in X \smallsetminus \{x\}$, then $(g_i(y))$ does not converge to $x$;
\item The intersection of the set $\{O \subseteq X \text{ open} \mid x \in O; \; \forall g \in G: gO = O\}$ is $\{x\}$.
\end{enumerate}
\end{lem}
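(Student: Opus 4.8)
The plan is to establish the cycle of implications (i)$\Rightarrow$(ii)$\Rightarrow$(iii)$\Rightarrow$(i) by unwinding the definitions of distality and proximality in net form. Recall that a pair $(a,b)$ is proximal for the $G$-action precisely when there is a net $(g_i)_{i\in I}$ in $G$ and a point $z \in X$ with $g_i(a) \to z$ and $g_i(b) \to z$, and that $x$ is a distal point precisely when no pair $(x,y)$ with $y \neq x$ is proximal. Two elementary observations will be used repeatedly. First, if $O \subseteq X$ is open and $G$-invariant then $g(O) = O$ for every $g \in G$, so membership in $O$ is both preserved and reflected by the action. Second, for \emph{any} open neighbourhood $U$ of a point $p$, the set $\bigcup_{g \in G} g(U)$ is open, $G$-invariant, and contains $p$. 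These let one translate freely between the statement ``every open $G$-invariant neighbourhood of $x$ contains $y$'' and the statement ``some net in $G$ pushes $y$ to $x$''.

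For (i)$\Rightarrow$(ii): assuming $x$ is a distal fixed point, let $(g_i)$ be a net in $G$, let $y \neq x$, and suppose for contradiction that $g_i(y) \to x$; since $g_i(x) = x$ for all $i$, the pair $(x,y)$ is proximal with common limit $x$, so distality forces $y = x$, a contradiction. For (ii)$\Rightarrow$(iii) I argue the contrapositive: if the intersection in (iii) properly contains $\{x\}$, choose $y \neq x$ lying in every open $G$-invariant set containing $x$. For each open neighbourhood $U$ of $x$, applying this to the set $\bigcup_{g \in G} g(U)$ yields some $g_U \in G$ with $y \in g_U(U)$, i.e.\ $g_U\inv(y) \in U$. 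Indexing by the neighbourhood filter of $x$ under reverse inclusion, the net $(g_U\inv)$ satisfies $g_U\inv(y) \to x$ with $y \neq x$, contradicting (ii). (I will note in passing that (ii) and (iii) each already force $x$ to be a fixed point: in (ii) via the constant net $g\inv$ applied to $y = g(x)$ when $g(x) \neq x$, and in (iii) because no open $G$-invariant set can separate $x$ from $g(x)$.)

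For (iii)$\Rightarrow$(i): by the remark just made, (iii) forces $x$ to be fixed, so it remains to prove distality. Suppose $(x,y)$ is proximal for some $y \neq x$; take a net $(g_i)$ and a point $z$ with $g_i(x) \to z$ and $g_i(y) \to z$. Since $g_i(x) = x$ we get $z = x$, hence $g_i(y) \to x$. Now if $O$ is any open $G$-invariant set containing $x$, then $g_i(y)$ is eventually in $O$ (as $O$ is open), and applying $g_i\inv$ and using $g_i\inv(O) = O$ gives $y \in O$. Thus $y$ lies in the intersection appearing in (iii), contradicting (iii) since $y \neq x$; hence $x$ is distal, and combining with fixedness gives (i).

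I do not anticipate a genuine obstacle; the only steps requiring a little care are the construction of the convergent net in (ii)$\Rightarrow$(iii) — one must verify that $\bigcup_{g} g(U)$ is open and $G$-invariant and that the index set (the neighbourhood filter of $x$ ordered by reverse inclusion) is directed, so that convergence of $(g_U\inv(y))$ is meaningful — and the mirror-image bookkeeping in (iii)$\Rightarrow$(i), where one pushes $y$ back into $O$ via $g_i\inv$ using the invariance identity $g_i\inv(O) = O$. Everything else is a direct transcription of the definitions.
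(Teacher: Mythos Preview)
Your proof is correct and follows essentially the same elementary definition-chasing route as the paper. The only notable organizational difference is in (ii)$\Rightarrow$(iii): the paper argues directly by taking, for each $y\neq x$, the complement $O = X \smallsetminus \overline{Gy}$ as the separating open $G$-invariant set, whereas you argue the contrapositive by building a net from the invariant sets $\bigcup_{g}g(U)$; both are fine and equally short.
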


\begin{proof}
Suppose $x$ is a distal fixed point of the $G$-action and let $(g_i)_{i\in I}$ be a net in $G$.  Suppose $g_i(y)$ converges to $x$ for some $y \in X$.  Then $(g_i(x),g_i(y))$ converges to $(x,x)$; since $x$ is a distal point we must have $y= x$.  Thus (i) implies (ii).

Suppose (ii) holds, let $y \in X \smallsetminus \{x\}$ and let $Y = \overline{\{g(y) \mid g \in G\}}$.  Then every point in $Y$ is the limit of some net $(g_i(y))$, where $(g_i)$ is a net in $G$.  Thus $x \not\in Y$.  We see that $O = X \smallsetminus Y$ is an open $G$-invariant subset of $X$ such that $x \in O$, but $y \not\in O$.  Since $y \in X \smallsetminus \{x\}$ was arbitrary, we conclude that (iii) holds.

Finally suppose that (iii) holds and let $y \in X \smallsetminus \{x\}$.  Then there exists an open $G$-invariant set $O$ that contains $x$ but not $y$.  In particular, no point $y \in X \smallsetminus \{x\}$ can be in the same $G$-orbit as $x$, so $Gx = \{x\}$, that is, $x$ is fixed by $G$.  Suppose $(g_i)$ is a net in $G$ such that $(g_i(y))$ converges to some limit $z$.  Then $(g_i(x))$ converges to $x$ (since $x$ is fixed), but $z$ lies in the closed $G$-invariant set $X \smallsetminus O$, which does not contain $x$.  Thus $(x,y)$ cannot be a proximal pair, showing that $x$ is a distal point for the action.
\end{proof}

With a sufficiently restricted action of $K$, we can ensure that a $1$-distal action of $H$ is in fact distal.

\begin{lem}\label{lem:1-distal_to_distal}
Let $G$ be a locally compact group, let $H \le \Aut(G)$ and let $K$ be a closed $H$-invariant subgroup of $G$.  Suppose that there is a collection $\mc{H}$ of $H$-invariant neighbourhoods of $K$ in $G/K$, each of which contains a $K$-invariant neighbourhood, and such that $\mc{H}$ has trivial intersection.  Then $H$ acts distally on $G/K$.
\end{lem}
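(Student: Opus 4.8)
The plan is to prove the statement by contradiction, playing the hypothesis ``$\mc{H}$ has trivial intersection'' against proximality. So suppose $H$ does not act distally on $G/K$. Then some coset fails to be a distal point, so there are $p,q \in G$ with $pK \ne qK$ forming a proximal pair: a net $(h_i)$ in $H$ and a coset $zK \in G/K$ with $h_i(p)K \to zK$ and $h_i(q)K \to zK$ in the right uniformity. Put $s := p\inv q$; since $pK \ne qK$ we have $s \notin K$, so $sK$ is not the trivial coset. Using $\bigcap \mc{H} = \{K\}$, fix a member of $\mc{H}$ that does not contain $sK$, and write it as $N/K$ for a subset $N$ of $G$ with $N = NK$; then $s \notin N$, and $N/K$ is an $H$-invariant neighbourhood of the trivial coset.

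The key step is to track the ratio $h_i(s) = h_i(p)\inv h_i(q)$, using that each $h_i$ is an automorphism. If $U$ is any symmetric identity neighbourhood of $G$, then $h_i(p),h_i(q) \in UzK$ eventually along the net, whence $h_i(s) \in (UzK)\inv (UzK) = K z\inv U^2 z K$. Now both hypotheses come in. First, by assumption $N/K$ contains a $K$-invariant neighbourhood $M/K$ of the trivial coset, and its preimage $M$ in $G$ satisfies $KM = M = MK$; choosing an identity neighbourhood $U_0$ of $G$ with $U_0K \subseteq M$, we get $K U_0 K \subseteq KM = M \subseteq N$. Second, by continuity of conjugation by $z\inv$ we may shrink $U$ so that $z\inv U^2 z \subseteq U_0$. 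For that $U$ and a suitable index $i$ we obtain $h_i(s) \in K z\inv U^2 z K \subseteq K U_0 K \subseteq N$, so $h_i(s)K \in N/K$. But $N/K$ is $H$-invariant, so applying $h_i\inv$ yields $sK \in N/K$, i.e.\ $s \in N$ --- contradicting the choice of $N$. Hence $H$ acts distally on $G/K$.

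The step I expect to be the crux, and the one that really uses the hypothesis, is controlling the ratio $h_i(p)\inv h_i(q)$. Proximality in $G/K$ only determines $h_i(p)$ and $h_i(q)$ up to right translation by $K$ and up to a small error near $z$, so the natural estimate for their ratio is two-sided, of the form $K\,(\text{small})\,K$ with the small part moreover conjugated by $z$; and there is no way to translate the situation back to the trivial coset, since the orbit closure of $zK$ need not meet that of the trivial coset, so no reduction to the fixed point is available (even though a short application of Lemma~\ref{lem:distal_equiv} shows the trivial coset is itself a distal point). The assumption that each $H$-invariant neighbourhood in $\mc{H}$ contains a $K$-invariant one is exactly what is needed to absorb a set of shape $K\,(\text{small})\,K$ into a member of $\mc{H}$; the conjugation by $z$ is then harmless provided the identity neighbourhoods are chosen in the right order ($U_0$ first, then $U$ depending on $z$).
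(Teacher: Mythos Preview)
Your proof is correct and follows essentially the same strategy as the paper's: track the ratio $h_i(p^{-1}q)$, show it eventually lands in any member of $\mc{H}$ via the $K$-invariant sub-neighbourhood, then use $H$-invariance to pull back. The only cosmetic differences are that the paper argues directly rather than by contradiction, and it absorbs one of the two factors of $K$ by lifting the convergence $h_i(x)K \to zK$ to a convergence $h_i(x)k_i \to z$ in $G$ (for a suitable net $(k_i)$ in $K$), whereas you keep the symmetric two-sided estimate $h_i(s)\in K z^{-1}U^2 z K$; your version arguably sidesteps the small bookkeeping issue of choosing such a lift.
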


\begin{proof}
Let $(h_i)$ be a net in $H$ and suppose $x,y,z\in G$ are such that $(h_i(x)K)$ and $(h_i(y)K)$ converge to $zK$.  Then there is a net $(k_i)$ in $K$ such that $(h_i(x)k_i)$ converges to $z$.  Consequently, the cosets
\[
k\inv_i(h_i(x)\inv h_i(y))K = k\inv_ih_i(x\inv y) K
\]
form a net that converges to the trivial coset in $G/K$.  In particular, $h_i(x\inv y)K$ is eventually contained in any given $K$-invariant neighbourhood of the trivial coset in $G/K$.  Since every element of $\mc{H}$ is $H$-invariant and contains a $K$-invariant neighbourhood of $K$, in fact $x\inv yK$ is contained in every element of $\mc{H}$.  By Lemma~\ref{lem:distal_equiv}, $\mc{H}$ has trivial intersection; thus $x\inv yK = K$, that is, $xK = yK$.  Thus the action of $H$ on $G/K$ is distal.
\end{proof}

We highlight the following special cases.

\begin{cor}\label{cor:res_distal}
Let $G$ be a locally compact group and let $H \le \Aut(G)$.
\begin{enumerate}[(i)]
\item The action of $H$ on $G/\Res_G(H)$ is distal.
\item Let $K$ be a closed $H$-invariant subgroup of $G$ such that $H$ acts $1$-distally on $G/K$ and $K$ is $1$-equicontinuous on $G/K$.  Then $H$ has distal action on $G/K$.
\end{enumerate}
\end{cor}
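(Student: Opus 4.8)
The plan is to obtain both parts as direct applications of Lemma~\ref{lem:1-distal_to_distal}. In each case the only thing to supply is a collection $\mc{H}$ of neighbourhoods of the trivial coset in $G/K$ that is simultaneously $H$-invariant (as a family of subsets) and ``coarse enough'' that each member contains a $K$-invariant neighbourhood, and whose total intersection is the trivial coset; once this is in place the conclusion ``$H$ acts distally on $G/K$'' is exactly what Lemma~\ref{lem:1-distal_to_distal} delivers.

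For part~(i) I would set $K = \Res_G(H)$ and take $\mc{H}$ to be the family of all sets $O/K$ with $O$ ranging over the open $H$-invariant subgroups of $G$. By the definition of the discrete residual, every such $O$ contains $K$, so each $O/K$ is an open subset of $G/K$ containing the trivial coset; it is $H$-invariant because $O$ is; and, being a subgroup of $G$ containing $K$, it is left-invariant under $K$, hence is itself a $K$-invariant neighbourhood of the trivial coset contained in $O/K$. Finally $\bigcap\mc{H} = \left(\bigcap O\right)/K = K/K$ is the trivial coset, so all hypotheses of Lemma~\ref{lem:1-distal_to_distal} hold and $H$ acts distally on $G/\Res_G(H)$.

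For part~(ii), the hypothesis that $H$ acts $1$-distally on $G/K$ says precisely that the trivial coset is a distal fixed point of the $H$-action, so by the equivalence (i)$\Leftrightarrow$(iii) of Lemma~\ref{lem:distal_equiv} the family $\mc{H}$ of \emph{all} open $H$-invariant subsets of $G/K$ containing the trivial coset has intersection equal to the trivial coset. Now I invoke the second hypothesis, that $K$ is $1$-equicontinuous on $G/K$: this is exactly the statement that the action of $K$ on $G/K$ has small invariant neighbourhoods at the trivial coset, so every neighbourhood of the trivial coset — in particular every member of $\mc{H}$ — contains a $K$-invariant neighbourhood of the trivial coset. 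Thus $\mc{H}$ satisfies the hypotheses of Lemma~\ref{lem:1-distal_to_distal}, and we conclude that $H$ acts distally on $G/K$.

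I do not anticipate a serious obstacle here, since the substantive content is already packaged into Lemmas~\ref{lem:distal_equiv} and~\ref{lem:1-distal_to_distal}. The one point that needs a moment's attention in part~(ii) is that the members of $\mc{H}$ coming from $1$-distality are $H$-invariant but need not be $K$-invariant, while the $K$-invariant neighbourhoods coming from the SIN property of $K$ need not be $H$-invariant; these two requirements are reconciled only because Lemma~\ref{lem:1-distal_to_distal} asks each member of $\mc{H}$ merely to \emph{contain} a $K$-invariant neighbourhood rather than to be one. It is also worth noting in passing that $\mc{H}$ is nonempty in both cases, since $G/K$ itself belongs to it.
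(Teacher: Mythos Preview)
Your proposal is correct and follows essentially the same approach as the paper: both parts are deduced from Lemma~\ref{lem:1-distal_to_distal}, with $\mc{H}$ in part~(i) taken to be the images of the open $H$-invariant subgroups, and $\mc{H}$ in part~(ii) taken to be all open $H$-invariant subsets of $G/K$ containing the trivial coset (the paper phrases these as $\{O/K \mid O \text{ open in } G,\ OK=O,\ h(O)=O\ \forall h\in H\}$, which is the same family). Your closing remark about why the mismatch between $H$-invariance and $K$-invariance is harmless is exactly the point the paper handles by noting that right-$K$-invariant identity neighbourhoods contain left-$K$-invariant ones under the $1$-equicontinuity hypothesis.
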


\begin{proof}
(i)
Let $R = \Res_G(H)$ and let
\[
\mc{H} = \{O/R \mid O \text{ is an open subgroup of } G, \forall h \in H: h(O) = O\}.
\]
By the definition of $R$, $\mc{H}$ has trivial intersection; moreover, every element of $\mc{H}$ is $R$-invariant.  Thus $H$ acts distally on $G/R$ by Lemma~\ref{lem:1-distal_to_distal}.

(ii)
Let
\[
\mc{H} = \{O/K \mid O \text{ is an open subset of } G, \; OK = O,\; \forall h \in H: h(O) = O\}.
\]
Since $H$ acts $1$-distally on $G/K$, the intersection of $\mc{H}$ is trivial.  Since $K$ is $1$-equicontinuous on $G/K$, any identity neighbourhood $O$ in $G$ that is invariant under right translation by $K$ (in other words, such that $O = OK$) also contains an identity neighbourhood that is invariant under left translation by $K$.  In particular, every element of $\mc{H}$ contains a $K$-invariant neighbourhood of the trivial coset.  Thus $H$ acts distally on $G/K$ by Lemma~\ref{lem:1-distal_to_distal}.
\end{proof}

\subsection{The stable residual property}\label{sec:sr}

Let $G$ be a \tdlc group and let $H \le \Aut(G)$.  In general $\Res_G(H)$ is not open, and so the set of open $H$-invariant subgroups has no minimal element.  However, there is often an open $H$-invariant subgroup $U$ such that $U/\Res_G(H)$ is compact; $U$ is then the smallest open $H$-invariant subgroup up to finite index.  In this section we define this property and then explore some consequences.

\begin{defn}
Let $G$ be a \tdlc group and let $H \le \Aut(G)$.  Say that $H$ has \defbold{stable residual (SR)} on $G$ if there exists an open $H$-invariant subgroup $U$ such that $\Res_G(H)$ is cocompact in $U$.
\end{defn}

If $H$ is equicontinuous on $G$, it is clear that $H$ has (SR) on $G$.  Later (Corollary~\ref{cor:equi_gen_res}), we will see that in fact every equicontinuously generated group of automorphisms has (SR).

The property (SR) has consequences for the normalizer of the discrete residual, as can be seen from the following lemma.

\begin{lem}\label{residual_normalizer}
Let $G$ be a \tdlc group, let $H \le \Aut(G)$, let $R = \Res_G(H)$ and let $x \in G$.  Suppose that $\{h(x)R \mid h \in H\}$ has compact closure in $G/R$.  Then $xRx\inv \le R$.
\end{lem}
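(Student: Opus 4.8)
The plan is to show that every open $H$-invariant subgroup $O$ of $G$ contains $xRx\inv$, from which the claim follows by taking the intersection over all such $O$. So fix an open $H$-invariant subgroup $O$ of $G$; we may replace $O$ by $O \cap R$ if convenient, but in fact we want to work with $O$ directly and exploit that $R \le O$. The key observation is that $G/R$ carries a well-defined action of $H$ (since $R$ is $H$-invariant), and that the hypothesis says the $H$-orbit of $xR$ has compact closure in $G/R$. The strategy is to use this compactness to find, for each $O$, finitely many conjugates $h_1(x), \dots, h_n(x)$ that $R$-approximate the whole orbit, and then combine this with the fact that $O/R$ is open in $G/R$.

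First I would fix a compact open subgroup $U$ of $G$ with $U \le O$ (using Van Dantzig, shrinking $O$ is not needed since we only need $xRx\inv \le O$ for every $O$, and $U \le O$); then $UR/R$ is an open neighbourhood of the trivial coset in $G/R$. Since $\overline{\{h(x)R \mid h \in H\}}$ is compact, it is covered by finitely many translates $g_1 UR/R, \dots, g_n UR/R$, so there are elements $h_1, \dots, h_n \in H$ such that for every $h \in H$ we have $h(x) \in g_j UR$ for some $j$, and we may as well take $g_j = h_j(x)$. Thus for every $h \in H$ there is $j$ with $h_j(x)\inv h(x) \in URU\inv R \subseteq$ (a bounded set modulo $R$); more precisely $h(x) \in h_j(x) U R$. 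Now consider the subgroup generated by $R$ together with all the $h_j(x)$: call it $P$. It is an open subgroup of $G$ containing $R$, but it need not be $H$-invariant, so I would instead pass to its interior-of-invariant-core: since the orbit closure of $xR$ is compact, one checks that the set $\bigcup_{h \in H} h(x) R$ lies in finitely many $R$-cosets of $P$, hence in $P$ together with finitely many translates, and taking the $H$-invariant subgroup generated by $R \cup \{h(x) : h \in H\}$ — which exists and is open because the orbit is bounded modulo $R$ — gives an open $H$-invariant subgroup, which therefore contains... no, that only recovers $R \le$ itself.

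The cleaner route, which I expect to be the one that works: apply Corollary~\ref{cor:res_distal}(i), so $H$ acts distally on $G/R$. Let $y = xR \in G/R$. The hypothesis says $\overline{Hy}$ is compact, so by Lemma~\ref{lem:distal_orbit_closure}, $H$ acts minimally on $\overline{Hy}$, and in particular $\overline{Hy}$ is a minimal closed $H$-invariant subset of $G/R$ through $y$. Now the point $R = 1 \cdot R$ is a fixed point of the $H$-action on $G/R$ (as $R$ is $H$-invariant and we act by automorphisms), so $\{R\}$ is itself a minimal closed $H$-invariant set. Next I would bring in the open $H$-invariant subgroups: for each such $O$, the set $OR/R = O/R$ is open and $H$-invariant in $G/R$, and the intersection of all of them is $\{R\}$ by definition of $R$. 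The aim is to conclude $xRx\inv R/R \subseteq$ each $O/R$; equivalently $x(O)x\inv \supseteq$ ... Here I would argue: since $\overline{Hy}$ is compact and $G/R$ has a base of $H$-invariant open neighbourhoods of $\{R\}$ given by the $O/R$ (this is exactly $\Res_G(H) = R$), and since $H$ acts minimally on $\overline{Hy}$ with the trivial coset fixed, if $\overline{Hy} \cap O/R \ne \emptyset$ for some invariant open $O/R$ then $\overline{Hy} \subseteq O/R$; running over a base forces $\overline{Hy} \subseteq \bigcap O/R = \{R\}$ only if the orbit meets every $O/R$, which we do not know. So instead the correct deduction is: $xRx\inv \le O$ iff $x(O) = O$ after twisting — more precisely, $O$ open $H$-invariant with $R \le O$ means $O/R$ is an open $H$-invariant set, and $xRx\inv \le O$ is equivalent to $x \in N_G(O)$ modulo...

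Let me restate the genuinely intended argument, which is short. We want $xRx\inv \le R$, i.e. $xRx\inv \le O$ for every open $H$-invariant $O \le G$ (WLOG $R \le O$, replacing $O$ by $O \cap$ a fixed such subgroup — actually every open $H$-invariant subgroup contains $R$ automatically). Fix such $O$. Since $O/R$ is an $H$-invariant open neighbourhood of the trivial coset and $H$ acts distally on $G/R$ with the trivial coset fixed, and $\overline{Hy}$ is compact: the trick is that $O/R$ is an open \emph{subgroup} quotient, so it is a union of $H$-orbits, and more importantly $G/O$ is a \emph{discrete} $H$-set. The image of $\overline{Hy}$ in $G/O$ is a compact, hence finite, $H$-invariant set, so it is a finite orbit; but a finite orbit under a group acting distally... hmm, distality gives that finite orbits are possible. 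Instead: the image of $y$ in $G/O$ has finite $H$-orbit $\{xO, h_1(x)O, \dots, h_m(x)O\}$. The stabilizer of this finite set in $H$ has finite index, and crucially the subgroup $\bigcap_i h_i(x) O h_i(x)\inv$, being a finite intersection of open subgroups conjugate-twisted... Actually the clean finish: let $O_0 = \bigcap_{h \in H} h(x) O h(x)\inv$. Since $\{h(x)O \mid h \in H\}$ is finite, this is a finite intersection of open subgroups, hence open; and it is $H$-invariant, since $h'(O_0) = \bigcap_h h'(h(x))\, h'(O)\, h'(h(x))\inv = \bigcap_h (h'h(x)) O (h'h(x))\inv = O_0$ using $h'(O) = O$ and reindexing. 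Therefore $O_0 \ge R$, and in particular $R \le x O x\inv$ (the $h = 1$ term), i.e. $x\inv R x \le O$; since this holds for all open $H$-invariant $O$, we get $x\inv R x \le R$. Replacing $x$ by $x\inv$ in the hypothesis (noting $\{h(x\inv)R\}$ has compact closure iff $\{h(x)R\}$ does, since inversion is a homeomorphism of $G/R$... careful, inversion does not descend to $G/R$; but $\{h(x)\inv R\} = \{(h(x)R)\inv\}$ and right-coset inversion is fine on the \emph{space} level) gives $xRx\inv \le R$, as required.

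The main obstacle is the reindexing/bookkeeping showing $O_0$ is genuinely $H$-invariant and open — i.e. that the map $h \mapsto h(x)O$ has finite image, which is where compactness of $\overline{Hy}$ in $G/R$ combined with openness of $O/R$ in the compact-in-$G/R$ orbit closure is used: the $H$-orbit of $xO$ in the discrete space $G/O$ is the image of the compact set $\overline{Hy}$ under the continuous map $G/R \to G/O$, hence compact, hence finite. Everything else is routine. I would write it up in roughly this form:

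\begin{proof}
Let $R = \Res_G(H)$. It suffices to show $x\inv R x \le O$ for every open $H$-invariant subgroup $O$ of $G$, since intersecting over all such $O$ yields $x\inv R x \le R$, and then applying the same conclusion with $x$ replaced by $x\inv$ (valid because $\{h(x)\inv R \mid h \in H\} = \{(h(x)R)\inv \mid h \in H\}$ also has compact closure in $G/R$, inversion of cosets being a homeomorphism of the uniform space underlying $G/R$) gives $x R x\inv \le R$.

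So fix an open $H$-invariant subgroup $O$ of $G$; then $R \le O$, so $O/R$ is an open subgroup of $G/R$, and the quotient map $\pi: G/R \to G/O$ is continuous onto the discrete left coset space $G/O$. By hypothesis $\overline{\{h(x)R \mid h \in H\}}$ is compact, so its image $\{h(x)O \mid h \in H\}$ under $\pi$ is a compact subset of the discrete space $G/O$, hence finite; say $\{h(x)O \mid h \in H\} = \{h_1(x)O, \dots, h_n(x)O\}$ with $h_1 = 1$.

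Put $O_0 = \bigcap_{i=1}^{n} h_i(x)\, O\, h_i(x)\inv$. This is a finite intersection of open subgroups of $G$, hence an open subgroup. We claim $O_0$ is $H$-invariant. Let $h' \in H$. For each $i$ there is $j = j(i)$ with $h'(h_i(x))O = h_j(x)O$, i.e. $h_j(x)\inv h'(h_i(x)) \in O$, so since $O$ is a subgroup, $h'(h_i(x))\, O\, h'(h_i(x))\inv = h_j(x)\, O\, h_j(x)\inv$; moreover $i \mapsto j(i)$ is a bijection of $\{1,\dots,n\}$ because $h'$ permutes the finite orbit $\{h_i(x)O\}$. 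Hence, using also $h'(O) = O$,
\[
h'(O_0) \;=\; \bigcap_{i=1}^{n} h'(h_i(x))\, h'(O)\, h'(h_i(x))\inv \;=\; \bigcap_{i=1}^{n} h_{j(i)}(x)\, O\, h_{j(i)}(x)\inv \;=\; O_0.
\]
Thus $O_0$ is an open $H$-invariant subgroup of $G$, so $R \le O_0$. Taking the term $i = 1$ (with $h_1 = 1$) gives $R \le x O x\inv$, that is, $x\inv R x \le O$, as required.
\end{proof}
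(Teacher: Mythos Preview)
Your overall approach is exactly the paper's: for each open $H$-invariant $O$, use compactness of $\overline{\{h(x)R\}}$ in $G/R$ and discreteness of $G/O$ to see that the relevant orbit of cosets is finite, form the finite intersection of conjugates of $O$, check it is open and $H$-invariant, and conclude it contains $R$.

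There is, however, a genuine gap in your final ``swap'' step. You argue that the hypothesis for $x$ implies the hypothesis for $x^{-1}$ because ``$\{(h(x)R)^{-1}\}$ also has compact closure in $G/R$, inversion of cosets being a homeomorphism''. This is not correct: $(h(x)R)^{-1} = R\,h(x)^{-1}$ is a \emph{right} coset of $R$, not a left coset, and inversion does \emph{not} descend to a self-map of $G/R$ unless $R$ is normal --- which is precisely what is not yet known. So from compactness of $\overline{\{h(x)R\}}$ in $G/R$ you cannot directly read off compactness of $\overline{\{h(x)^{-1}R\}}$ in $G/R$, and hence you cannot simply replace $x$ by $x^{-1}$.

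The paper avoids this detour by reversing the direction of conjugation from the outset: it takes
\[
O^* \;=\; \bigcap_{h \in H} h(x)^{-1}\,O\,h(x)
\]
instead of your $O_0 = \bigcap_i h_i(x)\,O\,h_i(x)^{-1}$. With that choice the $h=1$ term is $x^{-1}Ox$, so $R \le O^*$ gives $xRx^{-1} \le O$ directly, with no need to swap $x \leftrightarrow x^{-1}$. The paper justifies openness of $O^*$ by the assertion that $\{h(x)\}$ meets only finitely many \emph{right} cosets $Og$ (so that $\{h(x)^{-1}Oh(x)\}$ is a finite set of subgroups); your argument, in contrast, uses the finiteness of the \emph{left} cosets $\{h(x)O\}$, which is what drops out immediately from the map $G/R \to G/O$. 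So the fix is simply to mirror your argument with the conjugation reversed, matching the paper.
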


\begin{proof}
Let $O$ be an open $H$-invariant subgroup of $G$.  Then $R \le O$, and since $\{h(x)R \mid h \in H\}$ has compact closure in $G/R$, the orbit $\{h(x) \mid h \in H\}$ is contained in the union of finitely many right cosets of $O$.  Consequently $O^* = \bigcap_{h \in H}h(x)\inv Oh(x)$ is an open subgroup of $G$; clearly $O^*$ is also $H$-invariant.  In particular, $R \le O^*$, so $R \le x\inv Ox$, that is, $xRx\inv \le O$.  Since $O$ was an arbitrary open $H$-invariant subgroup, in fact $xRx\inv \le R$.
\end{proof}

We now obtain a number of equivalent versions of (SR).

\begin{prop}\label{sr_equivalents}
Let $G$ be a \tdlc group, let $H \le \Aut(G)$, let $R = \Res_G(H)$ and let $\mc{W}$ be the set of $H$-invariant open subgroups $W$ of $G$ of the form $W = VR$, where $V$ is a compact open subgroup of $G$.  Then the following are equivalent:
\begin{enumerate}[(i)]
\item $H$ has (SR) on $G$.
\item $\mc{W}$ is nonempty.
\item $\{W/R \mid W \in \mc{W}\}$ is a base of neighbourhoods of the trivial coset in $G/R$, and $\N_G(R) \ge \langle \mc{W} \rangle$.
\item The action of $H$ on $G/R$ is $1$-equicontinuous.
\item $\N_G(R)$ is open and $H$ has equicontinuous action on $\N_G(R)/R$.
\end{enumerate}
\end{prop}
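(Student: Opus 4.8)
The plan is to run the cycle $\mathrm{(i)}\Rightarrow\mathrm{(v)}\Rightarrow\mathrm{(iii)}\Rightarrow\mathrm{(ii)}\Rightarrow\mathrm{(i)}$ and, in addition, $\mathrm{(iii)}\Rightarrow\mathrm{(iv)}\Rightarrow\mathrm{(v)}$, so that all five conditions become equivalent. Several of these links are formal. For $\mathrm{(ii)}\Rightarrow\mathrm{(i)}$: if $W=VR\in\mc{W}$ then $W$ is open and $H$-invariant and $W/R$ is the continuous image of the compact set $V$, so $R$ is cocompact in $W$. For $\mathrm{(iii)}\Rightarrow\mathrm{(ii)}$: a base of neighbourhoods of the trivial coset is nonempty. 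For $\mathrm{(iii)}\Rightarrow\mathrm{(iv)}$: the members of such a base are $H$-invariant neighbourhoods of the trivial coset, which is precisely the SIN condition, equivalent (since $H$ is a group) to $1$-equicontinuity of $H$ on $G/R$.

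The substantial implication is $\mathrm{(i)}\Rightarrow\mathrm{(v)}$. Given an open $H$-invariant $U$ with $R=\Res_G(H)$ cocompact in $U$, I would apply Lemma~\ref{residual_normalizer} to each $x\in U$ and each $x\inv$; this is legitimate because $U$ is $H$-invariant, so $\{h(x)R\mid h\in H\}$ lies in the compact set $U/R$, and it forces $xRx\inv=R$, hence $U\le\N_G(R)$. Thus $\N_G(R)$ is open and $R$ is normal in it. A short argument with intersections of open $H$-invariant subgroups shows $\Res_{\N_G(R)}(H)=R$, so in the \tdlc group $\bar{G}:=\N_G(R)/R$ the open $H$-invariant subgroups intersect trivially, while $\bar{G}$ contains the $H$-invariant compact open subgroup $U/R$. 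The decisive step is then the following: inside the \emph{compact} group $U/R$, the closed subgroups $V\cap(U/R)$, as $V$ ranges over the open $H$-invariant subgroups of $\bar{G}$, have trivial total intersection, so by compactness finitely many of them meet inside any prescribed identity neighbourhood of $\bar{G}$, and their finite intersection is again an open $H$-invariant subgroup of $\bar{G}$. Hence the open $H$-invariant subgroups of $\bar{G}$ form a base at the identity, i.e. $H$ is $1$-equicontinuous on $\bar{G}$, and Lemma~\ref{lem:equi_homogeneous} applied to the group $\bar{G}$ upgrades this to equicontinuity on all of $\bar{G}$, which is $\mathrm{(v)}$.

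For $\mathrm{(v)}\Rightarrow\mathrm{(iii)}$: since $R$ is normal in $\N_G(R)$, conjugation by $R$ on $\N_G(R)/R$ is trivial, hence $1$-equicontinuous, and $H$ is equicontinuous there by $\mathrm{(v)}$; so Proposition~\ref{prop:SIN_tdlc}(i), applied with $\N_G(R)$ and $R$ in the roles of $G$ and $K$, yields a base of the trivial coset in $\N_G(R)/R$ — and hence, as $\N_G(R)/R$ is open in $G/R$, a base in $G/R$ — consisting of cosets $W/R$ with $W$ an $H$-invariant open subgroup of the form $(\text{compact open})\cdot R$; each such $W$ lies in $\mc{W}$, so $\{W/R\mid W\in\mc{W}\}$ is a base. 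For the clause $\N_G(R)\ge\langle\mc{W}\rangle$: given $W=VR\in\mc{W}$ and $w\in W$, $H$-invariance of $W$ puts $\{h(w)R\mid h\in H\}$ inside the compact set $VR/R$, so Lemma~\ref{residual_normalizer} applied to $w$ and $w\inv$ (using that $W$ is a subgroup) gives $w\in\N_G(R)$. For $\mathrm{(iv)}\Rightarrow\mathrm{(v)}$ I would first show $\N_G(R)$ is open: take a compact open subgroup $\bar{U}\le G$; the SIN form of $\mathrm{(iv)}$ gives an $H$-invariant open $N_0$ with $R\le N_0\subseteq\bar{U}R$, and then $N_1:=N_0\cap N_0\inv$ is $H$-invariant, open, symmetric and still contained in $\bar{U}R$. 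For $g\in N_1$ the orbit $\{h(g)R\mid h\in H\}$ lies in the compact set $\bar{U}R/R$, so Lemma~\ref{residual_normalizer} applied to $g$ and $g\inv$ gives $g\in\N_G(R)$; thus $N_1\subseteq\N_G(R)$ and $\N_G(R)$ is open. Then Lemma~\ref{lem:equi_quotient} carries $1$-equicontinuity of $H$ at the trivial coset from $G/R$ to the open subspace $\N_G(R)/R$, and Lemma~\ref{lem:equi_homogeneous} applied to the group $\N_G(R)/R$ promotes it to equicontinuity, yielding $\mathrm{(v)}$.

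The main obstacle I anticipate is the decisive part of $\mathrm{(i)}\Rightarrow\mathrm{(v)}$: turning the existence of a \emph{single} $H$-invariant compact open subgroup of $\bar{G}$ together with triviality of the discrete residual there into a genuine base of $H$-invariant open subgroups. Triviality of the discrete residual is strictly weaker than the SIN property, so compactness of $U/R$ has to be fed into the argument in an essential way — via the finite-intersection argument above — before Lemma~\ref{lem:equi_homogeneous} can be invoked. A recurring minor subtlety is that Lemma~\ref{residual_normalizer} only produces a containment $xRx\inv\le R$, so one must apply it to $x$ and to $x\inv$ each time in order to conclude $x\in\N_G(R)$.
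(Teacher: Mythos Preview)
Your proof is correct and follows essentially the same approach as the paper. The paper runs the cycle $\mathrm{(i)}\Rightarrow\mathrm{(v)}\Rightarrow\mathrm{(iv)}\Rightarrow\mathrm{(iii)}\Rightarrow\mathrm{(ii)}\Rightarrow\mathrm{(i)}$, whereas you route through $\mathrm{(v)}\Rightarrow\mathrm{(iii)}$ directly and add $\mathrm{(iii)}\Rightarrow\mathrm{(iv)}\Rightarrow\mathrm{(v)}$ separately; but the substantive ingredients --- the compactness argument inside $U/R$ for $\mathrm{(i)}\Rightarrow\mathrm{(v)}$, the use of Lemma~\ref{residual_normalizer} (applied to both $x$ and $x^{-1}$) to get openness of $\N_G(R)$, and the appeal to Proposition~\ref{prop:SIN_tdlc}(i) for the base of neighbourhoods --- are identical to the paper's. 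Your explicit remark that Lemma~\ref{residual_normalizer} only yields a one-sided containment, forcing a symmetric set before concluding membership in $\N_G(R)$, is exactly what the paper handles (more tersely) by writing $O\cap O^{-1}\subseteq\N_G(R)$.
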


\begin{proof}
The implications (v) $\Rightarrow$ (iv), (iii) $\Rightarrow$ (ii) and (ii) $\Rightarrow$ (i) are all immediate.  It now suffices to show (iv) $\Rightarrow$ (iii) and (i) $\Rightarrow$ (v).

Suppose (iv) holds.  Since the trivial coset is an equicontinuous fixed point for $H$ on $G/R$, it has a compact $H$-invariant neighbourhood $O/R$.  By Lemma~\ref{residual_normalizer}, $O \cap O\inv \subseteq \N_G(R)$, so $\N_G(R)$ is open. Now consider the action of $H$ and $R$ on $\N_G(R)/R$.  Since $R$ acts trivially on $\N_G(R)/R$, we can apply Proposition~\ref{prop:SIN_tdlc}(i) to conclude that $\{W/R \mid W \in \mc{W}\}$ is a base of neighbourhoods of the trivial coset in $G/R$.  Lemma~\ref{residual_normalizer} ensures that $\N_G(R) \ge \langle \mc{W} \rangle$; thus (iii) follows.

Suppose (i) holds: say $U$ is an open $H$-invariant subgroup of $G$ such that $U/R$ is compact.  Then $U \le \N_G(R)$ by Lemma~\ref{residual_normalizer}, so $U/R$ is a compact group.  By the definition of $R$, the open $H$-invariant subgroups of $U/R$ have trivial intersection; by compactness, they therefore form a base of neighbourhoods of the identity.  Thus $H$ has $1$-equicontinuous action on $U/R$ and hence on $\N_G(R)/R$; since $\N_G(R)/R$ is a group, it follows by Lemma~\ref{lem:equi_homogeneous} that $H$ acts equicontinuously on $\N_G(R)/R$.  Thus (v) holds and the cycle of implications is complete.
\end{proof}

\subsection{A dynamical characterization of a class of coset spaces}\label{sec:mainthm}

We recall part of the main theorem of \cite{ReidEqui}; together with the results we have accumulated so far on coset space actions, this will lead to a proof of Theorem~\ref{intro:distal_SIN}.

\begin{thm}[See {\cite[Theorem~1.2]{ReidEqui}}]\label{thm:AGW_generalized}
Let $X$ be a locally compact zero-dimensional space, let $S$ be a family of homeomorphisms of $X$ with $1 \in S$ and $S = S^{-1}$, such that $S$ is equicontinuous with respect to some compatible uniformity, and let $G = \langle S \rangle$.  Let $U$ be an open subset of $X$, and write $U_0$ for the union of all compact $G$-invariant subsets of $U$.  Then the following are equivalent:
\begin{enumerate}[(i)]
\item For every compact open subset $V$ of $U$, there is a finite subset $F$ of $G$ such that $\bigcap_{g \in G}g(V) = \bigcap_{g \in F}g(V)$.
\item Given $x \in U$ and $y \in U_0$ such that $y \in \overline{Gx}$ is a compact subset of $U$, then $x \in \overline{Gy}$.
\end{enumerate}
\end{thm}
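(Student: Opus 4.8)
The plan is to prove the equivalence of (i) and (ii) in three moves: reformulate (i) as a statement about clopenness of certain invariant sets, deduce (i)$\Rightarrow$(ii) by a short contrapositive argument, and then treat (ii)$\Rightarrow$(i), which is where essentially all the work sits.

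\emph{Reformulation of (i).} For a compact open $V \subseteq U$ I would write $W_V := \bigcap_{g \in G} g(V)$, observe that (since $G$ is a group) this is the largest $G$-invariant subset of $V$ — explicitly $W_V = \{x : Gx \subseteq V\}$ — and that it is closed, being an intersection of the clopen sets $g(V)$. Then (i) is equivalent to: $W_V$ is clopen for every such $V$. One direction is immediate, since a finite intersection of clopen sets is clopen. For the other, if $W_V$ is clopen then $V \setminus W_V$ is compact and is covered by the clopen sets $V \setminus g^{-1}(V)$ with $g \in G$; a finite subcover, together with the fact that $W_V$ is $G$-invariant and hence contained in every $g^{-1}(V)$, exhibits $W_V$ as a finite intersection of the $g(V)$.

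\emph{Proof of (i)$\Rightarrow$(ii).} I would argue contrapositively. If (ii) fails, pick $x \in U$ and $y \in \overline{Gx}$ with $\overline{Gx}$ compact and contained in $U$ but $x \notin \overline{Gy}$. Put $Z := \overline{Gy}$, a compact $G$-invariant subset of $U$ missing $x$, and (using zero-dimensionality and local compactness) choose a compact open $V$ with $Z \subseteq V \subseteq U$ and $x \notin V$. Then $y \in Gy \subseteq W_V$ while $x \notin V \supseteq W_V$. Taking a net $(g_i)$ in $G$ with $g_i(x) \to y$: were $W_V$ open, $g_i(x)$ would eventually lie in $W_V$, and $G$-invariance of $W_V$ would give $x = g_i^{-1}(g_i(x)) \in W_V$, a contradiction. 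So $W_V$ is not clopen, and by the reformulation (i) fails.

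\emph{Proof of (ii)$\Rightarrow$(i).} By the reformulation it suffices to show every $W := W_V$ ($V$ compact open, $V \subseteq U$) is open. Fix $w \in W$; then $\overline{Gw}$ is a compact $G$-invariant subset of $V \subseteq U$, so by (ii) it is minimal. Suppose $w$ has no neighbourhood inside $W$, and choose a net $(w_i)$ in $V \setminus W$ with $w_i \to w$ and elements $g_i \in G$ of minimal $S$-word-length $l_i$ with $g_i(w_i) \notin V$. Passing to a subnet, either $l_i$ is eventually constant or $l_i \to \infty$. The constant case is easy: if $l_i \equiv L$ then $g_i$ lies in the equicontinuous family $S^L$, so equicontinuity at $w$ puts $g_i(w_i)$ on the same side of the clopen set $V$ as $g_i(w) \in W \subseteq V$, contradicting $g_i(w_i) \notin V$. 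The hard case — and the main obstacle — is $l_i \to \infty$. Here one cannot simply transport the convergence $w_i \to w$ along the escape path of $w_i$, because equicontinuity of $S$ only controls products of bounded length whereas the escape path has length $l_i \to \infty$. The resolution must bring in the extra information from (ii), the minimality of $\overline{Gw}$: because $\overline{Gw}$ is minimal, the orbit of $w$ recurs to any prescribed clopen neighbourhood of $\overline{Gw}$ within a bounded number of steps, and by iterating the equicontinuity of $S$ one builds a nested sequence of clopen neighbourhoods of $\overline{Gw}$ inside $V$ each absorbing the action of $S$ one step further; the content of the theorem is that, under (ii), the points $w_i$ are then forced into such a neighbourhood — indeed into a $G$-invariant one, hence into $W$ — contradicting $w_i \notin W$. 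Making this precise is exactly the substance of \cite[Theorem~1.2]{ReidEqui}, whose proof carries out this recurrence-plus-equicontinuity analysis in the form inspired by \cite{AGW}; I would invoke it here.
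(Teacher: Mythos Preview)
This theorem is not proved in the paper at all: it is quoted verbatim from \cite[Theorem~1.2]{ReidEqui} and used as a black box, so there is no in-paper proof to compare your proposal against. Your reformulation of (i) as clopenness of $W_V$ and your contrapositive argument for (i)$\Rightarrow$(ii) are both correct and already go well beyond what the paper itself provides. For (ii)$\Rightarrow$(i) you correctly isolate the genuinely hard case ($l_i \to \infty$) and then invoke \cite[Theorem~1.2]{ReidEqui}; but note that this is the very statement you are trying to prove, so at that point your argument is not a proof but a deferral to the same external reference the paper uses --- which, given that the paper also defers, is entirely consistent with its treatment.
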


\begin{proof}[Proof of Theorem~\ref{intro:distal_SIN}]
It is clear that (\ref{distal_SIN:1}) implies (\ref{distal_SIN:2}).  By Lemma~\ref{lem:distal_orbit_closure} we see that (\ref{distal_SIN:2}) implies (\ref{distal_SIN:3}).

Now assume (\ref{distal_SIN:3}) holds.  Fix an equicontinuous generating set $S$ for $H$ such that $1 \in S$ and $S = S\inv$.  Then $S$ is equicontinuous on $G/K$ by Lemma~\ref{lem:equi_quotient}.  Considering the action of $H$ on $X = G/K$ and the neighbourhood $N/K$ of the trivial coset, we are in the situation of Theorem~\ref{thm:AGW_generalized}(ii).  Thus using Theorem~\ref{thm:AGW_generalized}(i), given any compact open subgroup $U$ of $G$ such that $U \subseteq N$, there is a finite subset $F$ of $H$ such that $O := \bigcap_{h \in H}h(UK)$ is $H$-invariant; note in particular that $O$ is an identity neighbourhood in $G$.  Let $W = \bigcap_{h \in F}h(U)$; then $W$ is an open subgroup of $G$, such that $wO = O$ for all $w \in W$.  In particular, the group $V := \{g \in G \mid gO = O\}$ is open.  Since $O$ is $H$-invariant, the group $V$ is also $H$-invariant, that is, $V \in \mc{V}$.  Since $O$ is an identity neighbourhood such that $VOK = O$, we have $VK \subseteq O$.  Since the initially chosen compact open subgroup $U$ of $G$ can be made arbitrarily small, by letting $U$ range over a base of identity neighbourhoods in $G$, we see that $\{VK/K \mid V \in \mc{V}\}$ is a base of neighbourhoods of the trivial coset in $G/K$.  Hence (\ref{distal_SIN:4}) holds.

Now suppose that (\ref{distal_SIN:4}) holds, that is, $\{VK/K \mid V \in \mc{V}\}$ is a base of neighbourhoods of the trivial coset in $G/K$.  Then in particular, the intersection $W = \bigcap_{V \in \mc{V}}V$ is contained in $K$.  By the definition of $\mc{V}$, we see that $W \ge \Res_G(H)$, so $K \ge \Res_G(H)$.  Thus (\ref{distal_SIN:4}) implies (\ref{distal_SIN:5}).

Finally, suppose that (\ref{distal_SIN:5}) holds.  Consider first the action of $H$ on $G/\Res_G(H)$: this action is distal, in particular N-distal, by Corollary~\ref{cor:res_distal}(i).  From the implication (\ref{distal_SIN:2}) $\Rightarrow$ (\ref{distal_SIN:4}) in the present theorem, indeed the action of $H$ on $G/\Res_G(H)$ is $1$-equicontinuous.  Proposition~\ref{sr_equivalents} then ensures that $H$ has (SR) on $G$, and hence has N-equicontinuous action on $G/\Res_G(H)$.  By Lemma~\ref{lem:equi_quotient}, the action of $H$ on $G/K$ is also N-equicontinuous, in particular N-distal.  Thus (\ref{distal_SIN:5}) implies (\ref{distal_SIN:1}) and the cycle of implications is complete.
\end{proof}

In the course of the above proof, we proved the following:

\begin{cor}\label{cor:equi_gen_res}
Let $G$ be a \tdlc group and let $H \le \Aut(G)$ be equicontinuously generated.  Then $H$ has (SR) on $G$.
\end{cor}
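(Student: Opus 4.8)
The plan is to read the statement off from implications already established inside the proof of Theorem~\ref{intro:distal_SIN}, taking care that none of the implications used themselves rely on property (SR). Write $R = \Res_G(H)$ and let $\mc{V}$ denote the set of open $H$-invariant subgroups of $G$.

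First I would invoke Corollary~\ref{cor:res_distal}(i): the action of $H$ on $G/R$ is distal, so in particular every point of $G/R$ is a distal point and the action is N-distal. Thus condition (\ref{distal_SIN:2}) of Theorem~\ref{intro:distal_SIN} holds with $K = R$. Next I would run the chain (\ref{distal_SIN:2}) $\Rightarrow$ (\ref{distal_SIN:3}) $\Rightarrow$ (\ref{distal_SIN:4}) exactly as in the proof of that theorem — the first step uses only Lemma~\ref{lem:distal_orbit_closure}, and the second uses only Lemma~\ref{lem:equi_quotient} together with Theorem~\ref{thm:AGW_generalized}, none of which mentions (SR). This yields that $\{VR/R \mid V \in \mc{V}\}$ is a base of neighbourhoods of the trivial coset in $G/R$; equivalently, the trivial coset is an equicontinuous fixed point for the action of $H$ on $G/R$, i.e.\ this action is $1$-equicontinuous.

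Finally I would apply Proposition~\ref{sr_equivalents} with this same $R$: condition (iv) there (the action of $H$ on $G/R$ is $1$-equicontinuous) implies condition (i) ($H$ has (SR) on $G$), which is the desired conclusion.

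The only genuine subtlety — and the point I would check most carefully — is the risk of circularity, since (SR) was itself used in one implication of Theorem~\ref{intro:distal_SIN}, namely (\ref{distal_SIN:5}) $\Rightarrow$ (\ref{distal_SIN:1}). One must verify that the particular implications invoked above, (\ref{distal_SIN:2}) $\Rightarrow$ (\ref{distal_SIN:3}) $\Rightarrow$ (\ref{distal_SIN:4}), are logically prior to and independent of that step; as noted, they rely only on the general lemmas on equicontinuity and distality and on the external Theorem~\ref{thm:AGW_generalized}, so no circularity arises. Beyond that, the argument is a short concatenation of results already in hand.
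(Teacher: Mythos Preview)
Your proposal is correct and follows essentially the same route as the paper: the corollary is extracted from inside the proof of Theorem~\ref{intro:distal_SIN}, using Corollary~\ref{cor:res_distal}(i) to get N-distality on $G/R$, then the already-established chain (\ref{distal_SIN:2}) $\Rightarrow$ (\ref{distal_SIN:4}) to deduce $1$-equicontinuity on $G/R$, and finally Proposition~\ref{sr_equivalents} to conclude (SR). Your explicit check that only the implications independent of (SR) are invoked is exactly the point the paper is making when it says ``in the course of the above proof''.
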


Let us note the special case of Theorem~\ref{intro:distal_SIN} when $K$ is $1$-equicontinuous on $G/K$ (for instance, if $K$ is a compact subgroup or if $K$ has open normalizer in $G$).  In this case we have more equivalent statements; the next theorem together with Corollary~\ref{cor:res_ergodic} will generalize \cite[Theorem~5.13]{ReidFlat}.

\begin{thm}\label{distal_SIN:compact}
Let $G$ be a \tdlc group, let $H \le \Aut(G)$ be equicontinuously generated and let $K$ be an $H$-invariant subgroup of $G$.  Suppose that $K$ is $1$-equicontinuous on $G/K$.  Let $\mc{V}$ be the set of open $H$-invariant subgroups of $G$ and let $\mc{W}$ be the set of open $H$-invariant subgroups of $G$ containing $K$ as a cocompact subgroup.  Then the action of $H$ on $G/K$ is one of: 

N-equicontinuous, $1$-equicontinuous, distal, N-distal, $1$-distal,

if and only if it has all of these properties.  Moreover, the following are equivalent:
\begin{enumerate}[(i)]
\item $H$ acts distally on $G/K$;
\item $H$ has minimal orbit closures on $G/K$;
\item $\{VK/K \mid V \in \mc{V}\}$ is a base of neighbourhoods of the trivial coset in $G/K$;
\item $\{W/K \mid W \in \mc{W}\}$ is a base of neighbourhoods of the trivial coset in $G/K$;
\item $K$ is an intersection of open $H$-invariant subgroups;
\item $K \ge \Res_G(H)$.
\end{enumerate}
\end{thm}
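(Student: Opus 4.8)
The plan is to deduce the whole statement from Theorem~\ref{intro:distal_SIN}, Corollary~\ref{cor:res_distal}(ii) and Proposition~\ref{prop:SIN_tdlc}; the point is that the hypothesis that $K$ is $1$-equicontinuous on $G/K$ collapses both the equicontinuity hierarchy and the distality hierarchy onto a single condition, namely $K \ge \Res_G(H)$.

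For the first assertion (that the five listed properties coincide) I would read off the implication diagram that $1$-distal is the weakest of the five, so it suffices to establish the cycle $1$-distal $\Rightarrow$ distal $\Rightarrow$ N-distal $\Rightarrow$ N-equicontinuous $\Rightarrow$ $1$-equicontinuous $\Rightarrow$ $1$-distal. Here the link $1$-distal $\Rightarrow$ distal is exactly Corollary~\ref{cor:res_distal}(ii) (this is where the hypothesis on $K$ is used), the link N-distal $\Rightarrow$ N-equicontinuous is the equivalence of (\ref{distal_SIN:2}) and (\ref{distal_SIN:1}) in Theorem~\ref{intro:distal_SIN}, and the other three implications are formal from the diagram.

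For the list (i)--(vi): by the first assertion ``distal'' in (i) can be replaced by ``N-distal'', so Theorem~\ref{intro:distal_SIN} immediately gives (i) $\Leftrightarrow$ (iii) $\Leftrightarrow$ (vi), these being conditions (\ref{distal_SIN:2}), (\ref{distal_SIN:4}), (\ref{distal_SIN:5}) of that theorem (the set $\mc{V}$ there is the present $\mc{V}$). It then remains to bring (ii), (iv), (v) into the circle, which I would do with two detours through (vi). For (i) $\Rightarrow$ (ii): by the first assertion $H$ is $1$-equicontinuous on $G/K$, and since $K$ is too, Proposition~\ref{prop:SIN_tdlc}(ii) gives minimal orbit closures. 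For (ii) $\Rightarrow$ (vi): minimal orbit closures says the orbit-closure relation on $G/K$ is symmetric, which trivially yields condition (\ref{distal_SIN:3}) of Theorem~\ref{intro:distal_SIN} with $N = G$ (the compactness clause in (\ref{distal_SIN:3}) is not needed), hence (vi). For (vi) $\Rightarrow$ (iv): since (vi) $\Rightarrow$ (i) $\Rightarrow$ ``$H$ is $1$-equicontinuous on $G/K$'', Proposition~\ref{prop:SIN_tdlc}(i) produces a base of neighbourhoods of the trivial coset consisting of sets $UK/K$ with $U$ compact open and $UK$ an $H$-invariant open subgroup; each such $UK$ lies in $\mc{W}$ because $UK/K$ is the image of the compact set $U$ under the quotient map and so is compact. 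Finally (iv) $\Rightarrow$ (v) holds because $\bigcap_{W \in \mc{W}} W = K$ in the Hausdorff space $G/K$, and (v) $\Rightarrow$ (vi) is immediate from the definition of $\Res_G(H)$.

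I do not expect a real obstacle: the theorem is essentially a repackaging of results already in hand under the $1$-equicontinuity hypothesis on $K$. The two points that need a moment's care are (a) that enlarging the neighbourhood base $\{UK/K\}$ to the full family $\{W/K \mid W \in \mc{W}\}$ still leaves a base, which is automatic since every member of the larger family is a neighbourhood of the trivial coset; and (b) that condition (\ref{distal_SIN:3}) of Theorem~\ref{intro:distal_SIN} only asks for the orbit-closure implication on \emph{some} neighbourhood and makes no essential use of the compactness clause, so global symmetry of the orbit-closure relation is more than enough to invoke it.
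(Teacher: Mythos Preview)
Your proposal is correct and follows essentially the same route as the paper's proof: Corollary~\ref{cor:res_distal}(ii) to collapse the distality hierarchy, Theorem~\ref{intro:distal_SIN} for the core equivalences (i)$\Leftrightarrow$(iii)$\Leftrightarrow$(vi), Proposition~\ref{prop:SIN_tdlc}(ii) for (i)$\Rightarrow$(ii), and Proposition~\ref{prop:SIN_tdlc}(i) to obtain (iv). The only cosmetic difference is that the paper closes the loop on (ii) by observing directly that minimal orbit closures implies $1$-distal (since the trivial coset has singleton orbit closure), whereas you route through condition~(\ref{distal_SIN:3}) of Theorem~\ref{intro:distal_SIN}; both are immediate.
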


\begin{proof}
By Corollary~\ref{cor:res_distal} it is equivalent for the action of $H$ on $G/K$ to be distal, N-distal or $1$-distal.  Moreover, N-distal is equivalent to N-equicontinuous by Theorem~\ref{intro:distal_SIN}, N-equicontinuous implies $1$-equicontinuous, and $1$-equicontinuous implies $1$-distal.  The properties listed for the action of $H$ on $G/K$ are therefore all equivalent.

By Theorem~\ref{intro:distal_SIN}, (iii) and (vi) are each equivalent to $H$ having N-distal action.  By Proposition~\ref{prop:SIN_tdlc}(ii), (i) implies (ii); on the other hand it is clear that (ii) implies that the action is $1$-distal.  Thus (i), (ii), (iii) and (vi) are equivalent.

Assuming (iii), then (iv) follows by Proposition~\ref{prop:SIN_tdlc}(i).  The implications (iv) $\Rightarrow$ (v) $\Rightarrow$ (vi) are clear.  This completes proof that all the conditions are equivalent.
\end{proof}

Given a \tdlc group $G$ and a group $H$ acting distally on $G$, we observe an interesting closure property of equicontinuous subsets of $H$.  This generalizes \cite[Corollary~1.9]{ReidFlat}, which in turn generalized several previously known sufficient conditions for a \tdlc group $G$ to be a SIN group (in other words, for $\Inn(G)$ to be equicontinuous on $G$).

\begin{cor}\label{cor:distal_to_equi:auto}
Let $G$ be a \tdlc group and let $H \le \Aut(G)$.  Suppose that $H$ acts distally on $G$.  Then for every equicontinuous symmetric subset $S$ of $H$, the subgroup $\langle S \rangle$ generated by $S$ is itself equicontinuous.
\end{cor}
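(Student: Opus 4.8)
The plan is to apply Theorem~\ref{intro:distal_SIN} not to $H$ but to the subgroup $L := \langle S \rangle$, with the invariant subgroup taken to be trivial. First observe that $L$ is equicontinuously generated: by hypothesis $S$ is a symmetric equicontinuous subset of $\Aut(G)$, which is precisely the generating data required by the definition of ``equicontinuously generated'' (and if one wishes to have $1 \in S$, one may harmlessly replace $S$ by $S \cup \triv$ without affecting $\langle S \rangle$, the symmetry of $S$, or the equicontinuity of $S$, using Lemma~\ref{lem:equi_product}(i)).

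Next I would pass from the distality of $H$ on $G$ to an N-distal action of $L$ on a coset space. Since $L \le H$ and $H$ acts distally on $G$, the subgroup $L$ also acts distally on $G$: no net in $L \subseteq H$ can witness a proximal pair. Viewing $G$ as the coset space $G/\triv$, the trivial coset $1$ is a fixed point of $L$ (automorphisms fix the identity), and the whole space $G$ is a neighbourhood of $1$ consisting of distal points. Hence $L$ acts N-distally on $G/\triv$ in the sense of the definition preceding Theorem~\ref{intro:distal_SIN}.

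Now Theorem~\ref{intro:distal_SIN}, applied with the equicontinuously generated group $L$ in place of $H$ and with $K = \triv$, gives the equivalence of conditions (\ref{distal_SIN:1}) and (\ref{distal_SIN:2}) of that theorem. Since $L$ acts N-distally on $G = G/\triv$, condition (\ref{distal_SIN:2}) holds, so condition (\ref{distal_SIN:1}) holds: $L$ acts N-equicontinuously on $G$, i.e. there is an identity neighbourhood in $G$ all of whose points are equicontinuous points for $L$. Finally, by Lemma~\ref{lem:equi_homogeneous}, a set of automorphisms that is equicontinuous at one point of $G$ is equicontinuous at every point; therefore $L = \langle S \rangle$ is equicontinuous on $G$, as claimed.

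There is essentially no obstacle here once the right reduction is spotted. The one genuinely load-bearing point is to apply the main theorem to the generated subgroup $\langle S \rangle$ rather than to $H$, and to use $K = \triv$ so that the hypothesis ``N-distal'' comes for free from the assumed distality of $H$ on all of $G$; the remaining steps (the trivial reduction allowing $1 \in S$, and the passage from ``equicontinuous somewhere'' to ``equicontinuous everywhere'') are immediate consequences of the cited lemmas.
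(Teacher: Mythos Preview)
Your proof is correct and follows essentially the same approach as the paper: apply Theorem~\ref{intro:distal_SIN} to $L=\langle S\rangle$ with $K=\triv$, using that $L$ inherits distality from $H$, and then upgrade N-equicontinuity to equicontinuity via Lemma~\ref{lem:equi_homogeneous}. Your write-up is more detailed, but the logical structure is identical.
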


\begin{proof}
Given an equicontinuous subset $S$ of $H$, then $L = \langle S \rangle$ is equicontinuously generated and acts distally on $G$.  Hence by Theorem~\ref{intro:distal_SIN}, $L$ is N-equicontinuous on $G$; indeed, $L$ is equicontinuous by Lemma~\ref{lem:equi_homogeneous}.
\end{proof}

\section{Reduced envelopes}\label{sec:envelope}

\subsection{First results}\label{sec:envelope_existence}

Suppose that $H$ is a subgroup of $G$, acting by conjugation.  There is a close relationship between the set of open subgroups normalized by $H$, and the set of open subgroups containing $H$.  In particular, if $H$ has (SR) on $G$, then there is also an open subgroup containing $H$ that is the smallest such up to finite index.

\begin{defn}
Let $G$ be a \tdlc group and let $H \le G$.  An \defbold{envelope} for $H$ in $G$ is an open subgroup $E$ of $G$ such that $H \le E$.  The envelope is \defbold{reduced} if, given any open subgroup such that $|H:H \cap E_2| < \infty$, then $|E:E \cap E_2|<\infty$.
\end{defn}

The following is a sufficient condition for a group to be a reduced envelope.

\begin{lem}\label{lem:reduced_criterion}
Let $G$ be a \tdlc group and let $H \le G$.  Suppose that there is an open subgroup of $G$ of the form $E = \overline{H\Res_G(H)X}$, where $X$ is compact.  Then $E$ is a reduced envelope for $H$ in $G$.
\end{lem}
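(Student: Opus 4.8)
Write $R=\Res_G(H)$. First I would note that $E$ is genuinely an envelope for $H$: we must have $X\neq\emptyset$ (else $E=\overline{\emptyset}$ is not a subgroup), and then for any $x\in X$ we have $x\in HRX\subseteq E$, whence $H=(Hx)x\inv\subseteq E$ (and likewise $R\subseteq E$). So everything reduces to showing $E$ is \emph{reduced}: given an open subgroup $E_2\le G$ with $|H:H\cap E_2|<\infty$, we must show $|E:E\cap E_2|<\infty$. The plan is to produce an open $H$-invariant subgroup $D'$ of $G$ with $D'\le E\cap E_2$ and $|E:D'|<\infty$; this suffices, since then $|E:E\cap E_2|\le|E:D'|$.

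For the construction of $D'$, set $D=E\cap E_2$. Because $H\le E$ we have $H\cap D=H\cap E_2$, so $|H:H\cap D|<\infty$; put $D'=\bigcap_{h\in H}hDh\inv$. The key point is that for $g\in H\cap D$ one has $gDg\inv=D$ (as $g$ lies in the subgroup $D$), so $D'$ in fact equals the intersection of the $hDh\inv$ over a \emph{finite} set of coset representatives of $H\cap D$ in $H$, and is therefore an open subgroup; it is $H$-invariant by construction, and since $1\in H$ we have $D'\le D\le E\cap E_2$. Being open and $H$-invariant, $D'$ contains $R=\Res_G(H)$. I would also record that $H\cap D'=\bigcap_{h\in H}h(H\cap D)h\inv$ is the core of $H\cap D$ in $H$, so $|H:H\cap D'|<\infty$ as well.

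It then remains to bound $|E:D'|$, and this is where the shape $E=\overline{HRX}$ is used. Since $H$ normalizes $D'$, the product $HD'$ is a subgroup of $G$; it is open, hence closed, and $HD'/D'\cong H/(H\cap D')$ is finite, say $HD'=\bigcup_{k=1}^{p}y_kD'$ with $y_k\in H$. As $R\le D'$ we get $HRX\subseteq HD'X$, and compactness of $X$ lets us cover it by finitely many of the open cosets $HD'x$ ($x\in X$), so that $HD'X=\bigcup_{j=1}^{m}HD'x_j$ is a finite union of cosets of $HD'$ and in particular is closed. Hence
\[
E=\overline{HRX}\subseteq\overline{HD'X}=HD'X=\bigcup_{j,k}y_kD'x_j=\bigcup_{j,k}D'(y_kx_j),
\]
where the last step uses that each $y_k\in H$ normalizes $D'$. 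So $E$ is covered by at most $pm$ right cosets of $D'$, giving $|E:D'|\le pm<\infty$, and therefore $|E:E\cap E_2|\le|E:D'|<\infty$.

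The one step that is not purely formal is the construction of $D'$: we need an open \emph{$H$-invariant} subgroup of finite index sitting inside $E\cap E_2$, starting only from the fact that $H\cap E_2$ has finite index in $H$. The core construction $\bigcap_{h\in H}hDh\inv$ is exactly the right device, and what makes it go through — that this a priori infinite intersection is actually finite, hence open — is the elementary observation that conjugating $D$ by an element of $D$ does nothing. Everything downstream (that $D'\supseteq R$, and the finite covering of $E$ by cosets of $D'$, using compactness of $X$ and the fact that an open subgroup is closed) is routine bookkeeping.
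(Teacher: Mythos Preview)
Your proof is correct. The paper takes a somewhat different route: given $E_2$ with $|H:H\cap E_2|<\infty$, it writes $H=(H\cap E_2)Y$ for a finite set $Y$ and invokes Lemma~\ref{lem:equi_residual} to conclude $\Res_G(H)=\Res_G(H\cap E_2)\le E_2$; then, since $H$ normalizes $R$, the subgroup $\overline{(H\cap E_2)R}$ has finite index in $\overline{HR}$, which is cocompact in $E$, so $E\cap E_2$ is open and cocompact in $E$, hence of finite index. Your core construction $D'=\bigcap_{h\in H}hDh^{-1}$ is precisely the mechanism underlying Lemma~\ref{lem:equi_residual}(i) specialized to a finite symmetric set, so you have in effect inlined that lemma, and your explicit coset-counting with $X$ replaces the paper's cocompactness step. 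The paper's route is shorter by citing the lemma; yours has the advantage of being entirely self-contained, using nothing beyond the definition of $\Res_G(H)$.
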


\begin{proof}
Let $E_2$ be an open subgroup of $G$, let $H_2 = H \cap E_2$ and suppose that $|H:H_2| < \infty$.  Then $H = H_2Y$ where $Y$ is finite, so $\Res_G(H) = \Res_G(H_2)$ by Lemma~\ref{lem:equi_residual}.  In particular, $\Res_G(H) \le E_2$.  Thus $E_2$ contains the finite index subgroup $\overline{H_2\Res_G(H)}$ of $\overline{H\Res_G(H)}$; the latter is cocompact in $E$ by hypothesis.  Since $E_2$ is also open, it follows that $|E:E \cap E_2|<\infty$.  Thus $E$ is a reduced envelope.
\end{proof}

If $H$ has a reduced envelope, recall that the \defbold{envelope class} $E_G(H)$ is the set of open subgroups of $G$ commensurate to a reduced envelope for $H$.  The discrete residual of $H$ on $G$ can be reinterpreted as the discrete residual of $E \in E_G(H)$.  The size of generating set needed for $E \in E_G(H)$ is controlled by a generating set for $H$.

\begin{lem}\label{lem:reduced_envelope:residual}
Let $G$ be a \tdlc group and let $H \le G$.  Suppose $H$ has a reduced envelope and let $E \in E_G(H)$.  
\begin{enumerate}[(i)]
\item Let $O$ be an open $H$-invariant subgroup of $G$.  Then there exists an open normal subgroup $O^*$ of $E$ such that $O^* \le O$.  Thus $\Res(E) = \Res_G(H)$; in particular, $\Res(E)$ does not depend on the choice of $E$, and the normalizer of $\Res_G(H)$ is open in $G$.
\item If $H$ is equicontinuously generated as a group of automorphisms of $G$, then so is $E$, and if $H$ is compactly generated, then so is $E$.
\end{enumerate}
\end{lem}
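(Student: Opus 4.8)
The plan is to reduce to the case where $E$ is itself a reduced envelope for $H$, and then to use the defining property of a reduced envelope to convert openness of certain subgroups into finite index. For the reduction: an arbitrary $E \in E_G(H)$ is commensurate with some reduced envelope $E_0$ of $H$, and $E_0 \ge H$, so $H' := H \cap E$ has finite index in $H$. I would check (all routine) that $E$ is then a reduced envelope for $H'$; that $\Res_G(H') = \Res_G(H)$, by the standard finite-index argument that an open $H'$-invariant subgroup has only finitely many $H$-conjugates, whose intersection is open and $H$-invariant; and, via Lemma~\ref{lem:equi_residual}(ii) applied to the finite set of coset representatives of $H'$ in $H$ (together with its evident analogue for compact generation), that $H'$ is equicontinuously generated, resp.\ compactly generated, whenever $H$ is. So I may assume $H \le E$ and $E$ is a reduced envelope for $H$.

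For (i), given an open $H$-invariant subgroup $O$ of $G$, set $D := O \cap E$, an open subgroup of $E$. Since $H$ normalizes $O$ and lies inside the group $E$, it normalizes $D$; and $D$ normalizes itself, so $N_E(D)$ is an open subgroup of $G$ containing $H$. Applying the reduced-envelope property with $E_2 = N_E(D)$ gives $|E : N_E(D)| < \infty$, so $D$ has only finitely many $E$-conjugates and its core $O^* := \bigcap_{e \in E}eDe\inv$ is a finite intersection of open subgroups, hence open; it is normal in $E$ and satisfies $O^* \le D \le O$, which gives the first assertion. Intersecting over all such $O$ yields $\Res(E) \le \Res_G(H)$, while conversely every open normal subgroup of $E$ is an open $H$-invariant subgroup of $G$ (since $H \le E$), giving $\Res_G(H) \le \Res(E)$; hence $\Res(E) = \Res_G(H)$. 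In particular this is independent of $E$, and since $\Res_G(H)$ is normal in the open subgroup $E$, its normalizer in $G$ is open.

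For (ii), Van Dantzig's theorem gives a compact open subgroup $V$ of $E$. Then $\langle H, V \rangle$ is an open subgroup of $E$ containing $H$, hence of finite index in $E$ by the reduced-envelope property, so $E = \langle H, V\rangle F$ for some finite $F \subseteq E$. If $S_0 = S_0\inv$ generates $H$ and is equicontinuous on $G$, then $S := S_0 \cup V \cup F \cup F\inv$ is a symmetric generating set for $E$, and it is equicontinuous on $G$ because it is a union of finitely many equicontinuous sets: $S_0$ by hypothesis, $V$ because it is compact (Lemma~\ref{lem:compact_equi}), and $F \cup F\inv$ because it is finite (Lemma~\ref{lem:equi_product}(i)). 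Thus $E$ is equicontinuously generated; and if $S_0$ can be taken compact, then $S$ is compact, so $E$ is compactly generated.

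The only genuinely load-bearing step should be the appeal to the reduced-envelope property in (i): it is precisely this hypothesis that upgrades ``$N_E(D)$ is open'' to ``$N_E(D)$ has finite index in $E$'', and hence makes the core $O^*$ open — in general the core of an open subgroup need not be open. Everything else is bookkeeping with commensurability and with finite unions of equicontinuous (or compact) sets.
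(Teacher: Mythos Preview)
Your proof is correct and follows essentially the same strategy as the paper: use the reduced-envelope property to force the relevant normalizer to have finite index in $E$, then take the core. The one presentational difference is that you perform an explicit upfront reduction to the case $H \le E$ (replacing $H$ by $H' = H \cap E$), whereas the paper keeps a general $E \in E_G(H)$ throughout, invoking an auxiliary reduced envelope $E_2 \ge H$ and commensurability at each step. Your reduction is arguably cleaner, particularly for part~(ii): the paper's argument there writes $E = \langle H \cup U \cup X\rangle$, which tacitly uses $H \le E$, so your version makes explicit a step the paper leaves implicit. In part~(i) the two arguments differ only cosmetically---you work with $\N_E(O \cap E)$, the paper with $\N_G(O)$---and the converse inclusion $\Res_G(H) \le \Res(E)$ is immediate for you (since $H \le E$) while the paper recovers it via the same finite-index trick you used in your reduction.
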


\begin{proof}
(i)
Let $N = \N_G(O)$ and let $E_2$ be a reduced envelope for $H$.  We see that $N$ is an open subgroup of $G$ containing $H$; by the reduced envelope property, $|E_2:E_2\cap N|$ is finite, so $|E:E \cap N|$ is finite.  It follows that $O_2 := \bigcap_{e \in E}eOe\inv$ is a finite intersection of conjugates of $O$, so $O_2$ is open, and hence $O^* = O_2 \cap E$ is open; by construction, $O^*$ is a normal subgroup of $E$.  We have shown that every $H$-invariant subgroup contains an open normal subgroup of $E$.  Conversely, since $E$ contains a finite index subgroup of $H$, the same argument shows that every open normal subgroup of $E$ contains an $H$-invariant open subgroup.  Thus $\Res(E) = \Res_G(H)$.  The remaining conclusions are clear.

(ii)
Let $U$ be a compact open subgroup of $E$.  Then $E_2 = \langle H, U \rangle$ is an open subgroup of $G$ containing $H$, so $E_2$ has finite index in $E$.  Thus $E = \langle H \cup U \cup X \rangle$ where $X$ is finite.  It follows that if $H$ is compactly generated, so is $E$.  Similarly, since the compact set $U \cup X$ is necessarily equicontinuous on $G$, if $H$ is equicontinuously generated on $G$ then so is $E$.
\end{proof}

We can now characterize the subgroups with (SR) on $G$ in terms of reduced envelopes.

\begin{lem}\label{lem:uniform_criterion}
Let $G$ be a \tdlc group and let $H \le G$.  Then the following are equivalent:
\begin{enumerate}[(i)]
\item $H$ has (SR) on $G$;
\item $H$ has a reduced envelope $E$ such that $E/\Res(E)$ is a SIN group.
\end{enumerate}
Moreover, if (ii) holds, we can take $E$ to be of the form $E = H\Res_G(H)U$ for a compact open subgroup $U$ of $G$.
\end{lem}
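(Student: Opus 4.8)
The plan is to prove the two implications separately, obtaining the "moreover" clause as a byproduct of the construction used for $(i)\Rightarrow(ii)$.

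For $(ii)\Rightarrow(i)$: suppose $E$ is a reduced envelope for $H$ with $E/\Res(E)$ a SIN group. By Lemma~\ref{lem:reduced_envelope:residual}(i) we have $\Res(E)=\Res_G(H)=:R$, with $R$ normal in $E$. Since $E/R$ is SIN it has a base of identity neighbourhoods consisting of open normal subgroups, and being locally compact it therefore has a compact open normal subgroup $N/R$. Then $N$ is an open subgroup of $G$ (as $E$ is open in $G$), normal in $E$, with $R\le N$ and $N/R$ compact. As $H\le E$, the subgroup $N$ is $H$-invariant and $R$ is cocompact in it, so $H$ has (SR) on $G$.

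For $(i)\Rightarrow(ii)$: put $R=\Res_G(H)$. By Proposition~\ref{sr_equivalents} we may fix an $H$-invariant open subgroup $W_0=VR$ of $G$ with $V$ a compact open subgroup. First I check that $E:=HW_0$ is an open subgroup of $G$ with $H\le E$; this uses only that $W_0$ is $H$-invariant (so $HW_0$ is closed under products and inverses, and is normalized by $H$, hence $H$-invariant) and that $W_0$ is open. Since $W_0=VR$ is a subgroup equal to its own inverse we also have $W_0=RV$, hence $E=HRV$; and as $E$ is an open, hence closed, subgroup, $E=\overline{H\,\Res_G(H)\,V}$. Lemma~\ref{lem:reduced_criterion} then shows $E$ is a reduced envelope for $H$ in $G$, of the desired form with $U=V$, and Lemma~\ref{lem:reduced_envelope:residual}(i) gives $\Res(E)=R$.

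It remains to show $E/R$ is a SIN group, and I expect this to be the main obstacle. Because $\Res(E)=R$, every open normal subgroup of $E$ contains $R$, so the open normal subgroups of $E/R$ have trivial intersection; thus $\Res(E/R)=\triv$ and, by Corollary~\ref{cor:res_distal}(i), $\Inn(E/R)$ acts distally on $E/R$. Next I exhibit an equicontinuous symmetric generating set for $\Inn(E/R)$: the image $W_0/R$ is a compact subgroup of $E/R$, so conjugation by it is equicontinuous by Lemma~\ref{lem:compact_equi}; and since $H$ acts $1$-equicontinuously on $G/R$ (Proposition~\ref{sr_equivalents}) and $E$ is an $H$-invariant closed subgroup with $R\le E\le G$, Lemma~\ref{lem:equi_quotient} gives $1$-equicontinuity of the $H$-action on $E/R$, which upgrades to equicontinuity everywhere by Lemma~\ref{lem:equi_homogeneous}. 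Since $E=HW_0$, the images of $H$ and of $W_0$ generate $E/R$, so the conjugations by elements of these two sets generate $\Inn(E/R)$, and their union is symmetric and equicontinuous by Lemma~\ref{lem:equi_product}(i). Finally, applying Corollary~\ref{cor:distal_to_equi:auto} to the distal action of $\Inn(E/R)$ on $E/R$ and this equicontinuous symmetric generating set, we conclude that $\Inn(E/R)$ is itself equicontinuous, i.e. $E/R$ is a SIN group. The only delicate points are bookkeeping: tracking that $R$, $W_0$ and $E$ are $H$-invariant closed subgroups so that Lemmas~\ref{lem:equi_quotient} and~\ref{lem:equi_homogeneous} apply, and that the conjugations by the chosen generating set really do generate all of $\Inn(E/R)$.
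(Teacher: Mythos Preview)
Your proof is correct, and the construction of $E$ and the implication $(ii)\Rightarrow(i)$ match the paper's approach closely (your version of $(ii)\Rightarrow(i)$ is arguably cleaner, going directly through the definition of (SR) rather than via Proposition~\ref{sr_equivalents}).

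The genuine difference is in how you establish that $E/R$ is a SIN group in $(i)\Rightarrow(ii)$. The paper argues directly: Proposition~\ref{sr_equivalents}(iii) says the groups $W/R$ with $W\in\mc{W}$ form a base of identity neighbourhoods in $G/R$, and Lemma~\ref{lem:reduced_envelope:residual}(i) says every such $W$ contains an open normal subgroup of $E$; hence the open normal subgroups of $E/R$ form a base of identity neighbourhoods, which is exactly the SIN property. Your route instead shows $\Inn(E/R)$ acts distally (via $\Res(E/R)=\triv$ and Corollary~\ref{cor:res_distal}) and is equicontinuously generated, then invokes Corollary~\ref{cor:distal_to_equi:auto}. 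This is logically sound and there is no circularity, since Corollary~\ref{cor:distal_to_equi:auto} is proved before this lemma; but it brings in the main theorem of the paper (Theorem~\ref{intro:distal_SIN}) for a conclusion that follows from the two earlier lemmas by a two-line argument. The paper's approach is more self-contained and makes the SIN structure visible explicitly, whereas yours illustrates that the ``distal $+$ equicontinuously generated $\Rightarrow$ equicontinuous'' principle is a general-purpose tool.
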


\begin{proof}
Let $R = \Res_G(H)$.

Suppose $H$ has (SR) on $G$.  Let $\mc{W}$ be the set of $H$-invariant open subgroups $W$ of $G$ of the form $W = UR$, where $U$ is a compact open subgroup of $G$.  Then by Proposition~\ref{sr_equivalents}, $\{W/R \mid W \in \mc{W}\}$ is a base of neighbourhoods of the trivial coset in $G/R$.  In particular, $H$ has an envelope of the form $E = HW$ for $W \in \mc{W}$; we can write $E$ as $E = HRU$ for $U$ a compact open subgroup of $G$, so $E$ is a reduced envelope by Lemma~\ref{lem:reduced_criterion}.  By Lemma~\ref{lem:reduced_envelope:residual}, we have $R = \Res(E)$ and every open $H$-invariant subgroup of $E$ contains an open $E$-invariant subgroup; in particular, every $W \in \mc{W}$ contains an open normal subgroup of $E$.  We conclude that $E/\Res(E)$ is a SIN group, so (i) implies (ii).

Conversely, suppose $H$ has a reduced envelope $E$ such that $E/\Res(E)$ is a SIN group.  Then $R=\Res(E)$ by Lemma~\ref{lem:reduced_envelope:residual}, so $\N_G(R)$ is open in $G$, and $E/R$ is a SIN group.  In particular, $H$ is equicontinuous on $E/R$, since $H \le E$, and hence $H$ has (SR) on $G$.  Thus (ii) implies (i). 
\end{proof}

We summarize the properties we have obtained for reduced envelopes of subgroups $H$ of a \tdlc group $G$, such that $H$ has (SR) on $G$.  This will immediately imply Theorem~\ref{intro:reduced_envelope}.

\begin{thm}\label{thm:sr_envelope}
Let $G$ be a \tdlc group and let $H \le G$.  If $H$ is compactly generated, or more generally, if $H$ has equicontinuously generated action on $G$, then $H$ has (SR) on $G$.

Now suppose that $H$ has (SR) on $G$ and let $K = \overline{H\Res_G(H)}$.  Then there is a reduced envelope $E$ for $H$ in $G$ of the form $E = KU$, where $U$ is a compact open subgroup of $G$.  Moreover the following holds for every closed subgroup $L$ of $G$ such that $K \le L$ and $|L:L \cap E| < \infty$:
\begin{enumerate}[(i)]
\item\label{sr_envelope:1} $L$ is compactly or equicontinuously generated on $G$ respectively if and only if $H$ is;
\item\label{sr_envelope:2} We have $\Res_G(H) = \Res(E) =\Res_G(L)$;
\item\label{sr_envelope:3} $L/\Res_G(H)$ is a SIN group;
\item\label{sr_envelope:4} Letting $\mc{W}$ be the set of open subgroups of $G$ containing $H$, then $\mc{W}$ has intersection $K$, and $\{W/K \mid W \in \mc{W}\}$ is a base of neighbourhoods for the trivial coset in $G/K$.
\end{enumerate}
\end{thm}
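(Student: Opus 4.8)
The plan is to dispatch the existence statement and the construction of $E$ first, then prove the residual identities in (\ref{sr_envelope:2}), and finally derive (\ref{sr_envelope:3}), (\ref{sr_envelope:4}) and (\ref{sr_envelope:1}) from them. For the opening claim: a compact set of automorphisms is equicontinuous by Lemma~\ref{lem:compact_equi}, so compactly generated action implies equicontinuously generated action, which implies (SR) by Corollary~\ref{cor:equi_gen_res}. Assuming $H$ has (SR), Lemma~\ref{lem:uniform_criterion} yields a reduced envelope of the form $E = H\Res_G(H)U$ with $U$ a compact open subgroup of $G$ and $E/\Res(E)$ a SIN group. Since $E$ is a subgroup containing $H\Res_G(H)$ and is open, hence closed, it contains $K = \overline{H\Res_G(H)}$; thus $KU \subseteq E = H\Res_G(H)U \subseteq KU$, so $E = KU$.

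Next I would establish (\ref{sr_envelope:2}). The equality $\Res_G(H) = \Res(E)$ is Lemma~\ref{lem:reduced_envelope:residual}(i). Since $H \le L$, every open $L$-invariant subgroup is $H$-invariant, so $\Res_G(H) \le \Res_G(L)$. Conversely, given an open $H$-invariant subgroup $O$ of $G$, the normalizer $\N_G(O)$ is an open subgroup containing $H$, so the reduced-envelope property gives $|E:E\cap\N_G(O)|<\infty$; together with $|L:L\cap E|<\infty$ this yields $|L:L\cap\N_G(O)|<\infty$, so $\bigcap_{l\in L}lOl\inv$ is a finite intersection of open subgroups, hence an open $L$-invariant subgroup contained in $O$, whence $\Res_G(L)\le O$. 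Intersecting over all such $O$ gives $\Res_G(L)\le\Res_G(H)$. Writing $R := \Res_G(H) = \Res_G(L) = \Res(E)$, note that $R$ is normal in $E$ and in $L$ (being $\Res_G(L)$), that $\N_G(R)$ is open by Lemma~\ref{lem:reduced_envelope:residual}(i), and that $K,L,E \le \N_G(R)$.

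For (\ref{sr_envelope:3}): since $R \le K \le L\cap E$ and $L\cap E$ is closed in $E$ (as $L$ is closed in $G$), $(L\cap E)/R$ is a closed subgroup of the SIN group $E/R$, hence SIN; moreover $L\cap E$ is open in $L$ (as $E$ is open in $G$) and of finite index in $L$, so $(L\cap E)/R$ is an open finite-index subgroup of $L/R$, and a finite-index overgroup of a SIN group is SIN (intersect the finitely many conjugates of a small open normal subgroup of the finite-index piece), so $L/R$ is SIN. For (\ref{sr_envelope:4}): each $W$ in the set $\mc{W}$ of open subgroups of $G$ containing $H$ satisfies $H \le W \le \N_G(W)$, so $W$ equals its unique $H$-conjugate, i.e.\ $W$ is $H$-invariant, hence $W \supseteq R$ and, being closed, $W \supseteq \overline{HR}=K$; thus $\mc{W}$ is exactly the set of open $H$-invariant subgroups containing $K$, and $\bigcap\mc{W}\supseteq K$. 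Since $K \supseteq R = \Res_G(H)$, Theorem~\ref{intro:distal_SIN}((\ref{distal_SIN:5})$\Rightarrow$(\ref{distal_SIN:4})) shows that $\{VK/K \mid V\in\mc{V}\}$ is a base of neighbourhoods of the trivial coset in $G/K$, where $\mc{V}$ is the set of open $H$-invariant subgroups of $G$. For each such $V$ we have $R\le V$, so both $H$ (as $V$ is $H$-invariant) and $R$ (as $R\subseteq V$) normalize $V$; hence $K=\overline{HR}\le\N_G(V)$ because $\N_G(V)$ is open, hence closed, and therefore $VK$ is an open subgroup, containing $H$, so $VK\in\mc{W}$. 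This gives that $\{W/K\mid W\in\mc{W}\}$ is a base of neighbourhoods of the trivial coset in $G/K$, and in particular $\bigcap\mc{W}\subseteq K$, so $\bigcap\mc{W}=K$.

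Finally (\ref{sr_envelope:1}). The aim is the chain ``$L$ [compactly / equicontinuously] generated $\iff$ $K$ is $\iff$ $E$ is $\iff$ $H$ is''. A direct computation gives $L\cap E = K(L\cap U)$ with $L\cap U$ a compact open subgroup of $L$ and $K$ cocompact in $L\cap E$; combining this with $|L:L\cap E|<\infty$, and with the facts that compact (resp.\ equicontinuous) generation passes to cocompact closed subgroups, is preserved under forming compact (resp.\ equicontinuous, by Lemma~\ref{lem:equi_residual}(ii)) extensions, and is preserved up and down finite-index inclusions (by the Reidemeister--Schreier argument inside the proof of Lemma~\ref{lem:equi_residual}(ii)), yields ``$L$ is $\iff$ $K$ is $\iff$ $E$ is''. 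The implication ``$H$ is $\Rightarrow$ $E$ is'' is Lemma~\ref{lem:reduced_envelope:residual}(ii). I expect the main obstacle to be the converse ``$E$ (equivalently $K$) is $\Rightarrow$ $H$ is'': here I would exploit the (SR) hypothesis, which makes $E/R$ a SIN group and hence, once compactly (resp.\ equicontinuously) generated, compact-open-normal-by-finitely-generated; pulling this decomposition back and intersecting with $H$ (using that $E=HRU$ forces the image of $H$ to have finite index in $E/N$ for a suitable compact open normal $N/R \trianglelefteq E/R$, hence to be finitely generated) reduces the claim to the analogous statement for the ``bottom layer'' $H\cap N$, which must then be treated by a further application of the same circle of ideas together with the SIN structure; getting that reduction to actually terminate is the delicate point.
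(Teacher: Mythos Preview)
Your argument for the opening claim, the construction of $E = KU$, and parts (\ref{sr_envelope:2}), (\ref{sr_envelope:3}), (\ref{sr_envelope:4}) is correct and essentially parallel to the paper's. The only cosmetic difference is in (\ref{sr_envelope:4}): you invoke Theorem~\ref{intro:distal_SIN} directly, whereas the paper routes through the SIN structure of $E/R$ via Corollary~\ref{cor:SIN_intersection} and Proposition~\ref{prop:SIN_tdlc}(i). Both routes work, and yours is arguably cleaner.

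For (\ref{sr_envelope:1}), your chain $L \Leftrightarrow K \Leftrightarrow E$ together with the forward implication ``$H$ is $\Rightarrow$ $E$ is'' is exactly what the paper's one-sentence proof establishes: it cites Lemma~\ref{lem:reduced_envelope:residual}(ii) for $H \Rightarrow E$, and Lemma~\ref{lem:equi_residual} plus the cocompactness of $K$ in $E$ for the equivalences among $L$, $K$, $E$. The paper supplies no separate argument for the converse ``$E$ is $\Rightarrow$ $H$ is'', and your instinct that this direction is genuinely problematic is correct. In fact the converse fails for compact generation: take $G$ compactly generated, nondiscrete and topologically simple, and let $H$ be any non-finitely-generated countable dense subgroup; then every open $H$-invariant subgroup is normal in $G$, so $\Res_G(H)=G$, $H$ has (SR) trivially, $K=E=L=G$ is compactly generated, but $H$ is not. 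Note that the introduction version, Theorem~\ref{intro:reduced_envelope}(\ref{reduced_envelope:3}), asserts only the forward direction. So rather than trying to make the reduction you sketch terminate, you should simply prove what the paper's argument actually supports: the implication from $H$ to $L$, and the equivalence among all admissible $L$.
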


\begin{proof}
If $H$ has equicontinuously generated action on $G$, then $H$ has (SR) on $G$ by Corollary~\ref{cor:equi_gen_res}.  From now on we assume $H$ has (SR) on $G$.

Let $R = \Res_G(H)$.  By Lemma~\ref{lem:reduced_envelope:residual}, any reduced envelope $E$ of $H$ must satisfy $R = \Res(E)$, and hence $K \le E$.  Lemma~\ref{lem:uniform_criterion} shows that there is a reduced envelope $E$ for $H$ of the form $E = KU$, where $U$ is a compact open subgroup of $G$, and moreover that $E/R$ is a SIN group.  Part (\ref{sr_envelope:1}) is satisfied in the case $L \in E_G(H)$ by Lemma~\ref{lem:reduced_envelope:residual}, and then follows in the generality stated from Lemma~\ref{lem:equi_residual} and the fact that $K$ is cocompact in a reduced envelope.  We see that $L$ is contained in a reduced envelope $E_2$ for $H$, so $\Res_G(L) \le \Res(E_2) = R$; on the other hand, since $H \le L$, we have $R \le \Res_G(L)$.  Thus (\ref{sr_envelope:2}) holds.  Part (\ref{sr_envelope:3}) follows the fact that $E/R$ is a SIN group.

Given an open subgroup $U$ of $G$ that contains $H$, then $U$ is also $H$-invariant, so $R \le U$, and hence $\overline{HR} \le U$.  Recall that $R = \Res(E)$ by Lemma~\ref{lem:reduced_envelope:residual} and $E/R$ is a SIN group by Lemma~\ref{lem:uniform_criterion}.  By Corollary~\ref{cor:SIN_intersection}, $\overline{HR}/R$ is an intersection of open subgroups of $E/R$, and hence $\overline{HR}$ is an intersection of open subgroups of $G$.  Indeed, Proposition~\ref{prop:SIN_tdlc}(i) ensures that $\{W/\overline{HR} \mid W \in \mc{W}\}$ is a base of neighbourhoods for the trivial coset in $G/\overline{HR}$.  Thus (\ref{sr_envelope:4}) holds.
\end{proof}

Now let $G$ be a \tdlc group and let $H \le \Aut(G)$ be equicontinuously generated.  Using Theorem~\ref{intro:distal_SIN}, we see that given any $K < \Res_G(H)$, then the action of $H$ on $\Res_G(H)/K$ is not N-distal.

\begin{prop}\label{prop:residual_distal}
Let $G$ be a \tdlc group, let $H \le \Aut(G)$ be equicontinuously generated and let $R = \Res_G(H)$.  Then $H$ does not act N-distally on $R/S$ for any proper closed subgroup $S$ of $R$.  In particular, $\Res_R(H) = R$.
\end{prop}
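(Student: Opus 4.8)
The plan is to reduce the proposition to the single assertion $\Res_R(H) = R$ (equivalently: $R$ has no proper open $H$-invariant subgroup), and then prove that assertion directly by playing off the distality of the $H$-action on $G/R$ against Theorem~\ref{intro:distal_SIN}.

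First I would record that $R$ is itself a \tdlc group and that the restricted action of $H$ on $R$ is equicontinuously generated: if $S$ is an equicontinuous symmetric generating set for $H$ on $G$, then Lemma~\ref{lem:equi_quotient} applied to the chain $\triv \le \triv \le R \le G$ shows that $S$ restricts to an equicontinuous symmetric generating set for $H|_R$ on $R$. Hence Theorem~\ref{intro:distal_SIN} is available for $R$, $H|_R$ and any closed $H$-invariant subgroup $S$ of $R$, and it says that $H$ acts N-distally on $R/S$ if and only if $S \ge \Res_R(H)$. Since $\Res_R(H)$ is itself a closed $H$-invariant subgroup of $R$, the whole proposition — including the final assertion — is equivalent to $\Res_R(H) = R$.

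So the heart of the argument is to show that $R = \Res_G(H)$ has no proper open $H$-invariant subgroup. Suppose $O$ were one. Being an open subgroup of the closed subgroup $R$, it is closed in $G$, and it is $H$-invariant, so Theorem~\ref{intro:distal_SIN} applies to $(G,H,O)$ and, since $O < R = \Res_G(H)$, tells us that $H$ does \emph{not} act N-distally — in particular not distally — on $G/O$. I will contradict this by proving $H$ \emph{does} act distally on $G/O$. Given any proximal pair $(gO,g'O)$ witnessed by a net $(h_i)$ in $H$, project along the $H$-equivariant map $G/O \to G/R$; since $H$ acts distally on $G/R$ by Corollary~\ref{cor:res_distal}(i), we get $gR = g'R$, so $g' = gr$ with $r\in R$, and if the pair is nontrivial then $r\notin O$, whence $h_i(r)\notin O$ for all $i$ because $O$ is $H$-invariant. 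On the other hand $h_i(g)O$ and $h_i(g)h_i(r)O$ converge to a common coset $zO$; writing $h_i(g) = c_i o_i$ with the $c_i$ in a fixed compact subset of $G$ lifting a compact neighbourhood of $zO$ (such a compact lift exists for the quotient map $G \to G/O$) and $o_i \in O$, one extracts that $h_i(r)$ eventually lies in $O$. The mechanism is that a small identity neighbourhood conjugated by the compact set of $c_i$'s stays small, $O$ (being open in $R$) absorbs its intersection with $R$, and the $o_i$'s can be absorbed because $O \le R$ (using $XO\cap R = (X\cap R)O$ and $o_i^{-1}Ro_i = R$). This contradicts $h_i(r)\notin O$; hence every proximal pair in $G/O$ is trivial, $H$ acts distally on $G/O$, and we have our contradiction. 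Therefore $R$ has no proper open $H$-invariant subgroup, $\Res_R(H)=R$, and the proposition follows from the reduction above.

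The main obstacle I anticipate is precisely the calculation in the previous paragraph: controlling the orbit of $r$ under the net knowing only that the orbit of $g$ converges \emph{modulo} $O$. The delicate points are the choice of the compact lift of a compact neighbourhood of $zO$, the uniform shrinking of conjugates of an entourage by that compact set, and careful bookkeeping of which sets are right-$O$-invariant so that the intersection-with-$R$ manipulations go through. Everything else — the reduction via Lemma~\ref{lem:equi_quotient}, the two invocations of Theorem~\ref{intro:distal_SIN}, and the final bookkeeping deducing the stated form from $\Res_R(H)=R$ — should be routine.
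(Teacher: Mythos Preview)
Your approach is essentially identical to the paper's: reduce to showing $R$ has no proper open $H$-invariant subgroup $S$, then derive a contradiction by proving $H$ acts distally on $G/S$ (using distality on $G/R$ to force a proximal pair into the same $R$-coset, then discreteness of $R/S$) and invoking Theorem~\ref{intro:distal_SIN}. The paper's execution of the core calculation is cleaner than your compact-lift argument and dissolves the ``main obstacle'' you flag: it simply lifts the convergent net $h_i(y)S \to tS$ to a net $h_i(y)s_i \to t$ in $G$ with $s_i \in S$, then observes that $(h_i(y)s_i)^{-1}h_i(x)S = s_i^{-1}h_i(r^{-1})S$ converges to the trivial coset while living in the discrete space $R/S$, hence is eventually trivial, forcing $r \in S$.
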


\begin{proof}
By Theorem~\ref{intro:distal_SIN}, to show that $H$ cannot act N-distally on $R/S$ for any proper closed subgroup $S$ of $R$, it is sufficient (and clearly also necessary) to show that $H$ does not preserve any proper open subgroup of $R$.

Let $S$ be a proper open subgroup of $R$ and suppose $S$ is $H$-invariant.  Consider the action of $H$ on $G/S$.  Suppose that $(xS,yS)$ is a proximal pair for the action of $H$ on $G/S$.  By Corollary~\ref{cor:res_distal}, $H$ acts distally on $G/R$, so $xR = yR$, in other words $y =xr$ for some $r \in R$.  Let $(h_i)$ be a net in $H$ and let $t \in G$ be such that $h_i(x)S$ and $h_i(y)S$ converge to $tS$.  In particular, $h_i(y)s_i$ converges to $t$ for a net $(s_i)$ in $S$.  Now
\[
(h_i(y)s_i)\inv h_i(x)S = s\inv_i h_i(r\inv x\inv) h_i(x)S = s\inv_i h_i(r\inv)S
\]
is a net that converges to the trivial coset in $G/S$.  This net, however, is confined to the discrete space $R/S$, so in order to converge it must be eventually constant.  Thus $s\inv_i h_i(r\inv) \in S$ for sufficiently large $i$ and hence $r\inv \in S$, so that in fact $xS=yS$.  Thus the action of $H$ on $G/S$ is in fact distal.  Theorem~\ref{intro:distal_SIN} now implies that $S \ge R$, a contradiction.  We see then that there are no proper open $H$-invariant subgroups of $R$, as required.
\end{proof}

\begin{cor}\label{cor:res_ergodic}
Let $G$ be a \tdlc group and let $H \le \Aut(G)$ be equicontinuously generated.  If $\Res_G(H)$ is compact, then $H$ acts ergodically on $\Res_G(H)$.
\end{cor}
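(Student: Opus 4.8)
The plan is to read off the conclusion from Proposition~\ref{prop:residual_distal}, combined with the observation that $R := \Res_G(H)$ is compact and totally disconnected, hence profinite, and then to translate ``$\Res_R(H) = R$'' into the statement that the action is ergodic. Since $R$ is an intersection of $H$-invariant subgroups it is itself $H$-invariant, so the restricted action of $H$ on $R$ is defined; and Proposition~\ref{prop:residual_distal} applies verbatim (its hypothesis is exactly that $H$ is equicontinuously generated), giving $\Res_R(H) = R$. By the definition of the discrete residual, this says precisely that $R$ has no proper open $H$-invariant subgroup: if $S \lneq R$ were open and $H$-invariant, the intersection defining $\Res_R(H)$ would be contained in $S$ and hence proper. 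Since in a profinite group the normal core of an open subgroup is again open, and $H$ permutes the $R$-conjugates of an $H$-invariant subgroup, this is equivalent to $R$ having no proper open $H$-invariant \emph{normal} subgroup.

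It then remains to deduce ergodicity of the action of $H$ on $R$ equipped with its Haar probability measure. I would argue via Peter--Weyl: decompose $L^2(R) = \bigoplus_{[\pi] \in \hat{R}} V_\pi$ into the (finite-dimensional) isotypic components of the two-sided regular representation, noting that $H$ acts on $L^2(R)$ by $f \mapsto f \circ h^{-1}$, unitarily, permuting the $V_\pi$ according to the action $[\pi] \mapsto [\pi \circ h^{-1}]$ on the unitary dual $\hat{R}$. For an $H$-fixed vector $f = \sum_{[\pi]} f_\pi$, unitarity forces $\|f_\pi\|$ to be constant along each $H$-orbit in $\hat{R}$, so square-summability kills $f_\pi$ on every infinite orbit; thus it suffices to show the only finite $H$-orbit in $\hat{R}$ is $\{[\mathbf{1}]\}$, whose contribution is exactly the constants. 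But if $\{[\pi_1],\dots,[\pi_k]\}$ were a finite $H$-orbit with $\pi_1$ nontrivial, then $\bigcap_{i=1}^k \ker \pi_i$ would be a proper open $H$-invariant normal subgroup of $R$ (each $\ker\pi_i$ is open since a profinite subgroup of a unitary group is finite, and the intersection is $H$-invariant since $H$ permutes the orbit), contradicting the previous paragraph. Hence every $H$-fixed $L^2$-function is constant, i.e. the action is ergodic.

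The only real content beyond Proposition~\ref{prop:residual_distal} is this last translation, and it is routine; alternatively one can simply quote the classical fact that an action of a group of automorphisms on a compact group is Haar-ergodic if and only if every nontrivial orbit in the unitary dual is infinite, and I showed above that for profinite $R$ this dual condition is equivalent to the absence of a proper open $H$-invariant (normal) subgroup. (If one takes ``$H$ acts ergodically on a profinite group'' to \emph{mean} the absence of a proper open $H$-invariant subgroup, the corollary is of course immediate from Proposition~\ref{prop:residual_distal}.) I do not expect any serious obstacle: the one point that genuinely uses the hypothesis that $R = \Res_G(H)$ is compact is that compactness makes open subgroups finite-index and hence ``visible'' to finite-dimensional representations — for a non-compact $\Res_G(H)$ one only gets the weaker topological statement $\Res_R(H) = R$ and no measure-theoretic conclusion.
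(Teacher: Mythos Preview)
Your proposal is correct and follows the same approach as the paper: invoke Proposition~\ref{prop:residual_distal} to get $\Res_R(H) = R$, then translate the absence of proper open $H$-invariant subgroups of the compact group $R$ into ergodicity. The paper simply cites \cite[Proposition~2.1]{JawDistal} for this translation, whereas you spell it out via the Peter--Weyl decomposition and the action on the unitary dual; your argument for that step is standard and correct.
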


\begin{proof}
Follows from Proposition~\ref{prop:residual_distal} and \cite[Proposition~2.1]{JawDistal}.
\end{proof}

\subsection{Envelope equivalence}\label{sec:env_equivalence}

\begin{defn}
Let $G$ be a \tdlc group.  Say $H_1,H_2 \le G$ are \defbold{envelope equivalent} in $G$, and write $H_1 \approx_G H_2$, if $H_1$ and $H_2$ both have reduced envelopes and $E_G(H_1) = E_G(H_2)$.
\end{defn}

Let us now note some sufficient conditions for two compactly generated subgroups $H_1$ and $H_2$ of a group $G$ to be envelope equivalent.

\begin{lem}\label{lem:envelope_equiv}
Let $G$ be a \tdlc group and let $H$ be a compactly generated subgroup of $G$.
\begin{enumerate}[(i)]
\item Let $H_2$ be a subgroup of $G$ commensurate with $H$.  Then $H_2 \approx_G H$.
\item Let $E$ be an open subgroup of $G$.  Then $E \approx_G H$ if and only if $E \in E_G(H)$.
\item Let $K$ be a subgroup of $H$, such that $\overline{K}$ is either cocompact or of finite covolume in $\overline{H}$.  Then $K \approx_G H$.
\end{enumerate}
\end{lem}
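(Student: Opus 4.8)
The three parts share two elementary observations. First, an \emph{index comparison}: for subgroups $A\le B$ of $G$ with $|B:A|<\infty$ and any open subgroup $C$ of $G$, one has $|A:A\cap C|<\infty$ iff $|B:B\cap C|<\infty$ (``$\Leftarrow$'' via the injection $A/(A\cap C)\hookrightarrow B/(B\cap C)$; ``$\Rightarrow$'' since $|B:B\cap C|\le|B:A\cap C|=|B:A|\,|A:A\cap C|$); consequently, if $A\le B\le E$ with $|B:A|<\infty$ and $E$ is a reduced envelope for $B$, then $E$ is a reduced envelope for $A$. Second, any open subgroup $E$ of $G$ is a reduced envelope for itself, so (commensurability of subgroups being an equivalence relation, and reduced envelopes of a fixed group being pairwise commensurate) $E_G(E)$ is exactly the set of open subgroups of $G$ commensurate with $E$, and more generally $E_G(H)$ is the set of open subgroups commensurate with any one reduced envelope of $H$.

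For (i), put $H_1=H\cap H_2$, so $|H:H_1|$ and $|H_2:H_1|$ are finite. I would first check that $H_1$ and $H_2$ have (SR), hence reduced envelopes by Theorem~\ref{thm:sr_envelope}: as $H$ is compactly generated it has (SR), witnessed by an open $H$-invariant $U$ with $\Res_G(H)$ cocompact in $U$; then $U$ is also $H_1$-invariant and $\Res_G(H_1)=\Res_G(H)$ by Lemma~\ref{lem:equi_residual}(i), so $H_1$ has (SR); and for $H_2=H_1F$ with $F$ finite, $hUh\inv$ depends only on the coset $hH_1$, so $U^{*}:=\bigcap_{h\in H_2}hUh\inv$ is a finite intersection of open subgroups, hence an open $H_2$-invariant subgroup in which $\Res_G(H_2)=\Res_G(H)$ is cocompact. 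Now if $E_H$ and $E_2$ are reduced envelopes for $H$ and $H_2$, then by the first observation both are reduced envelopes for $H_1$, hence are commensurate, so $E_G(H)=E_G(H_1)=E_G(H_2)$, that is, $H\approx_G H_2$.

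Part (ii) is immediate from the second observation: if $E\in E_G(H)$ then $E$ is commensurate with a reduced envelope for $H$, so $E_G(E)=E_G(H)$, and since $E$ and $H$ both have reduced envelopes, $E\approx_G H$; conversely $E\approx_G H$ gives $E\in E_G(E)=E_G(H)$. For (iii), $H$ has a reduced envelope $E$ (being compactly generated), and $K\le H\le E$; I would show $E$ is also a reduced envelope for $K$, which by definition of $\approx_G$ yields $K\approx_G H$. By the reduced-envelope property of $E$ for $H$, it suffices to show that for every open subgroup $E_2$ of $G$, $|K:K\cap E_2|<\infty$ implies $|H:H\cap E_2|<\infty$. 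Pass to closures: $\overline K\le\overline H$, and since $K$ is dense in $\overline K$ while the cosets of the open subgroup $E_2\cap\overline K$ of $\overline K$ are clopen, $|K:K\cap E_2|=|\overline K:\overline K\cap E_2|$, and likewise for $H$. So it is enough to show that the open subgroup $O:=\overline H\cap E_2$ has finite index in $\overline H$ when $|\overline K:\overline K\cap O|<\infty$. But then $\overline K\cap O$ has finite index in $\overline K$, hence is again cocompact (resp.\ of finite covolume) in $\overline H$; being contained in $O$, the overgroup $O$ is also cocompact (resp.\ of finite covolume) in $\overline H$; and an open subgroup with either property has finite index, since $\overline H/O$ is discrete and is compact, resp.\ carries a nonzero finite invariant measure.

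The step I expect to need the most care — and the only place in (iii) where the hypotheses are genuinely used — is the finite-covolume bookkeeping just invoked: that finite covolume is inherited by finite-index subgroups and by overgroups, and that an open subgroup of finite covolume has finite index. A lesser point is that in (i) a subgroup commensurable with a compactly generated subgroup need not be compactly generated in the strict sense, which is why its reduced envelope is obtained via (SR) rather than by quoting Theorem~\ref{thm:sr_envelope} for a compact generating set.
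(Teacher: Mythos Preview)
Your proof is correct and follows essentially the same route as the paper's: in each part you show that a reduced envelope for one subgroup serves as a reduced envelope for the other, using the same finite-index bookkeeping and (for (iii)) the same passage to closures followed by the observation that open cocompact or finite-covolume subgroups have finite index. The only notable difference is in (i): the paper simply takes a reduced envelope $E_2$ for $H_2$ and compares it directly with $E$, whereas you route through $H_1=H\cap H_2$ and take extra care to verify that $H_2$ has (SR) at all. Your caution is warranted, since $H_2$ need not be compactly generated; the paper's shortcut is justified by Lemma~\ref{lem:equi_residual}(ii), which shows $H_2$ is equicontinuously generated and hence has (SR), but this is left implicit there.
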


\begin{proof}
(i)
Let $E$ be a reduced envelope for $H$ and let $E_2$ be a reduced envelope for $H_2$.  Then $|H:H \cap E_2| < \infty$, so $|E: E \cap E_2|< \infty$; similarly, $|H_2:H_2 \cap E| < \infty$, so $|E_2: E \cap E_2|< \infty$.

(ii)
Every open subgroup $E$ of $G$ is clearly its own reduced envelope in $G$, so $E_G(E)$ is just the commensurability class of $E$ amongst open subgroups of $G$.  Thus $E_G(E) = E_G(H)$ if and only if $E \in E_G(H)$.

(iii)
Since open subgroups are closed, it is easily seen that $E_G(H) = E_G(\overline{H})$ and similarly for $K$.  So we may assume $H$ and $K$ are closed subgroups of $G$.  Let $E$ be a reduced envelope for $H$.

Consider an open subgroup $O$ of $G$ such that $O \cap K$ has finite index in $K$; we claim that $O \cap E$ has finite index in $E$.  Let $L = O \cap H$.  We see that if $K$ is cocompact or of finite covolume in $H$, then so is its finite index subgroup $O \cap K$, and hence the overgroup $L$ of $O \cap K$ is respectively cocompact or of finite covolume in $H$.  Moreover, $L$ is open in $H$.  In a locally compact group, the only open subgroups that are cocompact or of finite covolume are the finite index ones, so $L$ has finite index in $H$.  But then since $E$ is a reduced envelope for $H$, it follows that $O \cap E$ has finite index in $E$ as claimed.  Thus $E$ is also a reduced envelope for $K$; in particular, $K \approx_G H$.
\end{proof}

We recall the following general feature of compactly generated \tdlc groups.

\begin{lem}[See for instance {\cite[Proposition~4.1]{CRW-Part2}}]\label{lem:compact_to_finite_gen}
Let $G$ be a compactly generated \tdlc group, let $U$ be a compact open subgroup of $G$ and let $D$ be a dense subgroup of $G$.  Then there exists a finite subset $F$ of $D$ such that $G = \langle F \rangle U$.
\end{lem}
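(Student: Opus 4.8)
The plan is to read the conclusion $G=\langle F\rangle U$ as the assertion that the discrete group $\langle F\rangle$ acts transitively on the coset space $G/U$, and to produce such an $F\subseteq D$ by building a connected, locally finite, $G$-invariant graph on $G/U$ and picking one vertex representative in $D$ for each neighbour of a base vertex.

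First I would set up the graph. Fix a compact generating set $K$ of $G$; enlarging it we may assume $K=K^{-1}$ and $1\in K$. Put $T:=UKU$. Since $U$ is open and $K$ is compact, $K$ is covered by finitely many left cosets of $U$, and each double coset $UgU$ with $g\in K$ is compact and open; hence $T$ is a union of finitely many double cosets of $U$, a union of finitely many left cosets of $U$, and satisfies $T=T^{-1}=UTU\supseteq U$ and $\langle T\rangle\supseteq\langle K\rangle=G$. Define a graph $\Gamma$ with vertex set $G/U$ by $gU\sim g'U$ whenever $g^{-1}g'\in T$. Because $T$ is a union of double cosets of $U$, this relation is well defined on $G/U$; it is symmetric since $T=T^{-1}$; left translation gives an action of $G$ on $\Gamma$ by graph automorphisms, transitive on vertices. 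The graph is locally finite since $T$ is a finite union of left cosets of $U$, and connected since $\langle T\rangle=G$: a factorization $g=s_1\cdots s_k$ with $s_j\in T$ yields a walk $U,s_1U,s_1s_2U,\dots,gU$ from the base vertex $v_0:=U$ to $gU$ (consecutive vertices are adjacent because $(s_1\cdots s_{j-1})^{-1}(s_1\cdots s_j)=s_j\in T$).

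Next I would bring in the density of $D$. Let $\ell_1U,\dots,\ell_mU$ be the finitely many distinct left cosets of $U$ whose union is $T$; these are exactly the neighbours of $v_0$ in $\Gamma$, since $1^{-1}g=g\in T$. Each $\ell_iU$ is open and nonempty, so I may choose $f_i\in D\cap\ell_iU$, and set $F:=\{f_1,\dots,f_m\}\subseteq D$, a finite set. Then $f_iU=\ell_iU$, so the set of neighbours of $v_0$ is precisely $\{f_iv_0\mid 1\le i\le m\}$. Finally I would verify transitivity of $\langle F\rangle$ on $V(\Gamma)$: for any $\gamma\in\langle F\rangle$, left translation by $\gamma$ is a graph automorphism, so the neighbours of $\gamma v_0$ are $\gamma\{f_iv_0\}=\{\gamma f_iv_0\mid i\}$, and each $\gamma f_i$ again lies in $\langle F\rangle$. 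Hence the orbit $\langle F\rangle v_0$ is closed under passing to $\Gamma$-neighbours; as $\Gamma$ is connected and $v_0\in\langle F\rangle v_0$, an induction on graph distance gives $\langle F\rangle v_0=V(\Gamma)=G/U$, which is exactly $\langle F\rangle U=G$.

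The one point that needs care in the writeup — and the only real obstacle — is arranging that the adjacency relation descends to a well-defined, $G$-invariant, locally finite graph on $G/U$; this is precisely why $T$ is taken to be a union of finitely many double cosets of $U$ (using the standard fact that a compact set is contained in finitely many cosets of a compact open subgroup) rather than an arbitrary compact generating set. Everything after that is the routine distance/word-length induction sketched above.
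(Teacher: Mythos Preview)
Your argument is correct. The paper does not give its own proof of this lemma; it simply cites \cite[Proposition~4.1]{CRW-Part2}. Your approach via the Cayley--Abels graph on $G/U$ is exactly the standard one underlying that reference: construct a locally finite, connected, $G$-invariant graph on $G/U$ using a symmetric generating set fattened to a finite union of double cosets, then use density of $D$ to pick coset representatives in $D$ for the neighbours of the base vertex, and conclude by connectedness that $\langle F\rangle$ is vertex-transitive. The care you take with $T=UKU$ being a finite union of double cosets (so that adjacency is well defined on $G/U$, symmetric, and locally finite) is precisely the point, and your induction on graph distance is the expected finish.

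One cosmetic remark: you do not actually need the graph language. Once you have chosen $f_1,\dots,f_m\in D$ with $T=\bigcup_i f_iU$, the identity $T^n = (\bigcup_i f_iU)^n \subseteq \langle F\rangle U$ follows by a direct word-length induction using $UT=T$, and $G=\bigcup_n T^n$ gives $G=\langle F\rangle U$. But this is the same argument stripped of vocabulary.
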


We can now prove Theorems~\ref{intro:covol_envelopes} and \ref{intro:embedding_envelopes}.

\begin{proof}[Proof of Theorem~\ref{intro:covol_envelopes}]
Recall that $G$ is a \tdlc group, $A$ is a subgroup of $G$ such that $\overline{A}$ is cocompact or of finite covolume, and $H_1$ is a compactly generated subgroup of $G$.  Our aim is to find $H_2 \le A$ such that $H_1 \approx_G H_2$, and such that $H_2$ is finitely generated in the case that $\overline{A}$ is cocompact.

By Lemma~\ref{lem:envelope_equiv}(ii), $H_1 \approx_G E$ where $E$ is a reduced envelope for $H_1$.  So we may assume that $H_1$ is open in $G$.  Let $B = A \cap H_1$.  Since $H_1$ is open in $G$, we see that $\overline{B} = \overline{A} \cap H_1$.  Moreover, $\overline{B}$ is cocompact or of finite covolume in $H_1$.  Thus $B \approx_G H_1$ by Lemma~\ref{lem:envelope_equiv}(iii).

Let us now suppose $\overline{A}$ is cocompact in $G$.  In this case $\overline{B}$ is cocompact in $H_1$; in particular, $\overline{B}$ is compactly generated.  Then by Lemma~\ref{lem:compact_to_finite_gen}, there is a compact open subgroup $V$ of $\overline{B}$ and a finite subset $F$ of $B$ such that $\overline{B} = H_2V$, where $H_2 = \langle F \rangle$.  In particular, $H_2$ has cocompact closure in $\overline{B}$, so $H_2 \approx_G B$ and hence $H_2 \approx_G H_1$.
\end{proof}

\begin{proof}[Proof of Theorem~\ref{intro:embedding_envelopes}]
Recall that $G$ and $H$ are \tdlc groups and $\phi: G \rightarrow H$ is a continuous homomorphism, and $\mc{E}_G$ and $\mc{E}_H$ are the sets of commensurability classes of compactly generated open subgroups of $G$ and $H$ respectively.

Let $K_1$ and $K_2$ be compactly generated subgroups of $G$ such that $K_1 \approx_G K_2$.  Let $E_i$ be a reduced envelope in $G$ for $K_i$ for $i=1,2$.  Then both $\overline{\phi(K_1)}$ and $\overline{\phi(K_2)}$ are compactly generated, so each has a reduced envelope in $H$, and hence $\phi(K_1)$ and $\phi(K_2)$ have reduced envelopes.  Given an open subgroup $U_1$ of $H$ containing a finite index subgroup of $\phi(K_1)$, then $\phi\inv(U_1)$ is an open subgroup of $G$ containing a finite index subgroup of $K_1$.  Hence $\phi\inv(U_1)$ contains a finite index subgroup of $E_1$, and then by commensurability, $\phi\inv(U_1)$ contains a finite index subgroup of $E_2$, and in particular of $K_2$.  Thus $U_1$ contains a finite index subgroup of $\phi(K_2)$.  Similarly, any open subgroup of $H$ containing a finite index subgroup of $\phi(K_2)$ also contains a finite index subgroup of $\phi(K_1)$.  We conclude that the reduced envelopes for $\phi(K_1)$ and $\phi(K_2)$ are commensurate, so $\phi(K_1) \approx_H \phi(K_2)$.

Define $\theta: \mc{E}_G \rightarrow \mc{E}_H$ by $\theta(E_G(O)) = E_H(\phi(O))$.  To see that $\theta$ is well-defined, let $O_2$ be an element of $E_G(O)$ for some compactly generated open subgroup $O$ of $G$.  Then $O$ and $O_2$ are commensurate, so $O \approx_G O_2$ by Lemma~\ref{lem:envelope_equiv}(i).  Moreover, since $O$ is compactly generated, so is $O_2$.  Thus $E_H(\phi(O)) = E_H(\phi(O_2))$ by the previous paragraph.

Now suppose that $H = \phi(G)X$ for a compact set $X$ and let $U$ be a compactly generated open subgroup of $H$.  Then by Theorem~\ref{intro:covol_envelopes}, there is a finitely generated subgroup $V$ of $\phi(G)$ such that $V \approx_H U$.  Let $F$ be a finite generating set for $V$ and let $F'$ be a finite subset of $G$ such that $\phi(F') = F$.  Then $K = \langle F' \rangle$ is a finitely generated subgroup of $G$, so has a reduced envelope $E$ in $G$.  We have $K \approx_G E$, and hence $\phi(K) \approx_H \phi(E)$; moreover, $\phi(K) = V \approx_H U$, so in fact $\phi(E) \approx_H U$.  This demonstrates that $\theta$ is surjective.
\end{proof}

\subsection{Background on elementary groups}\label{sec:elementary}

For some context for later results in this article, we briefly recall the class $\ms{E}$ of elementary groups defined by P. Wesolek in \cite{Wesolek} and their canonical rank function, the decomposition rank.  See \cite{Wesolek} for more details.  From now on we write `\tdlcsc' to mean `\tdlc second-countable'.

\begin{defn}
The class of \defbold{elementary} \tdlcsc groups is the smallest class of \tdlcsc groups such that
	\begin{enumerate}[(i)]
		\item $\ms{E}$ contains all second countable profinite groups and countable discrete groups.
		\item $\ms{E}$ is closed under taking closed subgroups.
		\item $\ms{E}$ is closed under taking Hausdorff quotients.
		\item $\ms{E}$ is closed under forming extensions that result in a \tdlcsc group.
		\item If $G$ is a \tdlcsc group and $G=\bigcup_{i\in \bN}O_i$ where $(O_i)_{i\in \bN}$ is an $\subseteq$-increasing sequence of open subgroups of $G$ with $O_i\in\ms{E}$ for each $i$, then $G\in\ms{E}$.	\end{enumerate}
\end{defn} 

\begin{defn}\label{def:rank}
	Let $\ms{T}$ be the class of \tdlcsc groups and let $\omega_1$ be the first uncountable ordinal.  The \defbold{decomposition rank} $\xi$ is a partial ordinal-valued function from $\ms{T}$ to $[1,\omega_1)$ satisfying the following properties:
	\begin{enumerate}[(a)]
		\item $\xi(G)=1$ if and only if $G = \triv$;
		
		\item If $G \in \ms{T}$ is non-trivial and $G=\bigcup_{i\in \bN}O_i$ with $(O_i)_{i\in \bN}$ some increasing sequence of compactly generated open subgroups of $G$, then $\xi(G)$ is defined if and only if $\xi(R_i)$ is defined for $R_i := \Res(O_i)$ for all $i \in \bN$.  If $\xi(G)$ is defined, then
		\[
		\xi(G)=\sup_{i\in \bN}\xi(R_i)+1.
		\]
	\end{enumerate}
\end{defn}

By \cite[Theorem 4.7]{Wesolek} and \cite[Lemma 4.12]{Wesolek}, such a function $\xi$ exists, is unique, has domain of definition exactly $\ms{E}$, and is equivalent to the decomposition rank as defined in \cite{Wesolek}.

\subsection{Discrete regional quotient groups}\label{sec:drq}

We now give an example of how reduced envelopes can be used to reduce questions about arbitrary compactly generated closed subgroups $K$ of a \tdlc group $G$ to the case when $K$ is open.
 
\begin{prop}\label{prop:descent_groups}Let $G$ be a \tdlc group.  The following are equivalent:
\begin{enumerate}[(i)]
 \item For every nontrivial compactly generated closed subgroup $H$ of $G$, then $\Res_G(H) \ngeq H$;
 \item For every nontrivial compactly generated closed subgroup $H$ of $G$, then $\Res(H) \neq H$;
 \item For every noncompact compactly generated closed subgroup $H$ of $G$, then $H$ has an infinite discrete quotient;
 \item For every noncompact compactly generated open subgroup $H$ of $G$, then $H$ has an infinite discrete quotient.
\end{enumerate}
\end{prop}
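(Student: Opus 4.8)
The plan is to prove the cycle of implications $(i) \Rightarrow (ii) \Rightarrow (iii) \Rightarrow (iv) \Rightarrow (i)$, using reduced envelopes (via Theorem~\ref{thm:sr_envelope}) to pass between open subgroups and arbitrary compactly generated closed subgroups. The easy direction is $(iii) \Rightarrow (iv)$, which is just a specialization. For $(ii) \Rightarrow (iii)$: given a noncompact compactly generated closed $H$, I would note that if $\Res(H) \neq H$ then $H/\Res(H)$ is a nontrivial SIN \tdlc group (by \cite{Wesolek}-style structure theory of the discrete residual, or more directly since $H/\Res(H)$ has a base of open normal subgroups), so it has a nontrivial profinite-by-discrete or discrete quotient; one then has to rule out the case where every such quotient is finite, i.e. where $H/\Res(H)$ is compact. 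But if $H/\Res(H)$ is compact and $\Res(H)$ is itself noncompact one reapplies the hypothesis, so the real content is an induction or a direct argument that a noncompact compactly generated \tdlc group with property $(ii)$ inherited by all its compactly generated closed subgroups must have an infinite discrete quotient — here I would use the decomposition-rank machinery of \S\ref{sec:elementary}, or alternatively argue that the class of compactly generated \tdlc groups with no infinite discrete quotient is closed under the relevant operations and contains only compact groups under hypothesis $(ii)$.

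For $(i) \Rightarrow (ii)$: suppose $\Res(H) = H$ for some nontrivial compactly generated closed $H \le G$; I want to produce $H$ with $\Res_G(H) \ge H$. The subtlety is that $\Res(H) = \Res_H(\mathrm{Inn}(H))$ is computed inside $H$, whereas $\Res_G(H)$ is computed inside $G$, and a priori $\Res_G(H)$ could be much larger. I would pass to a reduced envelope $E$ for $H$ in $G$ (which exists by Theorem~\ref{thm:sr_envelope} since $H$ is compactly generated): then $\Res_G(H) = \Res(E)$ and $E = \overline{H\Res_G(H)}U$ for a compact open $U$. If $\Res_G(H) \ge H$ fails, then $H \cap \Res_G(H)$ is a proper closed subgroup of $H$; I would then need to show $\Res(H) \le H \cap \Res_G(H)$, contradicting $\Res(H) = H$. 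This should follow because every open $H$-invariant (i.e. $H$-conjugation-invariant) subgroup of $G$ meets $H$ in an open conjugation-invariant subgroup of $H$, hence contains $\Res(H)$; taking the intersection over all such gives $\Res(H) \le \Res_G(H)$, and since $\Res(H) \le H$ always, $\Res(H) \le H \cap \Res_G(H)$.

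Finally $(iv) \Rightarrow (i)$: suppose $\Res_G(H) \ge H$ for some nontrivial compactly generated closed $H \le G$; I want a noncompact compactly generated open subgroup of $G$ with no infinite discrete quotient. Take a reduced envelope $E$ for $H$; then $\Res(E) = \Res_G(H) \ge H \neq \triv$, so $\Res(E) \neq \triv$. Now $E$ is compactly generated and open; it is noncompact because a compact group has all compactly generated subgroups with $\Res = \triv$ (profinite groups are SIN, hence residual trivial, hence $\Res_G(H) = \triv$ for $H$ inside a compact open subgroup — one must be slightly careful, but $H \le \Res_G(H)$ with $H$ nontrivial forces $E$ noncompact). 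I would then apply Proposition~\ref{prop:residual_distal} (with $H$ there taken to be $\mathrm{Inn}(E)$, which is compactly generated since $E$ is) to conclude $\Res_{\Res(E)}(E) = \Res(E)$, and then argue that $E$ itself, or a suitable compactly generated open subgroup of it containing $H$, has no infinite discrete quotient: any such quotient $E \to D$ with $D$ infinite discrete would have $\Res(E)$ in the kernel of $E \to D$ composed with any further quotient... more precisely, an infinite discrete quotient $q: E \to D$ has open kernel, and $\Res(E)$ being the intersection of open normal subgroups would be contained in $\ker q$ only if that does not already force a contradiction — so I actually want to use that $E/\Res(E)$ is the maximal discrete-by-(profinite) quotient-like object and $\Res(E) \neq \triv$ does not by itself preclude infinite discrete quotients. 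The cleaner route: pass to $E_0 = \overline{H\,\mathrm{Res}(E)}$, which is open-by-nothing; by Proposition~\ref{prop:residual_distal} the discrete residual is "self-residual", and I claim the right subgroup to exhibit is $\Res(E)$ together with enough of $E$ — I expect the correct statement is that a compactly generated open subgroup $P$ of $G$ with $\triv \neq \Res(P) \ge$ (some fixed nontrivial compactly generated piece), chosen minimally, has no infinite discrete quotient because such a quotient would give a proper $H$-invariant open subgroup of $\Res(P)$, contradicting Proposition~\ref{prop:residual_distal}.

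**Main obstacle.** The hard part is the bookkeeping in $(iv) \Rightarrow (i)$ and $(ii) \Rightarrow (iii)$: relating infinite discrete quotients of an open subgroup to the non-vanishing and self-residuality of the discrete residual, and handling the "compact vs.\ noncompact" dichotomy cleanly. I expect one cannot avoid invoking Proposition~\ref{prop:residual_distal} (that $\Res_R(H) = R$ for $R = \Res_G(H)$) to close the loop, since that is exactly what forbids a proper open $H$-invariant subgroup of $R$ — equivalently, forbids the relevant infinite discrete quotient. The rest is a matter of choosing reduced envelopes carefully and chasing invariance through $H \cap O$ for open $H$-invariant $O \le G$.
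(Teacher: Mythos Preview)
Your implications $(i) \Rightarrow (ii)$ and $(iii) \Rightarrow (iv)$ are correct and match the paper. The other two contain genuine gaps.

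For $(iv) \Rightarrow (i)$ you are overcomplicating matters. Once you have the reduced envelope $E$ of $H$ with $\Res(E) = \Res_G(H) \ge H$, simply observe that $E = H\Res_G(H)U = \Res_G(H)U$ (the factor $H$ is absorbed), so $E/\Res(E)$ is a continuous image of the compact group $U$, hence compact. Every discrete quotient of $E$ factors through $E/\Res(E)$ and is therefore finite. That $E$ is noncompact follows because $H$ is noncompact: a nontrivial compact $H$ would admit an open $H$-invariant subgroup of $G$ not containing it, contradicting $H \le \Res_G(H)$. No appeal to Proposition~\ref{prop:residual_distal} is needed in this direction.

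For $(ii) \Rightarrow (iii)$ your descent idea does not close. If $H$ has no infinite discrete quotient then indeed $H/\Res(H)$ is compact, but replacing $H$ by $\Res(H)$ and iterating yields a chain $H \ge \Res(H) \ge \Res^2(H) \ge \cdots$ with no termination guarantee, and an infinite discrete quotient of some $\Res^n(H)$ is not one of $H$. The decomposition-rank machinery is unavailable since $G$ need not be elementary. The paper instead produces a \emph{different} witness to $\neg(ii)$: pass to the reduced envelope $E$ and set $R = \Res_G(H) = \Res(E)$. Using that $\overline{HR}$ is cocompact in $E$ and that $H$ (hence $\overline{HR}/R$) has no infinite discrete quotient, one shows $E$ has no infinite discrete quotient; since $E/R$ is SIN by Theorem~\ref{thm:sr_envelope}, this forces $E/R$ compact, so $R$ is cocompact in $E$, hence compactly generated and nontrivial. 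Now Proposition~\ref{prop:residual_distal} gives $\Res_R(E) = R$, and Lemma~\ref{lem:equi_residual} (applied with the compact set $E/R$ acting alongside $R$) gives $\Res(R) = \Res_R(E)$. Hence $\Res(R) = R$, and it is $R$, not $H$, that violates (ii). You correctly flagged Proposition~\ref{prop:residual_distal} as the essential ingredient, but it enters here in $(ii) \Rightarrow (iii)$, not in $(iv) \Rightarrow (i)$.
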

 
\begin{proof}
Let us note that for any closed subgroup $H$ of $G$, then $O \cap H$ is an open normal subgroup of $H$ whenever $O$ is an open $H$-invariant subgroup of $G$.  In particular, $\Res(H) \le \Res_G(H)$, so (i) implies (ii).

We now claim that (ii) implies (iii), by showing the contrapositive.  Let $H$ be a noncompact compactly generated closed subgroup of $G$, let $R =\Res_G(H)$ and let $K = \overline{HR}$.  Suppose that $H$ has no infinite discrete quotient.  Then certainly $K/R$ has no infinite discrete quotient.  Let $E = KU$ be a compactly generated reduced envelope for $H$ in $G$, as given by Theorem~\ref{thm:sr_envelope}, where $U$ is a compact open subgroup of $G$.  Then $R = \Res(E)$, so $R$ is in the kernel of any map from $E$ to a discrete group.  Moreover, $K$ is cocompact in $E$, so given any continuous homomorphism $\phi: E \rightarrow D$ where $D$ is discrete, then $\phi(K)$ has finite index in $E$.  In particular, since $K/R$ has no infinite discrete quotient, we see that $E$ has no infinite discrete quotient, so $E/R$ is a \tdlc group with no infinite discrete quotient.  By Theorem~\ref{thm:sr_envelope}(\ref{sr_envelope:3}), $E/R$ is also a SIN group; the only way this is possible is if $E/R$ is compact.  Thus $R$ is cocompact in $E$, so $R$ is compactly generated.  Since $E$ contains $H$, $E$ is not compact and hence $R$ is nontrivial.  We now observe that $\Res(R) = \Res_R(E)$ by Lemma~\ref{lem:equi_residual}, whilst $\Res_R(E) = R$ by Proposition~\ref{prop:residual_distal}.  Thus $\Res(R) = R$, a contradiction to (ii).

It is clear that (iii) implies (iv).  We prove the remaining implication (iv) $\Rightarrow$ (i) in contrapositive form.  Suppose there is a nontrivial compactly generated closed subgroup $H$ of $G$ such that $\Res_G(H) \ge H$.  Certainly $H$ is not compact.  As in the previous paragraph, we can take a compactly generated reduced envelope $E = H\Res_G(H)U$ for $H$ in $G$.  This time, in fact $E = \Res_G(H)U$ and $\Res(E) = \Res_G(H)$.  We see that $E/\Res(E)$ is compact, ensuring that $E$ has no infinite discrete quotient, but $E$ is not compact, a contradiction to (iv). 
\end{proof}

Say a \tdlc group $G$ is a \defbold{discrete regional quotient group} if it satisfies any of the four equivalent conditions of Proposition~\ref{prop:descent_groups}.  Let $\ms{D}$ be the class of discrete regional quotient groups.  We also recall the class $\ms{S}$ of nondiscrete compactly generated topologically simple \tdlc groups.  Clearly the classes $\ms{D}$ and $\ms{S}$ are disjoint.  We see that $\ms{D}$ contains the class $\ms{E}$ of elementary groups; indeed this containment is strict (see Remark~\ref{rem:U(F)} below).  On the other hand, every second-countable group in $\ms{D}$ is isomorphic to a group in the set $E^*$ introduced in \cite[Theorem~3.12]{ReidFlat}.  It is not clear if there are groups in $E^*$ that are not in $\ms{D}$; one possibility is that there exists $G \in \ms{S}$ such that every element of $G$ has trivial contraction group.

By appealing to results of P.-E. Caprace and N. Monod (\cite{CM}), we obtain the following statement on the role of $\ms{D}$ and $\ms{S}$ in the structure of general \tdlc groups.

\begin{cor}\label{cor:DandS}
Let $G$ be a \tdlc group.  Then exactly one of the following holds:
\begin{enumerate}[(i)]
\item $G \in \ms{D}$, that is, every noncompact compactly generated closed subgroup of $G$ has an infinite discrete quotient;
\item There is a compactly generated open subgroup $K$ of $G$ such that $\Res(K)$ is cocompact in $K$, and such that there is a $K$-invariant closed subgroup $L$ of $\Res(K)$ with $\Res(K)/L \in \ms{S}$.
\end{enumerate}
\end{cor}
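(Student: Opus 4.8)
The plan is to prove that alternatives (i) and (ii) are mutually exclusive and that their disjunction is forced, so that exactly one of them holds. Exclusivity is elementary; the substantive direction is the implication ``$G \notin \ms{D} \Rightarrow$ (ii)'', which will be obtained by feeding the output of Proposition~\ref{prop:descent_groups} through the reduced-envelope results of the previous section and then through the Caprace--Monod structure theory for compactly generated \tdlc groups.

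For exclusivity, suppose (ii) holds: there is a compactly generated open subgroup $K$ of $G$ with $\Res(K)$ cocompact in $K$ and a $K$-invariant closed subgroup $L \le \Res(K)$ such that $\Res(K)/L \in \ms{S}$. Then $K$ is a compactly generated closed subgroup of $G$ which is noncompact, since every group in $\ms{S}$ is noncompact, so $\Res(K)$, and hence $K$, is noncompact. On the other hand, any continuous surjection of $K$ onto a discrete group has open kernel, hence kernel containing $\Res(K)$, so its image is a quotient of the compact group $K/\Res(K)$ and is finite. Thus $K$ has no infinite discrete quotient, so $G \notin \ms{D}$ and (i) fails; this already shows (i) and (ii) are incompatible (in line with $\ms{D}$ and $\ms{S}$ being disjoint).

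Now assume $G \notin \ms{D}$. By Proposition~\ref{prop:descent_groups} there is a noncompact, compactly generated, closed subgroup $H$ of $G$ with no infinite discrete quotient. Re-running the construction from the proof of the implication ``(ii)$\Rightarrow$(iii)'' there, take a compactly generated reduced envelope $E$ for $H$ of the form $E = \overline{H\Res_G(H)}U$ with $U$ a compact open subgroup of $G$, as provided by Theorem~\ref{thm:sr_envelope}, and set $R := \Res(E)$; by Lemma~\ref{lem:reduced_envelope:residual} we have $R = \Res_G(H)$. That argument shows $R$ is cocompact in $E$, compactly generated, and nontrivial, hence noncompact, and that $\Res(R) = R$ (using Lemma~\ref{lem:equi_residual} and Proposition~\ref{prop:residual_distal}). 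The open subgroup $E$ is the candidate ``$K$'' for (ii), and its discrete residual $R$ is already cocompact in it, so the problem reduces to exhibiting an $E$-invariant closed subgroup $L \le R$ with $R/L \in \ms{S}$.

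This final step is where \cite{CM} is invoked. We are given $R$: a noncompact, compactly generated \tdlc group with no nontrivial discrete quotient (because $\Res(R) = R$), realized as a cocompact closed normal subgroup of the compactly generated group $E$. The plan is to apply the Caprace--Monod analysis of the closed normal subgroup structure of $R$ to produce a closed normal subgroup with quotient in $\ms{S}$, and then to upgrade $R$-invariance of that subgroup to $E$-invariance, using that the relevant normal subgroup is canonical in $R$ (built from minimal closed normal subgroups / a chief factor) and that there are only finitely many ``$\ms{S}$-pieces'', together with the compactness of $E/R$ --- for instance by replacing $L$ with an intersection of its finitely many $E$-conjugates. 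The main obstacle is precisely here: identifying the exact statement in \cite{CM} that, applied to a noncompact compactly generated \tdlc group whose discrete residual is the whole group, yields a compactly generated, nondiscrete, topologically simple quotient, and then checking carefully that the kernel can be taken invariant under all of $E$ rather than just under $R$. Once such an $L$ is secured, the pair $(E, L)$ witnesses (ii), so exactly one of (i), (ii) holds.
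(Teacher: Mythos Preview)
Your approach is essentially the paper's, but with an unnecessary detour and one suggestion that does not work as stated.

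The detour: you invoke Proposition~\ref{prop:descent_groups} to obtain a noncompact compactly generated \emph{closed} subgroup $H$ with no infinite discrete quotient, and then build a reduced envelope $E$ for it. The paper instead uses the equivalence in Proposition~\ref{prop:descent_groups} the other way round: the failure of condition (iv) there directly hands you a noncompact compactly generated \emph{open} subgroup $K$ with no infinite discrete quotient. This is the group you call $E$, obtained in one line rather than via the envelope machinery. The Caprace--Monod input (specifically \cite[Theorem~F]{CM} and \cite[Proposition~5.4]{CM}, which answers your question about the exact statement) is then applied to $K$ itself to conclude both that $\Res(K)$ is cocompact in $K$ and that $\Res(K)$ has finitely many, but at least one, quotients in $\ms{S}$.

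On the $K$-invariance of $L$: your suggestion to replace $L$ by the intersection of its finitely many $E$-conjugates does not work, since $R/L'$ would then only embed in a finite product of groups in $\ms{S}$ and need not itself lie in $\ms{S}$. Your other suggestion is the right one and is exactly what the paper does: since $K$ permutes the finitely many kernels of the $\ms{S}$-quotients of $\Res(K)$, replace $K$ by the finite-index open subgroup fixing each such kernel. This leaves $\Res(K)$ unchanged (finite-index open subgroups have the same discrete residual) and makes each such $L$ a $K$-invariant subgroup with $\Res(K)/L \in \ms{S}$.
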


\begin{proof}
If (i) holds, then the only compactly generated open subgroups $K$ of $G$ such that $\Res(K)$ is cocompact in $K$ are the compact open subgroups of $G$; in this case $\Res(K) = \triv$, so $\Res(K)$ has no quotient in $\ms{S}$.  Thus (i) and (ii) are mutually exclusive.  We may therefore suppose (i) fails, that is, not every noncompact compactly generated closed subgroup of $G$ has an infinite discrete quotient.  Then by Proposition~\ref{prop:descent_groups}, there is a compactly generated open subgroup $K$ of $G$ such that $K$ has no infinite discrete quotient.  It then follows by \cite[Theorem~F]{CM} and \cite[Proposition~5.4]{CM} that $\Res(K)$ is cocompact in $K$ and that $\Res(K)$ has $0 < n < \infty$ quotients in $\ms{S}$.  By replacing $K$ with a finite index open subgroup of $K$, we can ensure that each of the quotients of $\Res(K)$ in $\ms{S}$ has $K$-invariant kernel.  (Note that if $K_2$ is a finite index open subgroup of $K$, then $\Res(K) = \Res(K_2)$.)
\end{proof}

Instead of considering all infinite discrete quotients, we can consider infinite quotients in a commensurability class of discrete groups.  Here again, the situation for compactly generated closed subgroups in general reduces to considering compactly generated open subgroups.

 \begin{prop}\label{prop:descent_groups:class}Let $G$ be a \tdlc group.  Let $\mc{C}$ be a class of infinite discrete groups, such that if $H \in \mc{C}$ and $K$ is a group isomorphic to a finite index subgroup of $H$, then $K \in \mc{C}$.  The following are equivalent:
 \begin{enumerate}[(i)]
 \item For every noncompact compactly generated closed subgroup $H$ of $G$, then $H$ has a quotient in $\mc{C}$;
 \item For every noncompact compactly generated open subgroup $H$ of $G$, then $H$ has a quotient in $\mc{C}$.
 \end{enumerate}
 \end{prop}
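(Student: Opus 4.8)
The plan is to prove the two implications of the equivalence separately. Since open subgroups of a \tdlc group are closed, (i) $\Rightarrow$ (ii) is immediate. For (ii) $\Rightarrow$ (i) I would run a simplified version of the argument for Proposition~\ref{prop:descent_groups}: replace an arbitrary noncompact compactly generated closed subgroup $H$ by a compactly generated \emph{open} subgroup containing it whose discrete residual is controlled, apply the hypothesis (ii) to that open group, and then restrict the resulting quotient map back to $H$.

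In detail: let $H$ be a noncompact compactly generated closed subgroup of $G$, put $R = \Res_G(H)$ and $K = \overline{HR}$, and apply Theorem~\ref{thm:sr_envelope} to obtain a compactly generated reduced envelope $E = KU$ for $H$ in $G$, where $U$ is a compact open subgroup of $G$ and $\Res(E) = R$ (Lemma~\ref{lem:reduced_envelope:residual}). Since $H$ is noncompact, closed, and contained in the open (hence closed) subgroup $E$, the group $E$ is a noncompact compactly generated open subgroup of $G$; so by (ii) there is a continuous surjective homomorphism $\phi\colon E \to D$ with $D \in \mc{C}$. Because $D$ is discrete, $\ker\phi$ is an open normal subgroup of $E$, hence $R = \Res(E) \le \ker\phi$ and $\phi(R) = \{1\}$.

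From $E = KU$ we get $D = \phi(K)\phi(U)$ with $\phi(U)$ finite (compact in a discrete group), so $\phi(K)$ has finite index in $D$, and therefore $\phi(K) \in \mc{C}$ by the hypothesis on $\mc{C}$. On the other hand $K = \overline{HR}$ and $\phi(R) = \{1\}$, so by continuity $\phi(K) \subseteq \overline{\phi(H)} = \phi(H)$ (the equality holds because $\phi(H)$, being a subgroup of a discrete group, is closed); together with $H \le K$ this yields $\phi(H) = \phi(K)$. Finally $\phi|_H\colon H \to \phi(H)$ has open kernel $H \cap \ker\phi$, so it exhibits the discrete group $\phi(H) = \phi(K) \in \mc{C}$ as a quotient of $H$, which proves (ii) $\Rightarrow$ (i). I do not anticipate a genuine obstacle here: Theorem~\ref{thm:sr_envelope} already packages everything the argument needs (compact generation and openness of $E$, the identity $\Res(E) = R$, and the cocompactness of $K$ in $E$), and the only mildly delicate step is checking $\phi(H) = \phi(K)$, which is routine once $\phi(R)$ is seen to be trivial.
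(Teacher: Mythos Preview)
Your proof is correct and follows essentially the same approach as the paper: both pass to the reduced envelope $E = KU$ from Theorem~\ref{thm:sr_envelope}, use $\Res(E) = R$ to push $R$ into the kernel of any discrete quotient, and use the cocompactness of $K$ in $E$ together with the finite-index closure of $\mc{C}$. The only difference is organizational: the paper argues by contrapositive (if $H$ has no quotient in $\mc{C}$ then neither does $K/R$, hence neither does $E$), whereas you argue directly and make the extra step $\phi(H) = \phi(K)$ explicit.
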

 
\begin{proof}
Clearly (i) implies (ii).  For the converse, suppose that (i) does not hold, that is, there is a noncompact compactly generated closed subgroup $H$ of $G$ with no quotient in $\mc{C}$.  Let $R =\Res_G(H)$ and let $K = \overline{HR}$.  Then certainly $K/R$ has no quotient in $\mc{C}$.  Let $E = KU$ be a compactly generated reduced envelope for $H$ in $G$, as given by Theorem~\ref{thm:sr_envelope}, where $U$ is a compact open subgroup of $G$.  Then $R = \Res(E)$, so $R$ is in the kernel of any map from $E$ to a discrete group.  Moreover, $K$ is cocompact in $E$, so given any continuous homomorphism $\phi: E \rightarrow D$ where $D$ is discrete, then $\phi(K)$ has finite index in $E$.  In particular, since $K/R$ has no quotient in $\mc{C}$, we see that $E$ has no quotient in $\mc{C}$.  This contradicts (ii) and completes the proof.
\end{proof}

\begin{rem}\label{rem:U(F)}
This subsection (\S\ref{sec:drq}) is inspired by the recent preprint \cite{CW}; I thank Pierre-Emmanuel Caprace and Phillip Wesolek for sharing with me preliminary versions of their results.  Caprace and Wesolek show the following: Given a Burger--Mozes universal group $U(F)$ with prescribed local action on a locally finite tree (see \cite{BurgerMozes}), such that the local action $F$ is not free, then $U(F)$ is nonelementary and indeed involves groups in $\ms{S}$ as subquotients (\cite[Corollary~4.17]{CW}).  However, if $F$ is nilpotent, then given a compactly generated closed subgroup $H$ of $U(F)$ that is not compact, then $H$ has an infinite discrete quotient (see \cite[Corollary~1.2]{CW}).  These examples $U(F)$, where $F$ is nilpotent and does not act freely, have the following consequences for the present discussion:
\begin{enumerate}[(1)]
\item The group $U(F)$ is in $\ms{D} \smallsetminus \ms{E}$, so $\ms{E}$ is a proper subclass of $\ms{D}$.  In particular, this shows that not every group in the set $E^*$ is elementary;  as noted in \cite{CW}, this provides a negative answer to \cite[Question~2]{ReidFlat}.
\item There are closed subgroups $K \unlhd H \le U(F)$ such that $H$ is compactly generated and $H/K \in \ms{S}$.  Note that in this case we still have $H \in \ms{D}$.  Thus the class $\ms{D}$ is not closed under quotients, and the condition that $\Res(K)$ be cocompact in $K$ is essential for the dichotomy of Corollary~\ref{cor:DandS}.
\end{enumerate}
\end{rem}

\section{Some special classes of closed subgroups}

\subsection{Introduction}

In this section we consider the problem of describing closed subgroups of a \tdlc group in terms of open subgroups.  We focus on two elementary operations for extending a family of closed subgroups.

\begin{defn}\label{def:RIO}
Let $G$ be a topological group and let $\mc{C}(G)$ be a class of closed subgroups of $G$.  Define $\mc{IC}(G)$ (the class of \defbold{intersection-$\mc{C}$} subgroups) to be the class of subgroups that are intersections of groups in $\mc{C}(G)$.  Define $\mc{RC}(G)$ (the class of \defbold{regionally-$\mc{C}$} subgroups) as follows: we have $H \in \mc{RC}(G)$ if there is a compactly generated open subgroup $U$ of $H$ such that, for every compactly generated group $K$ such that $U \le K \le H$, then $K \in \mc{C}(G)$.

Let $\mc{O}(G)$ denote the class of open subgroups of $G$.  If $H \in \mc{IO}(G)$, say $H$ is an \defbold{IO subgroup} of $G$, and if $H \in \mc{RIO}(G)$, say $H$ is a \defbold{RIO subgroup}.
\end{defn}

It is clear that $\mc{RO}(G) = \mc{O}(G)$.  If $G$ is a \tdlc SIN group, then $\mc{IO}(G)$ is the class of all closed subgroups, by Corollary~\ref{cor:SIN_intersection}; however, for more general \tdlc groups it is easy to find examples of closed subgroups that are not IO subgroups, for example lattices in a nondiscrete simple \tdlc group.  Given an arbitrary subgroup $H$ of the \tdlc group $G$, it is clear that there is a unique smallest IO subgroup of $G$ that contains $H$, the \defbold{IO-closure} in $G$, namely the intersection of all open subgroups that contain $H$; if $H$ has (SR) on $G$, then by Theorem~\ref{thm:sr_envelope}(\ref{sr_envelope:4}), the IO-closure of $H$ is exactly $\overline{H\Res_G(H)}$.  Note also that if $K$ is a closed \defbold{locally normal} subgroup of $G$, that is, such that $\N_G(K)$ is open, then we have $K \in \mc{IO}(G)$ by considering the open subgroups of $\N_G(K)/K$; so if $H$ has (SR) on $G$, then $\Res_G(H) \in \mc{IO}(G)$.

\subsection{Codistal subgroups}

To understand the closed subgroup structure of a \tdlc group $G$, it is useful to consider distal coset spaces of $G$, where now $G$ itself acts by left translation.

\begin{defn}
Let $G$ be a topological group and let $H$ be a closed subgroup of $G$.  Say $H$ is \defbold{codistal} in $G$, and write $H \in \mc{D}(G)$, if the action of $G$ on $G/H$ by left translation is distal.
\end{defn}

We remark at this point that orbits of more general distal actions of $G$ can be converted into distal coset spaces, at the cost of passing to a possibly finer topology: if $G$ acts continuously and distally on a topological space $X$, then each point stabilizer of the action is codistal in $G$.

The following characterization of codistal subgroups is essentially due to H. Keynes \cite[see Theorem~2.3]{Keynes}.

\begin{lem}\label{lem:KUK}
Let $G$ be a topological group, let $H$ be a closed subgroup of $G$ and let $G$ act on $G/H$ by left translation.  Then the following are equivalent:
\begin{enumerate}[(i)]
\item $G$ acts distally on $G/H$;
\item $H$ is a distal point for the action of $H$ on $G/H$;
\item We have $H = \bigcap_{U \in \mc{U}}HUH$, where $\mc{U}$ is a base of identity neighbourhoods in $G$.
\end{enumerate}
\end{lem}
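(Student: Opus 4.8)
The plan is to prove the cycle of implications (i) $\Rightarrow$ (ii) $\Rightarrow$ (iii) $\Rightarrow$ (i). The implication (i) $\Rightarrow$ (ii) is immediate: the action of $H$ on $G/H$ by left translation is just the restriction of the $G$-action, and $H$ itself is the trivial coset, so if every point of $G/H$ is distal for the $G$-action then in particular the point $H$ is distal for the $H$-subaction. For (ii) $\Rightarrow$ (iii), I would invoke Lemma~\ref{lem:distal_equiv}: the statement that the trivial coset $eH$ is a distal fixed point of the $H$-action is equivalent to condition (iii) of that lemma, namely that the intersection of all open $H$-invariant subsets of $G/H$ containing $eH$ equals $\{eH\}$. (Note that $eH$ is automatically fixed by left translation by $H$.) So I must check that a typical open $H$-invariant subset of $G/H$ containing $eH$ contains, and is contained in something of, the form $HUH/H$. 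Pulling back to $G$, an open $H$-invariant-on-the-left neighbourhood of $e$ that is a union of right $H$-cosets is exactly a set of the form $\Omega$ with $H\Omega = \Omega = \Omega H$; and every identity neighbourhood $U$ gives such a set $HUH$, while conversely any open $\Omega \ni e$ with $H\Omega H = \Omega$ contains $HUH$ for some $U \in \mc{U}$ (take $U \subseteq \Omega$ open with $e \in U$; then $HUH \subseteq H\Omega H = \Omega$). Hence the intersection over all open $H$-biinvariant neighbourhoods equals $\bigcap_{U \in \mc{U}} HUH$, and this being $\{eH\}$ in $G/H$ translates precisely to $\bigcap_{U \in \mc{U}} HUH = H$ in $G$.

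For (iii) $\Rightarrow$ (i), I would argue directly with nets, showing $G$ acts distally on $G/H$ by translation. Suppose $(g_i)$ is a net in $G$ and $xH, yH \in G/H$ with $(g_i x H, g_i y H) \to (zH, zH)$ for some $z \in G$. Then there are nets $(h_i), (h_i')$ in $H$ with $g_i x h_i \to z$ and $g_i y h_i' \to z$; hence $(g_i x h_i)^{-1}(g_i y h_i') = h_i^{-1} x^{-1} y h_i' \to e$. Given any $U \in \mc{U}$, choose a symmetric $V \in \mc{U}$ with $VV \subseteq U$; eventually $h_i^{-1} x^{-1} y h_i' \in V$, so $x^{-1} y \in h_i V h_i'^{-1} \subseteq HVH$, hence $x^{-1}y \in HVH \subseteq HUH$. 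As $U$ was arbitrary, $x^{-1}y \in \bigcap_{U \in \mc{U}} HUH = H$ by (iii), so $xH = yH$. Thus $(xH, yH)$ cannot be a proximal pair unless $xH = yH$, i.e. the $G$-action on $G/H$ is distal.

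The main obstacle I anticipate is bookkeeping in the (ii) $\Leftrightarrow$ (iii) step: carefully matching up "open $H$-invariant subsets of $G/H$ containing the trivial coset" (the language of Lemma~\ref{lem:distal_equiv}(iii), where invariance is under the $H$-action by left translation on $G/H$) with "open $H$-biinvariant identity neighbourhoods in $G$", and checking that the $HUH$ sets are cofinal among these — the point being that an $H$-invariant open subset of $G/H$ pulls back to an open subset $\Omega$ of $G$ that is a union of right cosets of $H$ (so $\Omega H = \Omega$) and is stable under left translation by $H$ (so $H\Omega = \Omega$), whence $H\Omega H = \Omega$. Once that correspondence is set up cleanly, the rest is routine. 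Everything reduces to Lemma~\ref{lem:distal_equiv}, plus a short net argument using the continuity of multiplication to pass from convergence of the perturbed nets to membership in $HUH$.
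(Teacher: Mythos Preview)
Your proof is correct and follows the same cycle (i) $\Rightarrow$ (ii) $\Rightarrow$ (iii) $\Rightarrow$ (i) as the paper, invoking Lemma~\ref{lem:distal_equiv} for (ii) $\Rightarrow$ (iii) and a direct net computation for (iii) $\Rightarrow$ (i). The only cosmetic difference is that for (iii) $\Rightarrow$ (i) the paper first uses transitivity of $G$ on $G/H$ to reduce to the case $zH = H$, so that for each $U$ one simply picks a single $g$ with $gx, gy \in UH$ and reads off $x^{-1}y \in HU^{-1}UH$; this sidesteps your lifting of convergent nets from $G/H$ to $G$ (which, strictly speaking, may require passing to a subnet), but the substance is identical.
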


\begin{proof}
Clearly (i) implies (ii), and (ii) implies (iii) by Lemma~\ref{lem:distal_equiv}.  Suppose (iii) holds.  We note that $G$ acts transitively by homeomorphisms on $G/H$, so either all diagonal points $(zH,zH)$ are accumulation points of nondiagonal orbits or none of them are.  Thus to show $G$ acts distally on $G/H$, it suffices to consider whether there exists a proximal pair $(xH,yH)$ such that $xH \neq yH$ and $(H,H) \in \overline{\{(gxH,gyH) \mid g \in G\}}$.  Given such a pair, we see that for all $U \in \mc{U}$, there exists $g \in G$ such that $gx,gy \in UH$.  It then follows that $x\inv y \in HU\inv UH$.  Since $U \in \mc{U}$ was arbitrary, we have $x\inv y \in \bigcap_{U \in \mc{U}}HUH$.  By (iii), we therefore have $x\inv y \in H$, that is, $xH = yH$, a contradiction.  Thus $G$ acts distally on $G/H$, so (iii) implies (i).
\end{proof}

A special case are closed commensurated subgroups.  Recall that for a subgroup $H$ of a group $G$, the \defbold{commensurator} of $H$ in $G$ is the group
\[
\Comm_G(H) := \{g \in G \mid |H:H \cap gHg\inv||gHg\inv:H \cap gHg\inv| < \infty\};
\]
we say $H$ is \defbold{commensurated} in $G$ if $\Comm_G(H) = G$.  An easy observation is that $H$ is commensurated in $G$ if and only if $H$ has finite orbits on the coset space $G/H$.  For closed subgroups, this is clearly a special case of the situation where $H$ has distal action on $G/H$.

Given Lemma~\ref{lem:KUK}, we observe some more closure properties of the set of codistal subgroups of $G$.  In particular, every IO-subgroup is codistal.

\begin{prop}\label{codistal:closure_properties}
Let $G$ be a topological group.
\begin{enumerate}[(i)]
\item\label{codistal:closure:1} Every closed commensurated subgroup of $G$ is codistal.
\item\label{codistal:closure:2} Let $H \in \mc{D}(G)$ and $K \in \mc{D}(H)$.  Then $K \in \mc{D}(G)$.
\item\label{codistal:closure:3} Let $H,K \le G$ be closed subgroups such that $K \in \mc{D}(G)$.  Then $H \cap K$ is codistal in $H$.
\item\label{codistal:closure:4} Let $\mc{H} \subseteq \mc{D}(G)$.  Then $\bigcap_{K \in \mc{H}}K \in \mc{D}(G)$.
\item\label{codistal:closure:5} Let $K \le H \le G$ be closed subgroups such that $K$ is cocompact in $H$ and codistal in $G$.  Then $H$ is codistal in $G$.
\item\label{codistal:closure:6} Every IO-subgroup of $G$ is codistal.
\end{enumerate}
\end{prop}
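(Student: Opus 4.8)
The plan is to run everything through the criterion of Lemma~\ref{lem:KUK}(iii). I would first record the elementary equivalence: for $x \in G$ and an identity neighbourhood $U$, one has $x \in HUH$ iff $U \cap HxH \neq \emptyset$ (writing $x = h_1uh_2$ as $u = h_1^{-1}xh_2^{-1} \in HxH$, using $H = H^{-1}$); hence $\bigcap_U HUH = \{x \in G : 1 \in \overline{HxH}\}$, and by Lemma~\ref{lem:KUK} the subgroup $H$ is codistal in $G$ precisely when $1 \notin \overline{HxH}$ for all $x \in G \smallsetminus H$. The same holds for any closed subgroup in any topological overgroup, and when $x$ and the subgroup lie inside a closed subgroup $L$ the relevant closure may be taken in $L$ or in $G$ interchangeably. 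The monotonicity $\overline{AxA} \subseteq \overline{BxB}$ for $A \le B$ then dispatches most of the parts.

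For~(\ref{codistal:closure:1}): a commensurated closed $H$ has finite orbits on $G/H$, so for $x \notin H$ the set $HxH$ is a finite union of closed right cosets $h_ixH$, hence closed and missing $1$, so $1 \notin \overline{HxH} = HxH$. For~(\ref{codistal:closure:2}): given $x \in G \smallsetminus K$, either $x \notin H$, so $1 \notin \overline{HxH} \supseteq \overline{KxK}$ as $H \in \mc{D}(G)$, or $x \in H \smallsetminus K$, so $1 \notin \overline{KxK}$ as $K \in \mc{D}(H)$. For~(\ref{codistal:closure:3}) and~(\ref{codistal:closure:4}): if $x$ lies outside $H \cap K$ (resp.\ outside the closed subgroup $\bigcap_{K' \in \mc{H}}K'$) then $x \notin K'$ for some $K' \in \mc{D}(G)$, whence $1 \notin \overline{K'xK'}$, and $\overline{(H\cap K)x(H\cap K)}$ (resp.\ $\overline{(\bigcap K')x(\bigcap K')}$) sits inside it. For~(\ref{codistal:closure:6}): an open subgroup $O$ is codistal, since for $x \notin O$ the set $OxO = OxO\cdot O$ is a union of right cosets of the open subgroup $O$, hence clopen, so $1 \notin OxO = \overline{OxO}$; an arbitrary intersection of open subgroups is then codistal by~(\ref{codistal:closure:4}).

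The one part needing a genuine idea is~(\ref{codistal:closure:5}). I would fix compact subsets $C_1, C_2 \subseteq H$ with $H = C_1K = KC_2$, available from cocompactness of $K$ in $H$, and argue by contradiction. Suppose there is $x \in G \smallsetminus H$ with $1 \in \overline{HxH}$, and write a net in $HxH$ converging to $1$ as $c_ik_ixk'_ic'_i$ with $c_i \in C_1$, $k_i, k'_i \in K$, $c'_i \in C_2$. Passing to a subnet with $c_i \to c$ and $c'_i \to c'$, the net $k_ixk'_i = c_i^{-1}(c_ik_ixk'_ic'_i)(c'_i)^{-1}$ converges to $c^{-1}c'^{-1}$; call this limit $z$, so that $z \in \overline{KxK} \cap H$. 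Now work in $G/K$, on which $K$ acts distally: indeed $G$ acts distally on $G/K$ by Lemma~\ref{lem:KUK} since $K \in \mc{D}(G)$, so every point of $G/K$ is a distal point for the $K$-action. The $K$-orbit $K\cdot zK = (KzK)/K$ lies in $H/K$ because $z \in H$ and $K \le H$, and $H/K$ is a compact closed subspace of $G/K$; hence $\overline{K\cdot zK}$ is a compact set of distal points. Since $z = \lim k_ixk'_i$ gives $zK = \lim k_i\cdot(xK) \in \overline{K\cdot xK}$, the orbit closures $\overline{K\cdot xK}$ and $\overline{K\cdot zK}$ meet, so Lemma~\ref{lem:distal_orbit_closure} forces $\overline{K\cdot xK} = \overline{K\cdot zK} \subseteq H/K$; in particular $xK \in H/K$, i.e.\ $x \in H$, a contradiction.

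I expect the crux to be exactly this use of cocompactness in~(\ref{codistal:closure:5}): lifting a proximal pair directly from $G/H$ to $G/K$ does not go through (much as naive proofs of ``factors of distal flows are distal'' fail), and the working device is to use a compact transversal to locate a point $zK$ inside the compact orbit space $H/K$ and then invoke Lemma~\ref{lem:distal_orbit_closure} to push the whole $K$-orbit closure of $xK$ into $H/K$.
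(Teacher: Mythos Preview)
Your proof is correct and follows essentially the same route as the paper: parts (\ref{codistal:closure:1})--(\ref{codistal:closure:4}) and (\ref{codistal:closure:6}) are handled via the double-coset reformulation of Lemma~\ref{lem:KUK}(iii) (the paper phrases the same arguments through the equivalent distal-fixed-point criterion of Lemma~\ref{lem:KUK}(ii)), and part (\ref{codistal:closure:5}) hinges on Lemma~\ref{lem:distal_orbit_closure} exactly as in the paper. The only cosmetic difference in (\ref{codistal:closure:5}) is that you extract the auxiliary point $z\in H$ via compact transversals $C_1,C_2$ for $K$ in $H$ and then apply Lemma~\ref{lem:distal_orbit_closure} to the $K$-orbit of $zK$, whereas the paper passes to a convergent subnet inside the compact set $UH/K\subseteq G/K$ (for $U$ a compact open subgroup of $G$) and applies the lemma to the $H$-orbit; both variants need some local compactness and reach the same conclusion $xK\in H/K$.
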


\begin{proof}
All actions in this proof are by left translation.  By Lemma~\ref{lem:KUK}, to show a subgroup $B$ of $A$ is codistal, it suffices to show that $B$ is a distal point for the action of $B$ on $A/B$.

(\ref{codistal:closure:1})
If $K$ is closed and commensurated in $G$, then $G/K$ is Hausdorff and the action of $K$ on $G/K$ has finite orbits: indeed, we see that for each $g \in G$, the right coset $Kg$ of $K$ is a left coset of $g\inv Kg$, hence a union of finitely many left cosets of $K \cap g\inv Kg$, and so the double coset $KgK$ is a union of finitely many left cosets of $K$.  Thus the orbits of $K$ on $G/K$ have no accumulation points; in particular, $K$ is a distal point for the action of $K$ on $G/K$.

(\ref{codistal:closure:2})
Suppose $x \in G$ is such that the $K$-orbit of $xK$ accumulates at the trivial coset in $G/K$.  Then the $K$-orbit of $xH$ accumulates at the trivial coset in $G/H$.  Since $K$ acts distally on $G/H$, it follows that $x \in H$.  It then follows that in fact $x \in K$, since $K$ acts distally on $H/K$.  Thus $K$ is codistal in $G$.

(\ref{codistal:closure:3})
Let $L = H \cap K$.  There is then an $H$-equivariant continuous injective map from $H/L$ to $G/K$, where $H$ acts on both coset spaces by translation.  Since $H$ acts distally on $G/K$, it also acts distally on $H/L$.  Thus $L$ is codistal in $G$.

(\ref{codistal:closure:4})
Let $H = \bigcap_{K \in \mc{H}}K$ and suppose $x \in G$ is such that the $H$-orbit of $xH$ accumulates at the trivial coset in $G/H$.  Then for each $K \in \mc{H}$, the $H$-orbit of $xK$ accumulates at the trivial coset in $G/K$.  Since $H$ acts distally on $G/K$, it follows that $x \in K$.  Since $K \in \mc{H}$ was arbitrary, we must have $x \in H$.  Thus $H$ is codistal in $G$.

(\ref{codistal:closure:5})
Suppose $x \in G$ is such that the $H$-orbit of $xH$ accumulates at the trivial coset in $G/H$.  Then there is a net $(h_i)_{i \in I}$ in $H$ such that $h_ixH$ converges to $H$.  Let $U$ be a compact open subgroup of $G$.  Then the net $(h_ixK)_{i \in I}$ is eventually confined to a compact subset $UH/K$ of $G/K$; by passing to a subnet we may assume $h_ixK \rightarrow yK$ for some $y \in G$.  Indeed, since $H$ is closed and $h_ixH \rightarrow H$, we must have $y \in H$.  Since $K \in \mc{D}(G)$, we see that $H$ acts distally on $G/K$; the $H$-orbit of $yK$ is the compact set $H/K$.  We now apply Lemma~\ref{lem:distal_orbit_closure} to conclude that $\overline{HxK} = H$; in particular, $x \in H$.  Thus the trivial coset is a distal point for the action of $H$ on $G/H$, and hence $H$ is codistal in $G$.

(\ref{codistal:closure:6})
Open subgroups are clearly codistal, so (\ref{codistal:closure:6}) follows from (\ref{codistal:closure:4}).
\end{proof}

In general there are codistal subgroups of \tdlc groups, even codistal cocompact subgroups, that are not IO-subgroups; see Example~\ref{ex:different_classes}(ii).  However, if we focus on the case of subgroups with equicontinuously generated action, the possibilities are more restricted.  In particular, the next two lemmas will show that an IO-subgroup with equicontinuously generated action is very closely approximated by an equicontinuously generated open subgroup.

\begin{defn}
Given a topological group $G$ and $H \le \Aut(G)$, write $\Res_{G,f}(H)$ for the intersection of all open $H$-invariant subgroups of finite index in $G$.  In the case that $H = \Inn(G)$ we define $\Res_{G,f}(H) =: \Res_f(G)$.
\end{defn}

\begin{lem}\label{lem:cocompact_IO}
Let $G$ be a \tdlc group and let $H \in \mc{IO}(G)$.  Suppose that $H$ is cocompact in $G$.  Then $H$ is an intersection of open subgroups of finite index in $G$.  As a consequence, we have $\Res(G) \le \Res_f(G) \le H$.
\end{lem}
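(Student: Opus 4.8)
The plan is to combine the definition of an IO subgroup with the compactness of $G/H$. Since $H \in \mc{IO}(G)$, write $H = \bigcap_{i \in I}O_i$ for some family of open subgroups $O_i$ of $G$ (if the only possibility is $I = \emptyset$, then $H = G$ and every assertion below is trivial, so assume otherwise). Because $H$ is contained in the intersection, we have $H \le O_i$ for every $i$. The first step is to show each $O_i$ has finite index in $G$: since $H \le O_i$, there is a continuous surjection $G/H \to G/O_i$, $gH \mapsto gO_i$; as $H$ is cocompact the space $G/H$ is compact, so $G/O_i$ is compact, while openness of $O_i$ forces $G/O_i$ to be discrete. A compact discrete space is finite, so $|G:O_i| < \infty$. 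Hence $H = \bigcap_{i\in I}O_i$ exhibits $H$ as an intersection of open subgroups of finite index, which is the first assertion.

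For the displayed chain of inequalities, I would first record $\Res(G) \le \Res_f(G)$: by definition $\Res_f(G)$ is the intersection taken over just the finite-index members of the family of all open normal subgroups of $G$ used to define $\Res(G)$, and intersecting over a smaller family yields a larger subgroup. To obtain $\Res_f(G) \le H$, fix $i \in I$. As $O_i$ has finite index in $G$ it has only finitely many conjugates, so its normal core $N_i := \bigcap_{g \in G}gO_ig\inv$ is a finite intersection of finite-index open subgroups, hence an open normal subgroup of finite index in $G$; thus $\Res_f(G) \le N_i \le O_i$. Since $i$ was arbitrary, $\Res_f(G) \le \bigcap_{i \in I}O_i = H$.

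I do not anticipate any genuine obstacle. The two points requiring a little care are the direction of the residual inequalities (a larger family of subgroups produces a smaller intersection) and the need to pass to normal cores rather than working with the $O_i$ directly, since the $O_i$ need not be normal in $G$. The one essential input is the cocompactness of $H$, used exactly once, through the fact that a compact discrete space is finite.
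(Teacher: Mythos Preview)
Your proof is correct and follows essentially the same approach as the paper's: both argue that any open subgroup $O$ with $H \le O$ has compact discrete, hence finite, coset space $G/O$, and then pass to the normal core of each such $O$ to get inside $\Res_f(G)$. The only cosmetic differences are that the paper phrases the first step for an arbitrary open $O \ge H$ rather than fixing a representing family $(O_i)$, and that you handle the trivial case $H=G$ explicitly.
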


\begin{proof}
Let $H \le O \le G$ such that $O$ is open.  Then the coset space $G/O$ is both compact and discrete, hence it is finite.  Since $H \in \mc{IO}(G)$, it follows that $H$ is an intersection of open subgroups $O$ of finite index in $G$.  For each such $O$, the group $\bigcap_{g \in G}gOg\inv$ is itself open, since it is a finite intersection of conjugates of $O$.  Thus $\Res_f(G) \le H$.  In turn, it is clear that $\Res(G) \le \Res_f(G)$.
\end{proof}

\begin{lem}\label{lem:IO_equi_gen}
Let $G$ be a \tdlc group and let $H$ be a closed subgroup of $G$ that has equicontinuously generated action on $G$ (for example, any closed compactly generated subgroup of $G$).  Then the following are equivalent:
\begin{enumerate}[(i)]
\item $\Res_G(H) = \Res(H)$;
\item $\Res_G(H) \le H$;
\item $H \in \mc{IO}(G)$;
\item $H \in \mc{D}(G)$;
\item There is an open subgroup $E$ of $G$ such that $\Res(E) \le H \le E$.
\end{enumerate}
Moreover, if (i)--(v) hold, then in fact there is an equicontinuously generated open subgroup $E$ of $G$ such that $\Res(E) = \Res(H)$, $H$ is cocompact in $E$ and $H$ is an intersection of finite index open subgroups of $E$.\end{lem}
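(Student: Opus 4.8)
The plan is to establish the chain of implications (i)$\Rightarrow$(ii)$\Rightarrow$(iii)$\Rightarrow$(iv)$\Rightarrow$(ii)$\Rightarrow$(i) together with (ii)$\Leftrightarrow$(v), writing $\alpha\colon H\to\Aut(G)$ for the conjugation action so that $\Res_G(H)$ abbreviates $\Res_G(\alpha(H))$, and then to read off the final assertion from a reduced envelope produced en route. Three bookkeeping facts will be used throughout. (a) For every open $\alpha(H)$-invariant subgroup $O\le G$, the intersection $O\cap H$ is an open normal subgroup of $H$; hence $\Res(H)\le\Res_G(H)$ always, which already gives (i)$\Rightarrow$(ii) and reduces (ii)$\Rightarrow$(i) to showing $\Res_G(H)\le N$ for every open normal subgroup $N$ of $H$. (b) Since $\N_G(H)$ permutes the family of open $\alpha(H)$-invariant subgroups of $G$, the subgroup $\Res_G(H)$ is normalized by $H$. (c) If $E$ is an open subgroup of $G$ with $H\le E$, then $\Res_G(H)=\Res_E(H)$: open subgroups of $E$ are open in $G$, and any open $\alpha(H)$-invariant $O\le G$ meets $E$ in an open $\alpha(H)$-invariant subgroup of $E$ contained in $O$.

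For (ii)$\Leftrightarrow$(v): if (v) holds, then by (c) we have $\Res_G(H)=\Res_E(H)$, and since every open normal subgroup of $E$ is $\alpha(H)$-invariant we get $\Res_E(H)\le\Res(E)\le H$, which is (ii). Conversely, assuming (ii), Corollary~\ref{cor:equi_gen_res} gives that $H$ has (SR) on $G$, so Theorem~\ref{thm:sr_envelope} produces a reduced envelope; since $\Res_G(H)\le H$ and $H$ is closed, the group $K=\overline{H\Res_G(H)}$ of that theorem equals $H$, so the envelope has the form $E=HU$ with $U$ a compact open subgroup of $G$, and $\Res(E)=\Res_G(H)\le H\le E$ by Lemma~\ref{lem:reduced_envelope:residual}, which is (v). I will keep this $E$ for the rest of the proof. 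By Theorem~\ref{thm:sr_envelope}(\ref{sr_envelope:1}) it is equicontinuously generated on $G$; $H$ is cocompact in it since $E/H$ is a continuous image of $U$; and $E/\Res(E)$ is a SIN group by Theorem~\ref{thm:sr_envelope}(\ref{sr_envelope:3}) (taking $L=E$).

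It remains to run the cycle through (iii) and (iv). For (ii)$\Rightarrow$(iii): $H/\Res(E)$ is a closed subgroup of the SIN \tdlc group $E/\Res(E)$, hence an intersection of open subgroups of $E/\Res(E)$ by Corollary~\ref{cor:SIN_intersection}; pulling these back along $E\to E/\Res(E)$, and recalling that $E$ is open in $G$, exhibits $H$ as an intersection of open subgroups of $G$, so $H\in\mc{IO}(G)$. The implication (iii)$\Rightarrow$(iv) is immediate from Proposition~\ref{codistal:closure_properties}(\ref{codistal:closure:6}). For (iv)$\Rightarrow$(ii), note that for $h\in H$ the automorphism $\alpha(h)$ sends a coset $xH$ to $(hxh\inv)H=hxH$, so the action of $\alpha(H)$ on $G/H$ by conjugation coincides with the action of $H$ on $G/H$ by left translation; if the $G$-action on $G/H$ is distal then so is its restriction to $H$, hence so is the $\alpha(H)$-action on $G/H$, in particular it is N-distal, and Theorem~\ref{intro:distal_SIN} yields $H\ge\Res_G(H)$. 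Finally, for (ii)$\Rightarrow$(i): by (b), $R:=\Res_G(H)$ is normal in $H$, and by Proposition~\ref{prop:residual_distal} the group $R$ has no proper open $\alpha(H)$-invariant subgroup. If $N$ is an open normal subgroup of $H$, then $N\cap R$ is open in $R$ (using $R\le H$) and, as an intersection of two subgroups normal in $H$, is normalized by $H$; hence $N\cap R=R$, i.e.\ $R\le N$. Taking the intersection over all such $N$ gives $R\le\Res(H)$, and with fact (a) this is (i).

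For the last assertion, take $E=HU$ as above: it is an equicontinuously generated open subgroup of $G$, the subgroup $H$ is cocompact in $E$, and $\Res(E)=\Res_G(H)=\Res(H)$ by (i). Since $H\in\mc{IO}(G)$ and $H\le E$, intersecting the defining open subgroups with $E$ shows $H\in\mc{IO}(E)$, so Lemma~\ref{lem:cocompact_IO} applied to the pair $(E,H)$ gives that $H$ is an intersection of finite index open subgroups of $E$, completing the proof. The step I expect to be the crux is (ii)$\Rightarrow$(i): the elementary inequality runs only one way, and pinning down the reverse containment needs the ``internal irreducibility'' of the discrete residual recorded in Proposition~\ref{prop:residual_distal}. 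The other point requiring care is the identification, in (iv)$\Rightarrow$(ii), of the translation action of $H$ on $G/H$ with the conjugation action of $\alpha(H)$, which is precisely what lets Theorem~\ref{intro:distal_SIN} be brought to bear.
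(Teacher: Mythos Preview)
Your proof is correct and follows essentially the same route as the paper's: the key steps---using Theorem~\ref{intro:distal_SIN} to pass from codistality to $\Res_G(H)\le H$, invoking Proposition~\ref{prop:residual_distal} to upgrade (ii) to (i), and appealing to Lemma~\ref{lem:cocompact_IO} for the finite-index statement---are identical. Your organization differs only cosmetically: you establish (ii)$\Leftrightarrow$(v) directly and fix the envelope $E$ early, whereas the paper constructs $E$ inside the step (iv)$\Rightarrow$(v) and cites Theorem~\ref{thm:sr_envelope}(\ref{sr_envelope:4}) for (ii)$\Leftrightarrow$(iii) rather than unpacking it via Corollary~\ref{cor:SIN_intersection}.
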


\begin{proof}
Let $R = \Res_G(H)$.  It is clear that (i) implies (ii).  By Theorem~\ref{thm:sr_envelope}(\ref{sr_envelope:4}), the IO-closure of $H$ in $G$ is exactly $\overline{HR}$, so (ii) and (iii) are equivalent.  We see by Proposition~\ref{codistal:closure_properties}(\ref{codistal:closure:6}) that (iii) implies (iv).

Suppose (iv) holds.  Then $H$ acts distally on $G/H$ by conjugation, so (ii) holds by Theorem~\ref{intro:distal_SIN}, or in other words, $H = \overline{HR}$.  It then follows by Theorem~\ref{thm:sr_envelope} that $H$ has a reduced envelope of the form $E = HU$ where $U$ is a compact open subgroup of $G$.  In particular, $H$ is cocompact in $E$.  Since (ii) implies (iii), we see that $H$ is a cocompact intersection of open subgroups of $E$; hence $\Res(E) \le H$ by Lemma~\ref{lem:cocompact_IO}, proving (v).

Suppose (v) holds.  We have $R = \Res_E(H)$, and clearly $\Res_E(H) \le \Res(E)$, so $R \le H$.  If $O$ is an open $H$-invariant subgroup of $G$, then $H \cap O$ is an open normal subgroup of $H$; thus $\Res(H) \le R$.  On the other hand, if $L$ is an open normal subgroup of $H$, then $L \cap R$ is an open $H$-invariant subgroup of $R$, so by Proposition~\ref{prop:residual_distal}, $L \ge R$; hence $R = \Res(H)$.  We have now shown that (v) implies (i), which completes the cycle of implications to show that (i)--(v) are equivalent.

Now suppose (i)--(v) holds.  By Theorem~\ref{thm:sr_envelope}, $H$ has a reduced envelope $E = \overline{HRU}$ where $U$ is a compact open subgroup; in fact $E = HU$ in this case, since $R \le H$, so $H$ is cocompact in $E$.  By Theorem~\ref{thm:sr_envelope}(\ref{sr_envelope:1}), $E$ is equicontinuously generated on $G$, and hence on itself.  We have $R = \Res(E)$ by Theorem~\ref{thm:sr_envelope}(\ref{sr_envelope:2}), so $R = \Res(H)$ by condition (ii) of the present lemma.  By (iii), $H$ is an intersection of open subgroups of $E$; since $H$ is cocompact in $E$, it follows by Lemma~\ref{lem:cocompact_IO} that $H$ is an intersection of finite index open subgroups of $E$.  We have thus proved that $E$ has all the required properties.
\end{proof}

It was noted in \cite[Lemma~7.4]{CRW-Part1} that if $G$ is a first-countable profinite group and $H$ is a commensurated compact subgroup of $G$, then $G$ normalizes a finite index subgroup of $H$.  We now prove a similar result in the more general context of compactly generated closed subgroups of \tdlc groups.

\begin{prop}\label{prop:commensurated_locnorm}
Let $G$ be a \tdlc group and let $H$ be a compactly generated closed subgroup of $G$.  Suppose that $H$ has at most countably many open subgroups of finite index (for example, $H$ is second-countable).  Then the following are equivalent:
\begin{enumerate}[(i)]
\item $\Comm_G(H)$ is open in $G$;
\item there is an open normal subgroup of $H$ of finite index such that $\N_G(K)$ is open in $G$.
\end{enumerate}
\end{prop}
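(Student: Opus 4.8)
The easy direction is (ii) $\Rightarrow$ (i). If $K \trianglelefteq H$ has finite index and $\N_G(K)$ is open, then for every $g \in \N_G(K)$ the common subgroup $K$ of $H$ and $gHg\inv$ has finite index in each, so $g \in \Comm_G(H)$; hence $\Comm_G(H)$ contains the nonempty open set $\N_G(K)$, and being a subgroup it is open.

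For (i) $\Rightarrow$ (ii) I would first reduce to the case that $H$ is commensurated in $G$, by replacing $G$ with the open subgroup $\Comm_G(H)$: then $H$ is commensurated in the new ambient group, a finite-index normal subgroup of $H$ whose normalizer is open in $\Comm_G(H)$ has normalizer open in $G$, and the hypotheses on $H$ itself are unchanged. Recall that a finite-index closed subgroup of a topological group is open, so the finite-index open subgroups of $H$ are exactly the finite-index closed ones; let $\Omega$ denote this set, which is countable by hypothesis. For every $g \in G = \Comm_G(H)$ we have $H \cap g\inv H g \in \Omega$ and $g(H \cap g\inv Hg)g\inv = gHg\inv \cap H \subseteq H$, so $G = \bigcup_{K \in \Omega} C_K$ where $C_K := \{g \in G \mid gKg\inv \subseteq H\}$. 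Each $C_K = \bigcap_{k \in K}\{g \mid gkg\inv \in H\}$ is closed since $H$ is closed and conjugation is continuous, so as $G$ is locally compact the Baire category theorem yields $K_0 \in \Omega$ with $C_{K_0}$ having nonempty interior. Picking a point of this interior and translating to the identity, then shrinking to a compact open subgroup $W \le G$, I obtain $W$ together with a finite-index open subgroup $K_1$ of $H$ (a suitable $G$-conjugate of $K_0$ contained in $H$) such that $wK_1w\inv \subseteq H$ for all $w \in W$.

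Now put $M := \langle wK_1w\inv \mid w \in W\rangle$. Then $M \le H$, $M$ contains the open subgroup $K_1$ and so is finite-index open in $H$, and $M$ is manifestly invariant under conjugation by $W$, so $\N_G(M) \supseteq W$ is open. The crux is that $M$ need not be normal in $H$, so one cannot take $K = M$ directly; the remedy is to pass to $K := \Core_H(M)$. Because $M$ is normal in the finite-index subgroup $\N_H(M)$, writing $T$ for a (finite) transversal of $\N_H(M)$ in $H$ one gets $\Core_H(M) = \bigcap_{h \in H}hMh\inv = \bigcap_{t \in T}tMt\inv$, a finite intersection; hence $K$ is a finite-index open normal subgroup of $H$, and $\N_G(K) \supseteq \bigcap_{t \in T}t\N_G(M)t\inv$ is a finite intersection of open subgroups of $G$, so it is open. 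This $K$ witnesses (ii). The point worth emphasising is that normality of $M$ inside a finite-index subgroup of $H$ is precisely what makes its normal core a finite intersection of $G$-conjugates, which is what keeps the normalizer open; a naive use of the infinite normal core would lose all control of $\N_G(\Core_H(M))$.
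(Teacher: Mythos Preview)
Your proof is correct and takes a genuinely different, more elementary route than the paper. The paper first uses its reduced-envelope machinery (via Proposition~\ref{codistal:closure_properties} and Lemma~\ref{lem:IO_equi_gen}) to embed $H$ cocompactly in an open subgroup $E$ of $\Comm_G(H)$ with $H$ an intersection of finite-index open subgroups of $E$, and then invokes the external result \cite{CKRW} to show that every finite subset of $E$ normalizes some open finite-index normal subgroup of $H$; this yields $E = \bigcup_{K \in \mc{K}} \N_E(K)$ with $\mc{K}$ countable, and Baire category finishes. You bypass both the envelope theory and the citation to \cite{CKRW}: you apply Baire category directly to the covering $G = \bigcup_{K \in \Omega} C_K$ by the closed sets $C_K = \{g : gKg\inv \le H\}$, and then manufacture the required normal subgroup by hand via the $W$-invariant group $M$ and its $H$-core, exploiting the finite transversal of $\N_H(M)$ in $H$ to keep the normalizer open. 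Your argument is self-contained; the paper's has the virtue of illustrating how the proposition sits inside its broader framework, but at the cost of a nontrivial external dependency.
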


\begin{proof}
Suppose $\Comm_G(H)$ is open in $G$.  By Proposition~\ref{codistal:closure_properties}(\ref{codistal:closure:1}), we see that $H$ is codistal in $\Comm_G(H)$; hence by Lemma~\ref{lem:IO_equi_gen}, there is an open subgroup $E$ of $\Comm_G(H)$ such that $H$ is cocompact in $E$ and $H$ is an intersection of finite index open subgroups of $E$.

By \cite[Main Theorem]{CKRW}, it follows that given a finite subset $X$ of $E$, there is a finite index subgroup $K_X$ of $H$ that is normal in $\langle H,X \rangle$.  We can clearly replace $K_X$ with its closure in $H$, and so ensure that $K_X$ is closed, hence open in $H$.  Thus $E = \bigcup_{K \in \mc{K}}\N_E(K)$ where $\mc{K}$ is the set of open normal subgroups of $H$ of finite index.  Each of the subgroups $\N_E(K)$ is closed, and by hypothesis, $\mc{K}$ is countable.  Hence by the Baire Category Theorem, there is some $K \in \mc{K}$ such that $\N_E(K)$ has nonempty interior in $E$, and hence is an open subgroup of $E$, which in turn means that $\N_G(K)$ is open in $G$.  Thus (i) implies (ii).

Conversely, suppose $K$ is an open normal subgroup of $H$ of finite index such that $\N_G(K)$ is open in $G$.  Then $\Comm_G(K) \ge \N_G(K)$, so $\Comm_G(K)$ is open, and in fact $\Comm_G(K) = \Comm_G(H)$, since $K$ is commensurate with $H$.  Thus $\Comm_G(H)$ is open, showing that (ii) implies (i).
\end{proof}

\subsection{RIO subgroups}\label{sec:RIO}

We now consider the class $\mc{RIO}(G)$ of RIO-subgroups of a \tdlc group $G$.  This class has a number of closure properties that can be derived from Theorem~\ref{intro:distal_SIN} and Proposition~\ref{codistal:closure_properties}, as will be shown in Theorem~\ref{thm:rio_closure_properties} below.

Here are some characterizations of RIO subgroups.  Say that a net of subgroups $(H_i)_{i \in I}$ is \defbold{ascending} if $i \le j$ implies $H_i \le H_j$, and \defbold{descending} if $i \le j$ implies $H_i \ge H_j$.

\begin{prop}\label{prop:RIO_equivalents}
Let $G$ be a \tdlc group and let $H$ be a closed subgroup of $G$.  Then the following are equivalent:
\begin{enumerate}[(i)]
\item $H \in \mc{RIO}(G)$;
\item $H \in \mc{RD}(G)$;
\item  there is an ascending net $(H_i)_{i \in I}$ of subgroups such that $H = \bigcup_{i \in I}H_i$ and $H_i \in \mc{D}(G) \cap \mc{O}(H)$ for all $i \in I$;
\item every compactly generated open subgroup of $H$ is in $\mc{IO}(G)$;
\item every compactly generated open subgroup of $H$ is in $\mc{D}(G)$.
\end{enumerate}
\end{prop}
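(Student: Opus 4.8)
The plan is to reduce everything to two facts established above: for a compactly generated closed subgroup $L$ of $G$, membership in $\mc{IO}(G)$ is equivalent to membership in $\mc{D}(G)$ (Lemma~\ref{lem:IO_equi_gen}); and codistality is transitive, in the sense that $L \in \mc{D}(K)$ together with $K \in \mc{D}(G)$ implies $L \in \mc{D}(G)$ (Proposition~\ref{codistal:closure_properties}(ii)). I would first record the routine point that $H$ is itself a \tdlc group, and that any subgroup $K$ of $H$ containing a compactly generated open subgroup of $H$ is open in $H$ and hence closed in $G$; so if such a $K$ is compactly generated then it is a compactly generated closed subgroup of $G$, and Lemma~\ref{lem:IO_equi_gen} applies to it. Applying this pointwise makes (i) $\Leftrightarrow$ (ii) and (iv) $\Leftrightarrow$ (v) immediate, since within each pair the only difference is whether the relevant subgroups are required to lie in $\mc{IO}(G)$ or in $\mc{D}(G)$.

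Next I would prove (i) $\Leftrightarrow$ (iv). For (iv) $\Rightarrow$ (i), take $U$ to be a compact open subgroup of $H$ (which exists by Van Dantzig's theorem and is compactly generated); then any compactly generated $K$ with $U \le K \le H$ is a compactly generated open subgroup of $H$ and so lies in $\mc{IO}(G)$ by (iv). For (i) $\Rightarrow$ (iv), let $U$ be a witness for (i), let $L$ be an arbitrary compactly generated open subgroup of $H$, and put $K := \langle U \cup L \rangle$: this is compactly generated, contains the open subgroup $U$, and satisfies $U \le K \le H$, so $K \in \mc{IO}(G)$ by (i), whence $K \in \mc{D}(G)$ by Lemma~\ref{lem:IO_equi_gen}. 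Since $L$ is open in $K$, it is codistal in $K$ (an open subgroup is codistal), so $L \in \mc{D}(G)$ by Proposition~\ref{codistal:closure_properties}(ii), and then $L \in \mc{IO}(G)$ by Lemma~\ref{lem:IO_equi_gen} again; this is (iv).

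Finally I would bring in (iii). For (v) $\Rightarrow$ (iii), let $I$ be the set of compactly generated open subgroups of $H$, ordered by inclusion; $I$ is directed, because the subgroup generated by two such is again compactly generated and open in $H$. Setting $H_L := L$ for $L \in I$ gives an ascending net with $\bigcup_{L \in I} H_L = H$ (each $h \in H$ lies in $\langle V \cup \{h\} \rangle$ for a fixed compact open subgroup $V$ of $H$), and each $H_L$ lies in $\mc{O}(H) \cap \mc{D}(G)$ by (v). For (iii) $\Rightarrow$ (v), given the net $(H_i)_{i \in I}$ and a compactly generated open subgroup $L = \langle C \rangle$ of $H$ with $C$ compact, the $H_i$ cover $C$ and are ascending, so $C \subseteq H_j$ for some $j$; then $L \le H_j$ and $L$ is open in $H_j$, so $L$ is codistal in $H_j$ and $L \in \mc{D}(G)$ by Proposition~\ref{codistal:closure_properties}(ii). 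This establishes the equivalences (i) $\Leftrightarrow$ (ii) and (i) $\Leftrightarrow$ (iv) $\Leftrightarrow$ (v) $\Leftrightarrow$ (iii). The only step that is more than bookkeeping is (i) $\Rightarrow$ (iv): the single witness $U$ must be amplified to control every compactly generated open subgroup of $H$ simultaneously, and the device that makes this work — absorb each such $L$ into $\langle U \cup L \rangle$, which is controlled by (i), then push codistality back down to $L$ — is exactly where the transitivity of codistality is needed.
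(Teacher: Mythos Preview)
Your proof is correct and uses the same two ingredients as the paper: Lemma~\ref{lem:IO_equi_gen} for the $\mc{IO}\Leftrightarrow\mc{D}$ equivalences, and transitivity of codistality (Proposition~\ref{codistal:closure_properties}(ii)) to pass from a larger compactly generated subgroup back down to an open subgroup. The only difference is organizational: the paper routes (ii) $\Rightarrow$ (iii) $\Rightarrow$ (v) $\Leftrightarrow$ (iv) $\Rightarrow$ (i), treating (ii) $\Rightarrow$ (iii) as clear and proving the absorption-and-descent trick just once in (iii) $\Rightarrow$ (v), whereas you prove (i) $\Rightarrow$ (iv) directly and then separately do (iii) $\Rightarrow$ (v), effectively running the same argument twice.
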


\begin{proof}
By Lemma~\ref{lem:IO_equi_gen}, we see that (i) and (ii) are equivalent and (iv) and (v) are equivalent.  The implications (ii) $\Rightarrow$ (iii) and (iv) $\Rightarrow$ (i) are clear.  It now suffices to show (iii) $\Rightarrow$ (v).

Suppose (iii) holds for the ascending net $(H_i)_{i \in I}$ and let $K$ be a compactly generated open subgroup of $H$.  Then for $i \in I$ sufficiently large, we have $K \le H_i$.  Now $K$ is open in $H_i$, so certainly $K \in \mc{D}(H_i)$, whilst $H_i \in \mc{D}(G)$.  Thus $K \in \mc{D}(G)$ by Proposition~\ref{codistal:closure_properties}(\ref{codistal:closure:2}).  Thus (iii) implies (v) as required.
\end{proof}

As a generalization of directed unions and intersections of subgroups, one can define the limit inferior of a net of subgroups. 

\begin{defn}\label{def:liminf}
Let $G$ be a group and let $(H_i)_{i \in I}$ be a net of subgroups of $G$.  Define $\liminf (H_i)_{i \in I}$ to be the set of elements $g$ of $G$ such that there exists $i \in I$ for which $g \in H_j$ for all $j \ge i$.
\end{defn}

Note that the limit inferior of a net of closed subgroups is itself a subgroup, but not necessarily closed.  (The limit superior is less useful in this context, as it is not even a subgroup in general.)  In the special case that the net is ascending or descending, the limit inferior is just the union or intersection respectively of the net.

We now establish several closure properties of the class $\mc{RIO}(G)$, including properties analogous to those established for the smaller class $\mc{D}(G)$.  This will in particular prove Theorem~\ref{thmintro:rio_closure_properties}.

\begin{thm}\label{thm:rio_closure_properties}
Let $G$ be a \tdlc group.
\begin{enumerate}[(i)]
\item\label{rio_closure_properties:1} Every codistal subgroup of $G$ is a RIO subgroup.
\item\label{rio_closure_properties:2} Let $K \le H \le G$ be closed subgroups.  If $K \in \mc{RIO}(H)$ and $H \in \mc{RIO}(G)$, then $K \in \mc{RIO}(G)$.
\item\label{rio_closure_properties:3} Let $H,K \le G$ be closed subgroups.  If $K \in \mc{RIO}(G)$, then $H \cap K \in \mc{RIO}(H)$.
\item\label{rio_closure_properties:4} Let $\mc{H} \subseteq \mc{RIO}(G)$ and let $H = \bigcap_{K \in \mc{H}}K$.  Then $H \in \mc{RIO}(G)$.
\item\label{rio_closure_properties:5} Let $K \le H \le G$ be closed subgroups such that $K$ is cocompact in $H$.  If $K \in \mc{RIO}(G)$, then $H \in \mc{RIO}(G)$.
\item\label{rio_closure_properties:6} Let $(H_i)_{i \in I}$ be a net of RIO subgroups of $G$.  Then $H = \overline{\liminf (H_i)_{i \in I}}$ is a RIO subgroup of $G$.
\item\label{rio_closure_properties:7} Let $\psi: H \rightarrow G$ be a continuous homomorphism, where $H$ is a \tdlc group, and let $K$ be a RIO subgroup of $G$.  Then $\psi\inv(K)$ is a RIO subgroup of $H$.
\end{enumerate}
\end{thm}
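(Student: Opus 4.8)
The plan is to reduce every part to a statement about the class $\mc{D}(G)$ of codistal subgroups. By Proposition~\ref{prop:RIO_equivalents}, a closed subgroup lies in $\mc{RIO}(G)$ exactly when each of its compactly generated open subgroups lies in $\mc{D}(G)$ (equivalently $\mc{IO}(G)$), and by Lemma~\ref{lem:IO_equi_gen} a compactly generated closed subgroup lying in $\mc{D}(G)$ is cocompact in a compactly generated open overgroup in which it is an intersection of finite index open subgroups. Part~(\ref{rio_closure_properties:1}) is then immediate: if $H \in \mc{D}(G)$ and $L \le H$ is a compactly generated open subgroup, then $L \in \mc{D}(H)$ since open subgroups are codistal, so $L \in \mc{D}(G)$ by Proposition~\ref{codistal:closure_properties}(\ref{codistal:closure:2}). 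It is convenient to first record a mild strengthening of Proposition~\ref{codistal:closure_properties}(\ref{codistal:closure:3}): if $\psi : H \to G$ is a continuous homomorphism of topological groups and $M \in \mc{D}(G)$, then $\psi\inv(M) \in \mc{D}(H)$. This is proved like~(\ref{codistal:closure:3}), via the continuous injective equivariant map $H/\psi\inv(M) \to G/M$; the trivial coset is a distal fixed point for $\psi\inv(M)$ acting on $G/M$ because it is one for $M$ acting on $G/M$ by Lemma~\ref{lem:KUK}, and distality of a fixed point passes to any finer topology, in particular from the topology pulled back along $H/\psi\inv(M) \to G/M$ to the genuine quotient topology of $H/\psi\inv(M)$.

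Parts~(\ref{rio_closure_properties:2}), (\ref{rio_closure_properties:3}), (\ref{rio_closure_properties:4}), (\ref{rio_closure_properties:5}) and~(\ref{rio_closure_properties:7}) then run on a common template: take a compactly generated open subgroup $L$ of the conclusion subgroup, produce from it a compactly generated open subgroup of the hypothesis subgroup (by adjoining a compact open subgroup $K \cap V$ of the ambient group where needed), invoke the hypothesis to place that group in $\mc{D}(G)$, and finish with the closure properties of Proposition~\ref{codistal:closure_properties} — intersections~(\ref{codistal:closure:4}), the cocompact extension~(\ref{codistal:closure:5}), and above all transitivity~(\ref{codistal:closure:2}). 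Concretely: for~(\ref{rio_closure_properties:5}), $K \cap E$ is a compactly generated (being cocompact in $E$) open subgroup of $K$, so $K \cap E \in \mc{D}(G)$ and $E \in \mc{D}(G)$ by~(\ref{codistal:closure:5}); for~(\ref{rio_closure_properties:3}), with $L' := \langle L, K \cap V\rangle$ a compactly generated open subgroup of $K$, we get $H \cap L' \in \mc{D}(H)$ by~(\ref{codistal:closure:3}) and then $L \in \mc{D}(H)$ since $L$ is open in $H \cap L'$ (this is also the special case of~(\ref{rio_closure_properties:7}) with $\psi$ an inclusion); for~(\ref{rio_closure_properties:4}), $L \le \bigcap_{K \in \mc{H}} \langle L, K \cap V\rangle$, which lies in $\mc{D}(G)$ by~(\ref{codistal:closure:4}) and contains $L$ as an open subgroup; for~(\ref{rio_closure_properties:7}), $M := \langle \psi(L), K \cap V\rangle$ is a compactly generated open subgroup of $K$, so $M \in \mc{D}(G)$, hence $\psi\inv(M) \in \mc{D}(H)$ by the preimage lemma, and $L$ is open in $\psi\inv(M)$; and for~(\ref{rio_closure_properties:2}), $L \in \mc{D}(H)$ yields via Lemma~\ref{lem:IO_equi_gen} a compactly generated open subgroup $E$ of $H$ with $L$ cocompact in $E$ and $L$ an intersection of finite index open subgroups of $E$, whence $E \in \mc{D}(G)$, then $L \in \mc{D}(E)$ by~(\ref{codistal:closure:4}), then $L \in \mc{D}(G)$.

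The genuinely new ingredient, and the main obstacle, is part~(\ref{rio_closure_properties:6}). Let $L$ be a compactly generated open subgroup of $H = \overline{\liminf(H_i)}$ and fix a compact open subgroup $V$ of $G$ small enough that $U_0 := H \cap V = L \cap V$ is a compact open subgroup of $L$. Since $\liminf(H_i)$ is a dense subgroup of $H$, the subgroup $\liminf(H_i) \cap L$ is dense in $L$, so Lemma~\ref{lem:compact_to_finite_gen} furnishes a finite set $F \subseteq \liminf(H_i) \cap L$ with $L = \langle F \rangle U_0$. Because $F$ is finite there is $i_0$ with $F \subseteq K_{i_0} := \bigcap_{j \ge i_0} H_j$, and $K_{i_0} \in \mc{RIO}(G)$ by part~(\ref{rio_closure_properties:4}). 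Now put $L_0 := \langle F \cup (K_{i_0} \cap V)\rangle$, a compactly generated open subgroup of $K_{i_0}$, so that $L_0 \in \mc{D}(G)$. The key observations are that $L_0 \le L$ — since $F \subseteq L$ and $K_{i_0} \cap V \le H \cap V = U_0 \le L$ — while $\langle F\rangle \le L_0$ forces $L = \langle F\rangle U_0 \subseteq L_0 U_0 \subseteq L$, so $L = L_0 U_0$ and hence $L_0$ is cocompact in $L$. Then Proposition~\ref{codistal:closure_properties}(\ref{codistal:closure:5}) gives $L \in \mc{D}(G)$, and Proposition~\ref{prop:RIO_equivalents} yields $H \in \mc{RIO}(G)$.

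The points I expect to need the most care are, in part~(\ref{rio_closure_properties:6}), the finite-generation-modulo-compact reduction and the verification that the auxiliary group $L_0$ — a priori only a subgroup of $K_{i_0}$ — in fact sits inside $L$ as a cocompact subgroup; and, in the preimage lemma underlying~(\ref{rio_closure_properties:3}) and~(\ref{rio_closure_properties:7}), the point that the quotient topology on $H/\psi\inv(M)$ is strictly finer than the topology pulled back from $G/M$, which is harmless for distality precisely because the base point is fixed. Everything else is bookkeeping with the closure properties of $\mc{D}(G)$ already in hand.
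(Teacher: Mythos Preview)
Your proof is correct and follows the same overall strategy as the paper: reduce via Proposition~\ref{prop:RIO_equivalents} to showing each compactly generated open subgroup lies in $\mc{D}(G)$, then invoke the closure properties of $\mc{D}$ from Proposition~\ref{codistal:closure_properties}. Parts (\ref{rio_closure_properties:1}), (\ref{rio_closure_properties:2}), (\ref{rio_closure_properties:3}), (\ref{rio_closure_properties:5}) and (\ref{rio_closure_properties:6}) match the paper almost verbatim (in (\ref{rio_closure_properties:6}) the paper uses $M = L \cap H_{\ge i}$ rather than your $L_0$, but the cocompactness argument is the same).

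Two places differ in implementation. For (\ref{rio_closure_properties:4}) your argument is actually cleaner than the paper's: you observe directly that $\bigcap_{K\in\mc{H}}\langle L, K\cap V\rangle$ lies in $\mc{D}(G)$ by Proposition~\ref{codistal:closure_properties}(\ref{codistal:closure:4}) and contains $L$ as an open subgroup (being a subgroup of $H$), whereas the paper takes a detour through the discrete residual, showing $\Res_G(L)\le H$ and then that $\overline{L\Res_G(L)}\in\mc{IO}(G)$. Conversely, for (\ref{rio_closure_properties:7}) the paper's argument is shorter: it simply pulls back the description of $K$ as an ascending union of intersections of open subgroups through $\psi$, avoiding your preimage lemma for $\mc{D}$ entirely. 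Your preimage lemma is correct and has the virtue of unifying (\ref{rio_closure_properties:3}) and (\ref{rio_closure_properties:7}) under one template, but for (\ref{rio_closure_properties:7}) alone it is more work than needed.
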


\begin{proof}
In this proof we will use repeatedly without further comment the following special case of Proposition~\ref{prop:RIO_equivalents}: given $H \le G$ closed, then $H \in \mc{RIO}(G)$ if and only if every compactly generated open subgroup of $H$ is in $\mc{D}(G)$.

(\ref{rio_closure_properties:1})
Let $H$ be a codistal subgroup of $G$ and let $L$ be a compactly generated open subgroup of $H$.  Then $L \in \mc{D}(H)$, so $L \in \mc{D}(G)$ by Proposition~\ref{codistal:closure_properties}(\ref{codistal:closure:2}).  Thus $H \in \mc{RIO}(G)$.

(\ref{rio_closure_properties:2})
Let $L$ be a compactly generated open subgroup of $K$ and let $M$ be a compactly generated open subgroup of $H$ containing $L$.  Then $L \in \mc{D}(H)$, so $L \in \mc{D}(M)$.  In turn, since $H \in \mc{RIO}(G)$, we have $M \in \mc{D}(G)$.  Thus $L \in \mc{D}(G)$ by Proposition~\ref{codistal:closure_properties}(\ref{codistal:closure:2}).  We conclude that $K \in \mc{RIO}(G)$ as required.

(\ref{rio_closure_properties:3})
Let $L$ be a compactly generated open subgroup of $H \cap K$ and let $M$ be a compactly generated open subgroup of $K$ containing $L$.  Then $M \in \mc{D}(G)$; hence $H \cap M \in \mc{D}(H)$ by Proposition~\ref{codistal:closure_properties}(\ref{codistal:closure:3}).  Since $L$ is open in $H \cap M$, it follows by Proposition~\ref{codistal:closure_properties}(\ref{codistal:closure:2}) that $L \in \mc{D}(H)$.  We conclude that $H \cap K \in \mc{RIO}(H)$.

(\ref{rio_closure_properties:4})
Let $L$ be a compactly generated open subgroup of $H$, let $K \in \mc{H}$ and let $W$ be a compact open subgroup of $K$.  The group $L' = \langle L,W \rangle$ is then a compactly generated open subgroup of $K$, so $L' \in \mc{IO}(G)$ by part (\ref{rio_closure_properties:2}).  Consequently, $\Res_G(L') \le L' \le K$ and in particular $\Res_G(L) \le K$.  Since $K \in \mc{H}$ was arbitrary, in fact $\Res_G(L) \le H$.  Now $L$ has (SR) on $G$ by Corollary~\ref{cor:equi_gen_res}, so $L'' := \overline{L\Res_G(L)} \in \mc{IO}(G)$ by Theorem~\ref{thm:sr_envelope}(\ref{sr_envelope:4}).  We conclude that $H$ is an ascending open union of codistal subgroups of $G$, so $H \in \mc{RIO}(G)$ by Proposition~\ref{prop:RIO_equivalents}.

(\ref{rio_closure_properties:5})
Let $L$ be a compactly generated open subgroup of $H$.  Then $L \cap K$ is open in $K$ and cocompact in $L$; in particular, $L \cap K$ is compactly generated.  Since $K \in \mc{RIO}(G)$ we therefore have $L \cap K \in \mc{D}(G)$; we then have $L \in \mc{D}(G)$ by Proposition~\ref{codistal:closure_properties}(\ref{codistal:closure:5}).  Thus $H \in \mc{RIO}(G)$.

(\ref{rio_closure_properties:6})
Let $D = \liminf (H_i)_{i \in I}$.  Then we can write $D$ as an ascending union of intersections of subgroups, namely $D = \bigcup_{i \in I}H_{\ge i}$, where $H_{ \ge i} = \bigcap_{j \ge i}H_j$.  By part (\ref{rio_closure_properties:4}), we have $H_{\ge i} \in \mc{RIO}(G)$ for all $i \in I$.

Let $L$ be a compactly generated open subgroup of $H$.  Since $D$ is dense in $H$, we see that $D \cap L$ is dense in $L$, and consequently there is a finite subset $S$ of $D$ and a compact open subgroup $W$ of $L$ such that $L = \langle S \rangle W$.  Since $S$ is finite we have $S \subseteq H_{\ge i}$ for some $i \in I$.  Let $M = L \cap H_{\ge i}$; note that $M$ is open in $H_{\ge i}$, since $L$ is open in $H$, so $M \in \mc{RIO}(G)$ by part (\ref{rio_closure_properties:2}).  Moreover, since $\langle S \rangle \le M$ and $L = \langle S \rangle W$, we see that $M$ is cocompact in $L$ and thus compactly generated; hence in fact $M \in \mc{D}(G)$.  Since $M$ is cocompact in $L$, it follows by Proposition~\ref{codistal:closure_properties}(\ref{codistal:closure:5}) that $L \in \mc{D}(G)$.  Thus $H \in \mc{RIO}(G)$.

(\ref{rio_closure_properties:7})
We can write $K = \bigcup_{i \in I}K_i$ where each $K_i$ is an intersection of open subgroups $(O_{ij})$ of $G$.  Then $\psi\inv(K) = \bigcup_{i \in I}\psi\inv(K_i)$ and $\psi\inv(K_i) = \bigcap_j \psi\inv(O_{ij})$; by continuity, each of the groups $\psi\inv(O_{ij})$ is open in $H$, so each of the groups $\psi\inv(K_i)$ is an IO subgroup of $H$.  Thus $\psi\inv(K)$ is a RIO subgroup of $H$ by Proposition~\ref{prop:RIO_equivalents}.
\end{proof}

As a corollary of Theorem~\ref{thm:rio_closure_properties}(\ref{rio_closure_properties:6}), we have yet another characterization of RIO subgroups.

\begin{cor}\label{cor:RIO_liminf}
Let $G$ be a \tdlc group and let $H$ be a closed subgroup of $G$.  Then $H \in \mc{RIO}(G)$ if and only if $H$ is the limit inferior of a net of open subgroups of $G$.
\end{cor}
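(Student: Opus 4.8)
The plan is to prove the two implications separately.  For the ``if'' direction, suppose $H = \liminf (O_i)_{i \in I}$ for a net of open subgroups $O_i$ of $G$.  Each $O_i$ is trivially an IO subgroup of $G$, hence codistal by Proposition~\ref{codistal:closure_properties}(\ref{codistal:closure:6}) and hence a RIO subgroup of $G$ by Theorem~\ref{thm:rio_closure_properties}(\ref{rio_closure_properties:1}).  Since $H$ is closed we have $H = \overline{H} = \overline{\liminf (O_i)_{i \in I}}$, so $H \in \mc{RIO}(G)$ by Theorem~\ref{thm:rio_closure_properties}(\ref{rio_closure_properties:6}).

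For the ``only if'' direction, suppose $H \in \mc{RIO}(G)$, and let $\mc{K}$ be the set of compactly generated open subgroups of $H$.  Since $H$ is locally compact it has a compact open subgroup, so $\mc{K} \neq \emptyset$; as $\langle K_1, K_2 \rangle \in \mc{K}$ whenever $K_1, K_2 \in \mc{K}$, the set $\mc{K}$ is directed by inclusion with $H = \bigcup_{K \in \mc{K}} K$.  For $K \in \mc{K}$ let $\mc{N}(K)$ be the set of open subgroups of $G$ that contain $K$; it contains $G$ and is directed by reverse inclusion.  Because $H \in \mc{RIO}(G)$, each $K \in \mc{K}$ is an intersection of open subgroups of $G$, and since all of these lie in $\mc{N}(K)$ while $K \le \bigcap\mc{N}(K)$, in fact $K = \bigcap \mc{N}(K)$.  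It remains only to collapse this iterated description --- ``$H$ is an increasing union of groups, each an intersection of open subgroups of $G$'' --- into a single limit inferior of open subgroups of $G$.

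To do so I would take the directed set
\[
A = \bigl\{ (K, \psi) \;\bigm|\; K \in \mc{K},\ \psi \in \prod_{K' \in \mc{K}} \mc{N}(K') \bigr\},
\]
ordered by $(K, \psi) \le (K', \psi')$ iff $K \le K'$ and $\psi'_{K''} \subseteq \psi_{K''}$ for all $K'' \in \mc{K}$; this is directed, using the join $\langle K_1, K_2 \rangle$ in the first coordinate and coordinatewise intersections in the second (which still contain the relevant $K''$).  Putting $P_{(K, \psi)} := \psi_K$, an open subgroup of $G$, one then checks that $\liminf_{(K,\psi) \in A} P_{(K,\psi)} = H$: for $g \in H$, fixing $K_0 \in \mc{K}$ with $g \in K_0$ and any $\psi^0$, every $(K, \psi) \ge (K_0, \psi^0)$ satisfies $g \in K_0 \le K \le \psi_K$; while for $g \notin H$ we have $g \notin K_0 = \bigcap \mc{N}(K_0)$, so choosing $O^* \in \mc{N}(K_0)$ with $g \notin O^*$ and, above any candidate witness $(K_0, \psi^0)$, shrinking the $K_0$-coordinate to $\psi^0_{K_0} \cap O^*$ exhibits a larger index of $A$ at which $g \notin P$.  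As $H$ is closed this gives $H = \liminf_{(K,\psi) \in A} P_{(K,\psi)}$.  I expect this last construction to be the main obstacle: the naive index set of pairs $(K, O)$ with $O \in \mc{N}(K)$ is not directed under the obvious order, because enlarging $K$ is incompatible with shrinking $O$ inside $\mc{N}(K)$, so one must pass to the product over all coordinates to decouple the two limiting processes (the standard device for linearizing an iterated limit); and it is in verifying that the resulting limit inferior is no larger than $H$ that the hypothesis is genuinely used, via the identity $K = \bigcap \mc{N}(K)$ for each $K \in \mc{K}$.
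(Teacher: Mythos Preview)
Your proof is correct and follows the same overall strategy as the paper: the ``if'' direction is immediate from Theorem~\ref{thm:rio_closure_properties}(\ref{rio_closure_properties:6}), and for ``only if'' you write $H$ as an ascending union of IO subgroups (via Proposition~\ref{prop:RIO_equivalents}) and then amalgamate the associated families of open subgroups into a single net whose limit inferior is $H$.  The only difference is in the bookkeeping of that amalgamation.  The paper uses a lighter lexicographic construction: writing $H = \bigcup_{i \in I} K_i$ with each $K_i$ the intersection of a descending net $(O_j)_{j \in J_i}$ of open subgroups, it takes $J = \bigsqcup_{i \in I} J_i$ ordered by $j < j'$ iff either the $I$-index strictly increases or the $I$-indices agree and $j <_{J_i} j'$, and reads off $H = \liminf_{j \in J} O_j$ directly.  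Your product-of-nets index set $A$ is the standard device for linearising an iterated limit; it is heavier to set up but entirely routine to verify, and it avoids having to check that the concatenated order is directed.  Neither approach buys anything the other does not --- the substance is identical.
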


\begin{proof}
If $H$ is the limit inferior of a net of open subgroups of $G$, then $H \in \mc{RIO}(G)$ by Theorem~\ref{thm:rio_closure_properties}(\ref{rio_closure_properties:6}).  Conversely, if $H \in \mc{RIO}(G)$, then $H$ is the union of an ascending net $(K_i)_{i \in I}$ of subgroups such that $K_i \in \mc{IO}(G)$.  In turn, each $K_i$ is the intersection of a collection $\mc{O}_i$ of open subgroups of $G$; we may ensure $\mc{O}_i$ is closed under taking finite intersections, and thus organize $\mc{O}_i$ as a descending net $(O_j)_{j \in J_i}$, so that in fact $K_i = \liminf (O_j)_{j \in J_i}$.  Now equip $J:= \bigsqcup_{i \in I}J_i$ with the ordering so that $j < j'$ exactly if either $j \in J_i$ and $j' \in J_{i'}$ for $i <_I i'$, or $j,j' \in J_i$ with $j <_{J_i} j'$.  Then $H = \liminf (O_j)_{j \in J}$.
\end{proof}

We finish this subsection by considering how the class $\mc{RIO}(G)$ relates to the class of all closed subgroups.  At this point it is instructive to consider the class of \tdlc groups of rank $2$. 

\begin{defn}Let $G$ be a nontrivial \tdlc group.  Then $G$ is \defbold{regionally SIN}, or has \defbold{rank $2$}, if every compactly generated subgroup of $G$ is a SIN group.\end{defn}

The term `rank $2$' is motivated by elementary \tdlcsc groups (recall \S\ref{sec:elementary}): if $G$ is a nontrivial second-countable \tdlc group, then $G$ is regionally SIN if and only if $G$ is elementary and $\xi(G) = 2$.

If $G$ is regionally SIN, then every compactly generated subgroup is an IO subgroup, so every closed subgroup is a RIO subgroup.  On the other hand, if $G$ is a \tdlc group such that every closed subgroup is a RIO subgroup, then certainly every compactly generated \emph{discrete} subgroup has SIN action (equivalently, equicontinuous action) on $G$.  The author is not aware of examples of \tdlc groups that are not regionally SIN, but such that every finitely generated discrete subgroup acts equicontinuously.

We can give a constructive description of the RIO-closure of an arbitrary subgroup, generalizing the IO-closure of a compactly generated subgroup.  Given a \tdlc group $G$, the \defbold{rank $2$ residual} $\Res_2(G)$ is given by $\Res_2(G) := \ol{\bigcup_{K \in \mc{K}}\Res(K)}$, where $\mc{K}$ is the set of compactly generated open subgroups of $G$; equivalently, $G/\Res_2(G)$ is the largest rank $2$ quotient of $G$.  Similarly the \defbold{relative rank $2$ residual} $\Res_{G,2}(K)$ of a closed subgroup $K$ of $G$ is $\ol{\bigcup_{L \in \mc{K}}\Res_G(L)}$, where now $\mc{K}$ is the set of compactly generated open subgroups of $K$.

\begin{prop}\label{prop:rio_res2}
$K$ is a RIO subgroup of $G$ if and only if $\Res_{G,2}(K) \le K$.  Given any subgroup $K$ of $G$, then $L = \ol{\Res_{G,2}(K)K}$ is the smallest RIO subgroup of $G$ that contains $K$.
\end{prop}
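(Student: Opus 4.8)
The plan is to reduce the whole statement to the behaviour of the compactly generated open subgroups of $K$, combining the reduced-envelope machinery of \S\ref{sec:envelope} with the closure properties of $\mc{RIO}(G)$ from Theorem~\ref{thm:rio_closure_properties}. Write $\mc{K}$ for the set of compactly generated open subgroups of $K$. By Corollary~\ref{cor:equi_gen_res} every $L_0 \in \mc{K}$ has (SR) on $G$, so by Theorem~\ref{thm:sr_envelope}(\ref{sr_envelope:4}) its IO-closure in $G$ is exactly $L_0'' := \overline{L_0 \Res_G(L_0)} \in \mc{IO}(G)$, and $\Res_G(L_0) \in \mc{IO}(G)$ as well, as noted after Definition~\ref{def:RIO}. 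I would first record two routine facts: (a) if $L_0 \le L_1$ in $\mc{K}$ then $\Res_G(L_0) \le \Res_G(L_1)$, since every open $L_1$-invariant subgroup of $G$ is $L_0$-invariant, and hence $L_0'' \le L_1''$; together with $\langle L_0,L_1\rangle \in \mc{K}$ this makes $(L_0'')_{L_0 \in \mc{K}}$ an ascending net; (b) for $k \in K$ one computes $k\Res_G(L_0)k\inv = \Res_G(kL_0k\inv)$, and $L_0 \mapsto kL_0k\inv$ permutes $\mc{K}$, so $\bigcup_{L_0 \in \mc{K}}\Res_G(L_0)$, and therefore $\Res_{G,2}(K)$, is normalized by $K$; in particular $\Res_{G,2}(K)K$ is a subgroup and $L := \overline{\Res_{G,2}(K)K}$ is a closed subgroup of $G$.

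I would then treat the case that $K$ is closed, so that $K = \bigcup_{L_0 \in \mc{K}}L_0$ by Van Dantzig's theorem. By Proposition~\ref{prop:RIO_equivalents} and Lemma~\ref{lem:IO_equi_gen}, $K \in \mc{RIO}(G)$ if and only if $\Res_G(L_0) \le L_0$ for every $L_0 \in \mc{K}$. If this holds, then $\bigcup_{L_0}\Res_G(L_0) \le K$, so $\Res_{G,2}(K) \le \overline{K}=K$. Conversely, if $\Res_{G,2}(K) \le K$ then each $L_0'' = \overline{L_0\Res_G(L_0)}$ is a subgroup of $\overline{K\Res_{G,2}(K)}=K$ containing the open subgroup $L_0$ of $K$, so $K$ is the ascending union of the subgroups $L_0'' \in \mc{IO}(G)\cap\mc{O}(K)$, and $K \in \mc{RIO}(G)$ by Proposition~\ref{prop:RIO_equivalents}. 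This settles the equivalence for closed $K$.

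Still with $K$ closed, I would show $L$ is the smallest RIO subgroup of $G$ containing $K$. The key identity is $L = \overline{\bigcup_{L_0 \in \mc{K}}L_0''}$: each $L_0'' \le L$ because $L_0 \le K$ and $\Res_G(L_0) \le \Res_{G,2}(K)$ (using (b), that $\Res_{G,2}(K)K$ is a subgroup); conversely $\overline{\bigcup_{L_0}L_0''}$ is a closed subgroup containing $\bigcup_{L_0}L_0 = K$ and containing $\Res_G(L_0)$ for every $L_0$, hence containing $\Res_{G,2}(K)$, hence all of $L$. Since $(L_0'')_{L_0 \in \mc{K}}$ is an ascending net of RIO subgroups (each lies in $\mc{IO}(G)$, hence is codistal by Proposition~\ref{codistal:closure_properties}(\ref{codistal:closure:6}), hence is RIO by Theorem~\ref{thm:rio_closure_properties}(\ref{rio_closure_properties:1})), Theorem~\ref{thm:rio_closure_properties}(\ref{rio_closure_properties:6}) gives $L = \overline{\liminf_{L_0}L_0''} \in \mc{RIO}(G)$. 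For minimality, let $M \in \mc{RIO}(G)$ with $K \le M$; for $L_0 \in \mc{K}$ and a compact open subgroup $W$ of $M$, the group $\langle L_0,W\rangle$ is compactly generated and open in $M$, hence in $\mc{IO}(G)$, so $\Res_G(L_0) \le \Res_G(\langle L_0,W\rangle) \le \langle L_0,W\rangle \le M$ by Lemma~\ref{lem:IO_equi_gen}. Thus $\bigcup_{L_0}\Res_G(L_0) \le M$, whence $\Res_{G,2}(K) \le M$ and $L \le M$.

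For a general subgroup $K$, since every RIO subgroup of $G$ is closed, the smallest RIO subgroup containing $K$ is the same as the smallest one containing $\overline{K}$, which by the closed case is $\overline{\Res_{G,2}(\overline{K})\,\overline{K}}$; so it remains to check $\overline{\Res_{G,2}(K)K} = \overline{\Res_{G,2}(\overline{K})\,\overline{K}}$. Here I would use that for $L_0 \in \mc{K}$ the closure $\overline{L_0}$ is a compactly generated open subgroup of $\overline{K}$, and that an open subgroup of $G$ is $L_0$-invariant precisely when it is $\overline{L_0}$-invariant (continuity of conjugation, together with open subgroups being closed), so $\Res_G(L_0) = \Res_G(\overline{L_0})$; this gives $\Res_{G,2}(K) \le \Res_{G,2}(\overline{K})$ at once, hence ``$\le$'', and for ``$\ge$'' it suffices to see $\overline{\Res_{G,2}(K)K}$ is RIO and contains $\overline{K}$ (then it contains the minimal such subgroup $\overline{\Res_{G,2}(\overline{K})\,\overline{K}}$, forcing equality). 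The step I anticipate as the main obstacle is exactly this claim that $L = \overline{\Res_{G,2}(K)K}$ is a RIO subgroup: everything rests on presenting $L$ cleanly as the closure of a directed union of IO subgroups of $G$ so that the limit-inferior closure property of Theorem~\ref{thm:rio_closure_properties}(\ref{rio_closure_properties:6}) applies, and this is also where the passage from a general subgroup $K$ to $\overline{K}$ must be handled carefully so that the directed family one writes down genuinely has $L$ as its closure.
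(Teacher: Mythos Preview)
Your argument for closed $K$ is correct and essentially the same as the paper's: both identify $L$ as the closure of the ascending net $(L_0'')_{L_0 \in \mc{K}}$ with $L_0'' = \overline{L_0\Res_G(L_0)} \in \mc{IO}(G)$, invoke Theorem~\ref{thm:rio_closure_properties}(\ref{rio_closure_properties:6}) to get $L \in \mc{RIO}(G)$, and handle minimality by enlarging each $L_0$ to a compactly generated open subgroup of the candidate RIO overgroup. The paper phrases minimality slightly more compactly via the monotonicity $\Res_{G,2}(K) \le \Res_{G,2}(M)$ for $K \le M$, but this is exactly what your $\langle L_0,W\rangle$ argument unpacks. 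Your direct proof of the implication ``$\Res_{G,2}(K)\le K \Rightarrow K \in \mc{RIO}(G)$'' is a minor variation; the paper gets it as a consequence of the RIO-closure statement (if $\Res_{G,2}(K)\le K$ then $L=K$). The step you flag as the ``main obstacle'' is not one: you have already exhibited $L$ as the closure of a directed union of IO subgroups, and Theorem~\ref{thm:rio_closure_properties}(\ref{rio_closure_properties:6}) applies immediately.

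On the non-closed case: note that the paper only defines $\Res_{G,2}(K)$ for closed $K$ and its proof begins ``let $K$ be any closed subgroup'', so the statement is implicitly for closed $K$ and your extra work is not needed. If you do want to cover arbitrary $K$, your reduction $\Res_G(L_0)=\Res_G(\overline{L_0})$ is fine, but the cleanest route is to observe directly that $\overline{\bigcup_{L_0 \in \mc{K}} L_0''}$ is RIO by the same ascending-net argument (nothing about it used closedness of $K$), and that this set visibly equals $\overline{\Res_{G,2}(K)K}$; there is no need to compare with $\Res_{G,2}(\overline{K})$ at all.
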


\begin{proof}
If $K$ is a RIO subgroup of $G$, then every compactly generated open subgroup $M$ of $G$ satisfies $\Res_G(M) \le M$, so certainly $\Res_{G,2}(K) \le K$.

Now let $K$ be any closed subgroup of $G$.  Given $L \ge K$, every compactly generated open subgroup of $K$ is contained in such a subgroup of $L$, so that $\Res_{G,2}(L) \ge \Res_{G,2}(K)$.  Thus any RIO subgroup of $G$ containing $K$ must contain $\ol{\Res_{G,2}(K)K}$.  It remains to show that $L = \ol{\Res_{G,2}(K)K}$ is a RIO subgroup of $G$.  We observe that $L$ is generated topologically by subgroups of the form $M' = \ol{M\Res_G(M)}$, where $M$ is a compactly generated open subgroup of $K$; moreover, such subgroups $M'$ form an ascending net.  Each group of this form is an IO subgroup of $G$ by Theorem~\ref{thm:sr_envelope}(\ref{sr_envelope:4}).  Thus by Theorem~\ref{thm:rio_closure_properties}(\ref{rio_closure_properties:6}), $L$ is a RIO subgroup of $G$ as required.
\end{proof}

\subsection{Regional relative separability}\label{sec:reg_sep}

For some closed subgroups $H$ of a \tdlc group $G$, there is a clear relationship between compactly generated open subgroups of $H$ and those of $G$, defined as follows.

\begin{defn}
Let $G$ be a \tdlc group and let $H$ be a closed subgroup of $G$.  Then $H$ is \defbold{regionally relatively separable (RRS)} in $G$ if for every compactly generated open subgroup $K$ of $H$, there is an open subgroup $O$ of $G$ such that $K = H \cap O$.  Write $\mathrm{RRS}(G)$ for the class of RRS subgroups of $G$.
\end{defn}

Note that the open subgroup $O$ in the above definition can always be taken to be compactly generated: one can replace $O$ with $O' = \langle K,U\rangle$ where $U$ is a compact open subgroup of $O$.

We can now show that all RIO subgroups are RRS.

\begin{proof}[Proof of Theorem~\ref{thmintro:RIO_separation}]
Let $H \in \mc{RIO}(G)$ and let $K$ be a compactly generated open subgroup of $H$.  By Proposition~\ref{prop:RIO_equivalents}, $K \in \mc{IO}(G)$; in particular, $\Res_G(K) \le K$.  It follows by Theorem~\ref{thm:sr_envelope}(\ref{sr_envelope:4}) that $\{W/K \mid W \in \mc{W}\}$ forms a base of neighbourhoods of the trivial coset in $G/K$, where $\mc{W}$ is the set of open subgroups containing $K$.  Since $K$ is open in $H$, the subspace $H/K$ of $G/K$ is discrete, so there is $O \in \mc{W}$ such that $H/K \cap O/K = K/K$, that is, $H \cap O = K$.
\end{proof}

As corollaries, we note that the locally normal property is inherited by compactly generated open subgroups, and also obtain another property of compactly generated IO-subgroups.

\begin{cor}\label{locnorm:trans}
Let $G$ be a \tdlc group, let $H$ be a closed locally normal subgroup of $G$, and let $K$ be a compactly generated open subgroup of $H$.  Then $K$ is locally normal in $G$.
\end{cor}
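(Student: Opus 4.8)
The plan is to reduce the statement to the regional relative separability of $H$ in $G$ and then close with a one-line normalizer computation; the cleanest way to run this is to work inside the open subgroup $N := \N_G(H)$ from the outset.

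First I would note that $N$ is an open subgroup of $G$, hence itself a \tdlc group, in which $H$ is a closed normal — in particular locally normal — subgroup. As observed after Definition~\ref{def:RIO}, a closed locally normal subgroup is an IO subgroup, so $H \in \mc{IO}(N)$; hence $H$ is codistal in $N$ by Proposition~\ref{codistal:closure_properties}(\ref{codistal:closure:6}), hence $H \in \mc{RIO}(N)$ by Theorem~\ref{thm:rio_closure_properties}(\ref{rio_closure_properties:1}), and therefore $H \in \mathrm{RRS}(N)$ by Theorem~\ref{thmintro:RIO_separation}. Applying the RRS property to the compactly generated open subgroup $K$ of $H$ produces an open subgroup $O$ of $N$ with $K = H \cap O$. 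Since $N$ is open in $G$, the subgroup $O$ is open in $G$ as well. Now for any $g \in O$ we have $gHg\inv = H$, because $g \in N = \N_G(H)$, and $gOg\inv = O$, because $O$ is a subgroup containing $g$; consequently $gKg\inv = g(H \cap O)g\inv = H \cap O = K$. Thus $O \le \N_G(K)$, and since $O$ is open, $\N_G(K)$ is open, i.e. $K$ is locally normal in $G$.

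The only substantive input is Theorem~\ref{thmintro:RIO_separation}, invoked to upgrade the hypothesis ``$K$ is open in $H$'' to ``$K = H \cap O$ for an open \emph{subgroup} $O$''; after that the argument is purely formal, resting on the observation that a subgroup of $\N_G(H)$ normalizes its intersection with $H$. I do not expect a genuine obstacle here — the one point to take care over is to choose $O$ inside $\N_G(H)$ (equivalently, if one first extracts an open subgroup of $G$ from the RRS property of $H$ in $G$, one then replaces it by its intersection with $\N_G(H)$), so that the conjugation identities above are available.
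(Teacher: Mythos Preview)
Your argument is correct and follows essentially the same route as the paper: both proofs pass to the open subgroup $\N_G(H)$, observe that $H$ is then normal (hence IO, hence RIO, hence RRS) there, write $K = H \cap O$ for an open subgroup $O$, and conclude that $O$ normalizes $K$. The only cosmetic difference is that you thread the inclusion $\mc{IO}(N) \subseteq \mc{RIO}(N)$ through codistality and Theorem~\ref{thm:rio_closure_properties}(\ref{rio_closure_properties:1}), whereas it follows immediately from the definitions (or from Proposition~\ref{prop:RIO_equivalents}).
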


\begin{proof}
Without loss of generality we can replace $G$ with any open subgroup that contains $H$; thus we may assume $H$ is normal in $G$.  We then have $H \in \mc{RIO}(G)$ (indeed, $H \in \mc{IO}(G)$), so $H \in \mathrm{RRS}(G)$ by Theorem~\ref{thmintro:RIO_separation}.  In particular, there is an open subgroup $O$ of $G$ such that $K = H \cap O$.  Now $O$ clearly normalizes its intersection with the normal subgroup $H$, so we see that $K$ is normal in $O$.  In particular, $K$ is locally normal in $G$.
\end{proof}

\begin{cor}\label{IO_finite_residual}
Let $G$ be a \tdlc group and let $H$ be a closed compactly generated subgroup of $G$ such that $H \in \mc{IO}(G)$.  Let $E$ be a reduced envelope for $H$.  Then $\Res_f(E) = \Res_f(H)$; in particular, $\Res_f(H)$ is locally normal in $G$.
\end{cor}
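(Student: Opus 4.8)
The plan is to prove the equality first for one conveniently chosen reduced envelope and then transport it to an arbitrary one, using that $\Res_f$ is unchanged under passage between commensurate open subgroups. Since $H$ is a closed compactly generated subgroup of $G$ with $H \in \mc{IO}(G)$, it has equicontinuously generated action on $G$, so Lemma~\ref{lem:IO_equi_gen} applies: there is a reduced envelope $E_0 = HU$ for $H$ in $G$, with $U$ a compact open subgroup of $G$, such that $H$ is cocompact in $E_0$ and $H$ is an intersection of finite-index open subgroups of $E_0$. In particular $H \in \mc{IO}(E_0)$ is cocompact in $E_0$, so $\Res_f(E_0) \le H$ by Lemma~\ref{lem:cocompact_IO}. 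Moreover, every IO subgroup is codistal (Proposition~\ref{codistal:closure_properties}(\ref{codistal:closure:6})), hence RIO (Theorem~\ref{thm:rio_closure_properties}(\ref{rio_closure_properties:1})), so $H$ is an RRS subgroup of $E_0$ by Theorem~\ref{thmintro:RIO_separation}.

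I would then show $\Res_f(E_0) = \Res_f(H)$ by a double inclusion. For $\Res_f(H) \le \Res_f(E_0)$: if $N$ is a finite-index open normal subgroup of $E_0$, then $N \cap H$ is a finite-index open normal subgroup of $H$, so $\Res_f(H) \le N \cap H \le N$; intersecting over all such $N$ gives the inclusion. For $\Res_f(E_0) \le \Res_f(H)$: let $K$ be a finite-index open normal subgroup of $H$. Then $K$ is open in $H$ and compactly generated (being of finite index in the compactly generated group $H$), so by the RRS property there is an open subgroup $O$ of $E_0$ with $K = H \cap O$. Now $K$, being of finite index in the cocompact subgroup $H$ of $E_0$, is itself cocompact in $E_0$; hence $O \supseteq K$ is an open, cocompact subgroup of $E_0$, and so has finite index in $E_0$ (an open cocompact subgroup of a \tdlc group is of finite index). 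Therefore $\Core_{E_0}(O) = \bigcap_{e \in E_0}eOe\inv$ is a finite-index open normal subgroup of $E_0$, so $\Res_f(E_0) \le \Core_{E_0}(O) \le O$; together with $\Res_f(E_0) \le H$ this gives $\Res_f(E_0) \le H \cap O = K$. As $K$ ranges over the finite-index open normal subgroups of $H$, we conclude $\Res_f(E_0) \le \Res_f(H)$, and hence $\Res_f(E_0) = \Res_f(H)$.

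To reach an arbitrary reduced envelope $E$, I would use that reduced envelopes are unique up to finite index, so $E$ and $E_0$ are commensurate open subgroups of $G$, together with the elementary observation that $\Res_f(A) = \Res_f(A \cap B)$ whenever $B$ is an open subgroup and $A \cap B$ has finite index in $A$: one inclusion is $N \mapsto N \cap (A \cap B)$, and conversely any finite-index open normal subgroup $M$ of $A \cap B$ is itself a finite-index open subgroup of $A$, hence contains the finite-index open normal subgroup $\bigcap_{a \in A}aMa\inv$ of $A$. Applying this with $A = E_0$ and with $A = E$ (in each case with $B$ the other envelope) yields $\Res_f(E) = \Res_f(E \cap E_0) = \Res_f(E_0) = \Res_f(H)$. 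Finally, $\Res_f(E)$ is normal in $E$ and $E$ is open in $G$, so $\N_G(\Res_f(H)) = \N_G(\Res_f(E))$ contains the open subgroup $E$; hence $\Res_f(H)$ is locally normal in $G$.

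The one genuinely non-routine point is the inclusion $\Res_f(E_0) \le \Res_f(H)$: since $H$ need not be open in $E_0$, a finite-index open normal subgroup of $H$ need not extend to one of $E_0$, and the remedy is exactly to realize it as $H \cap O$ via relative separability and then use cocompactness to force $[E_0 : O] < \infty$.
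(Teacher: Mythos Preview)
Your argument is correct and follows essentially the same route as the paper: both use cocompactness of $H$ in a reduced envelope together with Lemma~\ref{lem:cocompact_IO} for one inclusion, and the relative separability result (Theorem~\ref{thmintro:RIO_separation}) to turn a finite-index open subgroup $K$ of $H$ into $H \cap O$ with $O$ of finite index in the envelope for the other. The only cosmetic difference is that the paper works directly with the given reduced envelope $E$ (tacitly using that $H$ is cocompact in \emph{any} reduced envelope, via commensurability with the special one from Theorem~\ref{thm:sr_envelope}), whereas you make this explicit by first proving the equality for a particular $E_0$ and then transporting it along $\Res_f(E) = \Res_f(E \cap E_0) = \Res_f(E_0)$.
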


\begin{proof}
By Theorem~\ref{thm:sr_envelope}, $H$ is cocompact in $E$, so by Lemma~\ref{lem:cocompact_IO} we have $\Res_f(E) \le H$; it is clear that $\Res_f(H) \le \Res_f(E)$.  Now let $K$ be an open subgroup of $H$ of finite index.  Then $K$ is compactly generated, so by Theorem~\ref{thmintro:RIO_separation} there is an open subgroup $O$ of $E$ such that $K = H \cap O$.  We see that $K$ is cocompact in $E$, so $O$ is also cocompact and hence of finite index.  The group $O^* = \bigcap_{g \in E}gOg\inv$ is then an open normal subgroup of $E$ of finite index such that $O^* \le O$, so $\Res_f(E) \le O$.  Thus $\Res_f(E) \le H \cap O = K$.  Since $K$ was an arbitrary open subgroup of $H$ of finite index, it follows that $\Res_f(E) \le \Res_f(H)$ and hence $\Res_f(E) = \Res_f(H)$.
\end{proof}

It is not clear which of the closure properties of the class of RIO subgroups also apply to RRS subgroups.  For instance, if $H$ is an ascending union of open subgroups, each RRS in $G$, is $H$ necessarily RRS in $G$?  We do at least observe that RRS is a transitive property.

\begin{prop}
Let $G$ be a \tdlc group and let $K \le H$ be closed subgroups of $G$.
\begin{enumerate}[(i)]
\item If $K$ is RRS in $G$, then $K$ is also RRS in $H$.
\item If $K$ is RRS in $H$ and $H$ is RRS in $G$, then $K$ is RRS in $G$.
\end{enumerate}
\end{prop}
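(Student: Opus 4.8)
The plan is to derive both statements directly from the definition of RRS, using only the elementary observation that, since $K \le H$, one has $K \cap A = K \cap (H \cap A)$ for every subgroup $A$ of $G$, together with the remark recorded just after the definition of RRS: the open subgroup of the ambient group witnessing relative separability of a compactly generated open subgroup can always be taken to be compactly generated.

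For (i), I would start with an arbitrary compactly generated open subgroup $L$ of $K$. Since $K$ is RRS in $G$, there is an open subgroup $O$ of $G$ with $L = K \cap O$. Then $H \cap O$ is an open subgroup of $H$, and $K \cap (H \cap O) = K \cap O = L$; as $L$ was arbitrary, this shows $K$ is RRS in $H$.

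For (ii), again take a compactly generated open subgroup $L$ of $K$. Applying RRS of $K$ in $H$ gives an open subgroup of $H$ meeting $K$ in exactly $L$; replacing it by $\langle L, U \rangle$ for a compact open subgroup $U$ of it --- a subgroup lying between $L$ and the original, hence still meeting $K$ in exactly $L$ --- I may assume it is a \emph{compactly generated} open subgroup $O_1$ of $H$ with $L = K \cap O_1$. Now RRS of $H$ in $G$ applies to $O_1$ and yields an open subgroup $O$ of $G$ with $O_1 = H \cap O$, whence $K \cap O = (K \cap H) \cap O = K \cap O_1 = L$. So $K$ is RRS in $G$.

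I do not anticipate any genuine obstacle: the whole argument is bookkeeping with intersections. The one point needing a moment's care is ensuring, in part (ii), that the intermediate open subgroup of $H$ is compactly generated, so that RRS of $H$ in $G$ is applicable; this is handled by the standard replacement $O_1 \mapsto \langle L, U \rangle$ already noted after the definition of RRS.
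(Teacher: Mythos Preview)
Your proof is correct and follows essentially the same approach as the paper: both parts are handled by direct intersection bookkeeping, and in part (ii) you invoke the same remark (recorded just after the definition of RRS) that the intermediate open subgroup of $H$ can be taken compactly generated before applying RRS of $H$ in $G$.
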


\begin{proof}
(i)
Let $L$ be a compactly generated open subgroup of $K$.  By the RRS property, $L = K \cap O$ where $O$ is an open subgroup of $G$.  Since $K \le H$, in fact $L = K \cap (O \cap H)$, where now $O \cap H$ is an open subgroup of $H$.

(ii)
Let $L$ be a compactly generated open subgroup of $K$.  Then $L = K \cap M$ where $M$ is an open subgroup of $H$; as observed, we can take $M$ to be compactly generated.  In turn $M = H \cap O$ where $O$ is an open subgroup of $G$.  Thus $L = K \cap (H \cap O) = K \cap O$, showing that $K$ is RRS in $G$.
\end{proof}

To summarize, we have the following nested sequence of classes of closed subgroups of a \tdlc group $G$, where $\mathrm{LN}(G)$ is the class of closed locally normal subgroups and $\mc{C}(G)$ is the class of all closed subgroups:
\begin{equation}\label{eq:classes}
\mc{O}(G) \subseteq \mathrm{LN}(G) \subseteq \mc{IO}(G) \subseteq \mc{D}(G) \subseteq \mc{RIO}(G) \subseteq \mathrm{RRS}(G) \subseteq \mc{C}(G).
\end{equation}

Any of the inclusions in (\ref{eq:classes}) can be proper.  The class of closed subnormal subgroups is contained in $\mc{D}(G)$ by Proposition~\ref{codistal:closure_properties}(\ref{codistal:closure:2}), but is incomparable in general with the classes below $\mc{D}(G)$ in (\ref{eq:classes}).  We justify these claims with the examples below.

\begin{ex}\label{ex:different_classes} \

\begin{enumerate}[(i)]
\item
Let $A = \mathrm{Sym}(3)$, let $S$ be the symmetric group acting on $\bN$, and form the unrestricted wreath product $G = A \wr_{\bN} S$.  Here we embed $P = \prod_{\bN}A$ as a compact open subgroup and equip $S$ with the discrete topology.
\begin{enumerate}[(a)]
\item The trivial subgroup is closed and locally normal, but not open in $G$.
\item The subgroup $\{f : \bN \rightarrow A \mid  \forall i: f(i) \in \langle(12)\rangle\}$ of $P$ is an IO subgroup of $G$ that is not locally normal.
\item The subgroup $\{f \in S \mid \exists j \in \bN \; \forall i > j: f(i) = i\}$ of $S$ is a closed RIO subgroup of $G$ that is not codistal.
\item The subgroup $H = \{f \in S \mid \exists n \; \forall i: f(i) = i+n\}$ of $S$ is a closed RRS subgroup of $G$ that is not a RIO subgroup.
\item The subgroup $P \rtimes H$ of $G$ is an open subgroup that is not subnormal.
\end{enumerate}
\item
Let $V \cong W \cong \bF_p[[t]]$, regarded as profinite vector spaces over $\bF_p$.  Let $H$ be the group of continuous $\bF_p$-linear maps from $V$ to $W$ under pointwise addition, and define an action $\rho$ of $H$ on $K = V \oplus W$ by setting $\rho(h)(v,w) = (v,w+h(v))$.  Let $G = K \rtimes_{\rho} H$ where $K$ is embedded as a compact open subgroup of $G$ and $H$ carries the discrete topology.  Then $H$ is subnormal in $G$ (since $G$ is nilpotent), and in addition $H$ is cocompact in $G$, but $H$ is not an IO subgroup, since every open subgroup containing $H$ also contains $W$.
\item
Let $T$ be the regular locally finite tree of degree $3$, let $G$ be the group of type-preserving automorphisms of $T$, let $g$ be a hyperbolic element of $G$ and let $H = \langle g \rangle$.  Then $H$ is a closed (indeed discrete) subgroup of $G$, but not an RRS subgroup: indeed, $H$ has trivial intersection with every proper open subgroup of $G$.
\end{enumerate}
\end{ex}

\subsection{Decomposition rank of open subgroups}\label{sec:xi}

As in \S\ref{sec:drq}, we give another example of how the structure of compactly generated closed subgroups of a \tdlc group $G$ is controlled by the structure of compactly generated open subgroups of $G$.  Recall the class $\ms{E}$ of elementary groups and the decomposition rank $\xi$ defined in \S\ref{sec:elementary}.

Fix a group $G \in \ms{E}$ and let $K$ be a nontrivial closed compactly generated subgroup of $G$.  We have the following restrictions on $\xi(K)$:
\begin{enumerate}[(A)]
\item By \cite[Corollary~4.10]{Wesolek}, we have $\xi(K) \le \xi(G)$.
\item From the definition, we see that every value taken by $\xi$ is a successor ordinal.  We must also have $\xi(K) = \xi(\Res(K))+1$.  Thus $\xi(\Res(K)) = \alpha+1$ for some ordinal $\alpha$, and then $\xi(K) = \alpha+2$.
\end{enumerate}

We now show that in a very strong sense, the restrictions (A) and (B) are the only possible restrictions on $\xi(K)$, and there are no further restrictions on the ranks of compactly generated \emph{open} subgroups of $G$.

\begin{thm}\label{thm:open_xi}
Let $G$ be an elementary \tdlcsc group and let $\alpha$ be an ordinal such that $\alpha+2 \le \xi(G)$.  Then there is a compactly generated open subgroup $O$ of $G$ such that $\xi(O) = \alpha+2$.
\end{thm}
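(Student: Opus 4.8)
The plan is to induct on the decomposition rank $\xi(G)$, using throughout the standard properties of $\xi$ recalled above together with two facts from \cite{Wesolek}: that $\xi$ is monotone under passage to closed subgroups, and that every value of $\xi$ is a successor ordinal. The base case is $\xi(G) = 2$, where necessarily $\alpha = 0$: writing $G = \bigcup_i O_i$ as an increasing union of compactly generated open subgroups, the identity $\sup_i \xi(\Res(O_i)) + 1 = 2$ forces $\Res(O_i) = \triv$ for every $i$, and since $G \ne \triv$ some $O_i$ is nontrivial and then has $\xi(O_i) = 2$.

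For the inductive step, suppose $\xi(G) = \beta > 2$, fix $\alpha$ with $\alpha + 2 \le \beta$, write $G = \bigcup_i O_i$ as before, and set $R_i = \Res(O_i)$ and $\lambda = \sup_i \xi(R_i)$, so $\beta = \lambda + 1$ and hence $\lambda \ge \alpha + 1$. I would split into two cases. If $\lambda = \alpha + 1$, then since $\alpha+1$ is a successor the values $\xi(R_i)$ cannot all be $< \alpha + 1$, so $\xi(R_{i_0}) = \alpha + 1$ for some $i_0$, and $O_{i_0}$ is the desired subgroup, with $\xi(O_{i_0}) = \xi(R_{i_0}) + 1 = \alpha + 2$. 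If instead $\lambda \ge \alpha + 2$, then, again because $\alpha + 2$ is a successor, some $\xi(R_{i_0}) \ge \alpha + 2$, while also $\xi(R_{i_0}) \le \lambda = \beta - 1 < \beta$; so $\alpha + 2 \le \xi(R_{i_0}) < \xi(G)$.

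In this remaining case I would apply the inductive hypothesis to $R_{i_0}$, which is a closed (indeed normal) subgroup of $O_{i_0}$, lies in $\ms{E}$, and has rank below $\beta$; this yields a compactly generated open subgroup $P$ of $R_{i_0}$ with $\xi(P) = \alpha + 2$. The remaining task is to convert $P$ into a compactly generated \emph{open} subgroup of $G$ of the same rank. Since $R_{i_0}$ is subnormal in $O_{i_0}$, it is a RIO subgroup of $O_{i_0}$ by Theorem~\ref{thmintro:rio_closure_properties}(\ref{rio_closure_properties:1}), and hence $P \in \mc{IO}(O_{i_0})$ by Proposition~\ref{prop:RIO_equivalents}. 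Lemma~\ref{lem:IO_equi_gen} then gives $\Res_{O_{i_0}}(P) = \Res(P)$, and together with Theorem~\ref{thm:sr_envelope} supplies a compactly generated open reduced envelope $E$ of $P$ inside $O_{i_0}$ with $\Res(E) = \Res_{O_{i_0}}(P) = \Res(P)$. Because $E$ is compactly generated and nontrivial, $\xi(E) = \xi(\Res(E)) + 1 = \xi(\Res(P)) + 1 = \xi(P) = \alpha + 2$; and $E$, being open in the open subgroup $O_{i_0}$ of $G$, is a compactly generated open subgroup of $G$. This would complete the induction.

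I expect the crux to be the transfer step at the end. Descending to $R_{i_0}$ lowers the rank but lands in a subgroup that is not open in $G$, and the regional relative separability of Theorem~\ref{thmintro:RIO_separation} would only realize $P$ as $R_{i_0} \cap O$ for an open subgroup $O \le G$, with no control on $\xi(O)$ from above. The point that resolves this is to use the reduced envelope of $P$ rather than an arbitrary such $O$: since $P$ is an IO-subgroup of $O_{i_0}$, its discrete residual relative to $O_{i_0}$ coincides with its intrinsic discrete residual and is contained in $P$, so the envelope $E$ satisfies $\Res(E) = \Res(P)$ and therefore has exactly the same decomposition rank as $P$. A minor point to keep track of is that whenever the supremum $\lambda = \sup_i \xi(\Res(O_i))$ dominates a successor ordinal $\gamma$, some individual term $\xi(\Res(O_{i_0}))$ already reaches $\gamma$, since otherwise every term would be at most the predecessor of $\gamma$; this is what makes the case split in the second paragraph exhaustive.
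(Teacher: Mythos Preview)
Your proof is correct and uses the same key mechanism as the paper: descend into some $\Res(O_i)$, locate a compactly generated open subgroup there of suitable rank, and lift it back to an open subgroup of $G$ via a reduced envelope, using that an IO-subgroup $P$ satisfies $\Res(E) = \Res(P)$ for its envelope $E$ (Lemma~\ref{lem:IO_equi_gen}). The only difference is organizational: you run a transfinite induction on $\xi(G)$, recursing into $R_{i_0}$ to obtain $P$ of rank exactly $\alpha+2$ before lifting, whereas the paper avoids recursion by taking the least $\beta \ge \alpha$ for which $G$ has a compactly generated open subgroup of rank $\beta+2$, descending once into $\Res(O)$ to find $K$ with $\alpha \le \gamma < \beta$, and lifting $K$ to contradict minimality of $\beta$ --- the standard duality between induction and minimal counterexample.
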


\begin{proof}
Write $\xi(G) = \pi+1$.  We have $G=\bigcup_{i\in \bN}O_i$ with $(O_i)_{i\in \bN}$ some (possibly terminating) increasing sequence of compactly generated open subgroups.  In this case
\[
		\pi=\sup_{i\in \bN}\xi(\Res(O_i)).
\]
In particular, since $\alpha < \pi$, there is some $i$ such that $\xi(\Res(O_i)) > \alpha$; it then follows that
\[
\xi(O_i) = \xi(\Res(O_i))+1 \ge \alpha+2.
\]
So certainly there is a compactly generated open subgroup of \emph{at least} the desired rank.

Now let $\beta$ be the least ordinal such that $\beta \ge \alpha$ and $G$ has a compactly generated open subgroup of rank $\beta+2$, say $\xi(O) = \beta+2$.  We may suppose for a contradiction that $\beta > \alpha$.  Then $\xi(\Res(O)) = \beta+1$ and $\alpha+2 \le \beta+1$.  Using the rank formula for $\Res(O)$, we see that $\Res(O)$ has a compactly generated open subgroup $K$  such that $\alpha+2 \le \xi(K) < \beta+2$; since restriction (B) above still applies, we have $\xi(K) = \gamma+2$ for some ordinal $\gamma$ such that $\alpha \le \gamma < \beta$.  Now in the chain
\[
K \le \Res(O) \le O \le G,
\]
each group is open or normal in the next.  By Theorem~\ref{thm:rio_closure_properties}(\ref{rio_closure_properties:2}), it follows that $K \in \mc{RIO}(G)$; indeed, since $K$ is compactly generated, $K \in \mc{IO}(G)$.  Applying Lemma~\ref{lem:IO_equi_gen}, there is a compactly generated open subgroup $E$ of $G$ such that $\Res(E) = \Res(K)$.  In particular,
\[
\xi(E) = \xi(\Res(E)) + 1 = \xi(\Res(K))+1 = \xi(K) = \gamma+2.
\]
This contradicts the minimality of $\beta$, proving that in fact $\beta = \alpha$, so $G$ has a compactly generated open subgroup $O$ of exactly the desired rank.
\end{proof}

\begin{rem}
The value of the ordinal $\alpha = \sup \{\xi(G) \mid G \in \ms{E}\}$ is not known at the time of writing: it could be $\omega_1$, or it could be some countable ordinal.  (In the latter case, it would be easy to show that there is actually an elementary group that achieves the largest possible rank.)  Theorem~\ref{thm:open_xi} at least shows (given that decomposition rank for infinite \tdlcsc groups is a commensurability invariant) that an elementary group of `large' rank necessarily has a `large' poset of commensurability classes of open subgroups.  Contrast with the group $G = \Aut(T_d)$ of automorphisms of a $d$-regular tree.  The \emph{closed} subgroup structure of $\Aut(T_d)$ is extremely rich: for example, given a compactly generated group $H$ without arbitrarily small nontrivial compact normal subgroups, then by taking a Cayley--Abels graph for $H$ and lifting to its covering tree, one can realize $H$ as a quotient of a closed subgroup of $\Aut(T_d)$ for all but finitely many $d$.  However, $\Aut(T_d)$ has only two commensurability classes of \emph{open} subgroups, namely the class of compact open subgroups, and the class that consists of $\Aut(T_d)$ and its unique open subgroup of index $2$.
\end{rem}

\section{The discrete residual}

\subsection{Action of the discrete residual}

Given an equicontinuously generated group $H$ of automorphisms of a \tdlc group $G$, the absence of $H$-invariant open subgroups restricts the way in which $\Res_G(H)$ can act on closed $H$-invariant subgroups of $G$.

\begin{prop}\label{prop:res_action}
Let $G$ be a \tdlc group, let $H \le \Aut(G)$ and let $R$ be an $H$-invariant closed subgroup of $G$ such that $\Res_R(H) = R$.  Let $K$ be a closed subgroup of $G$ that is both $R$-invariant and $H$-invariant.
\begin{enumerate}[(i)]
\item Let $L$ be a compactly generated open subgroup of $K$.  If $L$ is $H$-invariant, then $L$ is also $R$-invariant, and in fact
\[
[L,R]  \le \Res_L(R \rtimes H) \le \Res_{L,f}(R \rtimes H) = \Res_{L,f}(H).
\]
\item If $K$ is compact, then $[K,R] \le \Res_K(R \rtimes H) = \Res_K(H)$.
\item If $K$ is discrete, then $\CC_K(R)$ contains every finitely generated $H$-invariant subgroup of $K$.
\end{enumerate}
\end{prop}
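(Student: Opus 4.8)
The plan is to prove part (iii) directly, deduce part (ii) from part (i), and build part (i) from the single hypothesis $\Res_R(H)=R$ (``no proper open $H$-invariant subgroup of $R$'') via a compactness argument. For part (iii), let $F$ be a finitely generated $H$-invariant subgroup of the discrete group $K$. Since $K$ is discrete and $R$-invariant, conjugation is a continuous action of $R$ on the discrete set $K$, so for each $f\in F$ the centralizer $\CC_R(f)=\{r\in R:rfr\inv=f\}$ is open in $R$. Choosing a finite generating set $f_1,\dots,f_m$ of $F$ gives $\CC_R(F)=\bigcap_{i=1}^m\CC_R(f_i)$, a finite intersection of open subgroups, hence open; and $\CC_R(F)$ is $H$-invariant, since if $r$ centralizes $F$ and $h\in H$ then $h(r)$ centralizes $h(F)=F$. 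By $\Res_R(H)=R$ we get $\CC_R(F)=R$, i.e.\ $[F,R]=\triv$, so $F\le\CC_K(R)$.

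For part (i), fix a compact open subgroup $U$ of $L$ and a finite set $X$ with $L=\langle X\cup U\rangle$. \emph{Step A ($L$ is $R$-invariant).} The map $R\times U\to G$, $(r,u)\mapsto rur\inv$, is continuous and carries $\{1\}\times U$ into the open set $U$, so by compactness of $U$ some identity neighbourhood $V$ in $R$ satisfies $VUV\inv\subseteq U$, whence $V\cap V\inv\subseteq\N_R(U)$; thus $\N_R(U)$ is open in $R$. Likewise each $\{r\in R:rxr\inv\in L\}$ is open, so $\N_R(L)\supseteq\N_R(U)\cap\bigcap_{x\in X}\{r:rxr\inv\in L\}$ is open, and it is $H$-invariant because $L$ is; hence $\N_R(L)=R$ by $\Res_R(H)=R$, and $R\rtimes H$ acts on $L$. \emph{Step B ($\Res_{L,f}(R\rtimes H)=\Res_{L,f}(H)$).} The inclusion $\supseteq$ is immediate. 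Conversely, if $O$ is an open $H$-invariant subgroup of finite index in $L$, its core $O_0=\bigcap_{\ell\in L}\ell O\ell\inv$ is an open normal subgroup of $L$ of finite index, and it is $H$-invariant since $O$ and $L$ are; being a finite-index open (hence compactly generated open) subgroup of $K$, Step A applied to $O_0$ makes $O_0$ $R$-invariant, so $(R\rtimes H)$-invariant, giving $\Res_{L,f}(R\rtimes H)\le O_0\le O$, and intersecting over $O$ yields $\subseteq$. \emph{Step C} is the trivial inclusion $\Res_L(R\rtimes H)\le\Res_{L,f}(R\rtimes H)$.

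\emph{Step D ($[L,R]\le\Res_L(R\rtimes H)$).} Let $O$ be an open $(R\rtimes H)$-invariant subgroup of $L$; since $O$ is normalised by $R$, conjugation makes $R$ act on the coset space $L/O$, and $[L,R]\le O$ is equivalent to triviality of this action. The kernel $N$ of $R$ on $L/O$ is a normal subgroup of $R$, and it is $H$-invariant (using that $O$ and $L$ are $H$-invariant). The point is to show $N$ is open in $R$; once this is established, $\Res_R(H)=R$ forces $N=R$. To get openness one exploits compact generation of $L$: one may take $U\le O$, so that $L/O$ is identified with $\langle X\rangle/(\langle X\rangle\cap O)$, and passing first to the normal core $\Core_L(O)$ (permissible since $[L,R]$ is normal in $L$, so $[L,R]\le O\iff[L,R]\le\Core_L(O)$) makes $\{\ell\in L:[\ell,r]\in\Core_L(O)\}$ a subgroup; one then only needs $[X\cup U,r]$ to land in that core, which is controlled by the finitely many $x\in X$ together with the compactness argument of Step A. Finally, part (ii) is the case $L=K$: a compact $K$ is itself a compactly generated open $H$-invariant subgroup of $K$, so part (i) gives $[K,R]\le\Res_K(R\rtimes H)$, and since every open subgroup of the compact group $K$ has finite index, $\Res_K(R\rtimes H)=\Res_{K,f}(R\rtimes H)=\Res_{K,f}(H)=\Res_K(H)$.

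The main obstacle is Step D when $O$ has infinite index in $L$: then $\Core_L(O)$ need not be open, so the naive ``for $r$ near $1$, $[X\cup U,r]\subseteq\Core_L(O)$'' argument does not literally apply, and showing the $H$-invariant normal subgroup $N\le R$ is open requires a more careful analysis of how $R$ acts on the (possibly non-discrete) quotient $L/\Core_L(O)$. Everything else — Step A, the reduction in Step B via cores, the triviality in Step C, the deduction of (ii), and part (iii) — is a routine consequence of the compactness argument of Step A and of the hypothesis $\Res_R(H)=R$.
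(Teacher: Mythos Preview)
Your overall architecture matches the paper's, and your treatments of Step~B, Step~C, part~(ii) and part~(iii) are essentially the same as the paper's (the paper derives (iii) from (i) rather than proving it directly, but the content is identical). There are two substantive points of comparison.

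\emph{Step A.} The paper does not use your bare-hands compactness argument. It first replaces $G$ by $\overline{RK}$ so that $K$ becomes normal in $G$, and then invokes the relative separability machinery developed earlier (Theorem~\ref{thmintro:RIO_separation}): since $K\unlhd G$, the compactly generated open subgroup $L$ of $K$ can be written as $L=K\cap O$ for some open $O\le G$, whence $O\le\N_G(L)$ immediately. This is shorter, but it leans on the paper's main theorem; your elementary argument is self-contained and works just as well once you symmetrise (your inclusion only gives $rLr^{-1}\subseteq L$, but intersecting with the inverse condition fixes this, exactly as you did for $U$). The paper then applies the identical reasoning to any finite-index $H$-invariant open $M\le L$, obtaining directly that $R$ acts trivially on the finite set $L/M$, hence $[L,R]\le M$; this gives $[L,R]\le\Res_{L,f}(H)$ \emph{before} tackling general $N$, which is a slightly more efficient order than yours.

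\emph{Step D.} Your worry is exactly on target, and the paper does not do anything cleverer than what you sketch. For a general open $(R\rtimes H)$-invariant $N\le L$, the paper asserts that $L/N$ is ``the discrete finitely generated group $L/N$'' on which $P=\N_G(L)\cap\N_G(N)$ acts, and concludes that an open subgroup of $R$ centralising a finite generating set centralises all of $L/N$. That inference is valid precisely when $L/N$ is a group, i.e.\ when $N\unlhd L$, which is neither assumed nor established; for non-normal $N$ the set $\{\ell\in L:r\ell r^{-1}\in\ell N\}$ is not a subgroup, exactly as you observe. So the obstacle you flag is present in the paper's proof as written. It does not affect the downstream applications in the paper (parts (ii) and (iii), Proposition~\ref{prop:ergodic_nub}, Theorem~\ref{intro:normal_series}), since those only require the finite-index inequality $[L,R]\le\Res_{L,f}(H)$, where the core of $M$ is open and the generating-set argument is unproblematic.
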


\begin{proof}
Without loss of generality, we may replace $G$ with $\overline{RK}$ and assume that $K$ is normal in $G$.  Thus $K$ is a RIO subgroup of $G$; in particular, by Theorem~\ref{thmintro:RIO_separation}, $K$ is a RRS subgroup of $G$.

Let $L$ be a compactly generated $H$-invariant open subgroup of $K$.  Then by the RRS property, $L = K \cap O$ for some open subgroup $O$ of $G$.  In particular, $\N_G(L) \ge O$, so $\N_G(L)$ is open, and hence $\N_R(L)$ is an open subgroup of $R$.  We see that $\N_R(L)$ is also $H$-invariant.  Thus $\N_R(L) = R$.  Now let $M$ be an open $H$-invariant subgroup of $L$ of finite index.  Then $M$ is $R$-invariant by the same argument.  Thus $R$ acts on the finite discrete factor $L/M$ by conjugation; the kernel of this action, namely the set $\CC_R(L/M):= \{r \in R \mid \forall l \in L: [l,r] \in M\}$, is then an open subgroup of $R$.  Since $L$ and $M$ are $H$-invariant, $\CC_R(L/M)$ is also $H$-invariant, so $R = \CC_R(L/M)$.  In other words, $[L,R] \le M$.  As $M$ was an arbitrary open $H$-invariant subgroup of $L$ of finite index, we conclude that $[L,R] \le \Res_{L,f}(H)$.  In particular, every open $H$-invariant subgroup of $L$ of finite index is in fact $R \rtimes H$-invariant, so $\Res_{L,f}(H) = \Res_{L,f}(R \rtimes H)$.

Now let $N$ be an open subgroup of $L$ that is $R \rtimes H$-invariant, but not necessarily of finite index.  Then $P = \N_G(L) \cap \N_G(N)$ acts continuously on the discrete finitely generated group $L/N$; since $R \le P$, the action of $R$ on $L/N$ is also continuous.  In particular, every element of $L/N$ has open centralizer in $R$, and there is thus an open subgroup of $R$ that centralizes a generating set of $L/N$.  Then $\CC_R(L/N)$ is an open $H$-invariant subgroup of $R$, so $R = \CC_R(L/N)$; from the choice of $N$ we conclude that $[L,R] \le \Res_L(R\rtimes H)$.  Clearly $\Res_L(R\rtimes H) \le  \Res_{L,f}(R \rtimes H)$.  We have now proved all the assertions in (i).

Part (ii) is the special case of part (i) where $K = L$ and $K$ is compact, using the fact that every open subgroup of a compact group has finite index.

Part (iii) follows from the special case of part (i) where $K$ is discrete and $L$ is finitely generated, as in this case $\Res_L(R \rtimes H)$ is trivial, so $[L,R] = \triv$ and hence $L \le \CC_K(R)$.
\end{proof}

Consider now a compactly generated \tdlc group of the form $G = \Res_G(H) \rtimes H$.  We note that there is a certain compact normal subgroup of $G$ determined by the action of $H$ that naturally divides the action on compact normal subgroups into two parts.

\begin{prop}\label{prop:ergodic_nub}
Let $G$ be a \tdlc group, let $H \le \Aut(G)$ be compactly generated and let $R = \Res_G(H)$.
\begin{enumerate}[(i)]
\item There is a unique largest compact $H$-invariant subgroup $N$ of $G$ such that $H$ acts ergodically on $N$ and $R \le \N_G(N)$; moreover, $N \le R$ and $\N_G(N)$ is open.
\item Let $T/N$ be the elliptic part of the centre of $R/N$.  Then $T$ contains every compact normal $H$-invariant subgroup of $R$.
\end{enumerate}
\end{prop}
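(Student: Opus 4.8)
The plan is to extract from Corollary~\ref{cor:res_ergodic} and Proposition~\ref{prop:res_action} an ``ergodic core'' operation on compact invariant subgroups, to use it to prove (i), and then to deduce (ii) from (i). For the preliminaries: since $H$ is compactly generated it is equicontinuously generated, so $\Res_R(H) = R$ by Proposition~\ref{prop:residual_distal}, $H$ has (SR) on $G$ by Corollary~\ref{cor:equi_gen_res}, and $\N_G(R)$ is open by Proposition~\ref{sr_equivalents}. The lemma I would establish is: \emph{if $L$ is a compact $H$-invariant subgroup of $G$ with $R \le \N_G(L)$, then $\Res_L(H)$ is a compact $H$-invariant $R$-invariant subgroup on which $H$ acts ergodically; moreover if $H$ already acts ergodically on $L$ then $\Res_L(H) = L$.} Here $\Res_L(H)$ is closed in $L$ (hence compact) and visibly $H$-invariant; its $R$-invariance follows from Proposition~\ref{prop:res_action}(ii) applied with $K = L$, which gives $\Res_L(R \rtimes H) = \Res_L(H)$, the left side being $R\rtimes H$-invariant by construction. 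Ergodicity follows by applying Corollary~\ref{cor:res_ergodic} to the profinite (so \tdlc) group $L$ in place of $G$ and to the image of $H$ in $\Aut(L)$, which is equicontinuously generated on $L$ (the restriction to $L$ of an equicontinuous symmetric generating set for $H$ on $G$ is again one on $L$) and whose discrete residual on $L$ is the compact group $\Res_L(H)$. The last clause holds because an ergodic action on a nontrivial compact group leaves no proper open subgroup invariant.

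For (i), let $\mc{N}$ be the set of compact $H$-invariant subgroups $N$ of $G$ with $R \le \N_G(N)$ on which $H$ acts ergodically. Every $N \in \mc{N}$ lies in $R$: if $O$ is an open $H$-invariant subgroup of $G$ then $O \cap N$ is an open, hence finite-index, $H$-invariant subgroup of the compact group $N$, so $O \cap N = N$ by ergodicity, and intersecting over $O$ gives $N \le R$. The family $\mc{N}$ is upward directed: for $N_1,N_2 \in \mc{N}$ both lie in $R$ and are normal in $R$, hence normalise each other, so $N_1N_2$ is a compact $H$-invariant subgroup normalised by $R$; its ergodic core $\Res_{N_1N_2}(H)$ lies in $\mc{N}$ by the lemma and contains each $N_i$, because $N_i = \Res_{N_i}(H)$ is caught by the intersection defining $\Res_{N_1N_2}(H)$ (any open $H$-invariant subgroup of $N_1N_2$ meets $N_i$ in an $H$-invariant finite-index subgroup, which by ergodicity is all of $N_i$). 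One then shows that $\bigcup_{N \in \mc{N}} N$ lies in a single compact subgroup; granting this, the closure $P$ of that union is a compact $H$-invariant subgroup normal in $R$, and $N := \Res_P(H) \in \mc{N}$ contains every member of $\mc{N}$ (by the same ``ergodic is caught'' argument), so it is the unique largest element of $\mc{N}$ and $N \le R$. Finally $\N_G(N)$ is open: $\N_G(N) \cap \N_G(R)$ is an $H$-invariant subgroup of $\N_G(R)$ containing $R$ (it is preserved by every automorphism fixing $N$ and $R$ setwise, in particular by a symmetric generating set of $H$), and one concludes it is open using the maximality of $N$ together with the fact from Proposition~\ref{sr_equivalents} that the $H$-invariant open subgroups of $\N_G(R)$ containing $R$ form a base of neighbourhoods of the trivial coset of $\N_G(R)/R$.

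For (ii), let $M$ be a compact normal $H$-invariant subgroup of $R$. Then $M$ and $N$ are both compact normal $H$-invariant subgroups of $R$, so $MN$ is one too; in particular $R \le \N_G(MN)$, and $\Res_{MN}(H) \in \mc{N}$ by the lemma, whence $\Res_{MN}(H) \le N$ by the maximality from (i). On the other hand Proposition~\ref{prop:res_action}(ii) with $K = MN$ gives $[MN, R] \le \Res_{MN}(H)$, so $[M, R] \le [MN, R] \le N$. Hence the image of $M$ in $R/N$ is a compact subgroup of the centre $Z(R/N)$, so it is contained in the elliptic part $T/N$ of $Z(R/N)$, and therefore $M \le MN \le T$.

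The step I expect to be the main obstacle is the one in (i) asserting that the members of $\mc{N}$ all lie in a common compact subgroup — equivalently that $\overline{\bigcup \mc{N}}$ is compact — which I would expect to need compact generation of $H$ (rather than just equicontinuous generation) and may require input from \cite{ReidWesolek}, together with the verification that $\N_G(N)$ is open; everything else is a routine assembly of Corollary~\ref{cor:res_ergodic} and Proposition~\ref{prop:res_action}.
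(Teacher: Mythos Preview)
Your approach is close in spirit to the paper's and gets to the same $N$, but the paper organizes the construction differently in a way that handles both of your flagged obstacles at once. Rather than working with the directed family $\mc{N}$ of ergodic compact subgroups, the paper considers the union $C$ of \emph{all} compact $(R\rtimes H)$-invariant subgroups of $R$. A preliminary step (using the (SR) property) shows $C$ coincides with the union of compact $(W\rtimes H)$-invariant subgroups for any open $H$-invariant $W$ with $W/R$ compact; this lets one apply \cite[Theorem~3.3]{ReidWesolek} to the compactly generated group $W\rtimes H$, yielding a compact $(W\rtimes H)$-invariant $K\le R$ with $C/K$ discrete, so $C$ is closed. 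One then sets $N=\Res_C(R\rtimes H)$, which is contained in $K$ and hence compact; ergodicity of $H$ on $N$ follows from Corollary~\ref{cor:res_ergodic} and Proposition~\ref{prop:res_action}(ii) exactly as in your lemma. The payoff is that Lemma~\ref{lem:equi_residual} gives $N=\Res_C(W\rtimes H)$, so $W$ normalizes $N$ automatically and $\N_G(N)$ is open without any appeal to maximality.

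Your sketch for openness via ``maximality of $N$ together with Proposition~\ref{sr_equivalents}'' is not a proof as written: knowing that $\N_G(N)\cap\N_G(R)$ is $H$-invariant and contains $R$ does not by itself force it to be open. It can be repaired, but the repair amounts to reproving the paper's point: one must first embed $N$ in a compact $(W\rtimes H)$-invariant subgroup $A$ (this is the preliminary step above), then observe $\Res_A(H)=\Res_A(W\rtimes H)$ via Proposition~\ref{prop:res_action}(ii) and Lemma~\ref{lem:equi_residual}, and finally use your ``ergodic is caught'' argument and maximality to identify this with $N$, whence $W\le\N_G(N)$. Your boundedness step can be completed the same way: every $M\in\mc{N}$ satisfies $M\le\Res_{MK}(H)\le K$, so $\bigcup\mc{N}\le K$ is relatively compact. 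Your argument for part (ii) is correct and is essentially the paper's, phrased for a single $M$ rather than for all of $C/N$ at once.
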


\begin{proof}
We see that $L = R \rtimes H$ is compactly generated by Theorem~\ref{thm:sr_envelope}(\ref{sr_envelope:1}).

Let $A$ be a compact $L$-invariant subgroup of $R$.  Then there is a compact open subgroup $U$ of $G$ such that $A \le U$.  By Proposition~\ref{sr_equivalents}, there is an open $H$-invariant subgroup $V$ of $G$ such that $V \subseteq UR$.  Let $L_V := V \rtimes H$; we see that every $L_V$-conjugate of $A$ is contained in $U$, so $A$ is contained in a compact $L_V$-invariant subgroup $A_2$ of $R$.  Now let $W$ be any open $H$-invariant subgroup of $G$ such that $W/R$ is compact and let $L_W := W \rtimes H$; then $A_2$ has only finitely many $L_W$-conjugates, each of which is normal in $R$, so $A_2$ is contained in a compact $L_W$-invariant subgroup $A_3$ of $R$.

From the preceding argument, we see that the union $C$ of all compact $L$-invariant subgroups of $R$ is in fact the union of all compact $L_W$-invariant subgroups of $R$, where $W$ is any open $H$-invariant subgroup of $G$ such that $W/R$ is compact.  By \cite[Theorem~3.3]{ReidWesolek}, there is a compact $L$-invariant subgroup $K$ of $R$ such that $C/K$ is discrete; in particular, $C$ is closed.  Set $N = \Res_{C}(L)$; then certainly $N \le K$, so $N$ is compact.  We see that in fact $N = \Res_C(L_W)$ by Lemma~\ref{lem:equi_residual}.  In particular, $W \le \N_G(N)$, so $\N_G(N)$ is open.  We see that $L$ acts ergodically on $N$ by Corollary~\ref{cor:res_ergodic}; it then follows by Proposition~\ref{prop:res_action}(ii) that in fact $\Res_N(H) = N$, so $H$ acts ergodically on $N$.

Now suppose $M$ is a compact $H$-invariant subgroup of $G$ on which $H$ acts ergodically, such that $R \le \N_G(M)$.  In other words, $M$ is a compact $L$-invariant subgroup of $G$ on which $H$ acts ergodically.  Then $H$ does not preserve any open subgroup of $M$, so $M \le U$ for every open $H$-invariant subgroup of $\N_G(M)$; thus $M \le R$.  It follows that $M \le C$, and hence 
\[
M = \Res_M(L) \le \Res_C(L) = N,
\]
proving (i).

From the construction of $N$, we see that for every compact normal $H$-invariant subgroup $S$ of $R$, the action of $H$ on $SN/N$ is residually discrete.  Applying Proposition~\ref{prop:res_action}, $C/N$ is a central factor of $R$.  Since $C/N$ is generated topologically by compact subgroups, we see that it is contained in the elliptic part of the centre of $R/N$, proving (ii).
\end{proof}

\subsection{Decomposition of the discrete residual}\label{sec:normal_series}

We can now prove Theorem~\ref{intro:normal_series}.

\begin{proof}[Proof of Theorem~\ref{intro:normal_series}]
By hypothesis, we have a \tdlc group $G \rtimes H$ such that $H$ is compactly generated.  Let $R = \Res_G(H)$.

We have already seen (Proposition~\ref{prop:residual_distal}) that $\Res_R(H) = R$.  The group $L := R \rtimes H$ is compactly generated by Theorem~\ref{thm:sr_envelope}(\ref{sr_envelope:1}).

By \cite[Theorem~1.3]{ReidWesolek}, there is a finite normal $H$-invariant series
\[
\triv = R_0 < R_1 < \dots < R_n = R,
\]
such that given $1 \le i \le n$, the factor $R_i/R_{i-1}$ is compact, discrete or a chief factor of $L$.

Suppose $R_i/R_{i-1}$ is a chief factor of $L$; that is, there does not exist a closed subgroup $S$ of $R$ such that $R_{i-1} < S < R_i$ and such that $S$ is both $H$-invariant and $R$-invariant.  Then it follows from Proposition~\ref{prop:res_action}(i) that $H$ does not leave invariant any proper compactly generated open subgroup of $R_i/R_{i-1}$; in particular, either $R_i/R_{i-1}$ is discrete or case (\ref{normal_series:4}) in the statement of the theorem is satisfied. 

Let us now consider the uppermost factor $R/R_{n-1}$.  Since $H$ does not leave invariant any proper open subgroup of $R$, we see that $R/R_{n-1}$ is not discrete.  If $R/R_{n-1}$ is compact then $H$ acts ergodically by Corollary~\ref{cor:res_ergodic}.  Otherwise, $R/R_{n-1}$ is a nondiscrete chief factor of $L$.  Thus $R/R_{n-1}$ satisfies at least one of cases (\ref{normal_series:3}) and (\ref{normal_series:4}) in the statement of the theorem.

It is natural to obtain the series in ascending order, in other words, $R_{i+1}$ is obtained by setting $R_{i+1} = N$ where $N/R_i$ is either a minimal $L$-invariant subgroup of $R/R_i$, or a sufficiently large compact or discrete subgroup of $R/R_i$.  The flexibility of the construction is such that whenever $R_{i+1}/R_i$ is compact, we can ensure $R_{i+1}/R_i$ is the largest compact $L$-invariant subgroup of $R/R_i$ up to finite index (such a subgroup exists by \cite[Theorem~3.3]{ReidWesolek}). Then $R_{i} \le N \le R_{i+1}$ where $N/R_i$ is the unique largest compact $L$-invariant subgroup of $R/R_i$ on which $H$ acts ergodically, as provided by Proposition~\ref{prop:ergodic_nub}(i).  By Proposition~\ref{prop:ergodic_nub}(ii), $R_{i+1}/N$ is a central factor of $R$.  We can thus ensure, possibly at the cost of increasing the length of the series, that every compact factor in the series satisfies (\ref{normal_series:3}) or is a central factor of $R$ (or both).  Thus every compact factor $R_i/R_{i-1}$ satisfies at least one of cases (\ref{normal_series:2}) and (\ref{normal_series:3}) in the statement of the theorem.

Now let $1 \le i < n$ and consider the factor $M = R_i/R_{i-1}$.  We have already dealt with the cases when $M$ is compact or a nondiscrete chief factor, so we may assume $M$ is discrete.  If $M$ is finitely generated, then $M$ is a central factor of $R$ by Proposition~\ref{prop:res_action}(iii), so case (\ref{normal_series:2}) of the theorem is satisfied.  Otherwise $M$ is infinitely generated and case (\ref{normal_series:1}) of the theorem is satisfied.  This exhausts the possibilities for $M$; thus we have obtained a finite normal $H$-invariant series for $R$ of the required form.
\end{proof}

In the statement of Theorem~\ref{intro:normal_series}, the uppermost factor $R/R_{n-1}$ of $R = \Res_G(H)$ cannot be discrete, and all of the finitely generated discrete factors occurring must be central in $R$.  The following example indicates why we cannot hope to restrict the discrete factors much further than this: for instance, we cannot require all discrete abelian factors to be central in $R$, nor can we avoid a discrete factor appearing as a quotient of a cocompact subgroup of $R$.

\begin{ex}
In $\mathrm{Sym}(\bZ)$, let $a_i = (3i \; 3i+1 \; 3i+2)$, let $b_i = (3i \; 3i+1)$ and let $c$ be the shift $n \mapsto n+3$.  Let $D$ be the group abstractly generated by $\{a_i \mid i \in \bZ\}$, let $U$ be the closure of the group generated by $\{b_i \mid b_i \in \bZ\}$ and let $S = \langle c \rangle$.  Then the group $G = DUS$ carries a unique \tdlc topology extending the permutation topology of $U$.  Let $H = \mathrm{Inn}(G)$.  Both $G$ and $H$ are compactly generated, and $R:= \Res_G(H) = DU$.  One sees that in any finite series for $R \rtimes H$ of the form specified by Theorem~\ref{intro:normal_series}, there will be an infinitely generated discrete abelian factor $R_{i+1}/R_{i}$ appearing in the series such that $R_{i+1}$ is cocompact in $R$ and $R_{i+1}/R_i$ is not a central factor of $R$.
\end{ex}

\appendix

\section{Metrizable approximation of group actions}

In an earlier version of this article, the main theorem was proved by first reducing to actions on metrizable spaces.  This reduction turned out to be unnecessary, but is perhaps of independent interest; we preserve the arguments in this appendix.

Let $G$ be a \tdlc group, let $H \le \Aut(G)$ and let $K$ be a closed $H$-invariant subgroup.  As long as $H$ is not too large ($\sigma$-equicontinuous is a sufficient constraint), then $X = G/K$ can be approximated as a topological $H$-space by completely metrizable $H$-spaces.

\begin{defn}
A function $f: X \rightarrow Y$ between topological spaces is \defbold{proper} if it is continuous and the preimage of any compact set is compact.

Given equivalence relations $R_1,R_2$ on a set $X$, say $R_1 \le R_2$ if $R_1 \subseteq R_2$, regarded as subsets of $X \times X$.

Let $H$ be a group acting by homeomorphisms on a locally compact Hausdorff space $X$.  The $H$-space $X$ is \defbold{pro-metrizable} (with filtration $(\sim_i)_{i \in I}$) if there is a descending net $(\sim_i)_{i \in I}$ of equivalence relations, each preserved by $H$, such that:
\begin{enumerate}[(a)]
\item If $x \sim_i y$ for all $i \in I$, then $x = y$.
\item For each $i \in I$, the quotient space $X/\!\sim_i$ is locally compact and completely metrizable and the quotient map $p_i: X \rightarrow X/\!\sim_i$ is proper.
\end{enumerate}
\end{defn}

The following lemma shows that the structure of $X$ is approximated well by that of the quotients $X/\!\sim_i$.

\begin{lem}
Let $X$ be a pro-metrizable space with filtration $(\sim_i)_{i \in I}$, and let $p: X \rightarrow \prod_{i \in I}X/\!\sim_i$ be the product of the maps $p_i$.  Then $p$ is a proper topological embedding.
\end{lem}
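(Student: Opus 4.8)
The plan is to establish, in order, that $p$ is injective, continuous, proper, and open onto its image; the latter two, together with Hausdorffness of the target, then say $p$ is a proper topological embedding. Injectivity is immediate from axiom~(a): if $p(x)=p(y)$ then $x\sim_i y$ for every $i\in I$, so $x=y$. Continuity is the universal property of the product topology, since each $p_i$ is continuous. For properness I would fix any $i_0\in I$ (the directed set $I$ is nonempty; if $I=\emptyset$ then axiom~(a) forces $X$ to be a point and there is nothing to prove): given compact $Q\subseteq\prod_{i\in I}X/\!\sim_i$, it is closed since the product is Hausdorff, so $p^{-1}(Q)$ is closed in $X$; writing $\pi_{i_0}$ for the $i_0$-th projection, $p^{-1}(Q)\subseteq p_{i_0}^{-1}(\pi_{i_0}(Q))$, which is compact because $p_{i_0}$ is proper and $\pi_{i_0}(Q)$ is compact. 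A closed subset of a compact set is compact, so $p^{-1}(Q)$ is compact.

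The substantive step is that $p$ is open onto its image, for which it suffices to prove: for every $x\in X$ and every open $U\ni x$ there exist $j\in I$ and an open $V\subseteq X/\!\sim_j$ with $x\in p_j^{-1}(V)\subseteq U$. Granting this, one checks the identity $p(X)\cap\pi_j^{-1}(V)=p(p_j^{-1}(V))$ (both inclusions are routine: $\subseteq$ uses $\pi_j\circ p=p_j$), so $p(p_j^{-1}(V))$ is a relatively open neighbourhood of $p(x)$ in $p(X)$ contained in $p(U)$, and since $x\in U$ was arbitrary, $p(U)$ is open in $p(X)$. To produce $j$ and $V$, the key observation is that each fibre $[x]_i:=p_i^{-1}(\{p_i(x)\})$ is compact, being the preimage of a one-point set under the proper map $p_i$. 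The fibres form a descending net (if $i\le i'$ then $\sim_{i'}\subseteq\sim_i$, hence $[x]_{i'}\subseteq[x]_i$) with $\bigcap_{i\in I}[x]_i=\{x\}$ by axiom~(a). Fixing $i_0\in I$, the sets $[x]_i\setminus U$ for $i\ge i_0$ are closed subsets of the compact space $[x]_{i_0}$, they are directed downwards by inclusion, and their intersection is $\{x\}\setminus U=\emptyset$; so by the finite intersection property one of them is empty, i.e.\ $[x]_j\subseteq U$ for some $j$. Finally, a proper continuous map into a locally compact Hausdorff space is closed (if $A$ is closed and $y\in\overline{p_j(A)}$, a compact neighbourhood $N$ of $y$ has $p_j^{-1}(N)$ compact, so $p_j(A)\cap N=p_j(A\cap p_j^{-1}(N))$ is compact, hence closed, hence contains $y$), so $p_j(X\setminus U)$ is closed and $V:=(X/\!\sim_j)\setminus p_j(X\setminus U)$ is open; the inclusion $[x]_j\subseteq U$ gives $p_j(x)\in V$, and $p_j^{-1}(V)\subseteq U$ by construction.

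I expect the main obstacle to be precisely this last point: converting axiom~(a), which only constrains the intersection of all fibres through $x$, into the existence of a \emph{single} index $j$ with $[x]_j\subseteq U$. This is exactly where properness of the maps $p_i$ is indispensable — it makes the fibres compact (so the finite intersection property bites) and it makes $p_j$ closed (so the largest $\sim_j$-saturated open subset of $U$ is actually a neighbourhood of $x$); without properness the conclusion fails in general. The only other care needed is the bookkeeping identity $p(p_j^{-1}(V))=p(X)\cap\pi_j^{-1}(V)$ that promotes "saturated open in $X$" to "relatively open in $p(X)$"; everything else — injectivity, continuity, and properness of $p$ itself — is formal.
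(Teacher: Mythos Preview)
Your proof is correct. Injectivity, continuity, and properness of $p$ are handled essentially as in the paper (the paper makes the same observation that $p^{-1}(Q)$ lies inside a single $p_{i}^{-1}(\pi_{i}(Q))$, hence is relatively compact, and is closed by continuity).

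The approaches diverge at openness onto the image. The paper argues via the bijection $C\leftrightarrow p(C)$ between compact subsets of $X$ and of $p(X)$: since $X$ is locally compact, a basic open $O$ is the difference $\overline{O}\smallsetminus(\overline{O}\smallsetminus O)$ of two compact sets, and from this the paper concludes that $p(O)$ is open in $p(X)$. You instead work one coordinate at a time: compactness of the fibres $[x]_i$ together with directedness and the finite intersection property produces a single index $j$ with $[x]_j\subseteq U$; then, because $p_j$ is proper into the locally compact Hausdorff space $X/\!\sim_j$, it is closed, yielding a $\sim_j$-saturated open $p_j^{-1}(V)\subseteq U$, which you promote to the subbasic relatively open set $\pi_j^{-1}(V)\cap p(X)$. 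Your route is longer but more explicit---it actually exhibits a neighbourhood base for $X$ pulled back from individual quotients and isolates exactly where each hypothesis (properness of $p_i$, local compactness of $X/\!\sim_i$, axiom~(a), directedness of $I$) is used---whereas the paper's argument is terser but leaves implicit the step from ``difference of two compacts in $p(X)$'' to ``open in $p(X)$'', which ultimately rests on $p$ being closed onto its image. Your argument sidesteps that by only ever invoking closedness of the individual $p_j$, whose codomains are locally compact by hypothesis~(b).
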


\begin{proof}
By the standard characterization of the product topology, $p$ is continuous.  In particular, the image of every compact subset of $X$ is compact, and hence closed.  Conversely, given a compact subset $K$ of $\prod_{i \in I}X/\!\sim_i$, then the projection $\pi_i(K)$ of $K$ onto each of the coordinates is compact, so $p\inv(K) \subseteq p\inv(\pi_i(K)) = p_i\inv(K)$ for each $i$.  Since $p_i$ is proper, $p_i\inv(K)$ is compact, so $p\inv(K)$ is relatively compact; by continuity $p\inv(K)$ is closed, hence compact.  Thus $p$ is proper.

We see from condition (a) of the definition of a pro-metrizable space that $p$ is injective.  Thus $p$ induces a one-to-one correspondence between compact subsets of $X$ and those of $p(X)$.  The topology of $X$ is generated by relatively compact open sets; in turn, every relatively compact open set $O$ is the difference of two compact sets (namely, $O = \overline{O} \smallsetminus (\overline{O} \smallsetminus O)$).  Thus every open set in $X$ has open image in $p(X)$, in other words, $p$ is a topological embedding.
\end{proof}

Under some fairly general circumstances, coset spaces give pro-metrizable actions in this sense.

\begin{prop}\label{pro-metrizable:coset}
Let $G$ be a \tdlc group, let $H$ be a $\sigma$-equicontinuous group of automorphisms of $G$ and let $K$ be a closed $H$-invariant subgroup.  Then $G/K$ is pro-metrizable as an $H$-space, with completely metrizable quotients of the form $Z_i = N_i \bs G / K$ where $(N_i)_{i \in I}$ is a descending net of compact $H$-invariant subgroups with trivial intersection.
\end{prop}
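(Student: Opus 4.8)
The plan is to construct the filtration explicitly, indexed by the compact open subgroups of $G$. Write $H=\bigcup_n S_n$ with $(S_n)$ an ascending sequence of sets that are equicontinuous on $G$, and after replacing $S_n$ by $S_n\cup\{1\}$ (still equicontinuous, by Lemma~\ref{lem:equi_product}(i)) assume $1\in S_n$. For a compact open subgroup $U$ of $G$, set $W^U_0=U$ and $W^U_{k+1}=\bigcap_{s\in S_{k+1}}s\inv(W^U_k)$. Then each $W^U_k$ is a compact subgroup contained in its predecessor, and it is \emph{open}: equicontinuity of $S_{k+1}$ at the identity of $G$, applied to the basic entourage $E_{W^U_k}$, yields an identity neighbourhood $V$ with $s(V)\subseteq W^U_k$ for all $s\in S_{k+1}$, i.e.\ $V\subseteq W^U_{k+1}$. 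Put $N_U:=\bigcap_k W^U_k$, a compact subgroup of $U$.

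The first real point is that $N_U$ is $H$-invariant, not merely invariant under each $S_n$. Given $h\in H$, choose $M$ with $h,h\inv\in S_M$; then for all $k\ge M-1$ we have $h\in S_{k+1}$, so $h(W^U_{k+1})\subseteq W^U_k$, and likewise $h\inv(W^U_{k+1})\subseteq W^U_k$. Intersecting over such $k$ gives $h(N_U)\subseteq N_U$ and $h\inv(N_U)\subseteq N_U$, hence $h(N_U)=N_U$. An immediate induction shows $A\subseteq B\Rightarrow W^A_k\subseteq W^B_k$ for all $k$, hence $N_A\subseteq N_B$; so the $N_U$, indexed by the compact open subgroups of $G$ ordered by reverse inclusion, form a descending net (for $U_1,U_2$ use that $U_1\cap U_2$ lies below both and $N_{U_1\cap U_2}\subseteq N_{U_1}\cap N_{U_2}$), with $\bigcap_U N_U\subseteq\bigcap_U U=\triv$.

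Next define $gK\sim_U g'K$ to mean $N_U gK=N_U g'K$. Each $\sim_U$ is an equivalence relation preserved by $H$ (because $N_U$ and $K$ are $H$-invariant), the net $(\sim_U)$ is descending (since $U\subseteq U'$ forces $N_U\subseteq N_{U'}$), and the intersection of the $\sim_U$ is the diagonal: if $g'\in\bigcap_U N_U gK$, write $g'=n_U g\kappa_U$ with $n_U\in N_U$, $\kappa_U\in K$; as $N_U\subseteq U$ and the $U$ form a neighbourhood base at $1$, the net $(n_U)$ converges to $1$, so $\kappa_U=g\inv n_U\inv g'\to g\inv g'$, which lies in $K$ by closedness, forcing $g'K=gK$. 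The induced quotient $(G/K)/\!\sim_U$ is $Z_U:=N_U\bs G/K$, with $p_U\colon G/K\to Z_U$ the quotient by left translation of the compact group $N_U$; this $p_U$ is closed (a routine net argument using compactness of $N_U$) with compact fibres (the orbits), hence proper, and $Z_U$ is locally compact Hausdorff because the relevant quotient maps are open and $N_U gK$ is closed in $G$.

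The main obstacle is complete metrizability of $Z_U$. The idea: since each $W^U_k$ is open, $D_k:=W^U_k\bs G/K$ is discrete, the natural maps $\pi_k\colon Z_U\to D_k$ are continuous, and $\phi:=(\pi_k)_k\colon Z_U\to\prod_k D_k$ is injective --- injectivity being the identity $\bigcap_k W^U_k gK=N_U gK$, proved by extracting a convergent subnet of representatives inside the compact group $U$. Moreover $\phi$ is a topological embedding: given a saturated open neighbourhood $\widetilde O$ of $g$ in $G$, choose a compact open subgroup $V$ with $Vg\subseteq\widetilde O$; then $N_U V g K\subseteq\widetilde O$ and $N_U V$ is open, so $W^U_k\subseteq N_U V$ for all large $k$ (as the $W^U_k$ decrease to $N_U$), whence $W^U_k gK\subseteq\widetilde O$ and the $\pi_k$-preimage of the point $W^U_k gK$ lies inside $O$; thus the $\pi_k$-preimages of points form a neighbourhood base of $Z_U$. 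Hence $Z_U$ embeds in the countable product of discrete spaces $\prod_k D_k$, so is metrizable; being also locally compact Hausdorff it is \v{C}ech-complete, hence completely metrizable. Assembling the pieces, $(\sim_U)$ exhibits $G/K$ as a pro-metrizable $H$-space with quotients of the stated form $N_U\bs G/K$. The two places needing care are obtaining genuine $H$-invariance of $N_U$ while keeping it open, and correctly upgrading ``metrizable'' to ``completely metrizable'' via local compactness rather than trying to prove the embedding has closed image (which fails when $K$ is noncompact).
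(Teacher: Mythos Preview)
Your proof is correct. The construction of the compact $H$-invariant subgroups is essentially the same as the paper's (both amount to $N_U=\bigcap_{h\in H}h(U)$, written as a countable intersection of open subgroups via the $S_n$), as is the verification of conditions~(a) and properness in~(b).

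The genuine difference is in how complete metrizability of $Z_U=N_U\bs G/K$ is obtained. The paper first introduces the normal core $M_i\le N_i$ in $U_i$ so that $U_i/M_i$ is a compact metrizable group, then partitions $Z_i$ into clopen pieces $\{N_iyK:N_iyK\subseteq U_ixK\}$, identifies each piece with a double coset space $N_i\bs U_i/(xKx\inv\cap U_i)$, and concludes by Urysohn that any Hausdorff quotient of the compact metrizable space $U_i/M_i$ is completely metrizable. Your route instead exhibits an explicit injective continuous map into the countable product of discrete spaces $\prod_k W^U_k\bs G/K$, checks it is an embedding using the compactness descent $W^U_k\searrow N_U$, and then upgrades metrizability to complete metrizability via local compactness and \v{C}ech-completeness. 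Your approach avoids introducing the auxiliary normal subgroup $M_i$ and the clopen partition, at the price of invoking the (standard) fact that locally compact metrizable spaces are completely metrizable; the paper's approach is more hands-on about the local structure of $Z_i$ but needs the extra bookkeeping of $M_i$.
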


\begin{proof}
Let us note first that it follows from Urysohn's metrization theorem (see for instance \cite[Corollary 23.2]{Willard}) that every Hausdorff quotient space of a compact metrizable space is completely metrizable.

Let $H$ be the union of an ascending sequence $(S_n)$ of equicontinuous sets such that $1 \in S_n = S\inv_n$ for all $n \in \bN$.

Fix a descending net $(U_i)_{i \in I}$ of compact open subgroups of $G$ with trivial intersection.  Given $i \in I$, let $U_i(n) = \bigcap_{s \in S_n}s(U_i)$, let $V_i(n)$ be the core of $U_i(n)$ in $U_i$, let $M_i = \bigcap^\infty_{n=1}V_i(n)$ and let $N_i = \bigcap^\infty_{n=1}U_i(n)$.  Then $M_i$ is a compact normal subgroup of $U_i$; by construction, we see that $U_i/M_i$ is a first-countable, hence completely metrizable, profinite group.  Since $M_i \le N_i$, it follows that the right coset space $N_i \bs U_i$ is completely metrizable.

Now define $Z_i = N_i \bs G / K$; the corresponding equivalence relation $\sim_i$ on $G/K$ is then given by $xK \sim_i yK$ if $N_ixK = N_iyK$.  Since both $N_i$ and $K$ are $H$-invariant subgroups of $G$, we see that $\sim_i$ is an $H$-invariant equivalence relation, so $H$ acts on $Z_i$.  We see that $\{U_ixK/K \mid i \in I\}$ is a base of neighbourhoods of $xK$ for any given $x \in G$.  In particular, it is now clear that given $x,y \in G$, if $xK \sim_i yK$ for all $i \in I$ then $xK = yK$.  The relation $\sim_i$ is the orbit relation of the compact group $N_i$ acting on $G/K$, so the quotient map from $G/K$ to $Z_i$ is proper.

It remains to show that $Z_i$ is completely metrizable.  Fix $i \in I$ and write $N = N_i$ and $U = U_i$.  We can partition $Z_i$ into clopen sets $O_x$, where $O_x = \{NyK \mid y \in G, NyK \subseteq UxK\}$.  Now fix $x \in G$; it suffices to show $O_x$ is completely metrizable.  We see that $NyK \in O_x$ if and only if $yK = zxK$ for some $z \in U$.  There is therefore a surjective map $\theta$ from $U$ to $O_x$ given by $z \mapsto NzxK$.  This map is easily seen to be continuous; since $U$ is compact, in fact $\theta$ is closed, so $\theta$ is a quotient map.  We have $\theta(a) = \theta(b)$ if and only if $b \in Na(xKx\inv \cap U)$, so $O_x$ is homeomorphic to the double coset space $N \bs U / R$ where $R = xKx\inv \cap U$.  Since $N$ and $R$ are closed subgroups of the compact group $U$, the double coset space is Hausdorff; hence $N \bs U / R$ is completely metrizable.  Thus $O_x$ is completely metrizable, and hence $Z_i$ is completely metrizable.
\end{proof}

Here is a condition under which a distal fixed point remains distal in a quotient space.

\begin{lem}\label{distal_compact_extension}
Let $H$ be a group acting by homeomorphisms on a topological space $X$.  Let $\sim$ be an $H$-invariant equivalence relation, such that the quotient map $p: X \rightarrow X/\!\sim$ is proper and $X/\!\sim$ is a locally compact Hausdorff space.  Suppose that $z \in X$ is fixed by $H$ and that $[z]$ consists of distal points for the action of $H$ on $X$.  Then $[z]$ is a distal point of the action of $H$ on $X/\!\sim$.
\end{lem}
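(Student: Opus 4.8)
The plan is to argue by contradiction, reducing everything to an orbit-closure statement inside $X$ and then invoking the rigidity of compact distal orbit closures (Lemma~\ref{lem:distal_orbit_closure}). Write $[z]=p^{-1}(\{p(z)\})$ for the equivalence class of $z$. First I would record the elementary facts: since $z$ is fixed by $H$ and $\sim$ is $H$-invariant, the map $p$ is $H$-equivariant, so $p(z)$ is a fixed point of the action on $X/\!\sim$ and $[z]$ is an $H$-invariant subset of $X$; and since $p$ is proper, $[z]$ is compact. By hypothesis every point of $[z]$ is a distal point of the $H$-action on $X$.

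Next, suppose for contradiction that $p(z)$ is not a distal point of the action on $X/\!\sim$. As $p(z)$ is a fixed point, Lemma~\ref{lem:distal_equiv} provides a net $(h_i)_{i\in I}$ in $H$ and a point $w\in (X/\!\sim)\smallsetminus\{p(z)\}$ with $h_i\cdot w\to p(z)$. Write $w=p(x)$, so that $x\notin[z]$. Using local compactness of $X/\!\sim$, fix a compact neighbourhood $C$ of $p(z)$; then $p(h_i\cdot x)=h_i\cdot p(x)$ lies in $C$ for all sufficiently large $i$, so $h_i\cdot x$ lies eventually in the compact set $p^{-1}(C)$. Passing to a subnet, I may assume $h_i\cdot x\to x^\ast$ for some $x^\ast\in p^{-1}(C)$; then continuity of $p$ and the Hausdorff property of $X/\!\sim$ give $p(x^\ast)=\lim_i p(h_i\cdot x)=p(z)$, so $x^\ast\in[z]$, and of course $x^\ast\in\overline{Hx}$. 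In particular $x^\ast\neq x$, since $x\notin[z]$.

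Finally I would close the argument with Lemma~\ref{lem:distal_orbit_closure}. From $x^\ast\in\overline{Hx}$ we get $\overline{Hx^\ast}\subseteq\overline{Hx}$; and $\overline{Hx^\ast}$ is a closed subset of the compact $H$-invariant set $[z]$, hence it is compact, $H$-invariant, and consists of distal points. Applying Lemma~\ref{lem:distal_orbit_closure} with $x^\ast$ in place of the point $x$ of that lemma and $x$ in place of $y$ — which is permitted because $\overline{Hx^\ast}\cap\overline{Hx}=\overline{Hx^\ast}\neq\emptyset$ — we conclude $\overline{Hx}=\overline{Hx^\ast}\subseteq[z]$, whence $x\in[z]$, contradicting $x\notin[z]$. (Here we use that $X$ is Hausdorff, which holds in the situations to which the lemma is applied and is what makes Lemma~\ref{lem:distal_orbit_closure} available.) The step I expect to need the most care is exactly this last one: a naive attempt would try to exhibit $(x,z)$ directly as a proximal pair with $(h_i\cdot x,h_i\cdot z)\to(z,z)$, but this fails because $x^\ast=\lim_i h_i\cdot x$ need not equal $z$ — the class $[z]$ may be a nontrivial compact distal $H$-space — so one must instead compare the orbit closure $\overline{Hx}$ with the smaller orbit closure $\overline{Hx^\ast}$ lying inside $[z]$ and appeal to the rigidity of compact distal orbit closures rather than to $z$ itself.
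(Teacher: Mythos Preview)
Your argument is correct and follows essentially the same route as the paper's proof: lift a proximal pair $([x],[z])$ to $X$ using properness of $p$ and local compactness of $X/\!\sim$, pass to a convergent subnet to obtain a limit $x^\ast\in[z]\cap\overline{Hx}$, and then invoke Lemma~\ref{lem:distal_orbit_closure} to force $x\in[z]$. Your write-up is in fact more explicit than the paper's at the final step (the paper simply says ``thus $x\in[z]$ by Lemma~\ref{lem:distal_orbit_closure}''), and your remark that one needs $X$ Hausdorff for that lemma to apply is a fair caveat that the paper leaves implicit.
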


\begin{proof}
Given $x \in X$, write $[x]$ for the $\sim$-class of $x$.  Suppose that $([x],[z])$ is a proximal pair for the action of $H$ on $X/\!\sim$: that is, there exists a net $(h_i)$ in $H$ such that $h_i([x]) \rightarrow [z]$.  Eventually $h_i([x])$ will be confined to a compact neighbourhood $K$ of $[z]$; since $p$ is proper, it follows that $h_i(x)$ is confined to a compact neighbourhood $p\inv(K)$ of $z$.  Thus by passing to a subnet, we may assume $h_i(x)$ converges, say $h_i(x) \rightarrow w \in [z]$.

Consider $[z]$ as a subspace of $X$; then $[z]$ is compact, since $p$ is proper.  Moreover, $H[z] = [z]$, since $Hz = z$ and $\sim$ is $H$-invariant, and by hypothesis $[z]$ consists of distal points for the action of $H$ on $X$.  Thus $x \in [z]$ by Lemma~\ref{lem:distal_orbit_closure}.  In particular, $[x] = [z]$, proving that $[z]$ is a distal point of the action of $H$ on $X/\!\sim$.
\end{proof}

We conclude this section with a decomposition for actions with an N-distal fixed point on pro-metrizable spaces.  As noted in Lemma~\ref{lem:distal_equiv}, the singleton of a distal fixed point is always an intersection of open invariant neighbourhoods; what is significant here is that we obtain a supply of proper \emph{closed} invariant neighbourhoods.

\begin{defn}Let $G$ be a group acting on a topological space $X$.  The action has \defbold{generic neighbourhoods at $x \in X$} if every $G$-invariant neighbourhood of $x$ is dense; in other words, for all $y \in X$, there are nets $(x_i)$ in $X$ and $g_i$ in $G$ such that $x_i \rightarrow x$ and $g_ix_i \rightarrow y$.\end{defn}

Note that the action is topologically transitive, that is, every nonempty open $G$-invariant set is dense, if and only if the action has generic neighbourhoods at every point.  An action can have a generic neighbourhood point without being topologically transitive: for example, the action of the multiplicative group $\bR_{>0}$ of positive reals on $\bC$ via $r(z) = rz$ is not topologically transitive, but $z=0$ is a generic neighbourhood point.  However, for distal actions on compact metrizable spaces, the existence of a generic neighbourhood point is equivalent to minimality of the action.

\begin{lem}\label{lem:top_trans:metrizable}Let $H$ be a group acting by homeomorphisms on a completely metrizable space $X$.  Then $x \in X$ has generic neighbourhoods if and only if there is a dense $G_{\delta}$ set of points $D$ such that $x \in \overline{Hy}$ for all $y \in D$.\end{lem}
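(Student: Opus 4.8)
The plan is to handle both implications by translating the condition ``$x \in \overline{Hy}$'' into a statement about $y$ lying in the $H$-saturations of neighbourhoods of $x$. For an open set $U$ containing $x$, write $HU := \bigcup_{h \in H} hU$; since $H$ acts by homeomorphisms, $HU$ is open, it is $H$-invariant, and it contains $x$, so it is an $H$-invariant neighbourhood of $x$. The key observation is that $x \in \overline{Hy}$ if and only if $Hy$ meets every neighbourhood of $x$, which (using that $H$ is a group, so that $\bigcup_{h} h\inv U = HU$) is equivalent to $y \in HU$ for every open $U \ni x$. Fixing a countable neighbourhood base $(U_n)_{n \in \bN}$ at $x$, available since $X$ is metrizable and hence first countable, this reduces to the condition $y \in \bigcap_{n \in \bN} HU_n$.

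For the forward direction, suppose $x$ has generic neighbourhoods. Then each $HU_n$, being an $H$-invariant neighbourhood of $x$, is dense; as $X$ is completely metrizable, the Baire category theorem shows that $D := \bigcap_{n \in \bN} HU_n$ is a dense $G_\delta$ set. By the reformulation above, $y \in D$ implies $x \in \overline{Hy}$, so $D$ is the required set.

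For the reverse direction, suppose there is a dense $G_\delta$ set $D$ with $x \in \overline{Hy}$ for all $y \in D$. Let $N$ be any $H$-invariant neighbourhood of $x$, and choose an open $U$ with $x \in U \subseteq N$. For each $y \in D$ there is a net $(h_i)$ in $H$ with $h_i y \to x$, so $h_i y \in U$ eventually; since $N$ is $H$-invariant this gives $y = h_i\inv(h_i y) \in h_i\inv N = N$. Hence $D \subseteq N$, and as $D$ is dense, so is $N$. Thus every $H$-invariant neighbourhood of $x$ is dense, i.e.\ $x$ has generic neighbourhoods.

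The argument is elementary; the only substantive ingredients are first countability (to pass to a countable neighbourhood base at $x$) and the Baire category theorem (to intersect the dense open sets $HU_n$), and I do not expect a genuine obstacle. The one point to treat with a little care is the bookkeeping with the saturations $HU_n$: one must verify that they are open, $H$-invariant, and contain $x$, so that the generic-neighbourhood hypothesis applies to them, and conversely that simultaneous membership in all of them is precisely the condition $x \in \overline{Hy}$.
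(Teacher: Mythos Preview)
Your proof is correct and follows essentially the same approach as the paper: both arguments form the $H$-saturations $HU_n$ of a countable neighbourhood base at $x$ (the paper uses metric balls of radius $1/n$), observe these are dense open $H$-invariant sets under the generic-neighbourhood hypothesis, and intersect them via the Baire category theorem to obtain $D$. Your upfront reformulation of $x \in \overline{Hy}$ as $y \in \bigcap_n HU_n$ is slightly more explicit than the paper's treatment, and your reverse direction is phrased directly ($D \subseteq N$) rather than contrapositively, but the content is the same.
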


\begin{proof}
Suppose $x$  has generic neighbourhoods under the action and fix a complete metric $d$ on $X$ compatible with the topology.  Let $\mc{O}$ be the set of open $H$-invariant neighbourhoods of $x$.  For each positive integer $n$, let $O_n = \bigcup_{g \in G}g(B_n)$, where $B_n$ is the open ball of radius $1/n$ around $x$.  Then $O_n$ is an $H$-invariant neighbourhood of $x$.  By hypothesis, it follows that $O_n$ is dense in $X$.  Moreover, we see that for all $O \in \mc{O}$, there exists $n$ such that $B_n \subseteq O$, so that in fact $O_n \subseteq O$.  Thus $\bigcap_{O \in \mc{O}}O = \bigcap^\infty_{n=1}O_n$.  By the Baire Category Theorem, we conclude that $D = \bigcap_{O \in \mc{O}}O$ is a dense $G_{\delta}$ set.

Let $y \in D$.  Then the set $V = X \smallsetminus \overline{Hy}$ is an open $H$-invariant set that does not contain $y$.  Since $y \in O$ for all $O \in \mc{O}$, it follows that $x \not\in V$, in other words $x \in \overline{Hy}$.

Conversely, suppose there is a dense set of points $D$ such that $x \in \overline{Hy}$ for all $y \in D$.  Let $O$ be an open $H$-invariant neighbourhood of $x$ and suppose $z \in X \smallsetminus \overline{O}$.  Then $\overline{Hz}$ is contained in $X \smallsetminus O$, so $x \not\in \overline{Hz}$ and hence $z \not\in D$.  Thus $X \smallsetminus \overline{O}$ is an open set disjoint from $D$.  Since $D$ is dense, we conclude that $X \smallsetminus \overline{O} = \emptyset$, in other words $O$ is dense.
\end{proof}

\begin{cor}\label{cor:distal_generic}
Let $H$ be a group acting by homeomorphisms on a completely metrizable space $X$.  Suppose $x$ has generic neighbourhoods and that $\overline{Hx}$ is compact and consists of distal points for the action.  Then $X = \overline{Hx}$ and the action of $H$ on $X$ is minimal.
\end{cor}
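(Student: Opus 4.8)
The plan is to combine Lemma~\ref{lem:top_trans:metrizable} with Lemma~\ref{lem:distal_orbit_closure}. First, since $x$ has generic neighbourhoods for the $H$-action on the completely metrizable space $X$, Lemma~\ref{lem:top_trans:metrizable} supplies a dense $G_\delta$ subset $D$ of $X$ such that $x \in \overline{Hy}$ for every $y \in D$. (We only need that $D$ is dense, not that it is $G_\delta$.)

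Next I would feed each $y \in D$ into Lemma~\ref{lem:distal_orbit_closure}. By hypothesis $\overline{Hx}$ is compact and consists of distal points for the action, and $X$ is Hausdorff (being metrizable), so Lemma~\ref{lem:distal_orbit_closure} applies: we get that $H$ acts minimally on $\overline{Hx}$, and moreover $\overline{Hy} = \overline{Hx}$ whenever $\overline{Hx} \cap \overline{Hy} \neq \emptyset$. For $y \in D$ we have $x \in \overline{Hx} \cap \overline{Hy}$, so the intersection is nonempty and hence $\overline{Hy} = \overline{Hx}$; in particular $y \in \overline{Hx}$. Thus $D \subseteq \overline{Hx}$.

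Finally, $\overline{Hx}$ is compact, hence closed in the Hausdorff space $X$, and it contains the dense set $D$; therefore $X = \overline{D} \subseteq \overline{Hx} \subseteq X$, so $X = \overline{Hx}$. Since $X = \overline{Hx}$ is then a compact space all of whose points are distal for the action, a further application of Lemma~\ref{lem:distal_orbit_closure} (or simply the minimality clause already obtained) shows that $H$ acts minimally on $X$, completing the proof.

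There is no real obstacle here; the only point requiring a moment's care is the logical flow, namely that one first uses the ``$\overline{Hy} = \overline{Hx}$'' clause of Lemma~\ref{lem:distal_orbit_closure} to pin $D$ inside $\overline{Hx}$, and only afterwards uses density together with closedness of the compact set $\overline{Hx}$ to conclude equality with all of $X$. The minimality statement is then immediate from the minimality clause of Lemma~\ref{lem:distal_orbit_closure} applied to $X = \overline{Hx}$.
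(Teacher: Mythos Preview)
Your proof is correct and follows essentially the same route as the paper: invoke Lemma~\ref{lem:top_trans:metrizable} to obtain the dense set $D$ with $x \in \overline{Hy}$ for $y \in D$, then apply Lemma~\ref{lem:distal_orbit_closure} to conclude $\overline{Hy} = \overline{Hx}$ for all such $y$, so that $\overline{Hx} \supseteq D$ is dense and closed, hence all of $X$, with minimality immediate. Your write-up is somewhat more explicit about intermediate steps (closedness of compact sets in Hausdorff spaces, the nonempty-intersection clause) than the paper's terse version, but the argument is the same.
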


\begin{proof}
By Lemma~\ref{lem:top_trans:metrizable}, there is a dense $G_{\delta}$ set of points $D$ such that $x \in \overline{Hy}$ for all $y \in D$.  By Lemma~\ref{lem:distal_orbit_closure}, $H$ acts minimally on $\overline{Hx}$ and we have $\overline{Hy} = \overline{Hx}$ for all $y \in D$.  In particular, $\overline{Hx}$ is dense, and hence equal to $X$.
\end{proof}

\begin{prop}\label{prop:metrizable_transfinite}
Let $H$ be a group and let $X$ be an $H$-space that is either pro-metrizable or completely metrizable and let $x \in X$ be a fixed point of $H$.  For each closed subset $Y$ of $X$, let $\mc{K}_Y$ be the set of closed $H$-invariant neighbourhoods of $x$ in $Y$.  Let $X^0 = X$, for each ordinal $\alpha$ let $X^{\alpha+1} = \bigcap_{K \in \mc{K}_{X^\alpha}}K$ and for each limit ordinal $\lambda$ let $X^{\lambda} = \bigcap_{\alpha < \lambda}X^{\alpha}$.  Let $X^{\infty}$ be the intersection of the spaces $X^{\alpha}$ as $\alpha$ ranges over the ordinals.

Then the following hold:
\begin{enumerate}[(i)]
\item We have $X^{\infty} = X^{\alpha}$, where $\alpha$ is minimal such that $X^{\alpha} = X^{\alpha+1}$.  The sets $(X_{\beta})_{\beta \le \alpha}$ then form a strictly descending sequence of closed $H$-invariant subspaces of $X$.
\item The subspace $X^{\infty}$ is the unique largest subspace of $X$ such that $x$ has generic neighbourhoods under the action of $H$ on $X^{\infty}$.
\item Suppose that there is an open neighbourhood $O$ of $x$ such that every compact $H$-invariant subset of $O$ consists of distal points for the action of $H$ on $X$.  Then $X^{\infty} = \{x\}$.
\item If $X$ is completely metrizable, there is a countable subset $\mc{L}$ of $\mc{K}_X$ such that $\bigcap_{K \in \mc{L}}K = \bigcap_{K \in \mc{K}_X}K$.
\end{enumerate}
\end{prop}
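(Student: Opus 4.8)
The plan is to establish the four parts in the order (i), (ii), (iv), (iii), the real content being in (iii).

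For (i) and (ii): first observe that $(X^{\alpha})$ is a descending transfinite chain of closed $H$-invariant subsets of $X$, each containing $x$ --- closedness and $H$-invariance are preserved under arbitrary intersections, $x$ lies in every neighbourhood of $x$, and $X^{\alpha}\in\mc{K}_{X^{\alpha}}$ forces $X^{\alpha+1}\subseteq X^{\alpha}$. Since $\{X^{\alpha}\}_{\alpha\in\mathrm{Ord}}$ cannot be a strictly decreasing proper-class-indexed family of subsets of the set $X$, there is a least $\alpha$ with $X^{\alpha}=X^{\alpha+1}$, and a routine transfinite induction gives $X^{\beta}=X^{\alpha}$ for all $\beta\ge\alpha$, so $X^{\infty}=X^{\alpha}$; minimality of $\alpha$ makes the chain strictly descending up to $\alpha$, which is (i). For (ii), from $X^{\infty}=\bigcap_{K\in\mc{K}_{X^{\infty}}}K$ with each $K\subseteq X^{\infty}$ I conclude that $X^{\infty}$ is the only closed $H$-invariant neighbourhood of $x$ in $X^{\infty}$; hence the closure in $X^{\infty}$ of any $H$-invariant neighbourhood of $x$ equals $X^{\infty}$, i.e. $x$ has generic neighbourhoods for the action of $H$ on $X^{\infty}$. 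Conversely, if $Y$ is an $H$-invariant subspace with $x\in Y$ on which $x$ has generic neighbourhoods, I show $Y\subseteq X^{\alpha}$ for all $\alpha$ by induction: at a successor step, for $K\in\mc{K}_{X^{\alpha}}$ the set $K\cap Y$ is a closed $H$-invariant neighbourhood of $x$ in $Y$, hence both dense and closed in $Y$, hence equal to $Y$, so $Y\subseteq K$ and therefore $Y\subseteq X^{\alpha+1}$. Thus $X^{\infty}$ is the unique largest such subspace.

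For (iv): fix a complete metric $d$ on $X$, let $B_n$ be the open $d$-ball of radius $1/n$ about $x$, and set $C_n=\overline{\bigcup_{h\in H}h(B_n)}$. Each $C_n$ is a closed $H$-invariant neighbourhood of $x$, so $C_n\in\mc{K}_X$; and any $K\in\mc{K}_X$ contains some $B_m$, hence contains the $H$-invariant set $\bigcup_{h\in H}h(B_m)$ and, being closed, contains $C_m$. So $\mc{L}=\{C_n\mid n\in\bN\}$ satisfies $\bigcap_n C_n=\bigcap_{K\in\mc{K}_X}K$.

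For (iii): by (ii), $x$ has generic neighbourhoods on $X^{\infty}$; and since $\{x\}$ is a compact $H$-invariant subset of $O$, the hypothesis makes $x$ a distal point for $H$ on $X$, hence also on the $H$-invariant subspace $X^{\infty}$, so $\overline{Hx}=\{x\}$ is a compact set of distal points for $H$ on $X^{\infty}$. If $X$ is completely metrizable, so is the closed subspace $X^{\infty}$, and Corollary~\ref{cor:distal_generic} immediately gives $X^{\infty}=\overline{Hx}=\{x\}$. If $X$ is pro-metrizable, I would first note that $X^{\infty}$ inherits a pro-metrizable structure by restricting the filtration $(\sim_i)_{i\in I}$ to $X^{\infty}$, with completely metrizable quotients $Z_i=X^{\infty}/\!\sim_i$ and proper quotient maps $p_i$. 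For each $i$ the $\sim_i$-class $[x]_i$ of $x$ is a compact $H$-invariant subset of $X^{\infty}$, and these classes form a descending net of compacta with intersection $\{x\}$ (property (a) of the filtration), so $[x]_i\subseteq O$ for all $i$ beyond some index $i_1$. For such $i$ the hypothesis makes $[x]_i$ consist of distal points for $H$ on $X^{\infty}$, whence Lemma~\ref{distal_compact_extension} shows $p_i(x)$ is a distal point for $H$ on $Z_i$; since $p_i(x)$ also has generic neighbourhoods (the continuous surjective image of a generic-neighbourhood point), Corollary~\ref{cor:distal_generic} forces $Z_i=\{p_i(x)\}$, i.e. $\sim_i$ is the total relation on $X^{\infty}$ for all $i\ge i_1$, and property (a) then gives $X^{\infty}=\{x\}$.

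I expect the main obstacle to be precisely this last step in the pro-metrizable case: to use the metric-space tool Corollary~\ref{cor:distal_generic} one must descend to the quotients $Z_i$, and for Lemma~\ref{distal_compact_extension} to apply one needs the fibres $[x]_i$ --- a priori possibly large --- to lie inside the distality neighbourhood $O$, which is the point of the compactness argument on the descending net of classes. Along the way I would also need to check carefully that the restricted filtration on $X^{\infty}$ genuinely satisfies the axioms of a pro-metrizable structure (closed subspaces of locally compact completely metrizable spaces are again of that type, and restrictions of proper maps to closed subspaces with closed image are proper), and that distality of a fixed point is inherited by invariant subspaces.
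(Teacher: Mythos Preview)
Your proposal is correct and follows essentially the same route as the paper: the same transfinite stabilization for (i)--(ii), the same countable family $\overline{\bigcup_{h}h(B_n)}$ for (iv), and for (iii) the same reduction via Lemma~\ref{distal_compact_extension} and Corollary~\ref{cor:distal_generic} to the metrizable quotients, using the compactness argument on the descending net of fibres $[x]_i$ to push them into $O$. The only cosmetic difference is that you explicitly restrict the filtration to $X^{\infty}$ and flag the routine verifications (properness and metrizability of the restricted quotients, inheritance of distality to invariant subspaces, genericity passing to equivariant quotients), whereas the paper leaves these implicit.
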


\begin{proof}
It is clear that we produce a (weakly) descending sequence of closed $H$-invariant subspaces of $X$, so $X^{\alpha} = X^{\alpha+1}$ for some $\alpha$; let $\alpha$ be the first ordinal for which this occurs.  Then we see that every closed $H$-invariant neighbourhood of $x$ in $X^{\alpha}$ is dense.  It follows by induction that $X^{\alpha} = X^{\beta}$ for all $\beta > \alpha$, so $X^{\alpha} = X^{\infty}$.  By the choice of $\alpha$, the sets $(X_{\beta})_{\beta \le \alpha}$ then form a strictly descending sequence, proving (i).

Suppose $Y$ is an $H$-invariant subspace of $X$ in which $x$ has generic neighbourhoods.  Then $Y \subseteq X^0$, and whenever $Y \subseteq X^{\alpha}$ we see that $Y$ is contained in every closed $H$-invariant neighbourhood of $x$ in $X^{\alpha}$, so $Y \subseteq X^{\alpha+1}$.  By induction, $Y \subseteq X^{\infty}$.  Conversely, since $X^{\infty} = X^{\alpha} = X^{\alpha+1}$ for some $\alpha$, we see that there are no proper closed $H$-invariant neighbourhoods of $x$ in $X^{\infty}$, so $x$ is a generic neighbourhood point for the action on $X^{\infty}$.  This proves (ii).

Now suppose that there is an open neighbourhood $O$ of $x$ such that every compact $H$-invariant subset of $O$ consists of distal points for the action of $H$ on $X$.  Suppose $X$ is pro-metrizable with filtration $(\sim_i)_{i \in I}$.  Letting $i$ range over $I$, the sets $([x]_i \smallsetminus O)$ form a descending net of compact sets with empty intersection: thus there exists $i \in I$ such that $[x]_i \subseteq O$.  So without loss of generality, we may assume $[x]_i \subseteq O$ for all $i \in I$.  In particular, $[x]_i$ is a compact $H$-invariant set contained in $O$, so $[x]_i$ consists of distal points for the action.  Fix $i \in I$, let $Y = X^{\infty}/\!\sim_i$ and let $p: X \rightarrow Y$ be the quotient map.  By Lemma~\ref{distal_compact_extension}, $y:= [x]_i$ is a distal fixed point of $H$, but at the same time $y$ is a generic neighbourhood point of $H$ on $Y$.  Since $Y$ is a completely metrizable space, we conclude that $Y = \{y\}$ by Corollary~\ref{cor:distal_generic}.  Letting $i$ range over $I$, we see that each of the spaces $X^{\infty}/\!\sim_i$ has a single point, so in fact $X^{\infty}$ only has a single point.  Similarly, if $X$ itself is completely metrizable, then $X^{\infty}$ is completely metrizable and we again have $X^{\infty} = \{x\}$ by Corollary~\ref{cor:distal_generic}.  This completes the proof of (iii).

If $X$ is completely metrizable, then we see that every $K \in \mc{K}_X$ contains $\overline{O_n}$ for some $n$, where $O_n$ is as in the proof of Lemma~\ref{lem:top_trans:metrizable}, and hence it suffices to take $\mc{L} = \{\overline{O_n} \mid n \in \bN\}$, proving (iv).
\end{proof}

\end{document}